\documentclass[11pt]{article}%
\usepackage[colorinlistoftodos]{todonotes}

\usepackage{amssymb}
\usepackage{amsfonts}
\usepackage{amsmath}
\usepackage{mathtools}
\usepackage{dsfont}

\usepackage{comment}
\usepackage{amsthm}

\usepackage{hyperref}
\hypersetup{
    colorlinks=true,
    linkcolor=blue,
    citecolor=red,
    urlcolor=blue,
    pdfborder={0 0 0}
}

\usepackage{cleveref}

\usepackage{float}
\usepackage{graphicx}
\usepackage{wrapfig}
\usepackage{subfig}
\usepackage{fancybox}
\usepackage{framed}
\usepackage{caption}
\usepackage{epstopdf} 
\usepackage[all]{xy} 
\usepackage{tikz} 
\usepackage{tabularx}
\usepackage{afterpage}
\usepackage{arydshln}

\usepackage[titletoc, title]{appendix} 
\usepackage{titling} 
\usepackage{url} 
\usepackage{color}
\usepackage{enumitem}
\usepackage{multirow, longtable} 

\usepackage{pdfpages}

\usepackage[latin1, utf8]{inputenc}
\usepackage[a4paper]{geometry}
\usepackage{microtype}

\providecommand{\U}[1]{\protect\rule{.1in}{.1in}}
\newtheorem{theo}{Theorem}[section]
\newtheorem{conjecture}{Conjecture}
\newtheorem{prop}[theo]{Proposition}
\newtheorem{lem}[theo]{Lemma}
\newtheorem{cor}[theo]{Corollary}

\newtheorem{rem}[theo]{Remark}

\numberwithin{equation}{section}
\newtheorem{as}{Assumption}[section]

\newcommand{\CC}{\mathbb{C}}
\newcommand{\DD}{\mathbb{D}}
\newcommand{\D}{\mathbb{D}}

\newcommand{\EE}{\mathbb{E}}
\newcommand{\E}{\mathbb{E}}

\newcommand{\NN}{\mathbb{N}}

\newcommand{\PP}{\mathbb{P}}

\newcommand{\RR}{\mathbb{R}}

\newcommand{\ZZ}{\mathbb{Z}}

\def\E{\EE}
\def\P{\mathbb{P}}

\newcommand{\Ca}{ {\mathcal C }}

\newcommand{\cE}{ {\mathcal E }}

\newcommand{\cS}{ {\mathcal S }}
\newcommand{\cC}{ {\mathcal C }}

\newcommand{\Qb}{ {\mathbf Q }}
\newcommand{\Eb}{ {\mathbf E }}

\newcommand{\Bb}{ {\mathbf B }}

\newcommand{\Hb}{ {\mathbf H }}
\newcommand{\Pb}{ {\mathbf P }}
\newcommand{\Kb}{ {\mathbf K }}

\newcommand{\pb}{ {\mathbf p }}
\newcommand{\qb}{ {\mathbf q }}

\def\eps{\varepsilon}

\def\bl{\boldsymbol{\lambda}}

\newcommand{\R}{\mathbb{R}}

\def\bl{\boldsymbol{\lambda}}

\newcommand{\les}{ {<}}
\newcommand{\gre}{ {>}}

\newcommand{\dps}{\displaystyle}

\newcommand{\var}{\mathrm{Var}}
\newcommand{\cov}{\mathrm{Cov}}
\newcommand{\Hrad}{H_{\mathrm{rad}}}
\newcommand{\Hcirc}{H_{\mathrm{circ}}}
\newcommand{\hrad}{h_{\mathrm{rad}}}
\newcommand{\hcirc}{h_{\mathrm{circ}}}
\newcommand{\thrad}{{\tilde h}_{\mathrm{rad}}}
\newcommand{\thcirc}{{\tilde h}_{\mathrm{circ}}}

\newcommand{\red}[1]{{\color{red} #1}}

\allowdisplaybreaks

\newcommand{\nb}[1]{\textcolor{blue}{#1}}

\title{Spectral expansion of LQG heat trace and KPZ scaling}

\author{
Nathana\"el Berestycki\thanks{University of Vienna, \href{mailto:nathanael.berestycki@univie.ac.at}{nathanael.berestycki@univie.ac.at}}\:
and Jakob Klein\thanks{ University of Vienna,
\href{mailto:jakob.klein@univie.ac.at}{jakob.klein@univie.ac.at}}\: 
} 
\date{\today}
\begin{document}
\maketitle

\begin{abstract}
Let $h$ be a whole plane Gaussian free field, and let $\Omega$ be a bounded domain in two dimensions. We study the asymptotics as $t\to 0$ of the Liouville quantum gravity (LQG) heat trace, defined as the integral over $\Omega$ of the on-diagonal LQG heat kernel. Our main result is to show that the second term in the spectral expansion as $t\to 0$ of the expected heat trace is governed by a nontrivial exponent, given by the KPZ (Knizhnik--Polyakov--Zamolodchikov) relation. A similar but stronger (almost sure) result applies to the related notion of heat content. Along the way we obtain various results on the short-term behaviour of the heat kernel, notably solving a conjecture of \cite{BW} concerning its annealed asymptotics, and showing the finiteness of all moments of the properly rescaled heat kernel. 
\end{abstract}

\tableofcontents

\section{Introduction, main results and conjectures}

\subsection{Background}
In classical Riemannian geometry, the analysis of the \emph{heat trace} (the space integral of the on-diagonal heat kernel, see definitions below) is a key way of identifying spectral invariants, i.e., quantities that are spectrally determined, such as the volume or the length of the boundary. Its short term asymptotics are closely related to some fundamental parameters of the underlying manifold, including its Euler characteristic, eigenvalue counting function, and the zeta-regularised determinant of its Laplacian. 

In this paper we consider short time asymptotics of the heat trace in the context of \textbf{Liouville quantum gravity} (LQG), a certain canonical random geometry initially proposed by Polyakov \cite{polyakovstrings} for non-critical bosonic string theory in two dimensions. We are motivated not only by the programme initiated in \cite{BW} (which initiated the study of many questions in the spectral geometry of LQG, see also \cite{BerestyckiICMP}) but also by the fact that spectral geometry should provide an alternative way of defining and studying LQG; for instance, reweighing the law of an LQG surface corresponding to a certain coupling constant $\gamma \in (0,2)$ by an appropriate power -- the central charge -- of its zeta-regularised determinant (if it can be well-defined) should lead to an LQG surface with a different coupling constant $\gamma'$; see for instance \cite{AngParkPfefferSheffield} and \cite{BerestyckiICMP}. 

The leading order behaviour of the LQG heat trace $
\mathbf{H}_\Omega(t)$ as $t\to 0$ was described in \cite{BW} and was found to be proportional to $(1/t)$ times the LQG area of the surface, leading to an (almost sure) Weyl law for the eigenvalue counting function, albeit with a nontrivial, $
\gamma$-dependent proportionality constant $c_\gamma$. In this paper we consider the next term in this expansion, which, if the Riemannian template is to be believed, should be governed by boundary effects and should be of order $1/\sqrt{t}$. In fact, our main finding is that this next term is determined by boundary effects which are considerably more subtle in the LQG case than in the Riemannian situation. In particular, its behaviour is described by a nontrivial exponent which is related to the so-called \textbf{KPZ equation} (after Knizhnik, Polyakov and Zamolodchikov), the relation which can be used to transform Euclidean scaling exponents of a set into its LQG analogue \cite{KPZ, DS2011, RhodesVargas,  HKPZ, Aru, KPZas}; see also \cite[Chapter3]{BP} for an overview.

Let $h$ denote a whole plane Gaussian free field, normalised so its unit circle average vanishes (see \cite[Section 6.4.1]{BP} for a definition, and see also \eqref{eq:Greens function of full plane GFF} for some basic properties and reminders about this field). Let $\Omega \subset \mathbb{C}$ be a bounded, simply connected domain, and consider the restriction $h|_
\Omega$ of $h$ to $\Omega$.
Let $\mu$ denote the corresponding GMC (or Liouville area measure), which corresponds informally to the exponential of the field $h$: $\mu(dx) = \exp ( \gamma h(x) ) dx$. However, this definition is not rigorous as the GFF is not pointwise defined, and requires a regularisation procedure: 
$$
\mu(dx) = \lim_{\eps \to 0} \eps^{\gamma^2/2} e^{\gamma h_\eps(x) } dx
$$
as $\eps \to 0$ in probability with respect to the topology of vague convergence;  see e.g., \cite{Ber2017}, \cite{Shamov} or \cite{BP} for the existence of this limit. 

\medskip Let $(\mathbf{Z}_t, 0 \le t\le \sigma_\Omega )$ denote the associated Liouville Brownian motion (\cite{GRV, Ber2015}, killed upon leaving $\Omega$ (so $\sigma_{\Omega} = \sigma_\Omega(\mathbf{Z}) = \inf \{ t>0: \mathbf{Z}_t \notin \Omega\}$). We review its construction in Section \ref{SS:prelim}, but for now let us simply mention that Liouville Brownian motion can be viewed as the canonical diffusion in the geometry of LQG, and is for instance conjectured to describe the scaling limits of random walks on corresponding models of random planar maps (see e.g. \cite{BerestyckiGwynne} for results in this direction).

Let $\pb^\Omega_t(x,y)$ denote the corresponding \textbf{heat kernel}, so that, by definition \cite{GRV_hk}, $\pb^\Omega_t(x,y)$ is the Radon--Nikodym derivative of the law of $\mathbf{Z}_{t}$, restricted to the event $\{  \sigma_\Omega > t \}$, with respect to $\mu$, when $\mathbf{Z}$ is started from $x \in \Omega$. A jointly continuous modification in $\Omega^2 \times (0,t)$ is known to exist (\cite{MRVZ}) and we will only refer to this from now on. (Strictly speaking, these results were stated for Gaussian free fields with Dirichlet boundary conditions on $\partial \Omega$ rather than the whole plane GFF restricted to $\Omega$ which we consider here, but by local absolute continuity these results can easily be transported to this framework).

The main object of investigation in this paper is the \textbf{LQG heat trace}:
\begin{equation}
\label{eq:trace}
\mathbf{H}_\Omega(t) = \int_\Omega \mathbf{p}^\Omega_t(x,x) \mu(\mathrm{d}x). 
\end{equation}
Although this won't be needed in the paper, we recall that by the trace formula, $\mathbf{H}_\Omega(t)$ is the Laplace transform of the nonnegative measure associated to the increasing eigenvalue counting function: if
$\{\bl_n\}_{n\ge 1}$ denotes the sequence of eigenvalues associated to the corresponding Liouville Brownian motion in nondecreasing order, and $\mathbf{N}(\lambda) = \sum_{n} 1_{\{ \bl_n \le \lambda\}}$ is the eigenvalue counting function, then $
\mathbf{H}_\Omega(t) = \int_0^\infty e^{- \lambda t} \mathrm{d} \mathbf{N} (\lambda)$.

\medskip It was proved in \cite{BW} that the leading order asymptotics of the heat trace is of order $(1/t)$ and is proportional to the LQG area of $\Omega$:
\begin{equation}\label{eq:trace_vol}
t \mathbf{H}_\Omega(t) \to c_\gamma \mu(\Omega)
\end{equation}
in probability as $t\to 0$, where $c_\gamma = 1/ [ \pi (2- \gamma^2/2)]$. Using a small extension of Tauberian theory and the above trace formula, this implied the following result for the eigenvalue counting function:
\begin{equation}\label{eq:Weyl}
\lambda^{-1} \mathbf{N}(\lambda) \to c_\gamma \mu(\Omega), 
\end{equation}
 (Once again, strictly speaking these results were established for a Gaussian free field with Dirichlet boundary conditions on $\partial \Omega$ instead of a quantum cone restricted to $
\Omega$ as we do here, but there is no difficulty in extending that result to the setup of quantum cones considered here; see Section \ref{S:proof_heat} for more details.)
Thus, the LQG eigenvalues satisfy a (quenched) Weyl law, with spectral dimension $d_s =2$ but with anomalous proportionality constant $c_\gamma$ instead of the  constant $c_0 = 1/(2\pi)$ observed in the standard Riemannian framework.

\subsection{Heat trace results}

\medskip Our goal in this paper is on the one hand, to go further in the expansion of the heat trace near $t=0$ compared to \eqref{eq:trace_vol}, and on the other hand, to discuss the asymptotics of related quantities such as the \textbf{heat content}, discussed below. In the Riemannian setting, it is well known (see e.g., \cite{Kac1966}) that 
\begin{equation}\label{eq:heat_exp_Riemann}
    H_\Omega(t): = \int_\Omega p^\Omega_t(x,x) dx = \frac1t c_0 \text{Leb} (\Omega) - \frac1{\sqrt{t}} c'_0 |\partial \Omega|,
\end{equation}
at least when $\Omega$ is a smooth domain. (Here $H_\Omega(t)$ is the heat trace of standard Brownian motion in $\Omega$, and $p^\Omega_t(x,y)$ is the transition probability of standard Brownian motion, killed upon leaving $\Omega$, and $|\partial \Omega|$ denotes the boundary length of $\Omega$.)  While the first term above matches with 
\eqref{eq:trace_vol} (replacing $c_0$ by $c_\gamma$), we will see that the order of magnitude of the next term, averaged over the realisations of the field, is very different and is instead of order $t^{-(1-\Delta)}$, where $\Delta$ is the \textbf{KPZ exponent} of the boundary, as will be defined below.

This appears to confirm numerical simulations performed in \cite{BW}, where it was observed numerically that the leading order exponent for the second term in the expansion of $\mathbf{H}_\Omega(t)$ near $t =0$ is anomalous, i.e., deviates (substantially) from $1/2$.


\medskip To state our results precisely, we first introduce carefully the exponents which play a role. Suppose that $\Omega$ is a bounded domain, and let $\partial \Omega$ be its boundary. We will need to consider the fractal dimension (more precisely, Euclidean scaling exponent) of $\partial \Omega$, 
\emph{as viewed from the inside} of $\Omega$. (This corresponds to what is sometimes called the \emph{inner} Minkowski dimension \cite{Falconer}, more precisely the inner Minkowski dimension would be given by $d = 2- 2\mathsf{x}$ in the notations below.) 
As the domain above need not be simply connected (so that $\partial \Omega$ need not be connected, and could include Cantor-like sets of nontrivial dimensions), we also fix a connected component, denoted by $\partial^*\Omega$, of $\partial \Omega$, and which we assume is of positive diameter.

\begin{as} \label{As:innerMinkowski}
Suppose  there exists $\mathsf{x} \in [0,1]$ such that 
%
$\Omega_\eps$ denotes the inner $\eps-$neighbour\-hood of $\partial \Omega$ (i.e., $\Omega_\eps = \{ z \in \Omega: d(z, \partial \Omega) \le \eps\}$), then
\begin{equation}\label{eq:scaling_informal}
\mathrm{Leb} \left( \Omega_\eps \right) = \eps^{2 \mathsf{x}+o(1)}.
\end{equation}
We also suppose that if $\Omega^*_\eps = \{ z\in \Omega: d( z, \partial^*\Omega) \le \eps\}$, then also $ \text{Leb} ( \Omega^*_\eps ) = \eps^{ 2 \mathsf{x} + o(1)}$ as $\eps \to 0$.  
\end{as}

%

We call $\mathsf{x}$ defined by \eqref{eq:scaling_informal} the  \emph{inner} scaling exponent of $\partial \Omega$. Since $\partial^* \Omega$ is connected, note that $\mathsf{x} \in [0,1/2]$ necessarily, with $\mathsf{x}  = 1/2$ if $\partial \Omega$ is smooth.

\begin{rem}
    The reader may wonder if the inner Minkowski dimension (or equivalently inner scaling exponent) ever differs from its global version. Equivalently, can  the outer dimension of a domain $\Omega$ ever  be strictly larger than its inner dimension? 
It is in fact possible to construct examples of such sets. We will provide in Appendix \ref{App:Minkowski} an example of such a set.
\end{rem}

\medskip Thus, suppose that \eqref{eq:scaling_informal} holds. 
Let 
 $\Delta\in[0,1]$ be the unique solution of the KPZ equation
        \begin{equation}
\label{eq:KPZ}
\mathsf{x} = \frac{\gamma^2}{4} \Delta + (1- \frac{\gamma^2}{4})\Delta^2,
        \end{equation}
where $x$ is the inner scaling exponent associated to $\partial \Omega$, as in 
\eqref{eq:scaling_informal}.

\begin{theo}\label{T:qc} Let $h$ be as above. Then as $t \to 0$,
    \begin{equation}
    \EE[\mathbf{H}_\Omega(t)] = \frac1t c_\gamma \EE[ \mu(\Omega)] - t^{-1 + \Delta + o(1)}.
    \end{equation}
    \end{theo}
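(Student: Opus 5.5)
We split the expected heat trace into a bulk term and a boundary defect. Write $\mathbf{p}_t(x,x)$ for the heat kernel of Liouville Brownian motion in the whole plane (or in a large ball $B\supset\bar\Omega$), so that
\[
\mathbf{p}_t(x,x)-\mathbf{p}^\Omega_t(x,x)=\text{(contribution of paths from $x$ to $x$ that exit $\Omega$ before time $t$)} \ge 0 .
\]
This gives
\[
\EE[\mathbf{H}_\Omega(t)] = \EE\Big[\int_\Omega \mathbf{p}_t(x,x)\,\mu(dx)\Big] - \EE\Big[\int_\Omega (\mathbf{p}_t-\mathbf{p}^\Omega_t)(x,x)\,\mu(dx)\Big].
\]

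For the first term we use Girsanov/rooted-measure (Peyrière) arguments. Under $\EE[\,\cdot\,\mu(dx)]$ the field becomes $h+\gamma\log|\cdot-x|^{-1}$, and the annealed on-diagonal kernel at a $\gamma$-thick point satisfies $\EE^{x}[\mathbf{p}_t(x,x)] = c_\gamma/t\,(1+O(t^{a}))$ for some $a>0$, uniformly over $x$ in compacts. This is the annealed asymptotic (the conjecture of \cite{BW}) which we would prove first. The idea is that the scaling of the quantum cone makes $t\,\mathbf{p}_t(x,x)$ a functional whose law converges, with all moments finite; finite moments give uniform integrability. The correction from the deviation of $h$ from an exact cone near $x$ is polynomially small in $t$. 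Hence the first term equals $c_\gamma t^{-1}\EE\mu(\Omega)$ up to an error $O(t^{-1+a'})$, and we must check that this error is $o(t^{-1+\Delta})$. If the error cannot be made that small, we instead compare directly with the defect by localising: for $x$ far from $\partial\Omega$ the two kernels agree up to a stretched-exponential error.

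The main term is the defect. For $x$ at Euclidean distance $r$ from $\partial\Omega$, the LBM started at $x$ needs quantum time about $\mu(B(x,r))$ to reach distance $r$. The defect is therefore of order $t^{-1}$ when $\mu(B(x,r))\lesssim t$, and negligible (stretched-exponentially small, via heat-kernel off-diagonal bounds) when $\mu(B(x,r))\gg t^{1-\delta}$. Under the rooted measure, $\mu(B(x,r))\approx r^{\gamma Q}e^{\gamma B_{\log 1/r}}$ with $Q=2/\gamma+\gamma/2$ and a drift-corrected Brownian motion $B$. So
\[
\text{defect}\approx t^{-1}\int_0^1 \PP\big(\mu(B(x,r))\le t\big)\,d\,\mathrm{Leb}(\Omega_r).
\]
Using $\mathrm{Leb}(\Omega_r)=r^{2\mathsf{x}+o(1)}$ from Assumption~\ref{As:innerMinkowski} and Gaussian large deviations in $\log r$, optimising over $r$ gives $t^{\Delta+o(1)}$ with $\Delta$ solving \eqref{eq:KPZ}. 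This is exactly the standard expected-box-counting KPZ computation of \cite{DS2011}. For the lower bound we use the component $\partial^*\Omega$: it is connected and of positive diameter, so at points with $\mu(B(x,r))\le t$, LBM exits with probability bounded below. This uses the scale invariance of Brownian hitting of a connected set of comparable diameter, and the time change only changes the time, not the trace. On this event the loss is at least a constant fraction of $\mathbf{p}_t(x,x)$, and $t\,\mathbf{p}_t(x,x)$ is bounded below with positive probability.

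The main obstacle is making ``the defect is about $t^{-1}\mathbf 1\{\mu(B(x,r))\lesssim t\}$'' quantitative in both directions under the rooted measure. The upper bound needs moment bounds on $t\,\mathbf{p}_t(x,x)$, which hold with all moments finite, together with Hölder's inequality to decouple them from the exit event. The lower bound needs a joint lower bound on the return density and the exit probability. We would first try Markov decomposition at the exit time, writing
\[
(\mathbf{p}_t-\mathbf{p}^\Omega_t)(x,x)=\EE_x\big[\mathbf{p}_{t-\sigma_\Omega}(\mathbf{Z}_{\sigma_\Omega},x);\sigma_\Omega<t\big],
\]
and then bounding $\mathbf{p}_{t-s}(z,x)$ by a sub-Gaussian estimate of \cite{MRVZ} type. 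The $o(1)$ losses in these estimates are absorbed into the $t^{o(1)}$ of the statement.
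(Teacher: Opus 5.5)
Your outline has the paper's overall structure: the in--out decomposition, Girsanov, a KPZ optimisation for the exit term, and Hölder against heat-kernel moments for its upper bound. However, the two steps that actually carry the theorem are not supplied.

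\textbf{The bulk rate.} The statement requires $\bigl|\int_\Omega \E^*_x[\pb_t(x,x)]\,dx - c_\gamma|\Omega|/t\bigr| = o(t^{-1+\Delta})$. Since $\Delta\ge 1/2$ already for smooth $\partial\Omega$, a relative error $O(t^{a})$ for some unspecified $a>0$ is only harmless if $a>\Delta$. Convergence in law plus uniform integrability gives no rate at all. Your fallback does not address the issue either: this error concerns the full-plane kernel at every $x\in\Omega$, interior points included, and has nothing to do with $\partial\Omega$. Localising controls the defect, not the speed at which $t\,\E^*_x[\pb_t(x,x)]\to c_\gamma$.

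The missing idea is that no approximation near $x$ is needed. The field $h^*_x$ can be coupled to agree \emph{exactly} with a $\gamma$-quantum cone on a fixed ball $B$ around $x$. The two on-diagonal kernels then differ only through loops that leave $B$ before time $t$. This difference is $o_{\mathrm{FTP}}(1)$ in $L^1$, by the log-normal lower tail of the exit time (Lemma~\ref{lem:tailbound}) combined with moment bounds on $\pb_{t/2}(x,x)$; this is Lemma~\ref{L:comparison}. On the cone, adding the constant $\gamma^{-1}\log(1/t)$ gives the exact identity $\tilde\pb_t(0,0)\overset{d}{=}t^{-1}\tilde\pb_1(0,0)$ (Proposition~\ref{P:scale_hk}). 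Hence $\tilde\E[\tilde\pb_t(0,0)]=c_\gamma/t$ with no error, and the bulk error is FTP.

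\textbf{The lower bound on the defect.} You need $\E^*_x[\pb^{\CC\setminus\Omega}_t(x,x)\mid\cE]\ge t^{-1+o(1)}$ for an event $\cE$ of polynomially small probability. This event constrains precisely the local geometry at $x$ that governs $\pb_t(x,x)$. The fact that $t\pb_t(x,x)$ is unconditionally bounded below with positive probability says nothing about this conditional law. Also, ``the time change only changes the time, not the trace'' holds for Liouville Brownian motion but not for the Liouville loop, which is not a time-changed Brownian loop. So an exit probability for the motion does not show that the exiting loops carry a constant fraction of $\pb_t(x,x)$.

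The paper handles both points by separating scales:
\begin{itemize}
\item $x$ lies at distance $r=t^{\kappa_0(1+\delta)}$ from $\partial^*\Omega$, but the circle average is conditioned at the larger radius $r'=t^{\kappa_0(1-\delta)}$.
\item One passes to the cone (FTP error) and rescales by quantum area $t$.
\item The Markov property and law of large numbers for the drifted radial Brownian motion give $r\ll r_C\ll r'$, using $2\kappa_0>1/(\gamma(Q-\gamma))$.
\item Consequently the conditioning disappears in total variation on compacts of $\CC$, while the rescaled boundary collapses onto $x$.
\item Fatou's lemma then gives $\liminf_{t\to0}t\,\E[\pb^{\CC\setminus\Omega}_t(x,x)\mid\cE_x]\ge c_\gamma$ (Lemma~\ref{L:mapping}).
\end{itemize}
Conditioning at the boundary scale $r$ itself, as you do, loses this separation.

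\textbf{Smaller points.} For the upper bound, the paper does not decompose at $\sigma_\Omega$, since the bounds of \cite{MRVZ} are quenched with random constants. Instead it uses reversibility of the loop to place the exit in $[0,t/2]$, then applies Hölder and Jensen against $\E[\Kb_\Omega(t/2)]$. Your ball-mass proxy for the exit time is not a consequence of \cite{DS2011}; the paper works with exit times directly, via a scaling identity and Lemma~\ref{lem:tailbound}. Likewise, finiteness of all moments of $t\pb_t(x,x)$ does not follow from the limit law; the paper proves it through the bridge decomposition and negative moments of the quantum clock at large times.
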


Again, we emphasise that this result gives information only at the level of exponents for the second term in $\E[\Hb_\Omega(t)]$. Concretely, it says 
$$
\frac{\log \left(\EE[\mathbf{H}_\Omega(t)] - \frac1t c_\gamma \EE[ \mu(\Omega)]\right)}{\log t} \to -1+ \Delta
$$
as $t\to 0$. 
It is unclear at this stage whether one should expect the same result in an almost sure sense, i.e., without taking the expectation (over the realisations of the field $h$). 

\medskip Beyond Theorem \ref{T:qc}, it would be particularly natural and interesting to be able to analyse where $(\Omega, h)$ is a so-called unit \textbf{quantum disc} (of some particular weight, and conditioned to have unit boundary length, say). 
Informally, this corresponds to the above case where $\Omega$ is bounded by an SLE$_\kappa$-type curve. Using the well known theorem of Beffara \cite{Beffara} for the value of the dimension of SLE (and a recent strengthening of this result by Powell and Sepulveda \cite{PowellSepulveda}, which shows that the dimensions of this curve as computed on each side separately agree), and solving the KPZ equation we would find $\Delta = 1/2$. From Theorem \ref{T:qc} it is thus natural to conjecture:

\begin{conjecture}\label{Conj:QD}
    Let $(\Omega, h)$ be a quantum disc as above. Then 
    $$
\E [ \mathbf{H}(t)] =  \frac1t c_\gamma \EE[ \mu(\Omega)] - t^{-1/2 + o(1)}
    $$
\end{conjecture}

Thus for quantum discs one should recover (at least the level of exponents) the same behaviour as in the Riemannian case, \eqref{eq:heat_exp_Riemann}. Although such a fact is heavily suggested by our results, we are currently unable to prove this at the moment,  for reasons which we believe to be only technical. We are nevertheless able to prove a statement which is ``morally equivalent'' and which concerns the \textbf{heat content}, as we now explain.

\subsection{Heat content}

 By definition, the \textbf{heat content} $\mathbf{K}_\Omega(t)$ is given by 
$$
\mathbf{K}_\Omega(t) = 
\int_\Omega \mathbf{P}_x[ \sigma_\Omega<  t] \mu (\mathrm{d}x),
$$
where we recall that $\sigma_\Omega = \inf\{ t>0: \mathbf{Z}(t) \notin \Omega\}$
is the exit time of $\Omega$ by our Liouville Brownian motion, and $\mathbf{P}_x$ denotes its (quenched) law starting from $x\in \Omega$. 

\medskip The heat content has the following {physical interpretation}: imagine that at time $t=0$ the domain $\Omega$ has temperature $T=0$ everywhere. Then starting from time $t=0^+$, heat up the boundary with a constant temperature $T=1$. The quantity $\mathbf{P}_x( \tau_\Omega> t)$ then measures the temperature at a point $x$ and time $t$ in $\Omega$, so the heat content measures the integrated temperature (``total heat'') in $\Omega$ at time $t>0$. Now, as time increases, the temperature inside $\Omega$ gradually increases from $T=0$ at time $t =0$ to $T=1$ at time $t=\infty$, and it is of particular interest to describe the asymptotics near $t=0$ of this behaviour. Beyond its physical interpretation and connection to heat trace (which we will detail below), the heat content is also a quantity of interest from the point of view of spectral geometry of Euclidean domains and more generally Riemannian manifolds; see, e.g., \cite{McD_Meyers, vdB2, vdB_Gittins2, vdB_Gittins1}, and references therein (note however that the heat content is not a spectral invariant).

\medskip Clearly, from the above description one might expect that the short time behaviour of $\mathbf{K}_\Omega(t)$ is dominated by boundary effects, and this may lead the reader to wonder if there is a connection between the second term asymptotics of $\mathbf{H}_\Omega(t)$ and the leading order of $\mathbf{K}_\Omega(t)$. The answer turns out to be yes: informally, for reasons that will be clarified later, one should expect that the second term in the expansion of $\E[\mathbf{H}_\Omega(t)]$ near $t =0$ is approximately $\mathbb{E}[\mathbf{K}_\Omega(t)]/t$. For this reason, a key step in our proof of Theorem \ref{T:qc} is to establish the asymptotic behaviour of the heat content $\mathbf{K}_\Omega(t)$ near $t =0$. 

\medskip Our results establish convergence in probability for $\mathbf{K}_\Omega(t)$. For this we require a stronger version of \eqref{eq:scaling_informal}, where we control the size of the annuli centered at points on the boundary intersected with $\Omega$, uniformly over all scales and over the boundary. In the language of fractal geometry \cite{Falconer}, this corresponds to an \emph{inner} and slightly weaker version of the notion of \textbf{local Assouad dimension}: 

\begin{as} \label{assumption 1}
There is a constant $C_\partial >0$ such that for all $0<r<R\le R_0$ we have
    \begin{align}
        \label{eq:assumption_upper_bound_global}\sup_{z\in\partial\Omega}\Big\vert \Omega_r\cap B_R(z)\Big\vert\le C_\partial r^{2\mathsf{x}}R^{2(1-\mathsf{x})}.
    \end{align}
\end{as}
The above gives an upper bound on the size of $r$-neighbourhood of the boundary, uniformly across the boundary, and at all mesoscopic scales. In particular, this implies
\begin{align} \label{Assumption 1 non local}
    \vert\Omega_r\vert\le \tilde  C_\partial r^{2\mathsf{x}},
\end{align}
for some constant $\tilde C_\partial $. We also require a complementary lower bound, which says that for every choice of scale $0<R<1$, we have a polynomial number of good points in $\partial^* \Omega$ next to which the volume growth of the boundary of $\Omega$ grows in a controlled way (recall that $\partial^* \Omega$ is some fixed component of $\partial\Omega$ of positive diameter). 

\begin{as} \label{Assumption 2}
    For all sufficiently small $\zeta>0 $ there exists a constant $c_\partial = c_\partial(\zeta)>0$ satisfying the following properties. 
    For every choice of $0<R<1$ there exists integer $N\ge 1\vee R^{-2(1-\mathsf{x})+\zeta}$ and points $z_1, \ldots, z_N \in  \partial^* \Omega$ with $B_R (z_i) \cap B_R(z_j) = \emptyset$ if $1\le i \neq j \le N$, and for all $0 < r <R$, 
    \begin{align}\label{eq:assumption_lowerbound_local}
        \min_{1\le i\le N}\Big\vert \Omega^*_r\cap B_{R}(z_i)\Big\vert\ge c_\partial r^{2\mathsf{x}}R^{2(1-\mathsf{x})}.
    \end{align}
    We call $\partial_R = \{ z_1, \ldots, z_N\}$ this set of points. 
\end{as}

Assumption \ref{assumption 1} can be viewed as an upper bound on the dimension, while Assumption \ref{Assumption 2} is a lower bound; we require here not only that there is a single point where the local behaviour matches the global upper bound  \eqref{eq:assumption_upper_bound_global} but an abundance of such points, which of course is typically the case for most natural random fractals. The requirement that $N\ge R^{-2(1-\mathsf{x})+\zeta}$ is added at little cost, and corresponds to the fact that the dimension of $\partial \Omega$ is assumed to be $2-2\mathsf{x}$. (Since $R$ is typically small, the presence of the factor $R^{\zeta}$ in the assumption makes the result below stronger, since it means that any polylogarithmic terms in the Minkowski gauge can be ignored). 

\begin{theo}
\label{T:heat} Let $h$ be as in Theorem \ref{T:qc}, i.e., $h$ is a full plane GFF normalized to have $0$ average on the unit circle. Let $\Omega$ be a bounded domain satisfying Assumptions \ref{assumption 1} and \ref{Assumption 2}. Then 
$
\mathbf{K}_\Omega(t) = t^{\Delta + o(1)}
$ where $o(1)$ converges to 0 in probability. That is, 
$$
\frac{\log \mathbf{K}_\Omega(t)}{\log t} \to \Delta
$$
as $t\to 0$ in probability. \end{theo}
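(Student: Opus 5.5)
The plan is to prove matching upper and lower bounds, $\mathbf{K}_\Omega(t)\le t^{\Delta-\eps}$ and $\mathbf{K}_\Omega(t)\ge t^{\Delta+\eps}$, each with probability tending to one. The heuristic is a Whitney-type decomposition of $\Omega$ near $\partial\Omega$ into boxes $Q$ of size $r$ at distance $\asymp r$ from the boundary. Liouville Brownian motion started in $Q$ exits $\Omega$ before time $t$ with non-negligible probability roughly when the LQG "time scale" of $Q$ satisfies
$$
\tau(Q)\approx \mu(B_{r})\approx r^{\gamma Q_\gamma}e^{\gamma h_r(z)}\lesssim t,
$$
with $Q_\gamma = 2/\gamma+\gamma/2$, since the exit time from a ball of radius $r$ is comparable to the GMC mass of the ball. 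Then $\mathbf{K}_\Omega(t)$ is approximately the $\mu$-mass of those boxes with $\mu(Q)\lesssim t$. This is exactly the quantum-box-counting formulation of KPZ of Duplantier--Sheffield \cite{DS2011}: the $\mu$-mass of the quantum $t$-neighbourhood of a set of Euclidean scaling exponent $\mathsf{x}$ is $t^{\Delta+o(1)}$. I would therefore adapt the proof of the KPZ relation in \cite{DS2011} (boundary version, "inner" neighbourhoods) to this dynamical definition.

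\emph{Upper bound.} I would bound $\E[\mathbf{K}_\Omega(t)]$ and use Markov's inequality. Write $\mathbf{K}_\Omega(t)=\int_\Omega \mathbf{P}_x[\sigma_\Omega<t]\mu(dx)$, and split $x$ according to its distance $r=d(x,\partial\Omega)$ on dyadic scales. For $x$ at distance $r$, exiting $\Omega$ requires exiting $B_r(x)$ before time $t$. I will use a tail estimate of the form
$$
\mathbf{P}_x[\sigma_{B_r(x)}<t]\lesssim \exp\bigl(-c\,(\mu(B_{r/2}(x))/t)^{a}\bigr)
$$
for some $a>0$, or a moment bound derived from the fact that the time spent in $B_{r/2}(x)$ is bounded below by a GMC mass, via the Green function representation of the occupation measure. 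Under the rooted (Peyri\`ere) measure at $x$, the field is $h+\gamma\log|\cdot-x|^{-1}$. The expectation therefore becomes a Gaussian computation over the circle average $h_r(x)$. Summing over dyadic $r$, with $|\Omega_r\setminus\Omega_{r/2}|\lesssim r^{2\mathsf{x}}$ from \eqref{Assumption 1 non local}, and optimising the Gaussian large deviation over $h_r(x)$ produces exactly the KPZ exponent $\Delta$, as in \cite{DS2011}. The part of $\mu$ at distance less than $t^{M}$ from $\partial\Omega$ is handled trivially via $\mu(\Omega_{r})$, whose expectation is $\lesssim r^{2\mathsf{x}}$ up to boundary GMC corrections. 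To pass from the annealed bound to one holding with high probability I use Markov's inequality with the $t^{-\eps}$ slack.

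\emph{Lower bound.} This is the main obstacle, because it needs a statement holding in probability, not just in expectation. I would use Assumption \ref{Assumption 2}. Choose $R=t^{\beta}$ for a suitable $\beta$, giving $N\ge R^{-2(1-\mathsf{x})+\zeta}$ disjoint balls $B_R(z_i)$. In each ball, restrict to the Whitney boxes near $\partial^*\Omega$ of size $r$, where $r$ is the KPZ-optimal scale with $h_r$ at the optimal "thick" level $\alpha$. On such a box, with conditional probability bounded below, LBM exits $\Omega$ before time $t$. This holds because LBM started near a boundary point of a connected component of positive diameter hits $\partial^*\Omega$ with probability bounded below, by a Beurling-type estimate for the underlying Brownian path. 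The time change stays below $t$ when the GMC mass along the path is at most $t$, which follows from the circle-average control. The contributions of the $N$ balls depend only weakly on one another, since the field inside disjoint balls decorrelates after removing the coarse-scale circle average. A second-moment or independent-blocks argument (a law of large numbers over the $N$ balls) then shows that $\sum_i$ is at least a constant times its mean, with probability tending to one. That mean is $t^{\Delta+o(1)}$ by the same Gaussian optimisation as in the upper bound, now using \eqref{eq:assumption_lowerbound_local}. The delicate points are two. First, the coarse field (coming from scales above $R$, and the global normalisation) must be controlled uniformly; it only shifts exponents by $o(1)$ after conditioning on a high-probability event. Second, the exit-before-$t$ probability must be bounded below in terms of the local field alone.
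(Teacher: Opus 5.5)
The genuine gap is the concentration step of your lower bound, ``a second-moment or independent-blocks argument (a law of large numbers over the $N$ balls)''. This is where essentially all of the work lies, and as you set it up it fails.

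Each block contributes a nonnegative variable whose mean comes from points that are $\gamma$-thick at infinitesimal scales but $\alpha_0$-thick at scale $r$. No law of large numbers for such a triangular array is automatic. If you only restrict to $h_r(z)\approx\alpha_0\log(1/r)$, the second moment is infinite for $\gamma\ge\sqrt2$. For $|z-w|\approx e^{-n}\ll r$, the doubly-rooted probability of the constraint is about $r^{(2\gamma-\alpha_0)^2/2}$, independently of $n$, while $\int_{|z-w|\approx e^{-n}}|z-w|^{-\gamma^2}dw\approx e^{-n(2-\gamma^2)}$ is not summable.

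The paper adds the interpolating condition (C2), $h_{r^{1+q}}(z)\le(\alpha_0+q\beta)\log(1/r)$ for all $q\ge0$, with $\beta$ slightly above $\gamma$. This costs almost nothing in the first moment but kills the near-diagonal pairs. Even then, pairs at separation $\asymp R$ make $\E[\Kb_{\Omega,z_i}(t)^2]/\E[\Kb_{\Omega,z_i}(t)]^2$ grow like a negative power of $R$; Proposition~\ref{thm:2moment} states $R^{-\alpha_0^2/2}$. So Paley--Zygmund only guarantees each block a vanishing success probability. The proof closes only because $N\ge R^{-2(1-\mathsf{x})+\zeta}$ beats that power, via $\alpha_0^2/2<2(1-\sqrt{\mathsf{x}})^2<2(1-\mathsf{x})$. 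Your sketch neither identifies nor checks this trade-off.

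The trade-off is also more delicate than it looks. With a constraint only at scale $r$, the pair factor at separation $r^{\lambda}$ is $r^{-\lambda\alpha_0^2/(1+\lambda)}$, which is about $R^{-\alpha_0^2}$ for $R=t^{o(1)}$. The number of blocks does not beat this when $\alpha_0^2\ge 2(1-\mathsf{x})$, for instance when $\mathsf{x}<1/9$ and $\gamma$ is near $2$. To get $R^{-\alpha_0^2/2}$ you also need an upper barrier on the circle averages at intermediate scales, which costs only $t^{o(1)}$ by a ballot estimate. The paper's mesoscopic estimate uses $\frac{\lambda}{1+\lambda}\le\frac{\lambda}{2}$, which is reversed for $\lambda\in[0,1)$, so such a barrier is needed there as well.

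Two further points. First, with polynomial $R=t^{\beta}$ the coarse field cannot be controlled uniformly over blocks. $h_{4R}(z_i)$ has variance about $\beta\log(1/t)$, and its maximum over $N\approx t^{-c}$ blocks is of order $\log(1/t)$. That distorts both $\mu$ and the clock polynomially. The paper takes $R(t)=t^{o(1)}$ precisely so that $N(t)$ is sub-polynomial. Then the harmonic part contributes only $e^{\gamma(\sqrt{\log(1/t)}+10\log N(t))}=t^{o(1)}$ on a high-probability event (Lemma~\ref{lem:G}). Also, LBM started in $B_R(z_i)$ reaches $\partial B_{2R}(z_i)$ before time $t$ only with probability decaying faster than any polynomial. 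With polynomial $R$ you would instead have to discard blocks with atypical coarse field and argue conditionally on the harmonic part.

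Second, your upper bound follows the paper's route: rooted law, conditioning on $h_r$ at $r=d(z,\partial\Omega)$, Gaussian optimisation giving $\alpha_0=Q-\sqrt{Q^2-4(1-\mathsf{x})}$, then Markov. However, the quenched tail $\exp(-c(\mu(B_{r/2})/t)^a)$ is asserted rather than proved. The Green-function remark controls only the mean of the exit time, not its lower tail. What is actually used is an annealed lower-tail bound for the exit time of a unit ball under a field with a $\gamma$-log singularity. This is combined with the scaling $\sigma_r\approx r^{1/\kappa(\alpha)}\hat\sigma_1$ and the independence of $\bar u_r-u(z)$ from the circle average. The paper proves this separately: Lemma~\ref{lem:tailbound} gives a log-normal bound via a Duplantier--Sheffield type recursion.
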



Note that for this result there is no need to average over the realisations of the GFF; the convergence holds in probability. However, the assumption on the boundary of $\partial \Omega$ can be relaxed to \eqref{eq:scaling_informal} if all one desires is a control over $\E[ \Kb_\Omega(t)]$: indeed, as our results show, the estimate $\E [\Kb_\Omega(t) ] = t^{\Delta + o(1)}$ holds as soon as Assumption \ref{As:innerMinkowski} holds.

As already discussed above, the case of quantum discs (\cite{HRVdisk, DuplantierMillerSheffield, AruHuangSun}) is particularly interesting. In that setting it should be possible based on the same ideas used in the proof of Theorem \ref{T:heat} to show the following result.

\begin{conjecture}
   \label{T:qd_hc}
   Let $(\Omega, h)$ be a quantum disc, as defined in \cite{HRVdisk}. 
Then
$$
\frac{\log \mathbf{K}(t)}{\log t} \to \Delta = 1/2
$$
in probability.
\end{conjecture}

\subsection{Results pertaining to the heat kernel}

A key aspect of the work to prove Theorem \ref{T:qc} is to analyse the short-term behaviour of the heat kernel under the \emph{rooted} measure $\P^*_x$, which may be thought of as the law of the field viewed from a quantum-typical point $x$ (formally this is the Peyrière rooted measure arising from the Girsanov lemma and which may be described as the law of $h (\cdot) + \gamma G(x, \cdot)$, where $G(x, \cdot) $ denotes the covariance of the field $h$, see \eqref{eq:Greens function of full plane GFF}). In particular, we resolve a conjecture from \cite{BW}. We summarise our results as follows:

\begin{theo}\label{T:limit_law}
    Let $\Omega$ be any domain and let $x \in \Omega$. Consider the on-diagonal heat kernel $\pb_t^\Omega(x,x)$, viewed as a random variable under the law $\P^*_x$. Then, as $t\to 0$ we have the annealed convergence in law:
    \begin{equation} \label{eq:limit_law}
t \pb_t^\Omega (x,x) \Rightarrow \pb_1^{\cC}(0,0)
    \end{equation}
    where the limiting random variable describes the heat kernel at time one on the $\gamma$-quantum cone. Furthermore, this random variable satisfies:
    \begin{align*}
        \E[\pb_1^{\cC}(0,0)] & = c_\gamma = \frac{1}{\pi(2-\gamma^2/2)}\\
        \E[ (\pb_1^{\cC}(0,0))^n] &< \infty, \text{ for all } n \ge 1.
    \end{align*}
    
\end{theo}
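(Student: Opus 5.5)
We argue via scaling together with the local convergence of the rooted field to a quantum cone. Under $\P^*_x$ the field is $h+\gamma G(x,\cdot)$, which near $x$ looks like $h-\gamma\log|\cdot-x|+O(1)$. The LQG heat kernel has an exact scaling covariance. If we zoom in around $x$ by a factor $r$ and add a constant $C$ to the field, then LQG areas are multiplied by $e^{\gamma C}$ (times $r^{\gamma Q}$). Liouville Brownian motion is time-changed by the same factor, so $\pb$ transforms by an explicit factor. Choosing $C$ so that the rescaled field has $\mu$-mass one in the unit disc (the circle-average embedding), the time parameter $t$ becomes $t/\mu(B_r(x))$-type quantities. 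The standard fact that the rooted GFF, recentred this way, converges in total variation on compacts to the $\gamma$-quantum cone (e.g.\ \cite{DuplantierMillerSheffield}) then reduces $t\,\pb^\Omega_t(x,x)$ to $s\,\pb^{\text{field}}_s(0,0)$ at fixed time $s=1$, for a field converging to the cone. The killing at $\partial\Omega$ becomes killing at a domain whose size blows up.

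\textbf{Step 1 (convergence in law).} Choose the zoom radius $r=r(t)$ so that the rescaled time is exactly $1$. Concretely, let $r_t$ be the radius at which the quantum mass of $B_{r}(x)$ equals $t$, up to the time-scaling normalisation. Then $t\,\pb_t^\Omega(x,x)=\pb_1^{\Omega_t}(0,0)$ for the rescaled domain $\Omega_t=(\Omega-x)/r_t$ and the rescaled field, which is $r_t$-independent in law near the root up to a vanishing error. Locality follows from two-sided heat kernel estimates (e.g.\ \cite{MRVZ}): the heat kernel at time $1$ at $0$ depends on the field outside $B_R$ only with probability $\to0$ as $R\to\infty$. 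This comparison is made via $\pb^{\Omega_t}_1(0,0)\le \pb^{B_R}_1(0,0)+$ an error controlled by the probability of reaching $\partial B_R$ before time $1$. Combined with total-variation convergence on $B_R$ and continuity of $\pb_1^{B_R}(0,0)$ in the field, this gives \eqref{eq:limit_law}.

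\textbf{Step 2 (the mean).} Compute $\E[\pb_1^{\cC}(0,0)]$ by passing the known result \eqref{eq:trace_vol} through Girsanov. By definition of the rooted measure, $\E[t\Hb_\Omega(t)]=\int_\Omega \E^*_x[t\pb^\Omega_t(x,x)]\,\E[\mu(dx)]$. If uniform integrability of $t\pb^\Omega_t(x,x)$ under $\P^*_x$ holds, which follows from Step 3, then the left side tends to $\E[\pb_1^{\cC}(0,0)]\,\E\mu(\Omega)$. Identifying this with the limit of \eqref{eq:trace_vol}, again using uniform integrability, gives $c_\gamma$. Alternatively one can compute directly on the cone: $\int_0^\infty \pb_s^{\cC}(0,0)\,ds$-type Green function identities with the log-singularity $\gamma\cdot\gamma$ give $1/(\pi(2-\gamma^2/2))$. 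I would use the first route.

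\textbf{Step 3 (moments; main obstacle).} Bounding $\E[(\pb_1^{\cC}(0,0))^n]$ uniformly along the approximations is the crux, since heat kernel upper bounds involve random constants with heavy tails. I would use the Markov/semigroup bound
\[
\pb_1(0,0)=\int \pb_{1/2}(0,y)^2\mu(dy)\le \sup_y \pb_{1/2}(0,y).
\]
I would then bound $\pb_{1/2}(0,y)$ by the occupation density, i.e.\ the Green function in a ball divided by the expected exit-time contribution. More concretely, I would use $\pb_1(0,0)\lesssim 1/\mu(B_{\rho}(0))$, where $\rho$ is a scale such that Liouville Brownian motion from $0$ does not exit $B_\rho$ before time $1$ with probability at least $1/2$. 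This is a Nash/Faber–Krahn-type on-diagonal bound derived from the Dirichlet form, whose Euclidean part is conformally invariant. Negative moments of $\mu(B_\rho)$ at the cone root are finite of all orders, by the standard GMC negative moment estimates, since the $-\gamma\log$ singularity only increases mass. The positive-moment issue then reduces to tails of $\rho$ being small, i.e.\ the tail of the probability that Liouville Brownian motion exits a small ball quickly. This is controlled by exponential tails of exit times through the $\mu$-mass of annuli, using \cite{GRV}-type estimates and a union bound over dyadic scales. Making this multiscale argument yield all moments, rather than just a few, is where I expect the real work.
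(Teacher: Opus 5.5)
Your Step 1 follows the same route as the paper (Corollary \ref{cor:limit-law}): rescale so that time becomes $1$, use local total-variation convergence of the rooted field to the $\gamma$-quantum cone, and control the part of the path away from $x$ by an exit probability. However, Step 3 has a genuine gap, and Step 2 depends on it.

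\textbf{Moments.} The bound $\pb_1(0,0)\lesssim 1/\mu(B_\rho(0))$, with $\rho$ chosen so that Liouville Brownian motion stays in $B_\rho$ up to time $1$ with probability at least $1/2$, goes in the wrong direction. By symmetry and Cauchy--Schwarz,
\[
\pb_{1}(0,0)=\int \pb_{1/2}(0,y)^2\,\mu(dy)\ \ge\ \frac{\Pb_0(\mathbf{Z}_{1/2}\in B_\rho)^2}{\mu(B_\rho)}\ \ge\ \frac{1}{4\,\mu(B_\rho)},
\]
so non-exit gives a \emph{lower} bound. Nash/Faber--Krahn upper bounds of the form $C/\mu(B(x,r))$ need a Faber--Krahn inequality uniformly over subsets, and in practice volume doubling. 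LQG does not satisfy these in the required form; the known a.s.\ bounds \cite{MRVZ} carry random constants and a $\log(1/t)$ loss. So finiteness of moments, which you rightly call the crux and leave open, is not reached by your plan.

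The paper's device is different. Since $u\mapsto\pb_u(x,x)$ is nonincreasing, $\pb_t(x,x)\le\frac{2}{t}\int_{t/2}^t\pb_u(x,x)\,du$. Raising this to the power $n$ and applying the bridge decomposition \eqref{eq:bridge} turns the $n$th moment into an expectation over $n$ independent \emph{ordinary} Brownian bridges in a common environment. Hölder then reduces it to $\int_0^\infty u^{-1}\,\P\otimes\Pb_{x\overset{u}{\to}x}[\nu(u)\in[t/2,t]]^{1/n}\,du$. This integral is controlled by negative moments of the clock at large times (Lemma \ref{L:negmoments}, via domination of both fields by $h+\gamma\log(1/|\cdot|)$ and scaling) and positive moments at small times (Lemma \ref{L:posmoments}). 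Exact scaling on the cone then gives $\tilde\E[\pb_t(0,0)^n]=\tilde c_n t^{-n}$.

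\textbf{Mean.} Your Step 2 is valid only given uniform integrability of $t\pb^\Omega_t(x,x)$ under $\P^*_x$, uniformly in small $t$. That is, you need something like $\sup_t\E^*_x[(t\pb^\Omega_t(x,x))^p]<\infty$ for some $p>1$ along the approximating family, not just moments of the limit. The rooted whole-plane field differs from the cone at large scales, so a cone argument does not supply this.

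The paper never proves such a uniform bound; for $h^*_x$ it only gets $\E^*_x[\pb_t(x,x)^n]\le c_n t^{-n(1+\eps)}$. Instead it proves $L^1$ convergence directly (Lemma \ref{L:comparison}). It couples $h^*_x$ with the cone so that they agree on a ball $B$ around $x$. The two heat kernels then differ only through loops leaving $B$, each such contribution being at most $2\Pb_x(\sigma_B<t/2)\pb_{t/2}(x,x)$. Cauchy--Schwarz with the superpolynomial exit bound of Lemma \ref{lem:tailbound} absorbs the $t^{-\eps}$ loss. This yields $t\E^*_x[\pb_t(x,x)]\to\tilde\E[\pb_1(0,0)]$, and comparing with the \emph{expectation} asymptotics \eqref{eq:bulk_rough}, rather than \eqref{eq:trace_vol}, identifies $c_\gamma$.

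The same issue affects your locality step. The error there is $\Pb_0(\sigma_{B_R}<1/2)\,\pb_{1/2}(0,0)$, so you also need control of $\pb_{1/2}(0,0)$. If you use the MRVZ bound, the exit estimate must beat $\log(1/t)$, for instance by taking a fixed Euclidean ball together with Lemma \ref{lem:tailbound}.
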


See Section \ref{S:proof_heat} for a precise definition of the $\gamma$-quantum cone.  It is an interesting question as to whether one can compute explicitly the law or the moments of the limiting random variable, $\pb_1^{\cC}(0,0)$. We note that given \eqref{eq:limit_law}, it follows that the Laplace transform of this random variable is given by a somewhat explicit formula, described in Lemma 4.4 of \cite{BW}.




\subsection{The \emph{in-out} decomposition and heuristic connection to KPZ}

Here we describe informally a key idea of our proof or Theorem \ref{T:qc} for the heat trace; we call this the \emph{in-out} decomposition. We then discuss how this decomposition can be used to give a conceptual explanation for how the KPZ scaling emerges naturally. 

\label{SS:heuristics}


\paragraph{Reminders about KPZ.} We start by some reminders on the KPZ equation. Informally, the Euclidean  scaling exponent of a Borel set $A\subset \RR^2$ is such that $ \dim (A) = 2(1- \mathsf{x})$, where $\dim (A)$ is an appropriate notion of dimension for $A$ (i.e., typically, Hausdorff or Minkowski dimension, with respect to Euclidean distance, respectively Lebesgue measure). In other words, $1- \mathsf{x}$ is the \emph{relative dimension} of $A$ compared to that of the entire space. 

The quantum scaling exponent $\Delta$ is (at least informally) defined similarly, by the property that $1-\Delta$ is the dimension of $A$ in the intrinsic Liouville metric, relative to that of the entire space: \begin{equation}\dim_\gamma(A)=D_\gamma(1-\Delta),
\end{equation}
where $D_\gamma$ is the Hausdorff dimension of the whole space (see \cite{GwynnePfeffer} for a rigorous formulation of the above informal identity). On the other hand, the value of $D_\gamma$ is currently unknown (even nonrigorously), except in the case $\gamma=\sqrt{8/3}$ where it is known that we have $D_\gamma=4$. See \cite[Chapter 3]{BP} for a discussion.


\medskip While the proofs of Theorems \ref{T:qc} and \ref{T:heat} are based on a ``bare-hands'' analysis of the heat content and heat kernel, which does not explicitly use the connection to KPZ, we provide here a heuristic explanation for why such an exponent is in fact natural.






\paragraph{The ``in-out'' decomposition.} Fix $x\in \Omega$. The starting point of our argumentation is the following trivial but essential decomposition:
\begin{equation}\label{eq:decomposition}
\mathbf{p}_t^\Omega (x,x) = \mathbf{p}_t^{\CC}(x,x) - \mathbf{p}_t^{\CC \setminus \Omega} (x,x) 
\end{equation}
where the first term indicates that there is no restriction on the trajectory (other than it must start and end at the same point $x$, but is otherwise not constrained to stay in the domain $\Omega$), and the second term corresponds to subtracting all trajectories that leave $\Omega$. Integrating (with respect to the GMC measure $\mu$) over $x \in \Omega$ we obtain
\begin{equation}\label{eq:decomposition2}
\mathbf{H}(t) = \int_\Omega \mathbf{p}_t^{\CC}(x,x) \mu(dx) - \int_\Omega \mathbf{p}_t^{\CC\setminus \Omega} (x,x) \mu(dx).
\end{equation}
This identity is what we refer to in the following as the in-out decomposition. 

\medskip Let us explain how KPZ scaling emerges naturally from such a decomposition as well as basic scaling considerations. When $t$ is small, $\mathbf{p}_t^{\CC}(x,x)$ depends only on the local geometry of the field $h$ near $x$, which by results of \cite{DuplantierMillerSheffield} (see also Theorem 7.11  in \cite{BP}) is close (under the rooted law $\P^*_x$) to that of a $\gamma$-quantum cone rerooted at $x$ (let us call such a cone $\cC_x$ for now); see Section \ref{S:proof_heat} for definitions and reminders. It is thus natural to expect $\pb_t^{\CC}(x,x) \approx \pb_t^{\cC_x}(x,x)$. 

Now, a defining property of quantum cones is their invariance under scaling by a fixed amount of quantum (Liouville) area. 
Since in two dimensions ``volume=time'' (for Brownian motion) it is not hard to verify the following elementary but crucial scaling relation for the heat kernel on the quantum cone: \begin{equation}\label{eq:scaling_qc}
\mathbf{p}^{\Ca}_t(0,0) \overset{d}{=} \frac1t \mathbf{p}_1^{\Ca}(0,0). 
\end{equation}
A proof of this scaling relation will be given in Proposition \ref{P:scale_hk}.
As a consequence, plugging into \eqref{eq:decomposition2} and using ergodicity of the quantum cone, it is not hard to guess that the expectation of the first term on the right hand side of \eqref{eq:decomposition2} scales like $(1/t)\times \E[\mu(\Omega)]\times c_\gamma$ where the constant $c_\gamma$ corresponds to $\EE( \mathbf{p}_1^{\CC} (0,0))$. This first term thus corresponds to the volume scaling in the Weyl law. Comparing with $\cite{BW}$, it must be the case $c_\gamma = 1/[\pi ( 2- \gamma^2/2)]$; this explains Theorem \ref{T:limit_law}.

Given the above, the corrections to the Weyl law are thus captured by the second term in the in-out decomposition \eqref{eq:decomposition2}. Theorem \ref{T:qc} therefore boils down to estimating the ``out term'', namely it suffices to show: 
\begin{equation}\label{eq:conjecture_decomp}
    \E [\int_\Omega \mathbf{p}_t^{\CC\setminus \Omega} (x,x) \mu(dx)] = t^{  \Delta -1+ o(1)}.
\end{equation}
Intuitively, the dominant  contribution to the integral in the left hand side comes from points close to the boundary, since otherwise the probability to leave the domain is negligible.

Let $(\mathbf{Z}_t, t\ge 0) = (\mathbf{Z}_t(x), t\ge 0)$ denote the trajectory of a Liouville Brownian motion starting from $x \in \Omega$, and let $\sigma_\Omega = \inf \{ t \ge 0: \mathbf{Z}_t \notin \Omega\}$ 
be the first hitting time of the boundary of $\Omega$. 
Let $\mathbf{P}_x$ denote the law (conditional on the Gaussian free field, i.e., quenched) of $\mathbf{Z}$, given that $\mathbf{Z}_0 = x$. Let $\mathbf{d}$ denote the $\gamma$-intrinsic distance associated to our quantum cone, as constructed in the series of papers \cite{GM, GM2, DDDF}.

It is widely expected that when $t \to 0$, uniformly over $x \in \mathbb{C}$, 
\begin{equation}\label{eq:distance}
\mathbf{d}(\mathbf{Z}_t, x) \approx t^{1/ D_\gamma}.
\end{equation}
This generalises the well known fact that when $\gamma = 0$, standard Brownian motion is diffusive: that is, if $B$ denotes a standard Brownian motion starting from $x$, then $\|B_t - x\| \approx t^{1/2}$ as $t \to 0$. (In fact, \eqref{eq:distance} can be inferred from the diffusivity of standard Brownian motion and the KPZ equation itself, see Exercise 4.6 in \cite{BP}.) In other words, in terms of intrinsic distance, the behaviour of the Liouville Brownian motion is very regular (whereas in terms of Euclidean distance, its behaviour is wildly dependent on the local geometry of $h$ near $x$ and in particular its thickness, see \cite{Jackson}).  

Let us cover $\partial \Omega$ with balls $\mathbf{B}_i$ (with respect to the intrinsic metric $\mathbf{d}(\cdot, \cdot)$) of radius $r = t^{1/ D_\gamma}$, so $\partial \Omega \subset \cup_{i=1}^N \mathbf{B}_i.$ 
By definition, the number $N$ of balls required for such a covering is approximately 
$$N \approx  r^{ - \dim_\gamma(A)} = t^{ - \dim_\gamma(A) / D_\gamma} =  t^{-(1- \Delta)},$$ 
by definition of the quantum scaling exponent (see Section 3.12.1 in \cite{BP}), and where $\Delta = \Delta(\partial \Omega)$ satisfies \eqref{eq:KPZ}. From \eqref{eq:distance} it is natural to expect that if the starting point $x$ of Liouville Brownian motion is such that $x \notin \cup_{i=1}^N \mathbf{B}_i$ then $\mathbf{P}_x (\sigma_\Omega<t ) \approx 0$ and we should also have $\mathbf{p}_t^{\CC \setminus \Omega} (x,x) \approx 0$. On the other hand, if $x \in \cup_{i=1}^N \mathbf{B}_i$ then $\mathbf{P}_x ( \sigma_\Omega<t) \approx 1$, and thus, one should have (in  view of \eqref{eq:scaling_qc}): 
$$
\mathbf{p}_t^{\CC \setminus \Omega} (x,x) \asymp \mathbf{p}_t^{\CC} (x,x) \approx \frac1t.
$$
We deduce that the left hand side of \eqref{eq:conjecture_decomp} satisfies:
\begin{align*}
    \int_\Omega \mathbf{p}_t^{\CC\setminus \Omega} (x,x) \mu(dx) & \approx \frac1t  \mu(\cup_{i=1}^N \mathbf{B}_i)
\end{align*}
Now, by definition of the   Hausdorff dimension of the whole space, one should also expect that for each ball $\mathbf{B}_i$ ($i=1, \ldots, N$) one has $\mu(\mathbf{B}_i) \approx r^{ D_\gamma}=t$. 
(This is because the Hausdorff dimension $D_\gamma$ corresponds to the volume growth exponent). Therefore, since the balls $\mathbf{B}_i$ form an optimal cover of $\partial \Omega$ and are thus essentially disjoint, we get altogether:
\begin{align*}
    \int_\Omega \mathbf{p}_t^{\CC\setminus \Omega} (x,x) \mu(dx) & \approx N r^{D_\gamma}\frac1t  \\
    & \approx t^{ - (1- \Delta)} \times t \times t^{-1}  = t^{ - (1- \Delta)}.
\end{align*}
This justifies (at least heuristically) the exponent obtained in the right hand side of \eqref{eq:conjecture_decomp} and thus the scaling in Theorem \ref{T:qc}.



\begin{rem}
    The Euclidean analogue of the decomposition \eqref{eq:decomposition} and \eqref{eq:decomposition2} were already noticed by Kac in his seminal paper \cite{Kac1966}. A reasoning not too dissimilar to what is described above was also employed in the book by Berry \cite{Berry_book} to predict (still in the deterministic case) that in $d\ge 2$ dimensions and for a potentially rough bounded domain $\Omega$, the eigenvalue counting functions (with Dirichlet boundary conditions, say) has the following asymptotic behaviour: 
$$
N(\lambda) = c_0(d) \lambda^{d/2} \text{Leb} (\Omega) - c'(d,h) \lambda^{h/2} |\partial \Omega| + o(\lambda^{h/2})
$$
where $h$ denotes the Hausdorff dimension of $\partial \Omega$, $|\partial \Omega|$ its $h$-dimensional Hausdorff measure,  and $c'(d,h)$ a certain constant. This conjecture (and a number of natural variants of it, such as ones in which Hausdorff dimension and measure are replaced by Minkowski dimension and content) have now been disproved in their strictest sense, although they may remain broadly true in a sense that remains somewhat unclear (at least to us) at this stage. We recommend the article of Molchanov and Vainberg \cite{MolchanovVainberg} for a discussion of these results. In a sense, results such as our Theorem \ref{T:qc} can be viewed as providing some justification for these predictions, at least in the LQG setting.
\end{rem}


\subsection{Organization of the paper}
The paper is organised as follows. 

\begin{itemize}
\item 
In the next subsection (Section \ref{SS:prelim}), we start with some preliminaries (recalling the definitions of the whole plane GFF with unit circle average normalisation, and that of Liouville Brownian motion) and introduce notation that will be enforced throughout the paper. 

\item In Section \ref{sec:2} we prove Theorem \ref{T:heat}. 
Its proof splits into a first and second moment estimates, carried out respectively in Sections \ref{sec:2.1} and \ref{sec:2.2}, together with a ``zero-one law'' type argument to bootstrap the probability from not too small to order one.  This last aspect is the content of Section \ref{sec:2.3}. 

\item Section \ref{S:proof_heat} is dedicated to the proof of Theorem \ref{T:qc}. Key to the arguments in this section are estimates on the heat kernel and in particular a control on the moments of $\pb_t^{\CC} (x,x)$. This is carried out in Section \ref{SS:hkcone} (where we explain the scale invariance of the heat kernel of \emph{any} thick $\alpha$-quantum cone) and \ref{SS:moments}. We then use these estimates together with the in-out decomposition. We handle the bulk term (which provides the first order behaviour, corresponding to the Weyl law of \cite{BW}) in Section \ref{SS:bulk}, and along the way complete the proof of Theorem \ref{T:limit_law}.  Finally, the boundary term, which is the main novelty of Theorem \ref{T:qc}, is handled in Section \ref{SS:boundary}.

\item An appendix contains the proof of some technical lemmas, a proof that the heat content is intrinsic to LQG (i.e., satisfies the coordinate change formula, even though it is not expected to be spectrally determined), and the construction of an example of a domain having different Minkowski dimensions on both sides. 

\end{itemize}

\subsection{Acknowledgements}

N.B.  acknowledges the support from the Austrian Science Fund (FWF) grants 10.55776/F1002 on ``Discrete random structures: enumeration and scaling limits" and 10.55776/PAT1878824 on ``Random Conformal Fields''. J.K. was partly supported by FWF grant 10.55776/P36835.
For open access purposes, the
authors have applied a CC BY public copyright license to any accepted manuscript version arising from this submission. 

Both authors acknowledge Tomas Alcalde for some useful discussions at the start of this project. We also thank Jason Miller for some insights concerning the short-term behaviour of the heat kernel, and for asking us about how to interpret our results in Theorem \ref{T:heat} in an LQG intrinsic way: ultimately this led us to a more conceptual understanding of our results. We also thank Juhan Aru for discussions concerning Lemma \ref{lem:tailbound} and \cite{Arunotes}.

\medskip \noindent \textbf{Data statement}: this research contains no relevant data. 

\medskip \noindent \textbf{AI statement}: no generative AI was used for this research, except for proofreading purposes. 

\subsection{Preliminaries and notations}

\label{SS:prelim}

\paragraph{Whole plane Gaussian free field.}
A definition of the Gaussian free field $h$ on the whole plane may be found in \cite[Chapter 6.4]{BP}. Often, one sees this field as being defined modulo a global additive constant, but in this paper we choose a concrete normalisation (i.e., additive constant) by requiring that the average of this 
field on the unit circle is zero. Thus $h$ is a centered Gaussian field with covariance $G(x,y), (x,y) \in \CC^2$, in the sense that for arbitrary $f,g \in H^{-1}_{\mathrm{loc}} (\CC)$ we have 
$$
\E [ (h,f) (h,g) ] = \int_{\CC} f(x) g(y) G(x,y) dx dy,
$$
with a slight abuse of notations when $f,g$ are not pointwise defined. 

An explicit formula for $G$ is given by
\begin{align} \label{eq:Greens function of full plane GFF}
    G(x,y)=\log\frac{1}{\vert x-y\vert}+\log^+\vert x\vert +\log^+\vert y\vert,\qquad x,y\in\CC,
\end{align}
where $\log^+:=\max\{0,\log\}$; see, e.g., Lemma 5.30 in \cite{BP}. Note that for $x,y\in B_1(0)$ this gives $G(x,y)=-\log\vert x-y\vert$. One can check that that the field $h$ is invariant under all Möbius transforms of the extended complex plane \emph{modulo constants} (see, e.g., Corollary 5.7 in \cite{BP} or combine Corollary 6.5 and Theorem 1.57 in \cite{BP}).

\paragraph{Liouville Brownian motion.} Given the random field $h$ defined above, Liouville Brownian motion (\cite{GRV, Ber2015}) is the canonical diffusion in the geometry defined by $h$. Its definition starting from a given point $x$ proceeds in two steps. First, take an independent, standard Brownian motion $(B_t, t \ge 0)$ starting from $B_0 = x$. Then consider the quantum clock process associated to $h$ and $B$, namely:
\begin{align*}
\nu (s) &= \nu^{(h, B)}(s) = \int_0^s e^{\gamma h(B_u) } du\\
& = \lim_{\eps \to 0} \eps^{\gamma^2/2} \int_0^s e^{ \gamma h_\eps (B_u)} du,
\end{align*}
where $h_\eps$ is a smooth regularisation of $h$ at scale $\eps$. (The first line is a formal identity and is given meaning by the second line. The existence of the limit in the second line, and independence of this limit with respect to the choice of regularising convolution kernel, is justified by GMC theory, see \cite{Ber2017} and \cite{Shamov} or \cite[Chapter 3]{BP} for a systematic presentation). By definition, the Liouville Brownian motion $(\mathbf{Z}_t, t \ge 0)$ starting from $x$ is obtained by changing the time-parametrisation of $B$ through the quantum clock process: 
\begin{equation}
\mathbf{Z}_t = B_{\nu^{-1} (t)}; \quad \text{ where } \nu^{-1} (t) = \inf \{ s>0: \nu(s) > t \}.
\end{equation}
It is known that, with this time-parameterisation the process is continuous and has no interval of constancy, however it is highly singular with respect to the original parameterisation of $B$ (in particular, $d\nu$, viewed as a Stieltjès measure associated to the increasing process $\nu$, is mutually singular with respect to Lebesgue measure). Note also that if 
\begin{equation}\label{eq:notation_exit}
\begin{cases}
\tau_\Omega &= \inf \{ t>0: B_t \notin \Omega\}\\
\sigma_\Omega & = \inf \{ t > 0: \mathbf{Z}_t \notin \Omega \}
\end{cases}
\end{equation}
then $
\sigma_\Omega = \nu (\tau_\Omega)
.$ The notations \eqref{eq:notation_exit} will be enforced throughout the article as much as possible; however, in Section \ref{S:proof_heat} it becomes necessary to consider the path space $C( [0, T]; \CC)$, endowed with various measures (which may be the law of ordinary Brownian motion, Liouville Brownian motion, Brownian bridge, Liouville Brownian bridge, etc.). We then use generically $\sigma_\Omega$ and view it as a (continuous) functional defined on path space, which may lead to it being used for other processes than Liouville Brownian motion strictly speaking. To distinguish these laws, we will write $P_x$ for the law of ordinary Brownian motion starting from $x$, and $\mathbf{P}_x$ the law of Liouville Brownian motion starting from $x$. For instance, we have 
$$
\Pb_x( \sigma_\Omega>t) = P_x ( \nu(\tau_\Omega)>t),
$$
and both sides of the identity are actually random variables under the law $\P$ of the whole plane Gaussian free field $h$, say. 

\paragraph{Other notations.} If $x \in \CC$ and $r>0$ we call $B_r(x)$ the Euclidean ball of radius $r$ around $x$. We also let $Q = (\gamma/2) + (2/\gamma)$ be the \textbf{background charge} of LQG. If $x \in \CC$ and $h$ has the law $\P$ of a whole plane GFF with zero unit circle average, we call $\P^*_x$ the corresponding Peyrière or \textbf{rooted law}, which is simply the law of $h (\cdot) + \gamma G(x, \cdot)$, where $h$ has law $\P$ and $G$ is as in \eqref{eq:Greens function of full plane GFF}. 

In order to conveniently keep track of errors that can be safely neglected when we care about exponents (as in Theorems \ref{T:qc} and \ref{T:heat}) we introduce the notion of convergence to zero \textbf{faster than polynomial} (FTP): we say that $f(t) \to 0$ FTP if for any $k>0$, $|f(t) | \le t^{k}$ for $t$ sufficiently small. In that case we also write $f(t) = o_{\mathrm{FTP}}(1)$. 

We will also sometimes need the related (but slightly more subtle) notion of convergence to zero \textbf{slower than polynomial} (STP): say that $f(t) \to 0$ STP if $f(t) \to 0$ as $t\to 0$, but $f(t) t^{-\eta} \to \infty$ for any $\eta>0$. In that case we also write $f(t) = o_{\mathrm{STP}}(1)$.

\section{Heat content asymptotics} \label{sec:2}
This section is devoted to a proof of Theorem \ref{T:heat}. We start by giving an overview of the proof, which is divided into three parts.

Overall the proof follows a familiar first and truncated second moment argument. Although the KPZ heuristics discussed in Section \ref{SS:heuristics} strongly suggests breaking up the integral as a function of the intrinsic distance to the boundary, it turns out to be technically easier to analyse things if we break it up in terms of Euclidean distance to the boundary. We then estimate the probability of leaving the domain by time $t>0$ by taking into account the thickness of the point $z$, and optimise over this thickness. It is implicit in the proof of the KPZ relation that the dominant contribution to the Liouville area of the neighbourhood of a fractal set $A$ (measured in an essentially intrinsic way) comes from points with atypical thickness, even with respect to ordinary LQG where the points are typically $\gamma$-thick. Here we find a similar phenomenon: for instance, if $\Omega$ is \emph{smooth} the dominant contribution comes from points of thickness $\alpha = Q - \sqrt{Q^2 - 2} < \gamma$. (More generally the value of $\alpha_0$ is specified in \eqref{eq:alpha0}.) 

\medskip We will first consider the case where $h$ is a full plane GFF normalized to have zero average on the unit circle, and explain at the end how to deduce the result for quantum cones. We start in Section \ref{sec:2.1} to establish the first-moment asymptotics
$$
\EE[\Kb_\Omega(t)] = t^{\Delta+o(1)} \quad \text{as } t \to 0,
$$
see Proposition \ref{prop:first moment Heat content fractal boundary}. We apply Girsanov’s theorem to swap the expectation with the integral against the Liouville measure which adds a $\gamma \log$ singularity to the field. The problem is then reduced to understanding how the expected exit time from the domain  scales with $t$, as a function of the starting point. A scaling argument (Lemma \ref{lem:scaling without}) shows that, starting from a Liouville-typical (i.e., $\gamma$-thick point), an LBM travels a Euclidean distance of order $t^{2-\gamma^2/2}$ in time $t$. 

However, as mentioned above, the dominant contribution comes from atypical $\alpha$-thick points; more precisely points with circle average $h_r(z) \approx \alpha \log(1/r)$, where $r = d(z,\partial\Omega)$ and $\alpha < \gamma$ is the result of an optimisation problem  (which turns out to match the exponent $\alpha_0 = \gamma(1- \Delta)$ arising in the proof of the KPZ relation, see \cite{DS2011}). 
Lemma \ref{cor:scaling} gives the relevant time scale under the assumption that the point has a given thickness. Combined with an estimate on the size of the neighbourhood of the boundary (at a given Euclidean distance) using Assumption \ref{Assumption 1 non local} and the probability that a given point is of a given thickness we are led to an explicit optimisation problem whose solution gives us the expectation of the heat content.


A key input in this chapter is to obtain good concentration bounds for the exit time of the unit disc by a Liouville Brownian motion from a given open set, say the unit disc $\DD$. We obtain the following log-normal tail:
\begin{equation}\label{eq:lognormal}
\PP^*_0\otimes {P}_0[\nu(\tau_{\DD}) \le \epsilon] \le c e^{-c' (\log \epsilon)^2},
\end{equation}
for some constants $c, c' > 0$, and where $\PP^*_0$ indicates that the field may have an additional logarithmic singularity at the origin, and $\PP^*_0\otimes \mathbf{P}_0$ indicates we take the expectation both with respect to Brownian motion starting at the origin and GFF (i.e., it is an \emph{annealed} probability). In particular the right hand side of 
\eqref{eq:lognormal} converges to zero FTP as $\epsilon \to 0$. 
This is the content of Lemma \ref{lem:tailbound}, whose proof is technical and inspired by Duplantier–Sheffield \cite{DS2011} (or, more precisely, by a version of this argument discussed in \cite{Arunotes}), and which may be of independent interest.

Section \ref{sec:2.2} concerns second-moment estimates for the heat content. Jensen’s inequality provides a lower bound on the second moment, so we only need an upper bound on the second moment. One would hope for
$$
\EE[\Kb_\Omega(t)^2] \le t^{2\Delta+o(1)} \quad \text{as } t \to 0,
$$
but this fails outside of the $L^2$-phase (i.e., when $\gamma \in [\sqrt{2},2]$), since already the second moment of the Liouville measure diverges. We need to identify a good  event which carries most of the mass in an $L^1$ sense, and prevents the second moment from blowing up. The problem is nonstandard, owing to the fact that on the one hand, the points we consider are $\gamma$-thick at an infinitesimal level (an unavoidable consequence of the use of Girsanov's lemma) but are also conditioned to have a different thickness, namely $\alpha_0$, at scale $r$.

Finally, in Section \ref{sec:2.3} we invoke the Paley–Zygmund inequality to obtain convergence in probability. Because our first- and second-moment estimates carry $o(1)$ errors, they initially yield a statement only on an event whose probability may tend to $0$. To amplify this to probability tending to $1$, we use a $0$–$1$-type argument: we partition the boundary into many small pieces, establish the local first- and second-moment bounds on each piece, and exploit their approximate independence. For this step we also need Assumption \ref{Assumption 2}, in order to control the $o(1)$ terms uniformly over all small pieces. This completes the proof of Theorem \ref{T:heat}.

\subsection{First moment computation} \label{sec:2.1}
Let us now start with the first moment estimate of the heat content. We assume throughout Section \ref{sec:2.1} that $h$ is a full plane GFF normalized to have $0$ average on the unit circle and, without loss of generality, $\Omega\subseteq B_{1/2}(0)$. In particular, for any $x,y \in \Omega$, $G(x,y) = - \log | x- y |$. 

\begin{prop} \label{prop:first moment Heat content fractal boundary}
    Let 
 $\Omega$ satisfy Assumption \ref{As:innerMinkowski}, i.e., $\Omega$ is a  domain whose boundary has inner Euclidean scaling exponent $x\in[0,1/2]$. Then,
    \begin{align*}
\EE[\Kb_\Omega(t)]=t^{\Delta+o(1)} \text{ as }t\to0,
    \end{align*}
    where $\Delta$ is as in \eqref{eq:KPZ}. 
\end{prop}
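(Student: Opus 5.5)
By Girsanov (Cameron--Martin) we first rewrite the expectation as
\[
\EE[\Kb_\Omega(t)]=\int_\Omega \EE^*_z\otimes P_z[\nu(\tau_\Omega)<t]\,dz .
\]
Here $\EE^*_z$ is the law of $h+\gamma G(z,\cdot)$, and the GMC normalisation gives $\EE[\mu(dz)]=dz$ because $\Omega\subset B_{1/2}(0)$. We then decompose $\Omega$ dyadically by Euclidean distance to the boundary, writing $A_k=\{z\in\Omega: d(z,\partial\Omega)\in(2^{-k-1},2^{-k}]\}$. Assumption \ref{As:innerMinkowski} gives $|A_k|\le 2^{-2k\mathsf{x}+o(k)}$. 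For the lower bound we also need $|A_k|\ge 2^{-2k\mathsf{x}+o(k)}$ for infinitely many relevant $k$; this follows from $|\Omega_r|=r^{2\mathsf{x}+o(1)}$ after pigeonholing over a window of $\log$-scales of size $o(\log(1/r))$.

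For $z\in A_k$, set $r=2^{-k}$ and condition on the circle average $h_r(z)$. Under $\P^*_z$ this average is Gaussian with mean $\gamma\log(1/r)$ and variance $\log(1/r)$, so
\[
\P^*_z\big(h_r(z)\approx\alpha\log(1/r)\big)=r^{(\gamma-\alpha)^2/2+o(1)} .
\]
By the scaling lemmas (Lemma \ref{lem:scaling without} and Lemma \ref{cor:scaling}), on this event the time LBM needs to travel Euclidean distance of order $r$ is
\[
r^{\gamma Q-\gamma\alpha+o(1)}=r^{2+\gamma^2/2-\gamma\alpha+o(1)} .
\]
This comes from writing $\nu$ on $B_r(z)$ as $r^{2}e^{\gamma h_r(z)}r^{\gamma^2/2}$ times an order-one clock for the rescaled field. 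Exiting $\Omega$ requires traveling at least distance $r/2$, and the same distance suffices with probability bounded below (it reaches $\partial\Omega$ before leaving $B_{2r}(z)$ with positive probability). So the exit probability is close to one when $t\gg r^{\gamma Q-\gamma\alpha}$ and negligible when $t\ll r^{\gamma Q-\gamma\alpha}$.

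The negligibility is exactly the log-normal tail \eqref{eq:lognormal} from Lemma \ref{lem:tailbound}, applied to the rescaled field. Note that the rescaled field near $z$ still carries the $\gamma$-log singularity, which is why that lemma is stated under $\P^*_0$. Its FTP decay lets us discard every configuration where $t\le r^{\gamma Q-\gamma\alpha+\eta}$. Writing $t=r^{s}$, i.e.\ $r=t^{1/s}$, the contribution of $A_k$ together with thickness $\alpha$ is then
\[
t^{(2\mathsf{x}+(\gamma-\alpha)^2/2)/s+o(1)} \quad\text{subject to}\quad s\ge \gamma Q-\gamma\alpha .
\]
Two further contributions are handled separately. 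Deep points with $r$ of order one contribute via the same tail, giving an FTP term. The discretisation of $\alpha$ and $k$ costs only $t^{o(1)}$, since there are $O(\log(1/t))^2$ cells and the Gaussian tail controls large $|\alpha|$.

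It remains to optimise the exponent $e(\alpha)=\frac{2\mathsf{x}+(\gamma-\alpha)^2/2}{\gamma Q-\gamma\alpha}$ over $\alpha<Q$, the constraint being taken with equality at the optimum. The first-order condition gives $(\gamma-\alpha)(\gamma Q-\gamma\alpha)=\gamma(2\mathsf{x}+(\gamma-\alpha)^2/2)$. Substituting $\alpha=\gamma(1-\Delta)$ should reduce this to exactly \eqref{eq:KPZ} and give minimum value $\Delta$; we will verify this algebra, noting that the case $\mathsf{x}=1/2$ gives $\alpha=Q-\sqrt{Q^2-2}$ as stated. The upper bound is then the sum over cells, which is at most $t^{\min e+o(1)}$. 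The lower bound restricts to the scale $r=t^{1/(\gamma Q-\gamma\alpha_0)}$ with $\alpha=\alpha_0$ and uses the positive-probability exit estimate. This in turn needs a lower tail on the clock, i.e.\ that $\nu(\tau_{B_{2}})$ for the rescaled field is $O(1)$ with probability bounded below, which is immediate from tightness.

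I expect the main obstacle to be making the scaling heuristic rigorous in the upper bound. The clock on $B_r(z)$ depends on the whole field and not only on $h_r(z)$, and we must decouple the coarse-field contribution from the rescaled fine field, which carries the singularity. We must also check that the harmonic extension of the outer field does not fluctuate enough on $B_r(z)$ to spoil the estimate. I would handle this with the Markov decomposition at scale $r$, bounding the harmonic part's oscillation on $B_{r/2}(z)$ by a Gaussian with $O(1)$ variance, which only costs $t^{o(1)}$. The tail bound \eqref{eq:lognormal} then supplies the FTP control.
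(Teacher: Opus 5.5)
Your strategy is the paper's: Girsanov, conditioning on the circle average at the scale of the distance to the boundary, the scaling bounds of Lemma~\ref{lem:scaling without} combined with the log-normal tail of Lemma~\ref{lem:tailbound} and control of the harmonic oscillation, and optimisation over the thickness, with optimiser $\alpha_0=\gamma(1-\Delta)=Q-\sqrt{Q^2-4(1-\mathsf{x})}$. The upper bound goes through, apart from an inequality slip. Discarding the configurations with $t\le r^{\gamma Q-\gamma\alpha+\eta}$ removes those with $s\ge\gamma Q-\gamma\alpha+\eta$. The surviving ones are the points within roughly $t^{\kappa(\alpha)}$ of $\partial\Omega$, whose exit probability you bound by $1$, and they satisfy $s<\gamma Q-\gamma\alpha+\eta$. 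Since $(2\mathsf{x}+(\gamma-\alpha)^2/2)/s$ is decreasing in $s$, the worst case is at the endpoint, which gives $e(\alpha)$. With ``$s\ge$'' as you wrote it, the supremum would be $t^{0}$.

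The genuine gap is in the lower bound. You claim that Brownian motion from $z$ with $d(z,\partial\Omega)\asymp r$ hits $\partial\Omega$ before leaving $B_{2r}(z)$ with probability bounded below. This is true near a \emph{connected} piece of boundary of diameter $\gg r$, by Beurling's estimate; in particular it is fine if $\Omega$ is simply connected. But the proposition allows general domains, whose boundary may contain polar or low-capacity pieces. That is exactly why Assumption~\ref{As:innerMinkowski} has a second clause about $\partial^*\Omega$. Your argument never uses that clause, and it cannot be correct without it. For example, take $\Omega=B_{1/4}(0)\setminus K$, where $K\subset B_{1/8}(0)$ is a countable compact set with $|K_\eps|=\eps^{2-d+o(1)}$ for some $d\in(1,2)$. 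Then $|\Omega_\eps|=\eps^{2-d+o(1)}$, so the first clause holds with $\mathsf{x}=1-d/2$. However $K$ is polar, so $\Kb_\Omega(t)$ is the heat content of the disc, with exponent $\Delta(1/2)>\Delta(1-d/2)$. Your pigeonholed layers $A_k$ would sit almost entirely near $K$, where the exit estimate fails.

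The repair is what the paper does. Restrict to points $z$ with $d(z,\partial^*\Omega)\in[t^{\kappa(\alpha_0)(1+2\delta)},t^{\kappa(\alpha_0)(1+\delta)}]$. Take the volume lower bound from $|\Omega^*_\eps|=\eps^{2\mathsf{x}+o(1)}$. Then use connectedness and positive diameter of $\partial^*\Omega$ with Beurling to hit $\partial^*\Omega$ before leaving $B_{Nr}(z)$ with probability at least $1-CN^{-1/2}$.

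There is a related point. This hitting event and the event $\{\nu(\tau_{Nr})<t\}$ are functionals of the same Brownian path, so two probabilities that are merely ``bounded below'' do not bound their intersection. You need at least one of them close to $1$. The paper takes $N=\log(1/t)$ and uses the margin $t^{\kappa(\alpha_0)\delta}$, so that Corollary~\ref{cor:scaling} makes the clock event hold with probability tending to $1$. Tightness plus a constant-factor shift of the scale would also work.

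Finally, when you do the algebra, substituting $\alpha=\gamma(1-\Delta)$ gives $\mathsf{x}=(1-\frac{\gamma^2}{4})\Delta+\frac{\gamma^2}{4}\Delta^2$. This is the standard KPZ relation, and it is the one consistent with the paper's formula for $\alpha_0$. The display \eqref{eq:KPZ} has its two coefficients interchanged; as a check, at $\gamma=0$ one must get $\Delta=\mathsf{x}$.
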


First, using Girsanov's lemma (see Lemma 2.5 in \cite{BP}) we can rewrite the expected heat content as 
\begin{align*}
    \E[\Kb_\Omega(t)] & = \E\Big[\int_\Omega\Pb_z[\sigma_\Omega\leq t]\mu_\gamma(dz)\Big]
    = \int_\Omega \P^*_z\otimes {P}_z[\nu(\tau_\Omega)\leq t]dz,
\end{align*}
where, under $\P^*_z$ the field can be written as $h=h'+\gamma G(\cdot,z)$, with $h'$ being a whole plane GFF normalised to have zero unit circle average, and $G$ being as in \eqref{eq:Greens function of full plane GFF}.


We need to understand the quantum clock $\nu$ under a GFF with a $\gamma-$log singularity. For this, we want to perform a rescaling argument, to see how far a LBM goes in $t$ units of time. Choose $z\in \Omega$ and suppose $B_{2r} (z) \subset \Omega$. We apply the domain Markov property to $h'$ in this ball, to get by \eqref{eq:Greens function of full plane GFF} that under $\PP_z^*$,
$$
h = h^{2r} + u^{2r} + \gamma \frac{1}{\vert\cdot-z\vert},
$$
where $u^{2r}$ is a.s. harmonic in $B_{2r}(z)$, $h^{2r}$ has the law of a Gaussian free field in $B_{2r} (z) $ with Dirichlet boundary conditions, and these are independent. Let $\tau_r$ denote the exit time of the planar Brownian motion $B$ from $B_r(z)$, and let $\sigma_r = \nu (\tau_r)$ the associated exit time of the ball of radius $r$ around $z$ for the corresponding Liouville Brownian motion. We prove the following bounds: 

\begin{lem}[Scaling the exit time of a ball] \label{lem:scaling without} 
Suppose $B_{2r}(z) \subset \Omega$. Then, we have the almost sure inequalities: \begin{align} \label{eq:scaling without}
 r^{2- \gamma^2/2} e^{\gamma \underline{u}_r} \hat \sigma_1 \le     \sigma_r \le  r^{2-\gamma^2/2}e^{\gamma \bar u_r} \hat \sigma_1,
\end{align}
where $\bar u_r = \sup_{B_{r}(z) } u^{2r}$ (resp. $\underline{u}_r = \inf_{B_{r}(z)}  u^{2r}$), and under the law $\P^*_z$, $\hat \sigma_1$ is independent of $h|_{B_{2r}(z)^c}$ and has the same law as that of the exit time $\sigma_1$ from a unit ball by a Liouville Brownian motion starting at its center, associated to a Dirichlet Gaussian free field on $B_2(0)$ plus a $\gamma-$log singularity at the origin.
\end{lem}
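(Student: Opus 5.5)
The plan is to combine Brownian scaling with the GMC coordinate-change rule. Under $\P^*_z$ the field on $B_{2r}(z)$ is $h = h^{2r} + u^{2r} + \gamma\log\frac{1}{|\cdot - z|}$. The singular term is what the statement writes as $\gamma\frac{1}{|\cdot-z|}$, presumably a typo for $\gamma G(\cdot,z)=\gamma\log\frac1{|\cdot-z|}$.

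First, rescale. Let $\phi(w) = z + rw$, which maps $B_2(0)$ onto $B_{2r}(z)$, and set $\hat h = h^{2r}\circ\phi$. By conformal invariance of the Dirichlet GFF, $\hat h$ is a Dirichlet GFF on $B_2(0)$. Since $\gamma\log\frac{1}{|\phi(w)-z|} = \gamma\log\frac1{|w|} + \gamma\log\frac1r$, we can write
$$h\circ\phi = \hat h + \gamma\log\tfrac1{|\cdot|} + u^{2r}\circ\phi + \gamma\log\tfrac1r .$$
Set $\hat B_s = r^{-1}(B_{r^2 s} - z)$, a standard Brownian motion from $0$. Then $\tau_r = r^2\hat\tau_1$, where $\hat\tau_1$ is the exit time of $\hat B$ from $\DD$.

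Next, compare the clocks. By definition,
$$\nu(\tau_r) = \lim_{\eps\to0}\eps^{\gamma^2/2}\int_0^{\tau_r} e^{\gamma h_\eps(B_u)}\,du .$$
Substitute $u = r^2 s$ and $\eps = r\delta$. The regularisation $h_\eps(\phi(w))$ equals $(h\circ\phi)_\delta(w)$; this holds exactly for circle averages, and for general mollifiers up to the standard independence of the limit from the regularisation. We get
$$\nu(\tau_r) = r^2\cdot r^{\gamma^2/2}\cdot r^{-\gamma^2}\lim_{\delta\to0}\delta^{\gamma^2/2}\int_0^{\hat\tau_1} e^{\gamma(\hat h + \gamma\log(1/|\cdot|))_\delta(\hat B_s)}\, e^{\gamma (u^{2r}\circ\phi)_\delta(\hat B_s)}\,ds .$$
The powers of $r$ combine to $r^{2-\gamma^2/2}$: the factor $r^{-\gamma^2}$ comes from $e^{\gamma\cdot\gamma\log(1/r)}$ and $r^{\gamma^2/2}$ from $\eps^{\gamma^2/2}$. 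Since $u^{2r}$ is harmonic, hence continuous, on $B_{2r}(z)$, its mollification converges uniformly on $\overline{B_r(z)}$ to $u^{2r}$. For $s\le\hat\tau_1$ the path stays in $\overline{\DD}$, so $\phi(\hat B_s)\in\overline{B_r(z)}$. The factor $e^{\gamma(u^{2r}\circ\phi)_\delta}$ can therefore be bounded pathwise above by $e^{\gamma\bar u_r + o(1)}$ and below by $e^{\gamma\underline u_r - o(1)}$. Pulling these bounds out and passing to the limit gives both inequalities with
$$\hat\sigma_1 := \lim_{\delta\to0}\delta^{\gamma^2/2}\int_0^{\hat\tau_1}e^{\gamma(\hat h+\gamma\log(1/|\cdot|))_\delta(\hat B_s)}\,ds,$$
which is the exit time from $\DD$ of the LBM associated with the Dirichlet GFF $\hat h$ on $B_2(0)$ plus a $\gamma$-log singularity. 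Independence of $\hat\sigma_1$ from $h|_{B_{2r}(z)^c}$ holds because, by the domain Markov property, $h^{2r}$ is independent of $(u^{2r}, h|_{B_{2r}(z)^c})$, and $\hat B$ is independent of the field.

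The main obstacle is making the limit manipulation rigorous. The quantum clock is defined as a limit in probability, so the almost sure inequalities should be obtained along a subsequence where convergence holds almost sure, simultaneously for both $\nu$ and $\hat\sigma_1$. The log singularity also needs care: near $z$ the clock is still well defined, since a $\gamma$-log singularity is integrable for GMC along Brownian paths; this is the standard Girsanov/rooted-measure fact. I expect to cite the GMC theory for Brownian occupation measures to justify both points, rather than reprove them.
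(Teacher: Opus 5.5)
Your proposal is correct and follows the same route as the paper. Both arguments rescale the Brownian path so that $\tau_r = r^2\hat\tau_1$ and substitute $\eps = r\delta$ in the regularisation. Both collect the factor $r^2\cdot r^{\gamma^2/2}\cdot r^{-\gamma^2} = r^{2-\gamma^2/2}$ from time, from the normalisation and from the shifted $\gamma$-log singularity, bound $u^{2r}$ pathwise by $\bar u_r$ and $\underline{u}_r$, and get independence from the domain Markov property. Your write-up is slightly more careful than the paper's: you centre $\hat B_s = r^{-1}(B_{r^2 s}-z)$ correctly and pass to an a.s.\ convergent subsequence. With circle averages you could also drop the $o(1)$, since $(u^{2r})_\eps = u^{2r}$ exactly on $B_r(z)$ for $\eps<r$ by the mean value property.
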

\begin{proof}
    Denote $\hat B_t = (1/r) B_{tr^2}$, then $(\hat B_t)_{t\ge 0}$ has the law of a Brownian motion starting from $z$ which is independent of $h$. Let $\hat h$ denote the image of $h^{2r} $ under the maps $w \mapsto (w-z)/r$ (which by conformal invariance has the law of a GFF with Dirichlet boundary conditions in $B_2(0)$). Note that, by  \eqref{eq:Greens function of full plane GFF}, we have for $w\in B_{2r}(z)$
$
G(z,w) = - \log |z-w|.
$
Hence
    \begin{align*}
    \nu(\tau_r) &= \lim_{\epsilon\to0}\ \int_0^{\tau_r}e^{\gamma h_{\epsilon}(B_s)}\epsilon^{\gamma^2/2}ds\\
    &= \lim_{r\epsilon\to 0}\ \int_0^{\tau_{r}}e^{\gamma (h^{2r}_{r\epsilon}(B_s)+u^{2r}_{r\epsilon}(B_s))}(r\epsilon)^{\gamma^2/2} e^{\gamma^2 G(0, B_s)}ds\\
    & \le
\limsup_{\epsilon\rightarrow0} \int_0^{r^2\hat \tau_1}e^{\gamma (h^{2r}_{r\epsilon}(r\hat B_{s/r^2})+ \bar u_r)}(r\epsilon)^{\gamma^2/2}\vert rB_{s/r^2}\vert^{-\gamma^2} ds\\
    & \le \limsup_{\epsilon\rightarrow0} e^{\gamma \bar u_r} \int_0^{\tau_1}r^2e^{\gamma h^{2r}_{r\epsilon}(r \hat B_t)}(r\epsilon^{\gamma^2/2})\vert r \hat B_t\vert^{-\gamma^2}dt\\
    & = e^{\gamma \bar u_r} \ r^{2-\gamma^2/2} \limsup_{\epsilon\rightarrow0} \int_0^{\hat \tau_1}e^{\gamma \hat{h}_{\epsilon}( \hat B_{t})}\epsilon^{\gamma^2/2}\vert \hat B_{t}\vert^{-\gamma^2}dt\\
    &= e^{\gamma \overline{u}_r}r^{2-\gamma^2/2}\hat \nu(\hat \tau_1),
\end{align*}
where $ \hat \nu(\cdot)$ denotes the clock process associated to $\hat B$ in the environment $\hat h$. This gives the upper bound, the lower bound follows by a similar argument. 
\end{proof}

From this computation, it is natural to expect that, starting from a $\gamma$-typical point, a Liouville Brownian motion typically travels to a Euclidean distance of order $t^{2-\gamma^2/2}$ in time $t$. Note also that this computation tells us a bit more since we have isolated the dependence on the overall value of the field on $B_{2r}(z)$ through the values $\bar u_r$ and $\underline{u}_r$, thereby allowing us to condition on the value $h_{2r}(z)$. 

We will need to make this precise by providing quantitative bounds. This boils down to proving  suitable concentration bounds for both $\underline{u}_r$ and $\hat\sigma_1$. It is important to us that the control is ``better than polynomial" in order to not lose in the computation of the exponent. 
The first lemma below (Lemma \ref{lem:tailbound}) gives a lognormal estimate for the small ball probability of $\hat \sigma_1$, which will be sufficient for us. 
Concerning $\underline{u}_r$ and $\bar{u}_r$ we will need to condition on the value of $h_{2r}(z)$. 
Concretely, we further decompose
\begin{align*}
\underline{u}_r=u(z)+\bigl(\underline{u}_r-u(z)\bigr).
\end{align*}

We have the following tail bounds for $\hat\sigma_1$ and $\underline{u}_r-u(z)$, which we will prove later, as they are quite technical, see Section \ref{SS:intermediary}.

\begin{lem} \label{lem:tailbound}
    Let $\alpha \ge 0$, and let $h$ have the law under $\P$ of  $h=h^{2\DD} + \alpha \log 1/|\cdot|$ where $h^{2\DD}$ is a Dirichlet GFF on $2\DD$. Then, for some fixed constants $c_1,c_2\in(0,\infty)$ and any $\epsilon>0$ small enough, we have that 
    \begin{align}
        \PP\otimes P_0[\nu(\tau_{\DD})\leq \epsilon]\leq c_1e^{-c_2(\log\epsilon)^2}.
    \end{align}
\end{lem}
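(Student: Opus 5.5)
The plan is a multiscale argument in the spirit of Duplantier--Sheffield: lower bound the clock $\nu(\tau_{\DD})$ by the time Brownian motion spends in a sequence of dyadic annuli, and use the fact that the annulus contributions are nearly independent. Set $A_k = \{2^{-k-1}\le |w|\le 2^{-k}\}$ for $k\ge 1$. Since $\nu$ is increasing and the Brownian motion $B$ started at $0$ must cross every annulus $A_k$ before exiting $\DD$, we get
\[
\nu(\tau_{\DD}) \ge \max_{1\le k\le K} \nu_k ,
\]
where $\nu_k$ is the quantum time accumulated during the first crossing of $A_k$ from $\partial B_{2^{-k-1}}$ to $\partial B_{2^{-k}}$. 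We take $K \asymp |\log \epsilon|$. The event $\{\nu(\tau_{\DD})\le\epsilon\}$ then forces $\nu_k\le \epsilon$ for every $k\le K$. Since each $\nu_k\le \epsilon$ will have probability at most $1-p$ (suitably conditionally), this alone yields only $e^{-cK}$, which is polynomial in $\epsilon$. To reach $(\log\epsilon)^2$ we will instead use that at scale $2^{-k}$ the natural size of $\nu_k$ is $2^{-k(2-\gamma^2/2)}e^{\gamma h_{2^{-k}}(0)}$, possibly times an $\alpha$-dependent drift $2^{k\alpha\gamma}$. So $\nu_k\le\epsilon$ requires either that the circle average $h_{2^{-k}}(0)$ be atypically low, or that a rescaled ``unit-scale'' clock $\hat\nu_k$ be atypically small.

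Concretely, write $\nu_k \ge 2^{-k(2-\gamma^2/2)} e^{\gamma (h_{2^{-k}}(0) + \alpha k\log 2 - M_k)}\,\hat\nu_k$, using the same scaling as in Lemma \ref{lem:scaling without}. Here $M_k$ is the oscillation of the harmonic extension of the field outside $B_{2^{-k+1}}$ over the annulus $A_k$. Each $M_k$ has Gaussian tails uniformly in $k$. The variable $\hat\nu_k$ is the clock of a crossing of a fixed annulus for a field which, given the coarser information, is a Dirichlet GFF; it satisfies $P(\hat\nu_k\le\delta)\le p<1$ for some fixed small $\delta$, uniformly in $k$ and in the conditioning.

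We only need to use scales $k\in[K/2,K]$ with $2^{-k(2-\gamma^2/2)}\ge \epsilon^{1/2}$. This means choosing $K = c|\log\epsilon|$ with $c$ small, so the deterministic prefactor is harmless (the $\alpha$ drift only helps). Then $\nu(\tau_{\DD})\le\epsilon$ implies that, for every $k$ in that range, either $\hat\nu_k\le\delta$, or $M_k\ge \frac{c'}{\gamma}|\log\epsilon|$, or $h_{2^{-k}}(0)\le -\frac{c'}{\gamma}|\log\epsilon|$.

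The bad events are handled as follows.
\begin{itemize}
\item The event $\inf_{k\le K} h_{2^{-k}}(0)\le -c''|\log\epsilon|$ concerns the running minimum of a Brownian motion run for time $K\log 2\asymp|\log\epsilon|$, so its probability is $\le e^{-c(\log\epsilon)^2/|\log \epsilon|}$. This is not good enough.
\item Instead, we use that the circle average must also be low at $\epsilon$-scale. Better: choose $K=\lfloor\theta|\log\epsilon|\rfloor$ but require the circle average deviation $|\log\epsilon|$ over only $O(1)$ scales. The cleanest fix is to use the first scale $k_0=1$: $h_{1/2}(0)$ is $O(1)$ Gaussian, and the required deviation $\asymp|\log\epsilon|$ has probability $e^{-c(\log\epsilon)^2}$.
\end{itemize}

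Reorganised, the plan becomes: on the event $G=\{\sup_{k\le K}|h_{2^{-k}}(0)|\le \eta|\log\epsilon|,\ \sup_{k\le K} M_k\le\eta|\log\epsilon|\}$, with $\eta$ small relative to the scale choice, every $\nu_k\le\epsilon$ forces $\hat\nu_k\le\delta$. The decoupling then gives probability $\le p^{K}$ for the intersection. The complement of $G$ costs $K e^{-c\eta^2(\log\epsilon)^2/K}$. The main obstacle is exactly this balance: with $K\asymp|\log\epsilon|$ both error terms are only polynomial. To get genuinely lognormal tails, I would make the number of independent trials $N$ grow like $(\log \epsilon)^2$ rather than $|\log\epsilon|$.

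To do this, within each of $\asymp|\log\epsilon|$ scales, use $\asymp|\log\epsilon|$ independent excursions of the Brownian motion between $\partial B_{2^{-k-1}}$ and $\partial B_{2^{-k}}$. The number of such back-and-forth crossings before exit has a geometric tail only, but its probability of being below $m$ is small only like $e^{-cm}$ per scale, so this needs care. Alternatively, use the variance structure: $\log\hat\nu$ has Gaussian lower tails by the Kahane/Gaussian concentration for the GMC of a fixed path. I would try the concentration route first. Conditionally on the path, $\log\nu(\tau_{\DD})$ is the logarithm of a GMC-type functional. The lower tail of the GMC total mass is known to be lognormal, via the standard bound $P(\log M\le -s)\le e^{-cs^2}$ for subcritical GMC (a consequence of Gaussian concentration applied to the exponential of a Gaussian, \`a la \cite{DS2011}/\cite{Arunotes}). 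Applying this at a single fixed scale containing the path (e.g.\ the occupation measure of $B$ on $[0,\tau_{\DD}]$ restricted to a fixed annulus, which has positive mass with good probability) yields the $e^{-c(\log\epsilon)^2}$ bound. The log singularity only increases the measure away from $0$, and the event that the path's occupation measure in $A_1$ is tiny is handled by intersecting with $\tau_{\DD}>\epsilon^{1/2}$. Verifying the lognormal lower tail of GMC integrated against the random occupation measure, uniformly over path realisations in a good set, is the step I expect to be hardest.
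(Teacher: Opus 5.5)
Your proposal does not reach a proof. Every concrete scheme you write down gives only polynomial decay, as you note yourself. The final ``concentration route'' rests on a claim you do not prove: a lognormal lower tail for the GMC of the occupation measure of $B$ on $[0,\tau_{\DD}]$, uniformly over a set of paths whose complement has probability $e^{-c(\log\epsilon)^2}$.

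This is not a standard fact, for two reasons.
\begin{itemize}
\item The lognormal lower tail for GMC total mass in \cite{DS2011} and \cite{Arunotes} is obtained from a recursion, not from Gaussian concentration. The map $h\mapsto\log\int e^{\gamma h}\,d\sigma$ is not Lipschitz in the Cameron--Martin norm with a deterministic constant, because a shift of bounded Dirichlet energy need not be bounded below on the support of $\sigma$.
\item Conditionally on the path, GMC lower-tail bounds over a fractal reference measure carry constants that depend on Riesz energies $\int\!\!\int|B_s-B_u|^{-q}\,ds\,du$ of the stopped path. These constants then have to be integrated out, and this is exactly the obstruction the paper points out after Corollary~\ref{cor:finite negative moment of time change}.
\end{itemize}
Intersecting with $\{\tau_{\DD}>\epsilon^{1/2}\}$ does not help: it controls the total Euclidean time, not the regularity of the occupation measure that such bounds require.

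Your other two fixes also fail. The single-scale fix is circular: once the Gaussian shift $h_{1/2}(0)$ is removed, you are left with the small-ball probability of a unit-scale clock, which is the original quantity. The linear chain of $K\asymp|\log\epsilon|$ annuli gives only $K$ nearly independent trials, while circle averages at depth $K$ have variance $\asymp K$, so both error terms are only polynomial in $\epsilon$.

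What is missing is a branching self-similarity in place of a linear one. The paper proceeds as follows.
\begin{itemize}
\item It reduces to $\alpha=0$ and sets $z_0=B_{\tau_{1/2}}$.
\item It applies the Markov property of the field on the disjoint balls $D^+=B_{1/4}(0)$ and $D^-=B_{1/4}(z_0)$, together with the strong Markov property of $B$ at $\tau_{1/2}$.
\item This gives $A:=\log\nu(\tau_{\DD})\ge\max(A^+,A^-)+\gamma\underline u$. Here $A^\pm$ are independent, each distributed as $A-b$ (they are rescaled clocks of smaller concentric balls $B^\pm$ in the Dirichlet fields $h^{D^\pm}$). The term $\underline u$ is the infimum of the harmonic part on $B^+\cup B^-$, which has Gaussian tails by Lemma~\ref{lem:decaymean}.
\item Splitting on $\{\underline u\le\rho x/\gamma\}$ yields $p_A(x)\le e^{-c'x^2}+p_A((1-\rho)x+b)^2$.
\end{itemize}
In this recursion the number of independent trials doubles each time $|x|$ grows by a fixed factor, and that is what beats your count of $|\log\epsilon|$ trials. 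The recursion alone does not fix the decay rate. The paper closes it with an induction by contradiction, using only the a priori fact that $p_A(x)\to0$ as $x\to-\infty$, to conclude $p_A(x)\le e^{-ax^2}$.
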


The next result is a small extension of the classical Markov property of the GFF (in which we point out that the harmonic function is furthermore independent of its overall circle average), together with 
a variation on classical concentration results for Gaussian processes. 

\begin{lem} \label{lem:decaymean}
    Let $h$ be a whole plane GFF normalized to have $0$ average on the unit circle. Fix a constant $N>1$. Let $z_0\in \Omega $ and $r\gre0$ such that $\bar B_{Nr}(z_0)\subseteq B_1(0)$. We can write (using the Markov property)$$h(z) =h'(z)+u(z_0) + \big( u(z) - u(z_0)\big),$$where $h'$ is a Dirichlet GFF on $B_{Nr}(z_0)$, and $u$ is harmonic on $B_{Nr}(z_0)$ (so $u(z_0)$ coincides with the circle average $h_{Nr}(z_0)$). Then, we have that $h', u(0)$  and $u(\cdot)-u(z_0)$ are independent. Furthermore, fix $\lambda_0>0$. Then there are constants $c,c'>0$, $c$ only depending on $N$ and $\lambda_0$ and $c'$ depending only on $N$ such that
    \begin{align} \label{eq:decay mean}
        \PP[\sup_{z\in B_r(z_0)}\vert u(z)-u(z_0)\vert\ge\lambda]\le ce^{-c'\lambda^2},\qquad\lambda\ge\lambda_0.
    \end{align}
    Furthermore, the constant $c' = c'(N)$ can be chosen arbitrarily large by choosing $N$ to be large enough. 
\end{lem}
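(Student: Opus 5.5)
We must prove Lemma \ref{lem:decaymean}: the independence statement, and the Gaussian tail for the oscillation of the harmonic part, with $c'$ growing with $N$.

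\emph{Independence.} By the Markov property of the whole plane GFF (with unit circle normalisation) on $B_{Nr}(z_0)\subseteq B_1(0)$, we write $h = h' + u$, with $h'$ a Dirichlet GFF on $B_{Nr}(z_0)$ independent of $u$, and $u$ harmonic there. Since $u$ is harmonic, $u(z_0)$ equals its circle average on $\partial B_{Nr}(z_0)$, which is also $h_{Nr}(z_0)$ because $h'$ has zero circle averages there.

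It then remains to show that $u(z_0)$ and the field $v(\cdot) := u(\cdot) - u(z_0)$ on $B_{Nr}(z_0)$ are independent. As $(u(z_0), v)$ is jointly Gaussian, it suffices to check $\cov(u(z_0), u(z) - u(z_0)) = 0$ for $z\in B_{Nr}(z_0)$. We compute $\cov(u(z), u(z_0)) = \cov(u(z), h_{Nr}(z_0))$, using that $h'$ is independent of $u$ and has zero circle average. By harmonicity of $u$ and the mean value property, this is the average over $y \in \partial B_{Nr}(z_0)$ of $\cov(u(z), h(y))$, interpreted via circle averages.

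Now $\cov(u(z), h(y)) = \E[h(z) h(y)] - \E[h'(z) h(y)]$. For $y$ on the circle we have $h'(y) = 0$, so $\E[h'(z)h(y)] = 0$ and $\cov(u(z),h(y)) = G(z,y) = -\log|z-y|$ (since everything lies in $B_1(0)$). The circle average of $-\log|z-y|$ over $y\in\partial B_{Nr}(z_0)$ equals $-\log(Nr)$ for every $|z - z_0| < Nr$, which does not depend on $z$. Hence the covariance vanishes and independence follows.

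\emph{Tail bound.} Rescale by setting $w = z_0 + r\xi$. The field $\tilde v(\xi) = v(z_0 + r\xi)$ on $B_N(0)$ has a law not depending on $r$ or $z_0$, since the covariance structure above, $\E[v(z)v(z')] = $ (harmonic extension of $-\log|\cdot-\cdot|$) $+\log(Nr)$, is scale-invariant after subtracting constants. Explicitly, $\tilde v$ is the harmonic extension of the boundary-value field of a GFF on $\partial B_N$ with its mean removed. It is a continuous centred Gaussian process on $\bar B_1$ with $\tilde v(0) = 0$.

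Its pointwise variance is bounded by $\sigma_N^2 := \sup_{|\xi|\le 1} \var(\tilde v(\xi))$. We can compute $\var\tilde v(\xi)$ via the Poisson kernel expansion: the variance of the non-constant Fourier modes $a_k (|\xi|/N)^k$ is of order $\sum_{k\ge1} k^{-1}(|\xi|/N)^{2k} = -\log(1 - |\xi|^2/N^2)$, so $\sigma_N^2 \lesssim N^{-2}$.

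Next, $\E\sup_{\bar B_1}|\tilde v|$ is finite, either by Dudley's bound using smoothness (derivatives of harmonic functions are controlled on $\bar B_1$), or directly from the Fourier series. Borell--TIS then gives
\[
\PP[\sup|\tilde v| \ge \lambda] \le 2\exp\left(-(\lambda - \E\sup|\tilde v|)^2/(2\sigma_N^2)\right).
\]
For $\lambda\ge \lambda_0$ this is bounded by $c e^{-c'\lambda^2}$ with $c' = 1/(4\sigma_N^2)$, after adjusting $c = c(N,\lambda_0)$ to absorb the shift. Since $\sigma_N \to 0$ as $N\to\infty$, $c'$ can be made arbitrarily large.

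The main technical point is the rigorous covariance computation for $u$, given that $h$ is only a distribution; this is handled by working with circle averages and taking limits.
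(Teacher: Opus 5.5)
Your proof is correct. The independence part is essentially the paper's argument. Both reduce to the fact that the average of $-\log|z-\cdot|$ over $\partial B_{Nr}(z_0)$ equals $-\log(Nr)$ for every $z$ in the disc. The paper phrases this as $\cov(u(z)-u(w),u(z_0))=\log\frac{1}{Nr}\cdot\frac{1}{2\pi}\int f_{z,w}=0$, with $f_{z,w}$ a difference of Poisson kernels.

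For the tail bound you take a genuinely different route. The paper first proves the increment bound $\var(u(z)-u(w))\le C L_N^2|z-w|^2/r^2$ from a Harnack/derivative estimate on the Poisson kernel. It then runs an explicit dyadic chaining argument over lattices in $B_r(z_0)$, which gives $c'\asymp L_N^{-2}\asymp N^2$.

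You instead combine three ingredients:
\begin{itemize}
\item the exact scale invariance of $v=u-u(z_0)$, whose rescaled law depends only on $N$;
\item the Fourier diagonalisation of the log-covariance on the circle, which gives the exact pointwise variance $-\log(1-|\xi|^2/N^2)$, hence $\sigma_N^2=-\log(1-N^{-2})$, together with a trivially finite $\E\sup|\tilde v|$;
\item Borell--TIS to conclude.
\end{itemize}
Your ``$\sigma_N^2\lesssim N^{-2}$'' is only accurate for $N$ bounded away from $1$. Finiteness of $\sigma_N^2$ for every $N>1$ is all that is needed there, and the asymptotics $\sigma_N^2\sim N^{-2}$ is what makes $c'$ large.

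Your route is shorter and gives explicit constants: $c'=1/(4\sigma_N^2)$, and a prefactor $c$ depending on $N$ only. It also avoids the bookkeeping of the chaining. The price is that it uses Borell--TIS as a black box and relies on the exact $-\log|x-y|$ covariance on the circle. The paper's elementary chaining needs only the increment-variance bound, which is why it transfers directly to Dirichlet or Neumann GFFs on other domains, as remarked after the statement. In that setting your approach would need a separate variance estimate in place of the exact Fourier computation, and a Dudley-type bound for the expected supremum.
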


We emphasise that aspects of this lemma remain true for a GFF with Dirichlet or Neumann boundary conditions on an arbitrary domain $U$ such that   $B_{Nr}(z_0) \subset U$. In this case $u$ and $h$ are of course still independent, while $u(z_0)$ and $u(z)-u(z_0)$ are not in general. One can easily check that the proof of \eqref{eq:decay mean} remains valid for this case. 


When analyzing the exit time of a Liouville Brownian motion started at $z$ from the ball $B_r(z)$, we condition on the value of $u(z)$ and rescale time accordingly. This changes the effective exponent. The next corollary identifies the resulting scaling exponent. In particular, the thickness of $z$ (more precisely, the circle average at scale $r$) alters the typical Euclidean distance a Liouville Brownian motion started at $z$ can cover, as follows:

\begin{cor}[Scaling with conditioning] \label{cor:scaling}
    Recall the notations of Lemma \ref{lem:scaling without}.  Suppose $z \in \Omega$ and $B_{2r}(z) \subset \D$. 
            For $\alpha \in \R$, consider the event 
    \begin{align*}
      \cE_{z,r}(\alpha) := \{  h_{2r}(z)=\alpha\log1/2r\}.
    \end{align*}
Then a.s. under $\P^*_z ( \cdot | \cE_{z,r} (\alpha))$, we have by Lemma \ref{lem:scaling without}
    \begin{align} \label{eq:scaling with conditioning}
         r^{2+\gamma^2/2-\gamma\alpha} e^{\gamma(\underline{u}_r-u(z))} \hat \sigma_1 \le     \sigma_r \le  r^{2+\gamma^2/2-\gamma\alpha}e^{\gamma (\bar u_r-u(z))} \hat \sigma_1,
    \end{align}
    where $\hat \sigma_1$ is as in Lemma \ref{lem:scaling without}, and in particular, $\hat\sigma_1$ and $\underline{u}_r-u(z)$ are independent of $\cE_{z,r}(\alpha)$.
    As a consequence we obtain the following bound: 
    let
\begin{align} \label{eq:definition of kappa}
    \kappa(\alpha):=\frac{1}{2+\gamma^2/2-\gamma\alpha}.
\end{align}

If $\alpha < Q$ (so that $\kappa(\alpha))>0$), then for any $\delta>0$, 
    \begin{align}\label{C:exittime_alpha<Q}
    \dps\lim_{t\rightarrow0}\ \PP^*_z\otimes P_z[\nu(\tau_{r}) \leq t |\cE_{z,r} (\alpha)]=\begin{cases}0\qquad&\textnormal{if } r> t^{\kappa(\alpha)(1- \delta)}\\ 
    1 &\textnormal{if } r < t^{\kappa(\alpha)(1+ \delta)}
    \end{cases}\qquad\textnormal{FTP},
\end{align}
where in both of these situations, the convergence to the limit holds FTP and, crucially, \emph{uniformly} in $z$ and $\alpha<Q$. On the other hand, if $\alpha \ge Q$ and $r\le1$, then
\begin{align*}
    \PP^*_z\otimes P_z[\nu(\tau_{r}) \leq t |\cE_{z,r} (\alpha)] \to 0,\qquad\textnormal{FTP}.
\end{align*}
\end{cor}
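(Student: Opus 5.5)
The plan is to combine the almost sure sandwich of Lemma \ref{lem:scaling without} with the two tail estimates, Lemma \ref{lem:tailbound} and Lemma \ref{lem:decaymean}. The conditioning only fixes the deterministic prefactor.

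\textbf{Step 1: the sandwich \eqref{eq:scaling with conditioning}.} Under $\P^*_z$ write $h = h' + \gamma\log\frac{1}{|\cdot - z|}$. Apply the Markov decomposition of Lemma \ref{lem:decaymean} to $h'$ on $B_{2r}(z)$ (with $N=2$):
\[
h' = h^{2r} + u(z) + \bigl(u(\cdot) - u(z)\bigr), \qquad u(z) = h'_{2r}(z).
\]
The three pieces are independent. The circle average of the log-singularity at scale $2r$ equals $\gamma\log\frac{1}{2r}$. Hence on $\cE_{z,r}(\alpha)$ we have $u(z) = (\alpha-\gamma)\log\frac{1}{2r}$, and so
\[
e^{\gamma u(z)} = (2r)^{\gamma^2-\gamma\alpha}.
\]
Now insert $\bar u_r = u(z) + (\bar u_r - u(z))$ and $\underline u_r = u(z) + (\underline u_r - u(z))$ into \eqref{eq:scaling without}. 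The prefactor becomes
\[
r^{2-\gamma^2/2}(2r)^{\gamma^2-\gamma\alpha} = 2^{\gamma^2-\gamma\alpha}\, r^{2+\gamma^2/2-\gamma\alpha}.
\]
The harmless constant $2^{\gamma^2-\gamma\alpha}$ can be absorbed, or removed by conditioning on $h_{2r}(z)=\alpha\log\frac1r$ instead.

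Independence of $\hat\sigma_1$ and of $u(\cdot)-u(z)$ from $\cE_{z,r}(\alpha)$ holds for the following reasons. The event $\cE_{z,r}(\alpha)$ is measurable with respect to $u(z)$. The variable $\hat\sigma_1$ is a functional of $h^{2r}$ and of the Brownian motion alone. Finally, $u(\cdot)-u(z)$ is independent of $u(z)$ by Lemma \ref{lem:decaymean}.

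\textbf{Step 2: the case $r > t^{\kappa(\alpha)(1-\delta)}$.} Here $r^{1/\kappa(\alpha)} > t^{1-\delta}$, so by the lower bound in \eqref{eq:scaling with conditioning},
\[
\{\sigma_r \le t\} \subset \bigl\{ e^{\gamma(\underline u_r - u(z))}\hat\sigma_1 \le C t^{\delta} \bigr\}.
\]
The latter event is contained in the union of $\{\hat\sigma_1 \le t^{\delta/2}\}$ and $\{\underline u_r - u(z) \le -\tfrac{\delta}{2\gamma}\log\tfrac1t + C\}$. The first has probability at most $c_1 e^{-c_2(\delta/2)^2(\log t)^2}$ by Lemma \ref{lem:tailbound}, applied with singularity $\gamma$ and using the scale invariance of the Dirichlet GFF. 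The second has probability at most $c e^{-c'(\log t)^2}$ by \eqref{eq:decay mean}. Both bounds are FTP, and neither depends on $z$ or $\alpha$, which gives the required uniformity.

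The case $\alpha \ge Q$ is handled identically. There $2+\gamma^2/2-\gamma\alpha \le 0$ because $\gamma Q = 2+\gamma^2/2$, so the prefactor is at least a constant when $r\le 1$. We are then reduced to $\P\bigl(e^{\gamma(\underline u_r-u(z))}\hat\sigma_1 \le Ct\bigr)$, which is again FTP by the same two lemmas.

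\textbf{Step 3: the case $r < t^{\kappa(\alpha)(1+\delta)}$.} By the upper bound in \eqref{eq:scaling with conditioning}, $\sigma_r > t$ forces
\[
e^{\gamma(\bar u_r - u(z))}\hat\sigma_1 > c\,t^{-\delta'}
\]
for some $\delta' = \delta'(\delta) > 0$. The Gaussian part is controlled FTP by \eqref{eq:decay mean} as before.

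What remains is an \emph{upper} tail estimate for $\hat\sigma_1$, and I expect this to be the main obstacle. Lemma \ref{lem:tailbound} only controls the lower tail.

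My first attempt would be Markov's inequality applied to
\[
E_0[\hat\sigma_1] = \int G_{\DD}(0,w)\,\hat\mu(dw),
\]
where $\hat\mu$ is the GMC measure of the rooted field. Since $\gamma < Q$, the rooted measure satisfies $\hat\mu(B_s(0)) \approx s^{2-\gamma^2/2}$, so this integral is finite and has moments of some order $p > 0$. This yields only a polynomial bound $t^{\delta' p}$.

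To upgrade the bound to FTP, I would decompose the path into the successive crossings of the dyadic annuli $B_{2^{-k}}\setminus B_{2^{-k-1}}$. I would then show that a very large total exit time requires either an atypically large circle average at some scale, which has Gaussian tails, or many independent-ish large crossing times. The latter would be handled in the spirit of the Duplantier--Sheffield multiscale argument behind Lemma \ref{lem:tailbound}.

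If this upgrade fails, the fallback is to establish this branch with a tail that is polynomial with arbitrarily large exponent. This should suffice downstream after adjusting $\delta$, since the downstream use only concerns exponents.
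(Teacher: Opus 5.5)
Your Steps 1--2 and the case $\alpha\ge Q$ coincide with the paper's proof. The paper silently drops the factor $2^{\gamma^2-\gamma\alpha}$ you noticed. That factor is not harmless in the upper-bound direction, since it blows up as $\alpha\to-\infty$, so for uniformity there you do need the $\alpha\log(1/r)$ normalisation.

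The genuine gap is Step 3, and you located it exactly. The paper dismisses the regime $r<t^{\kappa(\alpha)(1+\delta)}$ as ``analogous'', which tacitly requires an FTP \emph{upper} tail for $\hat\sigma_1$; Lemma \ref{lem:tailbound} is only a small-ball bound. Neither your multiscale upgrade nor your fallback can close this, because the FTP claim in this regime is false.

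The reason is that the $\gamma$-log singularity at the starting point gives $\hat\sigma_1$ a heavy upper tail. Run the argument of Lemma \ref{lem:scaling without} inside the environment of $\hat\sigma_1$ at a scale $\epsilon$. This gives $\hat\sigma_1\ge\epsilon^{2-\gamma^2/2}e^{\gamma\underline u^{(\epsilon)}}\hat\sigma_1'$, where $\hat\sigma_1'$ is an independent copy and $\underline u^{(\epsilon)}$ is a centred Gaussian of variance $\log(1/\epsilon)+O(1)$, up to an error with Gaussian tails. Taking $q$-th moments gives $\E[\hat\sigma_1^q]\ge c\,\epsilon^{\xi^*(q)}\E[\hat\sigma_1^q]$ with $\xi^*(q)=(2-\gamma^2/2)q-\gamma^2q^2/2$. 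Hence $\E[\hat\sigma_1^q]=\infty$ whenever $q>4/\gamma^2-1$, and optimising over $\epsilon$ gives $\P[\hat\sigma_1>x]\ge x^{-(4/\gamma^2-1)+o(1)}$.

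Now take $r=t^{\kappa(\alpha)(1+2\delta)}$, which lies in the second regime. Use the \emph{lower} half of \eqref{eq:scaling with conditioning} together with the mutual independence of $\hat\sigma_1$, $\underline u_r-u(z)$ and $\cE_{z,r}(\alpha)$:
\[
1-\PP^*_z\otimes P_z\bigl[\nu(\tau_r)\le t\,\big|\,\cE_{z,r}(\alpha)\bigr]\ \ge\ \P\bigl[\underline u_r-u(z)\ge -1\bigr]\,\P\bigl[\hat\sigma_1>Ct^{-2\delta}\bigr]\ \ge\ t^{2\delta(4/\gamma^2-1)+o(1)}.
\]
This is polynomial, not FTP. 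Pushing $r$ up to the threshold shows that, for fixed $\delta$, no uniform rate better than roughly $t^{\delta(4/\gamma^2-1)}$ is possible. So the ``arbitrarily large exponent'' fallback fails as well.

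What is true, and what your Markov step already proves, is a polynomial version of the second case. The quantity $E_0[\hat\sigma_1]=\int G_{\DD}(0,w)\,\hat\mu(dw)$ has finite annealed $p$-th moment for $0<p<\min(1,4/\gamma^2-1)$; a dyadic decomposition around the root reduces this to $\xi^*(p)>0$. Combined with \eqref{eq:decay mean} for $\bar u_r-u(z)$, this gives $1-\PP^*_z\otimes P_z[\nu(\tau_r)\le t\mid\cE_{z,r}(\alpha)]\le Ct^{c\delta}$ for some $c>0$. The bound is uniform in $z$ and in $\alpha<Q$, with the $\alpha\log(1/r)$ normalisation.

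This weaker statement is all the paper ever uses. The regime $r<t^{\kappa(\alpha)(1+\delta)}$ enters only the lower bound in Proposition \ref{prop:first moment Heat content fractal boundary}, which just needs the conditional probability to exceed $(n-1)/n$ for small $t$. The FTP conclusions used elsewhere (the first case, the case $\alpha\ge Q$, and the argument in Lemma \ref{lem:comparison_local}) all concern the lower tail, which Lemma \ref{lem:tailbound} does control. So rather than trying to upgrade Step 3 to FTP, weaken the second case of the statement to this polynomial rate.
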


\begin{proof} Under $\PP^*_z$, we have $h=h'+\gamma \log\vert z-\cdot\vert$, so conditioned on $\cE_{r,z}(\alpha)$, we get by \eqref{eq:Greens function of full plane GFF}
\begin{align*}
    h'_{2r}(z)=(\alpha-\gamma)\log1/2r.
\end{align*}
Plugging that into \eqref{eq:scaling without}, we obtain \eqref{eq:scaling with conditioning}.

Now suppose that $\delta>0$ and $\alpha <Q$ so that $\kappa(\alpha)>0$. Then, we get
\begin{align*}
\PP^*_z\otimes P_z[\nu(\tau_{r}) \leq t| \cE_{z,r} (\alpha)]&\le\PP^*_z\otimes P_z[t^{\kappa(\alpha)(1-\delta)(2+\gamma^2/2-\gamma\alpha)} e^{\gamma(\underline{u}_r-u(z))} \hat \sigma_1\le t | \cE_{z,r} (\alpha)]\\
&\le\PP^*_z\otimes P_z[t^{(1-\delta)} e^{\gamma(\underline{u}_r-u(z))} \hat \sigma_1\le t | \cE_{z,r} (\alpha)]\\
&\le\PP^*_z\otimes P_z[e^{\gamma(\underline{u}_r-u(z))} \hat \sigma_1\le t^{\delta} | \cE_{z,r} (\alpha)]\\
&\le\PP^*_z\otimes P_z[\hat \sigma_1\le t^{\delta/2}| \cE_{z,r} (\alpha)]+\PP^*_z[e^{\gamma(\underline{u}_r-u(z))}\le t^{\delta/2} | \cE_{z,r} (\alpha)].
\end{align*}
Now, $\hat \sigma_1$ is independent of $\cE_{z,r} ( \alpha)$ so the first term vanishes FTP by Lemma \ref{lem:tailbound}, uniformly in $\alpha<Q$. The exact same is true for the second term, as by Lemma \ref{lem:decaymean} $u - u(z)$ and $u(z)$ are independent and the tail can be bounded by \eqref{eq:decay mean}. The second case $r<t^{\kappa(\alpha)(1+\delta)}$ follows in an analogous manner. 

Now suppose $\alpha \ge Q$ so $\kappa(\alpha ) \le 0$, and let $\delta>0$ and $r\le 1$. Then $r^{2+\gamma^2/2-\gamma\alpha}\ge1$ so we have
\begin{align*}
    \PP^*_z\otimes P_z[\nu(\tau_r)\le t\vert\cE_{r,z}(\alpha)]&\le\PP^*_z\otimes P_z[ e^{\gamma(\underline{u}_r-u(z))} \hat \sigma_1\le t | \cE_{z,r} (\alpha)]\\
    &\le\PP^*_z\otimes P_z[\hat \sigma_1\le t^{1/2}| \cE_{z,r} (\alpha)]+\PP^*_z[e^{\gamma(\underline{u}_r-u(z))}\le t^{1/2} | \cE_{z,r} (\alpha)].
\end{align*}
Again, we conclude by Lemma \ref{lem:tailbound} and Lemma \ref{lem:decaymean} that the right hand side tends to zero FTP in $t$,  uniform in $\alpha\ge Q$.
\end{proof}

We now explain the proof of Proposition \ref{prop:first moment Heat content fractal boundary} assuming Lemmas \ref{lem:tailbound} and \ref{lem:decaymean}.
\begin{proof}[Proof of Proposition \ref{prop:first moment Heat content fractal boundary}]
    As already indicated at the beginning of this section, we start applying Girsanov's theorem, to exchange the expectation and the integral. For every point $z\in \Omega$, under $\PP^*_z$ our field can be written as $h=h'+\gamma G ( z, \cdot)$ where $h'$ has again the law $\P$ (see \eqref{eq:Greens function of full plane GFF}).  Note that this is equal to $h(x) = h'(x) - \gamma \log\vert x-z\vert$, if $x \in B_1(0)$. 
    We will show that $\gamma$-thick points do not contribute uniformly to the integral (cf. Lemma \ref{cor:scaling}). The contribution of a point is governed by the circle average of $h'$ at the scale of its distance to the boundary. Writing $r=d(z,\partial\Omega)$, we therefore condition on the value of $h_{2r}(z)$ and rewrite the integral accordingly. 

    We have to show that
    \begin{align*}    I=\int_{\Omega}\P^*_z\otimes P_z[\nu(\tau_\Omega)<t]dz=t^{\Delta+o(1)}.
    \end{align*}
We start with the upper bound. We condition with respect to the value of $h_r(z)/ \log (1/r)$, a value which we will denote by $\alpha$ (so we will integrate with respect to $\alpha$). As 
\begin{align} \label{eq:compare h to h'}
\frac{h_r(z) }{\log (1/r)} = \gamma + \frac{h'_r (z)}{\log (1/r)}
\end{align}
and as $h'_r(z)/ \log (1/r)$ is a centered Gaussian random variable under $\P^*_z$ and variance $1/\log (1/r)$, we deduce (after lower-bounding $\tau_\Omega$ crudely by $\tau_r$) the following: 
    \begin{align}\label{eq:boundI}
I&\le\int_{\alpha\in \R}\int_{z\in \Omega}\P^*_z\otimes P_z\Bigl[\nu(\tau_r)<t\Big\vert h_{2r}(z)=\alpha\log1/r\Bigr]f_{r,z}(\alpha)dz d\alpha ;\\
 & \text{ with } f_{r,z}(\alpha) = r^{(\alpha-\gamma)^2/2+o(1)}; \text{ and } \int_\alpha f_{r,z} (\alpha) d\alpha = 1 \text{ for all } z \in \Omega\nonumber
    \end{align}

    As we will see, the dominant contribution (even for a given $\alpha \in \R$) to the second integral on the right hand side turns out to come from a thin ``strip'' of points at a certain distance from the boundary of $\Omega$, namely points whose distance to the boundary is of order $t^{\kappa (\alpha)}$, where $\kappa(\alpha)$ is given by \eqref{eq:definition of kappa}.

    Let us first comment on the case $\alpha\ge Q$ (i.e. $\kappa(\alpha)\le 0$), and show that the portion of the first integral coming from such values of $\alpha$ can be neglected. By Corollary \ref{cor:scaling}, the probability in \eqref{eq:boundI} can be bounded by $Ct^m$ for any $m>0$ (in particular any $m>\Delta$), for some $C=C(m,\gamma)>0$. Thus, we get
    \begin{align*}
        \int_Q^\infty&\int_{z\in \Omega}\P^*_z\otimes P_z\Bigl[\nu(\tau_r)<t\Big\vert h_{2r}(z)=\alpha\log1/r\Bigr]f_{r,z}(\alpha) d\alpha\\
        &\le\int_Q^\infty\int_{z\in \Omega}Ct^m f_{r,z}(\alpha)dz d\alpha\\
        &\le Ct^m \int_{z\in \Omega}\int_Q^\infty f_{r,z}(\alpha)d\alpha dz\\
        &\le C\vert\Omega\vert t^m
    \end{align*}
    and obtain that this case contributes less than $t^{\Delta+o(1)}$ to the integral.
    
    For $\alpha< Q$, we pick a small parameter $\delta>0$, let $\rho = t^{\kappa(\alpha)(1-\delta)}$ and decompose the domain $\Omega$ by setting 
\begin{align*}
    \Omega_{\alpha, n}^\delta&: = \{ z \in \Omega: \rho^{n+1} < d ( z, \partial \Omega) \le \rho^{n}\},\qquad n\ge1\\
    \Omega^\delta_{\alpha,\mathrm{in}}&:=\{z\in\Omega:d(z,\partial\Omega)>\rho\}
\end{align*}
so that 
\[
\Omega=\Omega_{\alpha,\mathrm{in}}^\delta\cup\bigcup_{n\ge0}\Omega_{\alpha,n}^\delta.
\]
    Let us first show that $\Omega_{\alpha,\mathrm{in}}^\delta$ only contributes a negligible amount to the integral. Note that for $z\in \Omega_{\alpha,\mathrm{in}}^\delta$, 
    we have $r = d(z, \partial \Omega) \ge \rho = t^{\kappa (\alpha) ( 1- \delta)}$. So by \eqref{C:exittime_alpha<Q} in Corollary \ref{cor:scaling}, we see that for any $m>0$, there is a constant $C = C(m, \delta, \gamma)$ (but independent of $\alpha<Q$) such that $\P^*_z\otimes P_z[\nu(\tau_\Omega)<t] \le C t^m$. Hence
    \begin{align} \label{eq: upper estimate first moment of the heat content}
 \int_{\alpha< Q} \int_{\Omega_{\alpha,\mathrm{in}}^\delta} \P^*_z\otimes P_z[\nu(\tau_r)<t] f_{r,z} (\alpha) dz d\alpha 
\le & C t^m \int_{\Omega_{\alpha,\mathrm{in}}^\delta} \int_{\alpha<Q} f_{r,z} (\alpha) d\alpha  \le C t^m |\Omega|.
    \end{align}
Since $m>0$ is arbitrary this shows the left hand side is in particular less than $t^{\Delta + o(1)}$ by picking $m>\Delta$. 
    
Now to the case $\alpha<Q$ and $z\in\Omega^\delta_{\alpha,n}$ for some $n\ge1$. It will turn out to be convenient to first show that the values of $\alpha$ outside of a compact set (say $\alpha< -2$) contribute a negligible amount overall to the integral. To see this, we bound crudely $\P^*_z\otimes P_z[\nu(\tau_r)<t] \le 1$, and recall that 
$$
\int_{\alpha< -2} f_{r, z} ( \alpha) d\alpha \le C \exp \Bigl( - \frac12 (\gamma+2)^2 \log (1/r) \Bigr)  
$$
using standard results for the tail of a standard Gaussian random variable (where $C$ is a universal constant). Furthermore, if $\alpha< -2$, $$\bigcup_{n\ge0} \Omega_{\alpha,n}^\delta \subseteq \{ z \in \Omega: d( z, \partial \Omega) \le t^{\eta}\}, \text{ with } \eta = \kappa(-2) /2$$
so that uniformly for $z \in \cup_{n\ge1} \Omega_{\alpha,n}^\delta$, we have $r \le t^{\eta}$. Therefore, using Fubini's theorem, 
$$
\int_{\alpha< -2} \int_{z\in  \cup_{n\ge0} \Omega_{\alpha,n}^\delta } f_{r,z} (\alpha) d\alpha \le |\Omega| \exp \Bigl( - \frac12 (\gamma+2)^2 \eta \log (1/t)\Bigr).
$$
One can easily check that $\eta ( \gamma+2)^2/2 > \Delta$, so that the integral on the left hand side is $t^{\Delta + o(1)}$.

We will thus henceforth be able to assume without loss of generality that $-2 \le \alpha <Q$. Let us call $I_n(\alpha)$ the portion of the integral in the right hand side of \eqref{eq:boundI} on $\Omega_{\alpha, n}^\delta$. 
By the assumption on the inner Minkowski dimension of the boundary \ref{assumption 1}, we have
\begin{align*} 
    \vert\Omega^\delta_{\alpha,n}\vert=( \rho^n)^{2\mathsf{x} (1+ o(1))}
\end{align*}
Since $\kappa(\alpha) \ge \kappa (-2)$ uniformly over $-2 \le \alpha \le Q$, and since $n\ge 1$ we deduce that 
\begin{align} 
    \vert\Omega^\delta_{\alpha,n}\vert\le t^{2\mathsf{x}n\kappa(\alpha)(1-\delta)+o(1)},
\end{align}
where now $o(1)$ is uniform in $\alpha\in[-2,Q)$ and $0<\delta<1/2$ and only depends on $t$. Again, on $\Omega^\delta_{\alpha,n}$, we can crudely bound $\P^*_z\otimes P_z\Bigl[\nu(\tau_r)<t\Big\vert h_r(z)=\alpha\log1/r\Bigr]\le 1$, and we estimate $f_{r,z}(\alpha)$ by
$$r^{(\alpha-\gamma)^2/2} \le \rho^{(n+1)(\alpha- \gamma)^2/2} = t^{(n+1)\kappa(\alpha)(1-\delta)(\alpha-\gamma)^2/2}.$$ Thus, we get the following upper bound for $I_n(\alpha)$ when $-2\le \alpha<Q$:
$$
I_n(\alpha) \le t^{2\kappa(\alpha)(1-\delta)(n+1)\mathsf{x}}t^{(n+1)\kappa(\alpha)(1-\delta)(\alpha-\gamma)^2/2+o(1)},
$$
Where the $o(1)$-term is uniform in $z,\alpha,\delta$ and $n$.
Therefore,
    \begin{align*}
        \int_{\alpha<Q}\sum_{n\ge0} I_n(\alpha) d\alpha&\le t^{\Delta+o(1)}+\int_{-2}^Q\sum_{n\ge0}\int_{\Omega_{\alpha,n}^\delta}t^{(n+1)\kappa(\alpha)(1-\delta)(\alpha-\gamma)^2/2+o(1)}dzd\alpha\\
        &\le t^{\Delta+o(1)}+\int_{-2}^Q\sum_{n\ge0}t^{2\kappa(\alpha)(1-\delta)(n+1)\mathsf{x}}t^{(n+1)\kappa(\alpha)(1-\delta)(\alpha-\gamma)^2/2+o(1)}d\alpha\\
        &\le t^{\Delta+o(1)}+\int_{-2}^Q 
      t^{o(1)}\sum_{n\ge 0} \left(t^{(1-\delta)  f(\alpha)}\right)^n d\alpha,
    \end{align*}
    where 
    $$
f(\alpha )= \kappa(\alpha)(2\mathsf{x}+(\alpha-\gamma)^2/2).
    $$
    Some elementary but tedious calculations show that the minimum of $\alpha\mapsto f(\alpha) $ 
    is attained at \begin{equation}\label{eq:alpha0}
        \alpha_0=Q-\sqrt{Q^2-4(1-\mathsf{x})}.
     \end{equation}   
        The corresponding minimum is the value
    \begin{align}
        f(\alpha_0) =  \frac{2\mathsf{x}+(\alpha_0-\gamma)^2/2}{2+\gamma^2-\alpha_0\gamma}=\Delta,
        \label{eq:Delta_alt}
    \end{align}
    where one can check that $\Delta$ agrees with \eqref{eq:KPZ}.

    As a consequence, the sum in the integral above can be bounded (uniformly over $-2 \le \alpha<Q$) by
    \begin{align*}
        \sum_{n\ge 1}\Bigl(t^{(1-\delta) \Delta}\Bigr)^n \le \frac{t^{(1-\delta)\Delta}}{1- t^{\Delta/2}}.
    \end{align*}
    if we assume without loss of generality that $\delta <1/2$ and $0<t<1$. 
Integrating, we thus obtain 
$$
I\le C t^{(1-\delta) \Delta + o(1)} 
$$
with $C = 2(Q+2)$ and $t$ sufficiently small. Since $0<\delta<1/2$ is arbitrarily small, this shows $I \le t^{\Delta+o(1)}$, as desired. 
%

    The lower bound is easier. We want to restrict to an event and a region which together carry the main contribution. Let $\alpha_0$ be as in \eqref{eq:alpha0}
    be the optimal value, $\kappa(\alpha_0)$ given by \eqref{eq:definition of kappa} (note that $\alpha_0 < Q$, so $\kappa(\alpha_0)>0$) and let $\delta>0$. Under the rooted law $\PP^*_z$ we have by \eqref{eq:Greens function of full plane GFF}, $h=h'-\gamma\log\vert\cdot-z\vert$, where $h'$ is again a full plane GFF normalized to have zero average on the unit circle. Let 
    \begin{align*}
        \cE_{z,r}(\alpha_0,\delta)&:=\{h_{2r}(z)\in[\alpha_0\log1/r,(\alpha_0+\delta)\log1/r]\}\\
        \Omega^*_{\delta,t}&:=\Bigl\{z\in\Omega\Big\vert t^{\kappa(\alpha_0)(1+2\delta)}\le d(z, \partial^*\Omega)\le t^{\kappa(\alpha_0)(1+\delta)}\Bigr\}.
    \end{align*}
    (We recall that $ \partial^*\Omega$ denotes a fixed connected component of positive diameter of the boundary).  Let also $z\in\Omega^*_{\delta,t}$. Let $n\ge1$. By scaling (for ordinary planar Brownian motion) and Beurling's estimate, since $\partial^* \Omega$ is connected, we have for $t>0$ small enough
    \begin{align*}
         P_z[\tau_\Omega<\tau_{r\log1/t}]\ge\frac{n-1}{n}.
    \end{align*}
    On $\Omega^*_{\delta,t}$ we have $r\le t^{\kappa(\alpha_0)(1+\delta)}$. Thus, we get by Corollary \ref{cor:scaling}
    \begin{align*}
        \PP^*_z\otimes P_z[\nu(\tau_{r\log1/t})<t\vert\cE_{z,r}(\alpha_0,\delta)]\ge\frac{n-1}{n},
    \end{align*}
    for $t>0$ small enough. Combining these estimates, we get
    \begin{align*}
        \PP^*_z\otimes P_z[\nu(\tau_\Omega)<t\vert \cE_{z,r}(\alpha_0,\delta)]&\ge\PP^*_z\otimes P_z[\nu(\tau_\Omega)<\nu(\tau_{Cr})<t\vert\cE_{z,r}(\alpha_0,\delta)]\\
        &=\PP^*_z\otimes P_z[\nu(\tau_\Omega)<\nu(\tau_{Cr})\vert\cE_{z,r}(\alpha_0,\delta)]\\
        &\qquad-\PP^*_z\otimes P_z[\nu(\tau_\Omega)<\nu(\tau_{Cr}),\nu(\tau_{Cr})\ge t\vert\cE_{z,r}(\alpha_0,\delta)]\\
        &\ge P_z[\tau_\Omega<\tau_{Cr}]-\PP^*_z\otimes P_z[\nu(\tau_{Cr})\ge t\vert\cE_{z,r}(\alpha_0,\delta)]\\
        &\ge1-\frac{1}{n}-\frac{1}{n}=\frac{n-2}{n},
    \end{align*}
   so the probability tends to $1$ uniformly in $z\in\Omega^*_{\delta,t}$, when $t\to0$. Next, we estimate the size of $\Omega^*_{\delta,t}$. By 
   \eqref{eq:assumption_lowerbound_local} and Assumption \ref{As:innerMinkowski}, we have
    \begin{align*}
        \vert\Omega^*_{\delta,t}\vert \ge t^{2\mathsf{x}\kappa(\alpha_0)(1+\delta)+o(1)}-t^{2\mathsf{x}\kappa(\alpha_0)(1+2\delta)+o(1)}=t^{2\mathsf{x}\kappa(\alpha_0)(1+\delta)+o(1)},
    \end{align*}
    where $\mathsf{x}$ is the Euclidean scaling exponent.
    
    Finally, we bound the probability of $\cE_{z,r}(\alpha_0,\delta)$ from below for a point $z\in\Omega^*_{\delta,t}$. Note that $h'_r(z)$ (where we recall that 
    $h=h'-\gamma\log\vert\cdot-z\vert$ under $\P^*_z$) is a centered Gaussian random variable with variance $\var (h'_r(z))= \var (h_r(z))=\log(1/r)$. Thus, using \eqref{eq:compare h to h'}, we get
    \begin{align*}
        \PP^*_z[\cE_{z,r}(\alpha_0,\delta)]&\ge\int_{\alpha_0\log1/r}^{(\alpha_0+\delta)\log1/r}r^{(\gamma-\alpha)^2/2+o(1)}d\alpha\\
        &\ge r^{(\gamma-\alpha_0)^2/2+o(1)}\\
        &\ge t^{\kappa(\alpha_0)(1+2\delta)(\gamma-\alpha_0)^2/2+o(1)},
    \end{align*}
    where again the $o(1)$-term is uniform in $\alpha,z$ and $\delta$. So we get in total
    \begin{align*}
        I&\ge\int_{\Omega^*_{\delta,t}}\PP^*_z\otimes P_z[\nu(\tau_\Omega)<t\cap \cE_{z,r}(\alpha_0, \epsilon)]dz\\
        &=\int_{\Omega^*_{\delta,t}}\PP^*_z\otimes P_z[\nu(\tau_\Omega)<t\vert \cE_{z,r}(\alpha_0, \delta)]\PP^*_z[\cE_{z,r}(\alpha_0, \delta)]dz\\
        &\ge \frac{\vert\Omega^*_{\delta,t}\vert}{2} t^{\kappa(\alpha_0)(1+2\delta)(\gamma-\alpha_0)^2/2+o(1)}\\
        &=t^{2\mathsf{x}(\kappa(\alpha_0)(1+\delta)+\kappa(\alpha_0)(1+2\delta)(\gamma-\alpha_0)^2/2+o(1)}.
    \end{align*}
As $\delta>0$ was arbitrary, we have in total, using \eqref{eq:Delta_alt},
\begin{align*}
    I_1&\ge t^{\kappa(\alpha_0)(2\mathsf{x}+(\gamma-\alpha_0)^2/2)+o(1)}=t^{\Delta+o(1)}.
\end{align*}
    All together, we proved
    \begin{align*}
        \E[\Kb_\Omega(t)]=t^{\Delta+o(1)},
    \end{align*}
    which completes the proof of Proposition \ref{prop:first moment Heat content fractal boundary} assuming Lemmas \ref{lem:tailbound} and \ref{lem:decaymean}.
\end{proof}

\subsection{Proof of Lemmas \ref{lem:tailbound} and \ref{lem:decaymean} }
\label{SS:intermediary}

To complete the proof of Proposition \ref{prop:first moment Heat content fractal boundary}, it thus remains to verify the estimates for $\hat\sigma_1$ and for $u(0)-\underline{u}_r$ that were invoked above, namely Lemma \ref{lem:tailbound} and Lemma \ref{lem:decaymean}. These proofs are somewhat technical, and the rest of this subsection is devoted to developing them. We also note that the arguments are inspired by Duplantier and Sheffield (see Lemma 4.5 in \cite{DS2011}). We start by proving Lemma \ref{lem:decaymean}, as we actually also need it for the proof of Lemma \ref{lem:tailbound}. We present a self-contained exposition tailored to our framework and notation, expanding a few steps that were only sketched in the original source.

\begin{proof}[Proof of Lemma \ref{lem:decaymean}]
    The independence of $h'$ and $u$ is just given by the spatial Markov property of the GFF. As $u$ is harmonic on $B_{Nr}(z_0)$, we can write for all $z,w\in B_{r}(z_0)$
    \begin{align*}
        u(z)-u(w)=(h,\nu_z)-(h,\nu_w)=\frac{1}{2\pi}\int_0^{2\pi}h(z_0+Nr e^{i\phi})(\underbrace{f_z(\phi)-f_w(\phi)}_{=:f_{z,w}(\phi)})d\phi,
    \end{align*}
    where $\nu_z,\nu_w$ are the harmonic measures on $\partial B_{Nr}(z_0)$ of $z,w$ respectively and $f_z,f_w$ are the densities of $\nu_z,\nu_w$ to the uniform measure on $\partial B_{Nr}(z_0)$. In particular $f_{z,w}$ induces a signed measure on $\partial B_{Nr}(z_0)$ with total mass $0$. By estimating the first derivative of $f_{z,w}$ for $z,w\in B_{r}(z_0)$, it can be easily shown that
    \begin{align} \label{eq:estimate on f_{z,w}}
        \vert f_{z,w}(\phi)\vert\le L\frac{\vert z-w\vert}{(N-1)r},
    \end{align}
    where $L>0$ is a universal constant (note that for this bound $z,w\in B_r(z_0)$ is crucial); this corresponds to \textbf{Harnack's inequality}. We will write $L_N = L / (N-1)$; the only property we will need is that $L_N$ depends only on $N$ and tends to 0 as $N\to \infty$. By \eqref{eq:Greens function of full plane GFF}, and since $\bar B_{Nr}(z_0) \subset \D$, we have
    $
    G(z',w')=\log (1/{\vert z'-w'\vert})
    $ for any $z', w' \in \bar B_{Nr} (z_0)$. 
    Since for any fixed $x\in\CC$ the function $z\mapsto G(z,x)$ is harmonic on $\CC\setminus\{x\}$, we get for $z,w\in B_r(z_0)$,
    \begin{align*}
        \cov(u(z)-u(w),u(z_0))&=\frac{1}{4\pi^2}\int_0^{2\pi}\int_0^{2\pi}G(z_0+Nre^{i\phi},z_0+Nre^{i\psi})f_{z,w}(\phi)d\psi d\phi\\
        &=\frac{1}{2\pi}\int_0^{2\pi}G(z_0,z_0+N re^{i\phi})f_{z,w}(\phi)d\phi\\
        &=\frac{1}{2\pi}\int_0^{2\pi}\log\frac{1}{N r}f_{z,w}(\phi)d\phi=0.
    \end{align*}
    As $u$ is centered Gaussian, we obtain independence of $u(z)-u(w)$ and $u(z_0)$. We also want to estimate the variance of $u(z)-u(w)$. Using that $f_{z,w}$ integrates to $0$ and \eqref{eq:estimate on f_{z,w}}, we get
    \begin{align*}
        \var(u(z)-u(w))&=\frac{1}{4\pi^2}\int_0^{2\pi}\int_0^{2\pi}f_{z,w}(\phi)G(z_0+ Nre^{i\phi},z_0+ Nre^{i\psi})f_{z,w}(\psi)d\psi d\phi\\
        &=\frac{1}{4\pi^2}\int_0^{2\pi}\int_0^{2\pi}f_{z,w}(\phi)\log\frac{1}{Nr\vert e^{i\phi}-e^{i\psi}\vert}f_{z,w}(\psi)d\psi d\phi\\
        &\le\frac{1}{4\pi^2}\int_0^{2\pi}\int_0^{2\pi}\vert f_{z,w}(\phi)\vert \Big\vert\log\frac{1}{\vert e^{i\phi}-e^{i\psi}\vert}\Big\vert\vert f_{z,w}(\psi)\vert d\psi d\phi\\
        &\qquad+\frac{1}{4\pi^2}\int_0^{2\pi}\int_0^{2\pi}\log\frac{1}{N r}f_{z,w}(\phi)f_{z,w}(\psi)d\psi d\phi\\
        &\le L_N^2\frac{\vert z-v\vert}{4\pi^2r^2}\int_0^{2\pi}\int_0^{2\pi} \Big\vert\log\frac{1}{\vert e^{i\phi}-e^{i\psi}\vert}\Big\vert d\psi d\phi\\
        &\le CL_N^2 \frac{\vert z-w\vert^2}{r^2},
    \end{align*}
 where $C>0$ is a universal constant.
    
    Let us use this to deduce the desired Gaussian tail for the supremum of $\vert u(z)-u(z_0)\vert$ over $z\in B_r(0)$. For $n\ge1$, we define the lattice
    \begin{align*}
        A_n:=\{z\in B_r(z_0):z=(r2^{-n}k,r2^{-n}l), k,l\in\ZZ\}
    \end{align*}
    Then we have $\vert A_n\vert\le 4r^22^{2n}$. Furthermore, for any given $z\in B_r(z_0)$ we define a sequence $(z_n)_{n\in\NN}$, by setting $z_n$ to being the closest point in $A_n$ to $z$ 
    Then note that $\vert z_n-z_{n-1}\vert\le \sqrt{2}r2^{-(n-1)}$ and thus
    \begin{align} \label{eq:variance estimate u}
        \var(u(z_n)-u(z_{n-1}))\le CL_N^2 2^{-2n},
    \end{align}
     for some universal constant $C>0$. As $u$ is harmonic and therefore also continuous, we can write
    \begin{align*}
        u(z)-u(z_0)=\sum_{n\ge1}u(z_n)-u(z_{n-1}).
    \end{align*}
    Let $\lambda\ge\lambda_0$ for some fixed $\lambda_0>0$. Then using \eqref{eq:variance estimate u} and the estimate on the variance of $u(z)-u(w)$ above, we have
    \begin{align}\label{eq:estimate in proof of decay mean}
        \PP[\sup_{z\in B_r(z_0)}\vert u(z)-u(z_0)\vert\ge\lambda]&\le\PP[\sup_{z\in B_r(z_0)}\vert \sum_{n\ge1}u(z_n)-u(z_{n-1})\vert\ge\frac{6\lambda}{\pi^2}\sum_{n\ge1}n^{-2}]\nonumber\\
        &\le\sum_{n\ge1}\PP[\sup_{z\in B_r(z_0)}\vert u(z_n)-u(z_{n-1})\vert\ge\frac{6\lambda}{\pi^2}n^{-2}]\nonumber\\
        &\le\sum_{n\ge1}8\sum_{z_n\in A_n}\PP[\vert u(z_n)-u(z_{n-1})\vert\ge\frac{6\lambda}{\pi^2}n^{-2}]\nonumber\\
        &\le C_1\sum_{n\ge1}2^{2n}\exp\Bigl(-\frac{C_2 L_N^{-2} \lambda^22^{2n}}{n^4}\Bigr),
    \end{align}
    for some universal constant $C_1,C_2>0$. To conclude, we first notice that there is a universal constant $C>0$ such that
    \begin{align*}
        \frac{C_2L_N^{-2} \lambda^2 2^{2n}}{n^4} & \ge CL_N^{-2} \lambda^2(n+2^n)\\
        & \ge C L_N^{-2}\lambda^2 n + C  \lambda^2 2^{2n} .
    \end{align*}
    
    Using this, we obtain
    \begin{align*}
        \sum_{n\ge1}2^{2n}\exp\Bigl(-\frac{C_2\lambda^22^{2n}}{n^4}\Bigr)\le\max_{n\ge1}\exp\Bigl(2\log(2)n-C\lambda^22^n\Bigr)\sum_{n\ge1}e^{-CL_N^{-2}\lambda^2n}.
    \end{align*}
    By maximizing over any $n\in[1,\infty)$, we get that the maximum can be estimated by
    \begin{align} \label{eq:proof of decaymean 1}
        \max_{n\ge1}\exp\Bigl(2\log(2)n-C\lambda^22^n\Bigr)\le\Bigl(\frac{2}{C\lambda^2}\Bigr)^2e^{-2}\le C' <\infty,
    \end{align}
    assuming $\lambda\ge\lambda_0$, for some constant $C'$ only depending on $\lambda_0$. Furthermore, the sum gives
    \begin{align} \label{eq:proof of decaymean 2}
        \sum_{n\ge1}e^{-C\lambda^2n}=\frac{e^{-C L_N^{-2}\lambda^2}}{1-e^{-C L_N^{-2} \lambda^2}}<\frac1{1- e^{-C}}e^{-CL_N^{-2}\lambda^2}.
    \end{align}
     By putting \eqref{eq:proof of decaymean 1} and \eqref{eq:proof of decaymean 2} together, we obtain
    \begin{align*}
        \PP[\sup_{z\in B_r(z_0)}\vert u(z)-u(z_0)\vert\ge\lambda]\le ce^{-c'\lambda^2},\qquad\lambda\ge\lambda_0
    \end{align*}
    for constants $c,c'>0$, where $c$ depends only on $N$ and $\lambda_0$ and where $c' = C L_N^{-2}$ uniform in $\lambda_0$ can be made arbitrarily big by choosing $N$ large enough since $L_N \to 0$ as $N\to \infty$. This completes the proof of Lemma \ref{lem:decaymean}.
\end{proof}

Now we are ready to prove Lemma \ref{lem:tailbound} and complete this section.

\begin{proof}[Proof of Lemma \ref{lem:tailbound}]
First we note that we can assume without loss of generality that $\alpha = 0$, as when $\alpha>0$ the exit time becomes larger (since $\log (1/ |z|) \ge 0$ in $\D$). We therefore assume $\alpha = 0$ in the rest of the proof. The strategy is inspired by Duplantier and Sheffield’s approach in \cite{DS2011} (where the Liouville area measure of a ball is used in place of the quantum clock accumulated by an independent Brownian motion until its exit time from the unit disc). Roughly, the idea is as follows. We estimate the exit time from below by two smaller copies of itself, by considering two balls of a slightly smaller radius (but still same scale overall). For each of them, by the spatial Markov property, the GFF splits into an independent part plus a harmonic remainder, and Lemma \ref{lem:decaymean} controls the harmonic contribution. This yields a lower bound on the original exit time by the maximum of two independent, smaller copies of itself, up to a small error term. By the change of coordinate formula of the Liouville measure, these smaller exit times get transported back to the original scale, resulting in a recursive inequality for the distribution function of the exit time. Estimating this recursion gives the desired bound.\\

\textbf{Setup.} To make things rigorous, fix a path of the underlying (ordinary) Brownian motion $(B_t, t \ge 0)$ up to its exit from $B_2(0)$. Let 
\begin{align*}
    \tau_{1/2}=\inf\{t>0:\vert B_t\vert=1/2\},\qquad \text{ and let } z_0=B_{\tau_{1/2}}
\end{align*}
be the first point where the (Euclidean) Brownian motion attains modulus 1/2. By Lemma \ref{lem:decaymean}, by choosing $N$ sufficiently large we suppose without loss of generality that the constant $c'$ in this Gaussian bound is such that $c'>\log2$. 

For notational convenience, set
\begin{align*}
    D^+&=B_{1/4}(0),\qquad D^-=B_{1/4}(z_0),\qquad D=D^+\cup D^-,\\
    B^+&=B_{1/(4N)}(0)\subseteq D^+,\qquad B^-=B_{1/(4N)}(z_0)\subseteq D^-.
\end{align*}
The argument runs as follows. Given the path of the ordinary Brownian motion $(B_t, t \ge 0)$, we bound $\sigma_\D = \nu(\tau_\DD)$ from below by two ingredients: 
-- The exit time $\nu(\tau_{B^+})$ of the smaller ball $B^+$ -- and the clock accumulated by $B$ between $\tau_{1/2}$ and $\tau_{B^-}$, defined as the first time after $\tau_{1/2}$ when $B$ leaves the ball $B^-$. Equivalently, this can be viewed as the clock $\nu^{(h, \tilde B)}(\tilde \tau_{B^-})$ where the underlying planar Brownian motion is  $\tilde B_s = B_{s+\tau_{1/2}}, s\ge 0$, and $\tilde \tau_{B^-} = \inf \{ s > 0: \tilde B_s \notin B^-\}$.

\medskip To make them independent, we use the spatial Markov property of the GFF $h^{2\DD}$ to get $h^{2\DD}=h^{D^+}+h^{D^-}+u$, where $h^{D^+}, h^{D^-}$ are independent Dirichlet GFFs in $D^+$ and $D^-$ respectively and $u$ is independent of the two other terms and harmonic in $D^+\cup D^-$. We define
\begin{align*}
    A &= \log\nu^{2\DD}(\tau_{\DD}),\\
    A^+ &= \log\nu^{(h^{D^+}, B)}(\tau_{B^+}),\\
    A^- &= \log\nu^{(h^{D^-}, \tilde B)}(\tilde \tau_{B^-}),
\end{align*}
where $\nu^{D^+},\nu^{D^-}$ are the quantum clocks associated to $h^{D^+}$ and $h^{D^-}$ respectively. Now, by the strong Markov property of planar Brownian motion, given $\mathcal{F}_{\tau_{1/2}} = \sigma( B_s: 0 \le s \le \tau_{1/2})$,  $(\tilde B_{s}, s \ge 0)$ is a Brownian motion starting in $z_0$. Since $h^{D^+}$ and $h^{D^-}$ are also independent, we deduce that $A^+$ and $A^-$ are independent.\\

\textbf{Construction of the recursion.} By the change of coordinate formula of LQG, we have
\begin{align}
    A^+, A^- \overset{(d)}{=} A - (2+\gamma^2/2)\log(8N). 
\end{align}
We will write $b = ( 2+ \gamma^2/2) \log (8N)$. Its value for the rest of the argument is of little relevance but we note that $b >0$. Moreover, we have that
\begin{align*}
    \nu^{2\DD}(\tau_{\DD})\geq \nu^{2\DD}(\tau_{B^+})\geq e^{\gamma\underline{u}}\nu^{D^+}(\tau_{B^+}),
\end{align*}
and similarly for $A^-$, so we conclude that 
\begin{align} \label{eq:boundAs}
    A\geq\max\{A^-,A^+\}+\gamma\underline{u},
\end{align}
with $\underline{u}=\inf_{z\in B^+\cup B^-}u(z)$. Our goal is to show that, for sufficiently negative $x$, 
\begin{align*}
    p_A(x):=\PP\otimes P_0[A\leq x]\leq e^{-c'x^2}.
\end{align*}

{
For $\rho\in(0,1)$ that we will specify later, we split this probability into two terms, 
\begin{align*}
    p_A(x)=\underbrace{\PP\otimes P_0[A\leq x,\underline{u}\leq\rho x/\gamma]}_{=:p_1(x)}+\underbrace{\PP\otimes P_0[A\leq x,\underline{u}>\rho x/\gamma]}_{=:p_2(x)}.
\end{align*}
Now, Lemma \ref{lem:decaymean} allows us to immediately bound the first term, so that for sufficiently negative $x$,
\begin{align}\label{eq:sufficiently negative}
    p_1(x)\leq e^{-c'x^2},
\end{align}
where we made $c'$ smaller to get rid of the factor $\rho^2/\gamma^2$ in the exponent and also the constant $c>0$. Using \eqref{eq:boundAs}, and the independence of $A^+$ and $A^-$, we get 
\begin{align*}
    p_2(x)
    \le \P(\max (A^-, A^+) \le (1- \rho) x) \le p_A\Bigl((1-\rho)x+\underbrace{(2+\gamma^2/2)\log8N}_{=b}\Bigr)^2,
\end{align*}
which in total gives 
\begin{align*}
    p_A(x)\le   p_A\Bigl((1-\rho)x+b\Bigr)^2+e^{-c'x^2},
\end{align*}
for some constant $c'>0$. From here the idea is to use a recursive argument to obtain the desired Gaussian bound. Such an idea is sketched in \cite{DS2011}; however, we could not follow the details of their argument, and indeed our numerical simulations suggest that the induction alone as used in \cite{DS2011} is not enough to guarantee a Gaussian bound without additional nontrivial information on the start of this recursion. The same question is also studied by Aru in \cite[Lemma 4.3]{Arunotes}, following a similar idea but with a cleaner approach, and we follow this instead. While some of the details of the argument there were not entirely written (and indeed the crucial iteration step cannot be carried out all the way up to $\nu_0$ in Aru's notations) we were nevertheless able to carry out the argument and provide a complete proof following his outline. The details are as follows.


Note that if we choose $N=2$ so $b = (2+ \gamma^2/2) \log (8N) $ then $b\le 16 \log 2$, so if $x \le -1000$ and $\rho =0.15$ then $x' := ((1- \rho) x + b) \le 0.8 x = (4/5) x$. Using monotonicity of $p_A$ we deduce
\begin{equation}\label{eq:Induction_Aru}
p_A(x) \le e^{ - c' x^2} + p_A( 4x/5)^2; \quad \text{ for all } x \le - 1000.
\end{equation}
Fix $a = c'/2$ and let $x_0 \le -1000$ be such that if $x \le x_0$ then \eqref{eq:sufficiently negative} holds true and also
\begin{equation}\label{eq:x0}
e^{ - a x^2 } - e^{ - c' x^2} \ge e^{ - 0.9 a x^2}.
\end{equation}

Consider $x_m = -x_0 \cdot (5/4)^m$. By monotonicity of $p_A$ again it suffices to show that $p_A(x_m) \le e^{ - a x_m^2} $ for all sufficiently large $m\ge 0$. Suppose for contradiction that: 
\begin{equation}\label{eq:contrad}
p_A(x_m) > e^{ - a x_m^2}, \quad \text{ for infinitely many values of $m$.}
\end{equation}
If $m\ge 1$ is any such value we also have from \eqref{eq:Induction_Aru}
$$
e^{ - a x_m^2} \le p_A(x_m) \le e^{ - c' x_m^2} + p_A(x_{m-1})^2
$$
so by \eqref{eq:x0} (since $x_m \le x_0$),
$$
p_A(x_{m-1})^2 \ge e^{ - a x_m^2} - e^{ - c' x_m^2} \ge e^{ - 0.9 a x_m^2}.
$$
This implies (since $0.45 \cdot (5/4)^2 \approx 0.7 \le 0.9$)
\begin{equation}\label{induction_k=1}
p_A(x_{m-1}) \ge e^{ - 0.45 a x_m^2} \ge e^{ - 0.9 a x_{m-1}^2}.
\end{equation}
This can be iterated by induction: for $0 \le k \le m$, let 
$$
y_k = 0.9^{k/2} x_{m-k}.
$$ 
Thus $y_0 = x_m$, and as $k$ increases, $y_k$ gradually shrinks closer to 0 at a geometric rate which is faster than $x_{m-k}$ itself; in particular $y_k \ge x_{m-k}$. As a consequence, if 
$$
k_m = \min \{ k \ge 0: y_k \ge x_0\}
$$
is the first time for which 
$y_k$ gets above the value $x_0$, then $k_m< m$, and in fact $m - k_m \to \infty$ as $m\to \infty$ (linearly with $m$, in fact). 
We will show by induction on $k\ge 0$ that for all $0 \le k < k_m$, 
\begin{equation}\label{eq:iterated}
p_A (x_{m-k}) \ge e^{- 0.9^k a x_{m-k}^2} = e^{ - a y_k^2}.
\end{equation}
For completeness we write out the induction carefully. The case $k =0$ was the assumption on $m$, and we have just done by hand the case $k = 1$ in \eqref{induction_k=1}. Let $k \ge 1$ and suppose that \eqref{eq:iterated} holds. Then  from \eqref{eq:Induction_Aru}, 
\begin{align*}
p_A(x_{m-k-1})^2 & \ge e^{ - a y_{k}^2 } - e^{ - c' x_{m-k}^2} \\
&\ge  e^{ - a y_{k}^2 } - e^{ - c' y_{k}^2} \quad \text{ (since $x_{m-k} \le y_k$)}\\
& \ge e^{ - 0.9 a y_{k}^2} = e^{ - 0.9^{k+1} a x_{m-k}^2}, 
\end{align*}
where we used \eqref{eq:x0}, thanks to the fact that $k< k_m$ and thus $y_k \le x_0$. Therefore, taking square roots, 
\begin{align*}
p_A(x_{m-k-1})& \ge e^{ - (1/2) 0.9^{k+1} a x_{m-k}^2} \\
& = e^{ - 0.45 \cdot (5/4)^2 \cdot 0.9^k a x_{m-k-1}^2} 
\\ 
& \ge e^{ - 0.9^{k+1} a x_{m-k-1}^2} = e^{ - 0.9^{k+1} y_{k+1}^2}.
\end{align*}
This completes the induction. 

Now let us evaluate \eqref{eq:iterated} at $k = k_m-1$. This reads: 
\begin{equation}\label{eq:contrad2}
p_A ( x_{m- k_m+1} ) \ge e^{ - a y_{k_m-1}^2} \ge \rho: =  e^{ - a \frac{x_0^2}{ 0.9 (4/5)^2}}.
\end{equation}
Note that \eqref{eq:contrad2} holds for the infinitely many values of $m$ for which \eqref{eq:contrad} holds. In particular, since $m-k_m \to \infty$ as $m\to \infty$ we have a subsequence $n_m \to \infty$ such that $p_A ( x_{n_m}) \ge \rho$. This contradicts the fact that $p_A (x) \to 0$ as $x \to - \infty$ (which follows from the fact that $\nu^{2\DD}( \tau_\DD) >0$ a.s., see, e.g., Theorem 3.42 in \cite{BP}). Therefore, \eqref{eq:contrad} cannot hold, and this concludes the proof of Lemma \ref{lem:tailbound}.
}
\end{proof}

As an immediate consequence of Lemma \ref{lem:tailbound} we obtain finiteness of all negative moments for the exit time. We write this below as this may be of independent interest.

\begin{cor} \label{cor:finite negative moment of time change}
    In the same setting as Lemma \ref{lem:tailbound}, we have for all $p>0$
    \begin{align*}
        \EE\otimes P_0\Bigl[\frac{1}{\sigma_{\DD}^p}\Bigr]<\infty.
    \end{align*}
\end{cor}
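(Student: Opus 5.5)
The plan is to use the layer-cake (tail integral) formula for the negative moment and feed in the lognormal small-ball bound of Lemma \ref{lem:tailbound}. Write $\sigma_\DD = \nu(\tau_\DD)$. For $p>0$,
\begin{align*}
\EE\otimes P_0\Bigl[\sigma_\DD^{-p}\Bigr] = \int_0^\infty \PP\otimes P_0\bigl[\sigma_\DD^{-p} > s\bigr]\, ds = \int_0^\infty \PP\otimes P_0\bigl[\sigma_\DD < s^{-1/p}\bigr]\, ds .
\end{align*}

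We split the integral at a large threshold $s_0$. On $[0,s_0]$ the integrand is bounded by $1$, so this piece contributes at most $s_0$.

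For $s\ge s_0$ we set $\epsilon = s^{-1/p}$. Taking $s_0$ large makes $\epsilon$ small enough for Lemma \ref{lem:tailbound} to apply, and the lemma bounds the integrand by $c_1 \exp\bigl(-c_2 p^{-2}(\log s)^2\bigr)$. This function decays faster than any power of $s$. In particular it is at most $s^{-2}$ once $\log s \ge 2p^2/c_2$, so $\int_{s_0}^\infty$ is finite.

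Adding the two pieces gives finiteness for every $p>0$. The same computation can be phrased via $\EE[\sigma_\DD^{-p}] = \int_0^\infty p\,\epsilon^{-p-1}\,\PP\otimes P_0[\sigma_\DD<\epsilon]\,d\epsilon$. Near $\epsilon=0$ the factor $e^{-c_2(\log\epsilon)^2}$ beats every power of $\epsilon$, and for $\epsilon \ge \epsilon_0$ the integrand is bounded by $p\,\epsilon^{-p-1}$, which is integrable on $[\epsilon_0,\infty)$.

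Two points need checking, though neither is a real obstacle. First, the statement holds \emph{in the same setting} as Lemma \ref{lem:tailbound}, that is, for a Dirichlet GFF on $2\DD$ plus $\alpha\log(1/|\cdot|)$ with $\alpha\ge 0$. The lemma's bound was proved uniformly in such $\alpha$, since the $\alpha$-term only increases the clock, so the estimate carries over directly. Second, $\sigma_\DD>0$ almost surely, which rules out an atom at $0$; this is already implicit in the tail bound.
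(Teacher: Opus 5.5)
Your argument is correct and matches the paper, which records the corollary as an immediate consequence of Lemma~\ref{lem:tailbound} without writing out a proof. Integrating the lognormal small-ball bound against $p\,\epsilon^{-p-1}\,d\epsilon$, or equivalently using the layer-cake formula as you do, is exactly that consequence; the only slip is cosmetic: for large $s$ the bound is $c_1 s^{-2}$ rather than $s^{-2}$, which changes nothing.
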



Note that Theorem 3.42 in \cite{BP} shows finiteness of \emph{all} negative moments for Gaussian multiplicative chaos. It seems initially tempting to want to deduce Corollary \ref{cor:finite negative moment of time change} from this result: indeed, when we condition on the trajectory of the Brownian path, then $\sigma_\DD = \nu(\tau_\DD)$ can be viewed as a GMC with respect to a reference measure given by the occupation measure of the Brownian path. Then Theorem 3.42 in \cite{BP} would apply and give finiteness of all negative moments but only conditionally given the Brownian path; one would still need to take expectation with respect to the Brownian path. Unfortunately, this last step is far from obvious, even if we keep track of the explicit dependence of the moments on the Brownian path itself, which, following the work of \cite{BerestyckiSheffieldSun} (which itself relies on \cite{GHSS}), boils down to the  Riesz energies of the stopped Brownian path, i.e., quantities of the form
$\int_0^{\tau_\DD}\int_0^{\tau_\DD} 1/ |B_t - B_s|^q ds dt$ for some $q>0$. 


\subsection{Second moment computation} \label{sec:2.2}

In this section we complement our estimates on $\E( \mathbf{K}_\Omega(t))$ (which, in particular, gives an upper bound valid with high probability on the heat content itself via Markov's inequality) with an upper bound on the second moment. Ultimately, the goal is to show that the heat content is concentrated so that we get the lower bound also with high probability.

However, in common with many problems involving Gaussian multiplicative chaos, a straightforward bound on the second moment of the type
$
    \EE[\Kb_\Omega(t)^2]\le t^{2\Delta+o(1)}
$    
is too much to hope for. Indeed, the total mass $\mu(\Omega)$ itself does not even have finite second moment when $\gamma \notin [0, \sqrt{2})$. We will therefore need to apply a truncation argument in the spirit of \cite{Ber2017} (see also \cite[Chapter 3]{BP}). In essence, we have identified in the previous section an optimal distance from the boundary (roughly $r = t^{\kappa(\alpha_0)}$, where $\alpha_0=Q-\sqrt{Q^2-4(1-\mathsf{x})} = \gamma(1-\Delta)$, see \eqref{eq:Delta_alt}) which carries most of the contribution to the heat content. In addition, we know that the dominant contribution comes from points whose circle average is of order $h_r(z) \approx \alpha_0 \log (1/r)$, where $r = \text{dist}(z, \partial \Omega)$, consistent with the KPZ heuristics. At the same time, we know that points which contribute to the heat content are themselves by definition sampled from Liouville measure and are thus $\gamma$-thick (\cite[Chapter 3]{BP}), not $\alpha_0$-thick, i.e., at scales $\eps\ll r$, one expects $h_\eps(z) \approx \gamma \log (1/\eps)$. Finding the good event which carries all the mass in an $L^1$ sense, and does not make the second moment explode, is therefore more subtle than in the standard case discussed, say, in \cite{Ber2017}.

Our solution is as follows. Recall the set $\partial_R = \{ z_1, \ldots, z_N \}$ from Assumption \ref{Assumption 2}, associated to a scale $R>0$ and a small error $\zeta>0$. For each $t>0$, choose a radius $R(t)>0$ depending only on $t>0$, such that $R(t)\to0$ when $t\to0$ but slower than any polynomial (STP), i.e. $R(t) = t^{o(1)}$ or equivalently, $t^{-\eps}R(t)\to\infty$ for all $\eps>0$ when $t\to0$. We also
 fix $\zeta>0$ arbitrarily (later $\zeta$ and $R(t)$ will be fixed by \eqref{eq:R_N}). Given $R(t)$ and $\zeta$, we consider the set $\partial_{R(t)}$ coming from Assumption \ref{Assumption 2}. For $z_i \in \partial_{R(t)}$, we define the truncated heat content as
\begin{align} \label{eq:truncated heat content}
    \Kb_{\Omega,z_i}(t):=\displaystyle\int_{\Omega^t\cap B_{R(t)}(z_i)}\Pb_z[\sigma_\Omega<t]\textbf{1}_{G_t}\mu_{\gamma}(dz),
\end{align}
where the good region $\Omega^t$ is defined by
\begin{align} \label{eq:def of Omega^t}
\Omega^t:=\Bigl\{z\in \Omega:\Bigl(\frac{C_\partial}{2c_\partial}\Bigr)^\frac{1}{2\mathsf{x}}t^{\kappa(\alpha_0)}\le d(z,\partial^*\Omega)\le t^{\kappa(\alpha_0)}\Bigr\}.
\end{align}
(This should not be confused with our notation $\Omega_r = \{z \in \Omega: d(z, \partial \Omega) \le r\}$.) Here, we recall that the constants $c_\partial, C_\partial$ are those appearing in the boundary assumptions \eqref{assumption 1} and \eqref{Assumption 2}). The event $G_t = G_t(z)$ is a good event defined for $z \in \Omega^t$, defined as follows. Let us fix $\beta > \gamma$ arbitrarily (later it will be taken suitably close to $\gamma$). Fix also $N>0$ large enough such that (by scaling for ordinary Brownian motion and Beurling's estimate)
\begin{align} \label{eq:Beuerling's estimate}
P_z[\tau_\Omega>\tau_{Nd(z,\partial\Omega)}]\le p_1 = 0.1,
\end{align}
uniformly over all $z\in\Omega^t$. Writing $r=Nd(z,\partial^*\Omega)$, we impose the following conditions:
\begin{align} \label{conditions}
    &G_t = G_t(z):=\Bigl\{(C1)\text{ and }(C2) \text{ are fulfilled}\Bigr\}\nonumber\\
    (C1)&\qquad h_r(z)\in[\alpha_0 \log (1/r)-2M,\alpha_0 \log (1/r)-M]\\
    (C2)&\qquad h_{r^{1+q}}(z)\le(\alpha_0+q\beta)\log(1/r),\qquad q\ge0
\end{align}
for some $\beta>\gamma$ sufficiently close to $\gamma$, and where $M$ is a sufficiently large constant which will be fixed below depending solely on $\beta$ and $\gamma$. Roughly speaking, condition (C1) imposes the correct order of magnitude for the circle average at the scale corresponding to the distance to the boundary (which is the correct intrinsic thickness of points on the boundary which actually contribute to the heat content), while condition (C2) interpolates between the thickness at scale $r$, which is $\alpha_0$, and that at infinitesimal scales, which is $\gamma$.


Note that because of our assumptions on the boundary and by \eqref{Assumption 1 non local}, we have
\begin{align*}
    \vert\Omega^t\cap B_{R(t)}(z_i)\vert&\le\vert \Omega^t\vert\le\vert \Omega_{t^{\kappa(\alpha_0)}}\vert\le \tilde C_\partial t^{2\mathsf{x}\kappa(\alpha_0)},\\
    \vert\Omega^t\cap B_{R(t)}(z_i)\vert&\ge\vert\Omega^*_{t^{\kappa(\alpha_0)}}\cap B_{R(t)}(z_i) \vert-\vert \Omega_{(c_\partial/2C_\partial)^{1/2\mathsf{x}}t^{\kappa(\alpha_0)}}\cap B_{R(t)}(z_i)\vert\\
    &\ge c_\partial t^{2\mathsf{x}\kappa(\alpha_0)}R(t)^{2(1-\mathsf{x})}-C_\partial \frac{c_\partial}{2C_\partial}t^{2\mathsf{x}\kappa(\alpha_0)}R(t)^{2(1-\mathsf{x})}\\
    &=\frac{c_\partial}{2}t^{2\mathsf{x}\kappa(\alpha_0)}R(t)^{2(1-\mathsf{x})},
\end{align*}
for all $t>0$ and for all $z_i \in \partial_{R(t)}$.
In total we get
\begin{align} \label{eq:size of local version domain}
    \vert\Omega^t\cap B_{R(t)}(z_i)\vert=t^{\mathsf{x}\kappa(\alpha_0)+o(1)},
\end{align}
where $o(1)\to0$ when $t\to0$ is an error term whose magnitude is bounded by a quantity tending to 0 uniformly in $z_i \in \partial_{R(t)}$. 


The plan for this section is as follows. First, we show that the truncation does not remove a significant contribution: $$\E[\Kb_{\Omega,z_i}(t)] \ge t^{o(1)} \E [ \Kb_\Omega(t)].$$
Second, we estimate the second moment of $\Kb_{\Omega,z_i}(t)$ and show that $$\E[\Kb_{\Omega,z_i}(t)^2] \le \E[\Kb_{\Omega,z_i}(t)]^{2+ o(1)}.$$
This then shows (via a standard Paley--Zygmund argument) that $\Kb_{\Omega,z_i}(t)$ is not much smaller than its expectation with reasonable probability, i.e.,  with a probability at least of order $t^{o(1)}$. This could however still tend to zero, and we use a decoupling argument to boost this probability from $t^{o(1)}$ to close to 1. This last step is the subject of Section \ref{sec:2.3}.

Before starting with the proofs, we describe the two conditions $(C1),(C2)$ under Girsanov’s transform.  Recall that $X_s:=h_{e^{-s}}(z)$ is a Brownian motion in $s$,  so the conditions constrain the long-time behavior of $X$, when $t$ gets small. By the Girsanov--Cameron--Martin lemma,
\begin{align}\label{eq:GCMKtilde}
    \EE[\Kb_{\Omega,z_i}(t)]=\displaystyle\int_{\Omega^t\cap B_{R(t)}(z_i)}\EE^*\Bigl[\Pb_z[\sigma_\Omega<t]\textbf{1}_{G_t}\Bigr]dz,
\end{align}
where under $\PP^*$, the field decomposes as $h=h'+\gamma G(z,\cdot)$ with $h'$ being a whole plane GFF (normalised to have zero unit circle average). Consequently, by \eqref{eq:Greens function of full plane GFF} $X_s$ becomes a Brownian motion with drift $\gamma s$. Writing $X'_s:=h'_{e^{-s}}(z)$ (which is a Brownian motion under $\P^*$), the conditions become
\begin{align} \label{conditions'}
        (C1)'&\qquad X'_s\in \bigl[(\alpha_0-\gamma)s-2M,(\alpha_0-\gamma)s -M\bigr],\\
        (C2)'&\qquad X'_{s(1+q)}\le\bigl(\alpha_0-\gamma+(\beta-\gamma)q\bigr)s,\qquad q\ge0,
\end{align}
where $s=\log(1/r)$, $M$ is still to be fixed, and as before, $r = Nd(z, \partial \Omega)$. In this form, the conditions are well suited for the first-moment analysis.

\begin{lem} \label{lem:modified expectation}
For $\Kb_{\Omega,z_i}(t)$ defined as above, we have for all $z_i\in \partial_{R(t)}$,
\begin{align*}
    \E[\Kb_{\Omega,z_i}(t)]=t^{\Delta+o(1)}\ \ 
\end{align*}
where the $o(1)$-term is bounded by a quantity which tends to zero as $t \to 0$, uniformly in $z_i \in \partial_{R(t)}$. In fact, there exists $\epsilon (t) \to 0$ such that the following holds. For any choice of $R(t) \to 0$: 
\begin{align}\label{eq:1stmoment_strong}
    \E[\Kb_{\Omega,z_i}(t)]\ge t^{\Delta+\epsilon(t)} R(t)^{2 (1- \mathsf{x})},
\end{align}
where $\Delta$ is as in Proposition \ref{prop:first moment Heat content fractal boundary}, uniformly in $z_i \in \partial_{R(t)}$.
\end{lem}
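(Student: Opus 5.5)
The upper bound is immediate: $\Kb_{\Omega,z_i}(t)\le \Kb_\Omega(t)$ pointwise, so Proposition \ref{prop:first moment Heat content fractal boundary} gives $\E[\Kb_{\Omega,z_i}(t)]\le t^{\Delta+o(1)}$, uniformly in $z_i$. The work is in the lower bound \eqref{eq:1stmoment_strong}. Starting from \eqref{eq:GCMKtilde}, I will lower bound the integrand $\E^*[\Pb_z[\sigma_\Omega<t]\mathbf 1_{G_t}]$ uniformly over $z\in\Omega^t\cap B_{R(t)}(z_i)$ by $t^{\kappa(\alpha_0)(\gamma-\alpha_0)^2/2+o(1)}$. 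I will then multiply by the volume bound $|\Omega^t\cap B_{R(t)}(z_i)|\ge \tfrac{c_\partial}{2}t^{2\mathsf{x}\kappa(\alpha_0)}R(t)^{2(1-\mathsf{x})}$ established just before the lemma. Since $\kappa(\alpha_0)(2\mathsf{x}+(\gamma-\alpha_0)^2/2)=\Delta$ by \eqref{eq:Delta_alt}, this gives \eqref{eq:1stmoment_strong}.

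For fixed $z$, put $r=Nd(z,\partial^*\Omega)$ and $s=\log(1/r)$. On $\Omega^t$ we have $s=\kappa(\alpha_0)\log(1/t)+O(1)$. I split
\[
\E^*\bigl[\Pb_z[\sigma_\Omega<t]\mathbf 1_{G_t}\bigr]\ \ge\ \P^*[G_t]-\P^*\otimes P_z\bigl[G_t,\ \nu(\tau_\Omega)\ge t\bigr].
\]
The subtracted term I bound by $P_z[\tau_\Omega>\tau_{r}]\P^*[G_t]\le 0.1\,\P^*[G_t]$, using \eqref{eq:Beuerling's estimate}. To this I add $\P^*\otimes P_z[G_t,\ \nu(\tau_r)\ge t]$.

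For the latter term I apply Lemma \ref{lem:scaling without} at scale $r$, conditioned on the value of $h_r(z)$ allowed by (C1). Since the clock is monotone in the field, I can replace the field by the larger one, which removes the constraint (C2). This gives
\[
\sigma_r\le r^{2+\gamma^2/2-\gamma\alpha_0}\,e^{-\gamma M}\,e^{\gamma(\bar u_r-u(z))}\hat\sigma_1 .
\]
Here $r^{2+\gamma^2/2-\gamma\alpha_0}\asymp t$, up to a constant factor coming from $N$ and the constants in \eqref{eq:def of Omega^t}. The factors $\hat\sigma_1$ and $\bar u_r-u(z)$ are independent of $h_r(z)$ (Lemma \ref{lem:decaymean}), but not of (C2), which involves finer scales. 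I expect this dependence to be the main obstacle. To handle it, I bound $\P^*\otimes P_z[G_t,\ \sigma_r\ge t]\le \P^*[(C1)]\,\P[e^{\gamma(\bar u_r-u(z))}\hat\sigma_1\ge c\,e^{\gamma M}]$. This uses that $\hat\sigma_1$ has an upper tail: it is a.s. finite, with positive moments of small order. So by choosing $M$ large I can make this term at most $0.1\,\P^*[(C1)]$.

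It then remains to show $\P^*[(C1)\cap(C2)]\ge c\,\P^*[(C1)]$ and to compute $\P^*[(C1)]$. For the latter, $X'_s\sim N(0,s)$, so
\[
\P^*[(C1)]=e^{-(\gamma-\alpha_0)^2s/2+O(\log s)}=t^{\kappa(\alpha_0)(\gamma-\alpha_0)^2/2+o(1)}.
\]
For (C2)$'$, note that the constraint at the scales $e^{-s}$ and finer is implied by (C1)$'$ at $q=0$. Given $X'_s$, the process $X'_{s+u}-X'_s$ is an independent Brownian motion. It must stay below the line $(\beta-\gamma)u/1\cdot(1)+M$ in the variable $u=qs$, since $(C2)'$ reads $X'_{s+u}\le(\alpha_0-\gamma)s+(\beta-\gamma)u$ and $X'_s\le(\alpha_0-\gamma)s-M$. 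A Brownian motion stays below a line of positive slope $\beta-\gamma$ and intercept $M$ with probability $1-e^{-2(\beta-\gamma)M}\ge 1/2$ once $M$ is large. Only $M$ depends on $\beta,\gamma$, and everything is uniform in $z$ and $z_i$, giving the claimed $\epsilon(t)$.

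A subtlety is that the probability $0.1\,\P^*[(C1)]$ should be compared with $\P^*[G_t]\ge \tfrac12\P^*[(C1)]$. The final bound is therefore $\ge(\tfrac12-0.2)\P^*[(C1)]$, which suffices.
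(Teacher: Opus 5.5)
Your proposal is correct and follows essentially the same route as the paper. Both arguments use Girsanov to reduce to $\P^*_z\otimes P_z$, Beurling to replace $\tau_\Omega$ by $\tau_r$, Lemma \ref{lem:scaling without} on $(C1)$ (with independence of the centre value from $(\bar u_r-u(z),\hat\sigma_1)$ and a large choice of $M$), the Brownian-motion-below-a-line bound for $(C2)'$ given $(C1)'$, and finally the Gaussian density of $(C1)'$ times the volume bound.

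The only difference is bookkeeping. The paper first discards $(C2)'$ by subtracting $(1-p_0)\P^*[(C1)']$ and then works with $(C1)'$ alone. You instead subtract the failure events from $\P^*[G_t]$ and invoke $\P^*[G_t]\ge\frac12\P^*[(C1)]$ at the end, which is equivalent.
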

\begin{proof}
    Clearly, we have $\EE[\Kb_{\Omega,z_i}(t)]\le\EE[\Kb_\Omega(t)]$, so it suffices to establish the  lower bound. Our goal will be to establish that 
    \begin{equation}\label{eq:1st_trunc_goal}
\P^* \otimes P_z ( \nu (\tau_\Omega)< t ; G_t) \ge c \P^*(G_t) 
    \end{equation}
    for some constant $c>0$, uniformly in $z \in \Omega^t \cap B_{R(t)} (z_i)$ and uniformly in $z_i \in \partial_{R(t)}$. Note that this implies the strong lower bound \eqref{eq:1stmoment_strong}.
    Indeed, given this, we can write by the Girsanov--Cameron--Martin lemma (more specifically \eqref{eq:GCMKtilde}):
     \begin{align*}
        \EE[\Kb_{\Omega,z_i}(t)]&\ge\int_{\Omega^t \cap B_{R(t)} (z_i)}\PP^*_z\otimes P_z[\nu(\tau_\Omega)<t; G_t]dz\ge c\int_{\Omega^t\cap B_{R(t)}(z_i)}\PP^*_z[G_t]dz
        \end{align*}
    Recall that $G_t = (C1)' \cap (C2)'$. We can estimate the probability of $G_t$ simply by that of $(C1)'$: more precisely, there exists $p_0$ depending solely on $\beta $, $\gamma$ and $M$ such that 
    \begin{equation}\label{eq:p0}
\P^*_z ( (C2)' (z) | (C1)'(z) ) \ge \P [ B_t \le M + ( \beta - \gamma) t, \forall t \ge 0] \ge p_0, 
    \end{equation}
    where $(B_t)_{t\ge 0}$ is a standard one-dimensional Brownian motion starting from $B_0 = 0$. Furthermore, if $\beta$ and $\gamma$ are fixed, by choosing $M$ large enough we can ensure that $p_0 \ge 0.9$.
    
    Thus, 
    $$
\EE[\Kb_{\Omega,z_i}(t)] \ge c p_0 \int_{\Omega^t}\PP^*_z [ (C1)'(z)] dz.
    $$
    As $X'_s$ is a centered Gaussian random variable with variance $\log1/r$ and by using $r=Nd(z,\partial\Omega)$ and \eqref{eq:def of Omega^t}, we have
    \begin{align*}
        \PP^*_z[(C1)'(z)]\ge C\frac{t^{\kappa(\alpha_0)(\alpha_0-\gamma)^2/2}}{\log1/t},
    \end{align*}
    where $C>0$ is a constant which is uniform in $R(t),z$ and $z_i$. 
We deduce that 
     \begin{align*}
        \EE[\Kb_{\Omega,z_i}(t)]&\ge cp_0 \left|\Omega^t\cap B_{R(t)}(z_0)\right| \PP^*_z[(C1)'(z)]\\
        &\ge c_\partial t^{2\mathsf{x}\kappa(\alpha_0)}R(t)^{2(1-\mathsf{x})}\frac{C}{\log1/t}t^{\kappa(\alpha_0)(\alpha_0-\gamma)^2/2}\\
        &\ge c\frac{t^{\Delta} R(t)^{2(1-\mathsf{x})}}{\log1/t}=t^{\Delta+\epsilon(t)}R(t)^{2(1-\mathsf{x})}.
    \end{align*}
where $c>0$ is a constant which is uniform in $R(t)$ and $z_i$ and $\epsilon(t)$ is tending to 0 as $t\to 0$, uniformly in $z_i \in \partial_{R(t)}$, and uniformly in the choice of $R(t)$.
    
It therefore suffices to prove \eqref{eq:1st_trunc_goal}. Now, observe that 
\begin{align}
    \P^*_z \otimes P_z [ \nu (\tau_\Omega) < t ; G_t] & \ge  \P^*_z \otimes P_z [ \nu (\tau_\Omega) < t ; (C1)'] -  \P^*_z \otimes \Pb_z [ \nu (\tau_\Omega) < t; (C1)' ; \overline{(C2)'}]
  \nonumber  \\
    & \ge  \P^*_z \otimes P_z [ \nu (\tau_\Omega) < t ; (C1)']  - \P^*_z \otimes P_z [  (C1)' ; \overline{(C2)'}] \nonumber \\
    & \ge  \P^*_z \otimes P_z [ \nu (\tau_\Omega) < t ; (C1)']  - (1-p_0) \P^*_z [ (C1)'] \label{eq:GtC1}
\end{align}
where $\overline{(C2)'}$ denotes the complement of the event $(C2)'$, and $p_0$ is as in \eqref{eq:p0}. 

To estimate the first term in the right hand side, we will use a Beurling estimate to compare $\nu (\tau_\Omega)$ with $\nu (\tau_r)$ (cf. \eqref{eq:Beuerling's estimate}) as well as a scaling argument,     where we recall that $r=Nd(z,\partial\Omega)$. 
More precisely, 
\begin{align} 
\P^*_z \otimes P_z [ \nu (\tau_\Omega) < t ; (C1)']  & \ge \P^*_z \otimes P_z [ \nu (\tau_r) < t ; (C1)'] - \P^*_z \otimes P_z [ \nu (\tau_r) < \nu (\tau_\Omega) ; (C1)'] \nonumber \\
& \ge \P^*_z \otimes P_z [ \nu (\tau_r) < t ; (C1)'] - \P^*_z \otimes P_z [ \tau_r < \tau_\Omega ; (C1)'] \nonumber \\
& \ge \P^*_z \otimes P_z [ \nu (\tau_r) < t ; (C1)'] - p_1 \P^*_z [ (C1)']. \label{eq:C1tau_r}
\end{align}
Now, by Lemma \ref{lem:scaling without}, 
we have for $z \in \Omega^t \cap B_{R(t)} (z_i)$, 
 \begin{align*}
     \nu(\tau_{r})\le r^{2-\gamma^2/2}e^{\gamma\bar u_r}\hat\sigma_1=r^{2-\gamma^2/2}e^{\gamma u(z)}e^{\gamma(\bar u_r-u(z))}\hat\sigma_1,
 \end{align*}
 where all three factors $e^{\gamma u(z)}$, $e^{\gamma(\bar u_r-u(z))}$ and $\hat\sigma_1$ are mutually independent. 
When $(C1)'$ holds, we therefore deduce (after some algebra) that 
\begin{align*}
 \nu(\tau_{r})& \le r^{1/\kappa(\alpha_0)} e^{ - \gamma M}  e^{\gamma(\bar u_r-u(z))} \hat \sigma_1\\
 & =  N^{1/\kappa ( \alpha_0)} t e^{ - \gamma M} e^{\gamma(\bar u_r-u(z))} \hat \sigma_1.
\end{align*}
We may assume without loss of generality that 
\begin{equation}
\label{eq:MN}
N^{ 1/ \kappa (\alpha_0)} e^{ - \gamma M/2} \le 1,
\end{equation}
so that in total, canceling the factors of $t$ on either side of the inequality, 
\begin{align*}
\P^*_z \otimes P_z [ \nu (\tau_r) < t ; (C1)']     \ge \P^*_z \otimes P_z [e^{\gamma(\bar u_r-u(z))} \hat \sigma_1 < e^{ \gamma M /2} ; (C1)'].
\end{align*}
Now, observe that $(C1)'$ depends only on $u(z)$ and is thus independent of both random variables appearing in the right hand side. We deduce 
\begin{align*}
\P^*_z \otimes P_z [ \nu (\tau_r) < t ; (C1)']
& \ge \P^*_z [ (C1)'] \P^*_z \otimes P_z [e^{\gamma(\bar u_r-u(z))} \hat \sigma_1 < e^{ \gamma M /2}].
\end{align*}
The two random variables above are furthermore independent of each other, and their law does not depend on $r$ or $z$ (by scaling invariance of the Gaussian free field in the plane modulo constants). By choosing $M$ sufficiently large and using the monotone convergence theorem, we can therefore ensure that the second term in the right hand side is as close to 1 as desired, say at least $0.9$. Hence 
$$
\P^*_z \otimes P_z [ \nu (\tau_r) < t ; (C1)'] \ge 0.9\  \P^*_z [ (C1)'] 
$$
Plugging  this into \eqref{eq:C1tau_r} and recalling that $p_1 =0.1$, we deduce 
$$
\P^*_z \otimes P_z [ \nu (\tau_\Omega) < t ; (C1)'] \ge 0.8 \  \P^*_z [ (C1)'] .
$$
In turn, plugging this into \eqref{eq:GtC1}, and recalling that $p_0 \ge 0.9$, we get 
$$
\P^*_z \otimes P_z [ \nu (\tau_\Omega) < t ; G_t] \ge 0.7 \  \P^*_z[ (C1)'] \ge  0.7 \ \P^*_z [ G_t].
$$
Let us comment quickly on the order in which we need to choose constants in order to make this argument work: we first pick $N$ large enough that \eqref{eq:Beuerling's estimate} holds with $p_1 = 0.1$. Then we choose $M$ large enough so that \eqref{eq:MN} holds and \eqref{eq:p0} holds with $p_0=0.9$.

Altogether, this proves \eqref{eq:1st_trunc_goal} with $c = 0.7$, and thus concludes the proof of Lemma \ref{lem:modified expectation}.
\end{proof}

We remark that the constants $M$ and $N$ were chosen in a specific way in order to make the above proof work;  the choice of these constants to define $G_t$ (and thus the restricted heat content $\Kb_{\Omega, z_i}(t)$) is now fixed and will not be changed when computing the second moment of the restricted heat content in Proposition \ref{thm:2moment} below.  

\begin{prop} \label{thm:2moment}
There exists $\epsilon (t) \to 0$ such that the following holds for all $R(t) \to 0$. Consider the restricted heat contents $\Kb_{\Omega,z_i}(t), (1 \le i \le N(t))$, defined above. Then for all $z_i\in \partial_{R(t)}$, 
    \begin{align*}
        \EE[\Kb_{\Omega,z_i}(t)^2]\le t^{2\Delta+\epsilon(t)} R(t)^{4(1-\mathsf{x}) - \alpha_0^2/2}\ \ \text{as} \ \ t\to0,
    \end{align*}
uniformly in $z_i \in \partial_{R(t)}$, and  where $\Delta$ is as in Proposition \ref{prop:first moment Heat content fractal boundary}, 
\end{prop}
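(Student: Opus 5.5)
We write the second moment as a double integral and apply the two-point Girsanov transform. Bounding $\Pb_z[\sigma_\Omega<t]\le 1$ and $\Pb_w[\sigma_\Omega<t]\le1$,
\begin{align*}
\EE[\Kb_{\Omega,z_i}(t)^2]\le \int\!\!\int_{(\Omega^t\cap B_{R(t)}(z_i))^2} e^{\gamma^2 G(z,w)}\,\PP^{**}_{z,w}\bigl[G_t(z)\cap G_t(w)\bigr]\,dz\,dw,
\end{align*}
where under $\PP^{**}_{z,w}$ the field is $h'+\gamma G(z,\cdot)+\gamma G(w,\cdot)$. We split according to $\delta:=|z-w|$ relative to $\rho:=t^{\kappa(\alpha_0)}$, the scale of $d(z,\partial^*\Omega)$ for $z\in\Omega^t$ (note $r=N d(z,\partial^*\Omega)\asymp \rho$).

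\emph{Far regime $\delta\ge N'\rho$.} Here we only use (C1) for both points, i.e.\ $h_r(z)\le \alpha_0\log(1/r)$ and similarly at $w$. Under $\PP^{**}$ the circle average $h_r(z)$ gets an extra drift from the singularity at $w$, namely $\gamma\log(1/\delta)$. Since the circles of radius $r$ around $z$ and $w$ are essentially disjoint, the two conditions are nearly independent given the common coarse field down to scale $\delta$. A standard Gaussian computation, decomposing the circle average processes into their common part (scales $1$ to $\delta$) and independent parts (scales $\delta$ to $r$), gives
\[
e^{\gamma^2\log(1/\delta)}\PP^{**}[\cdots]\le \PP^*_z[(C1)']\,\PP^*_w[(C1)']\;\delta^{-\alpha_0^2/2+o(1)} .
\]
The point is that the common part must be $\alpha_0$-thick at scale $\delta$; this costs $\delta^{(2\gamma-\alpha_0)^2/2}$ after the shift, against the gain $\delta^{-\gamma^2}$, and the net exponent is $-\alpha_0^2/2$ after comparing with the one-point costs. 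We then integrate over $z,w$. Each $\PP^*[(C1)']$ contributes $t^{\kappa(\alpha_0)(\alpha_0-\gamma)^2/2}$, and the area of $\Omega^t\cap B_{R(t)}(z_i)$ is $t^{2\mathsf{x}\kappa(\alpha_0)}R(t)^{2(1-\mathsf{x})}$. The factor $\delta^{-\alpha_0^2/2}$ is integrated using Assumption \ref{assumption 1}: the measure of $w\in\Omega^t$ with $|z-w|\sim 2^{-k}$ is at most $C_\partial\rho^{2\mathsf{x}}2^{-2k(1-\mathsf{x})}$. Provided $2(1-\mathsf{x})>\alpha_0^2/2$, which we check from $\alpha_0=\gamma(1-\Delta)$ and \eqref{eq:KPZ}, the dyadic sum is dominated by the largest scale $\delta\asymp R(t)$. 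This yields $t^{2\Delta+o(1)}R(t)^{4(1-\mathsf{x})-\alpha_0^2/2}$.

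\emph{Near regime $\delta< N'\rho$.} Here condition (C2) becomes essential: it prevents the $e^{\gamma^2\log(1/\delta)}$ blow-up, as in the truncation argument of \cite{Ber2017}. Write $\delta=r^{1+q}$. Under $\PP^{**}$, $h_\delta(z)$ has mean $2\gamma\log(1/\delta)+{}$(coarse terms), while (C2) forces $h_\delta(z)\le (\alpha_0+q\beta)\log(1/r)$. The Gaussian cost of this restriction, combined with (C1) at scale $r$, gives
\[
e^{\gamma^2 \log(1/\delta)}\PP^{**}[\cdots]\le \PP^*_z[(C1)']\, r^{-\alpha_0^2/2}\,\delta^{-\gamma^2+ (2\gamma-\beta)^2 q/(2(1+q))\cdots}
\]
and, after choosing $\beta$ close to $\gamma$, an effective exponent $\delta^{-a}$ with $a<2$. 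The integrability in $w$ against Lebesgue measure on $B_{N'\rho}(z)$ then gives a contribution of order $\PP^*_z[(C1)']\rho^{2-a}$. This must be compared with the target $t^{2\Delta}$. Summed over $z$ in a set of area $t^{2\mathsf{x}\kappa(\alpha_0)}R(t)^{2(1-\mathsf{x})}$, we need the exponent to exceed $2\Delta$ in $t$, strictly, so that the near regime is negligible up to $t^{o(1)}$ (the $R(t)$ factors being STP).

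The main obstacle is this near-diagonal optimisation. We have to carry out the Gaussian large-deviation computation for the Brownian motion $s\mapsto X'_s$ with the piecewise-linear barrier of (C2). The drift doubles to $2\gamma$ below scale $\delta$, and we need an exponent that beats $2\Delta$ uniformly in $q\ge0$ once $\beta\downarrow\gamma$. We would first attempt this by optimising the endpoint value $X'_{\log 1/\delta}$ explicitly via a quadratic in $q$. We would fall back on splitting further into $\delta\in[r^{1+q},r^{1+q+\epsilon}]$ dyadic bands if uniformity in $q$ fails.
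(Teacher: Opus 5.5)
Your architecture matches the paper's: two-point Girsanov, a split at $|z-w|\asymp r_0:=t^{\kappa(\alpha_0)}$, (C1) plus a common/independent Gaussian decomposition and the annulus bound of Assumption \ref{assumption 1} away from the diagonal, and (C2) against the doubled drift near it. However, the key far-regime estimate does not follow from the truncation. (C1) pins circle averages only at the single scale $r$, and (C2) only concerns scales below $r$. Nothing forces the common part at scale $\delta=|z-w|$ to be $\alpha_0$-thick. Doing the Gaussian computation (variances $L=\log(1/r)$, covariance $\ell=\log(1/\delta)$, both values of $h'$ pinned near $(\alpha_0-\gamma)L-\gamma\ell$) gives, with $\lambda=\ell/L\in[0,1]$,
\[
e^{\gamma^2\ell}\,\PP^{**}\bigl[(C1)(z)\cap(C1)(w)\bigr]=\PP^*_z[(C1)']\,\PP^*_w[(C1)']\,\delta^{-\alpha_0^2/(1+\lambda)}\,t^{o(1)} .
\]
Here the optimal common part is $\frac{2\alpha_0}{1+\lambda}$-thick at scale $\delta$, not $\alpha_0$-thick. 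Since $R(t)$ is STP, $\lambda\to0$ at the separations $\delta\asymp R(t)$ that dominate your dyadic sum, so the loss there is $\delta^{-\alpha_0^2+o(1)}$, not $\delta^{-\alpha_0^2/2}$. The dyadic sum then needs $2(1-\mathsf{x})>\alpha_0^2$, which fails e.g.\ for $\mathsf{x}<1/9$ and $\gamma$ near $2$. It produces $R(t)^{4(1-\mathsf{x})-\alpha_0^2+o(1)}$, off from the claim by a factor $R(t)^{-\alpha_0^2/2+o(1)}$ that cannot be absorbed into a $t^{\epsilon(t)}$ fixed independently of $R(t)$.

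The paper's mesoscopic step does not rescue this. It obtains the same exact probability exponent $\frac{(\gamma(1+\lambda)-\alpha_0)^2}{1+\lambda}$, but then uses $\frac{\lambda}{1+\lambda}\alpha_0^2\le\frac{\lambda}{2}\alpha_0^2$, which holds only for $\lambda\ge1$.

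Your heuristic becomes correct if you add to $G_t(z)$ an upper barrier $h_\rho(z)\le\alpha_0\log(1/\rho)+M'$ for $2r\le\rho\le R(t)$.
\begin{itemize}
\item Under $\PP^*_z$ and given (C1), this asks a Brownian bridge to stay below a line. It costs at most a factor $O(\log(1/t))$ in Lemma \ref{lem:modified expectation}, uniformly in $R(t)$.
\item It involves only the radial part of the field at radii in $[2r,R(t)]$, so the independence used there and the locality used in Section \ref{sec:2.3} survive.
\item Under $\PP^{**}$ the drift above scale $\delta$ is $2\gamma>\alpha_0$. So the barrier binds at scale $\delta$ and yields exactly your cost $\delta^{(2\gamma-\alpha_0)^2/2}$ and net factor $\delta^{-\alpha_0^2/2}$.
\end{itemize}

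The near regime is left open, and the bound you display there cannot close it. It has only one factor $\PP^*_z[(C1)']$ in front, and (C2) can at best turn $|z-w|^{-\gamma^2}$ into $|z-w|^{-\gamma^2/2}$. So the total is no better than $r_0^{2\mathsf{x}+(\gamma-\alpha_0)^2/2+2-\gamma^2/2}$, which does not beat $t^{2\Delta}=r_0^{4\mathsf{x}+(\gamma-\alpha_0)^2+o(1)}$ for $\gamma$ close to $2$.

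The missing observation, which the paper uses, is that for $|z-w|\lesssim r_0$ both insertions lie inside the circle of radius $r$ about $z$. So (C1)$(z)$ alone already costs $r_0^{(2\gamma-\alpha_0)^2/2}$. This absorbs $r_0^{-\gamma^2}$, leaving $\PP^*_z[(C1)']^2\,r_0^{-\alpha_0^2/2}$. (C2) is then needed only for the residual factor $(|z-w|/r_0)^{-\gamma^2}$, via the Brownian increment between scales $r$ and $|z-w|=r^{1+q}$, which costs $r_0^{q(2\gamma-\beta)^2/2}$. With $|A_n(z)|\le\pi e^{-2n}$, the exponent exceeds $4\mathsf{x}+(\gamma-\alpha_0)^2$ by at least $2(1-\mathsf{x})-\alpha_0^2/2>0$ plus $q\bigl(2-\gamma^2+\tfrac{(2\gamma-\beta)^2}{2}\bigr)$. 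The latter is positive once $\beta$ is close to $\gamma$, so no further splitting in $q$ is needed. The band $r_0/2\le|z-w|\le 100r_0$, where (C2) says nothing, is handled by (C1) under drift $2\gamma$ alone.
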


The dependence on $R(t)$, in particular the presence of the power $-\alpha_0^2/2$ in the exponent (which represents a loss compared to the square of the first moment in \eqref{eq:1stmoment_strong}), plays an important role in subsequent aspects of the proof of Theorem \ref{T:heat}.

\begin{proof}
    The lower bound is obvious from Jensen's inequality and Lemma \ref{lem:modified expectation}. The hard part is to prove the corresponding upper bound. Using Girsanov again, we get
    \begin{align*}
        \EE[ \Kb_{\Omega,z_i}(t)^2]= \displaystyle\iint_{(\Omega^t \cap B_{R(t)} (z_i))^2} \EE^{**}\Bigl[\Pb_z[\sigma_\Omega<t]\Pb_w[\sigma_\Omega<t]\textbf{1}_{G_t(z)}\textbf{1}_{G_t(w)}\Bigr]\vert z-w\vert^{-\gamma^2}dzdw,
    \end{align*}
    where under the tilted measure $\PP^{**}$ we have $h=h'+\gamma G(z,\cdot)+\gamma G(w,\cdot)$ with $h'$ being a whole plane Gaussian free field, normalised to have zero unit circle average. (Indeed, recall that on $B_1(0)$ we have $G(z,w)=-\log\vert z-w\vert$ and $\Omega \subset B(0,1/2)$). To make the computation easier, we split the integral in the variable $w$ in the following way: we fix $z\in \Omega^t \cap B_{R(t)}(z_i)$ and define for $n\in\NN$
    \begin{align}
        A_n(z):=\Bigl\{w\in \Omega^t \cap B_R(z_i):\vert z-w\vert\in[e^{-(n+1)},e^{-n})\Bigr\}.
    \end{align}
    Thus, we get, bounding both $\Pb_z[\sigma_\Omega<t],\Pb_w[\sigma_\Omega<t]$ crudely by $1$,
    \begin{align} \label{eq:proof of second moment}
        \EE[\Kb_{\Omega,z_i}(t)^2]\lesssim\int_{\Omega^t\cap B_{R(t)(z_i)}}\sum_{n\ge n(t)}\int_{A_n(z)}\EE^{**}\Bigl[\textbf{1}_{G_t(z)}\textbf{1}_{G_t(w)}\Bigr]e^{n\gamma^2}dwdz
    \end{align}
    where $n(t)$ is such that $e^{ - n(t)} \asymp R(t)$. 
    The key step is to control the probability of the good events. To this end, we reformulate the conditions \eqref{conditions} once more; now in the presence of two logarithmic insertions rather than a single one.

    The approach is as follows. Let us write $r_0:=t^{\kappa(\alpha_0)}>0$. We decompose the double integral into two regimes.
    \begin{itemize}
        \item First we look at the \textbf{mesoscopic case}, where $\vert z-w\vert\ge 100r_0$. In this regime the two balls $B_{r_0}(z),B_{r_0}(w)$ are disjoint. The objective is to show that, at bigger separations, the restriction of the GFF to the two balls are roughly independent, so the $\P^{**}$-probability of $G_t(z)$ and $G_t(w)$ (which concern the behaviour of the GFF inside balls of radius roughly $r_0$ around both points) factorises, yielding the correct exponent. As the distance between the two points gets closer to the bound $100 r_0$, the contribution to the integral coming from points at a given distance depends in a delicate way on the geometry of the domain near the boundary, in particular the size of balls of a given radius. This can be controlled by using Assumption \ref{Assumption 2}.
        
        \item The second case is the \textbf{microscopic} one, where $\vert z-w\vert<100r_0$. Here the two balls $B_{r_0}(z),B_{r_0}(w)$ may overlap, and the two logarithmic insertions from the Girsanov transform are close, yielding (almost) a $2\gamma$-log singularity. This proximity makes it very unlikely for both events $G_t(z)$ and $G_t(w)$ to hold - this is for similar reasons as in the GMC case (\cite{Ber2017}).
    \end{itemize}
    This mesoscopic/microscopic splitting corresponds to partitioning the sum over $n$ into $n\le N_t:=\lfloor\log(1/r_0)-\log(100)\rfloor -1$ and $n>N_t$. Let us first comment on the size of $A_n(z)$. If $z \in \Omega^t$, there exists $\tilde z\in\partial\Omega$ with $d(\tilde z,z)<e^{-n}$. So we can estimate $A_n(z)$ using Assumption \ref{assumption 1}:
    \begin{align} \label{eq:estimate on A_n mezoscopic case}
        \vert A_n(z)\vert\le\vert \Omega_{t^{\kappa(\alpha_0)}}\cap B_{2e^{-n}}(\tilde z)\vert\le 4C_\partial t^{2\mathsf{x}\kappa(\alpha_0)}e^{-2n(1-\mathsf{x})}.
    \end{align}
    This will be useful in the mesoscopic case. On the other hand, in the microscopic case we will simply use the trivial bound $\vert A_n(z)\vert\le\vert B_{e^{-n}}(z)\vert=\pi e^{-2n}$. 
    
    Now let us start with the estimate of \eqref{eq:proof of second moment}. We begin with the mesoscopic case.\\
    
    \textbf{Mesoscopic case.}
    Let $n \le N_t$, and suppose $w\in A_n(z)$. Under $\P^{**}$, we may write by the Girsanov--Cameron--Martin lemma:
    \begin{align} \label{eq:h with two singularities}
        h=h'+\gamma G(z,\cdot)+\gamma G(w,\cdot),
    \end{align}
    where $h'$ is a full plane GFF, normalised to have zero unit circle average. 
    Let $s_z=\log(1/Nd(z,\partial\Omega))$ and $s_w=\log(1/Nd(w,\partial\Omega))$ (where $N$ was chosen in \eqref{eq:Beuerling's estimate}), and let $X_{s_z}':=h'_{e^{-s_z}}(z)$ and $Y_{s_w}':=h'_{e^{-s_w}}(w)$. 
    So $(X_{s_z}',Y'_{s_w})$ is a centered Gaussian vector, with respective variances and covariance given by
    \begin{align} \label{eq:variance and covariance of (X',Y')}
    \sigma_z^2:=\EE[(X_{s_z}')^2]=s_z+O(1),\quad\sigma_w^2:=\EE[(Y_{s_w}')^2]=s_w+O(1),\quad \rho:=\EE[X_{s_z}'Y_{s_w}']=n+O(1),
    \end{align}
    where all $O(1)$ terms are bounded by a constant $C>0$ which is uniform in $z,w,n,t$ and $z_i$. (Note that the estimate on the covariance relies on the fact that we are in the mesoscopic regime).
    
    Because of \eqref{eq:proof of second moment}, our goal is to estimate $\P^{**} [G_t(z), G_t(w)].$ In fact we will ignore the condition $(C2)$ and simply estimate these events using the conditions $(C1)$. Note that the conditions $(C1)(z) \cap (C1)(w)$ imply
    \begin{align*}
        X_{s_z}'&\in\Bigl[(\alpha_0-\gamma)s_z-\gamma n-C,(\alpha_0-\gamma)s_z-\gamma n+C\Bigr],\\
        Y_{s_w}'&\in\Bigl[(\alpha_0-\gamma)s_w-\gamma n-C,(\alpha_0-\gamma)s_w-\gamma n+C\Bigr],
    \end{align*}
    where $C=2M$ (and where $M$ was chosen in the proof of Lemma \ref{lem:modified expectation}, and hence is uniform in $z,w,n$ and $t$). Note that $s_z = \log ( 1/ r_0) + O(1)$, and analogously for $s_w$. For simplicity, we write $\lambda = \lambda_n:=n/(\log1/r_0)$ (note that $\lambda \in [0,1]$ since $n \le N_t$). Thus, it suffices to compute the joint density of $(X'_{s_z}, Y'_{s_w})$ in the box
    \begin{align} \label{eq:box}
        \Bigl[(\alpha_0-\gamma(1+\lambda))\log(1/r_0)-C,(\alpha_0-\gamma(1+\lambda))\log(1/r_0)+C\Bigr]^2,
    \end{align}
    for some $C>0$ which is uniform in $z,w$ and $t$. To describe the joint law of $(X'_{s_z}, Y'_{s_w})$ we use the fact that we can explicitly describe the conditional law of $Y'_{s_w}$ given $X'_{s_w}$, from which it is not hard to see that there exists an independent centered normal random variable $Z$ such that
    \begin{align*}
        \left(X'_{s_z}, Y'_{s_w}\right)\overset{d}{=}\left(X'_{s_z},\frac{\rho}{\sigma_z^2}X'_{s_z}+Z\right).
    \end{align*}
    To make this work, by \eqref{eq:variance and covariance of (X',Y')} we have to choose
    \begin{align}
        \var(Z)&=\sigma_w^2-\frac{\rho^2}{\sigma_z^2}=\log\frac{1}{r_0}-\frac{n^2}{\log1/r_0}+O(1)
        =\log\frac{1}{r_0}(1-\lambda^2+o(1))\nonumber \\
        &\in\Bigl[\log\frac{1}{r_0}\Bigl(1-\lambda^2\frac{\log1/r_0}{\log1/r_0-C}\Bigr)-C,\log\frac{1}{r_0}\Bigl(1-\lambda^2\frac{\log1/r_0}{\log1/r_0+C}\Bigr)+C\Bigr]\label{eq:variance of Z}
    \end{align}
    For some $C>0$, which is uniform in $z,w,n$ and $z_i$. Observe also that
    \begin{align}\label{eq:rho over sigma^2}
    \frac{\rho}{\sigma_z^2} = \lambda + o(1)\in\Bigl[\lambda\frac{\log1/r_0}{\log1/r_0+C}-C,\lambda\frac{\log1/r_0}{\log1/r_0-C}+C\Bigr],
    \end{align}
    again for some $C>0$, which is uniform in $z,w,n$ and $z_i$. Hence, conditionally given
    \begin{align} \label{eq:mesoscopic case (C1) for z}
        X'_{s_z}\in\Bigl[(\alpha_0-\gamma(1+\lambda))\log(1/r_0)-C,(\alpha_0-\gamma(1+\lambda))\log(1/r_0)+C\Bigr],
    \end{align}
    by \eqref{eq:box}, the condition for $Y'_{s_w}$ can be equivalently reformulated in terms of $Z$ as
    \begin{align*}
        Z\in(1-\lambda(1 + o(1)))\biggl[(\alpha_0-\gamma\Bigl(1+\lambda(1+o(1))\Bigr)\log(1/r_0),(\alpha_0-\gamma\Bigl(1+\lambda(1+o(1))\Bigr)\log(1/r_0)\biggr],
    \end{align*}
    for an $o(1)$-term, which is of the same form as in \eqref{eq:variance of Z} and \eqref{eq:rho over sigma^2}. In particular the $o(1)$-term is uniform in $z_i\in\partial_{R(t)}$ and uniform in the choice of $R(t)$. As $(X'_{s_z},Z)$ is a Gaussian vector with independent entries, we get by the estimates before
    \begin{align*}
        \P^{**}\Bigl[{G_t(z)};{G_t(w)}\Bigr]\le r_0^{\frac{(\gamma(1+\lambda)-\alpha_0)^2}{2}}r_0^{\frac{(1-\lambda)^2(\gamma(1+\lambda)-\alpha_0)^2}{2(1-\lambda^2)}+\epsilon(t)},
    \end{align*}
    where the $\epsilon(t)$ is uniform in $z,w,n$ and $z_i$ and $\epsilon(t)\to0$ when $t\to0$. Using \eqref{eq:proof of second moment}, \eqref{eq:estimate on A_n mezoscopic case} and $e^{-n}=r_0^{\lambda}$, we conclude
    \begin{align*}
    \int_{\Omega^t\cap B_{R(t)}(z_i)}&\sum_{n= n(t)}^{N_t}\int_{A_n(z)}\PP^{**}\Bigl[{G_t(z)};{G_t(w)}\Bigr]e^{n\gamma^2}dwdz\\
    &\le \int_{\Omega^t\cap B_{R(t)}(z_i)}\sum_{n= n(t)}^{N_t}\int_{A_n(z)}\PP^{**}\Bigl[G_t(z)\cap G_t(w)\Bigr]e^{n\gamma^2}dwdz\\
    &\le\int_{\Omega^t\cap B_{R(t)}(z_i)}\sum_{n= n(t)}^{N_t}\int_{A_n(z)}r_0^{\frac{(\gamma(1+\lambda)-\alpha_0)^2}{2}}r_0^{\frac{(1-\lambda)^2(\gamma(1+\lambda)-\alpha_0)^2}{2(1-\lambda^2)}+\epsilon(t)}e^{n\gamma^2}dwdz\\
    &\le C_\partial  \sum_{n\ge n(t)}\int_{\Omega^t\cap B_{R(t)}(z_i)}r_0^{\frac{(\gamma(1+\lambda)-\alpha_0)^2}{2}+\frac{(1-\lambda)^2(\gamma(1+\lambda)-\alpha_0)^2}{2(1-\lambda^2)}+\epsilon(t)}e^{n\gamma^2}e^{-2n(1-\mathsf{x})}r_0^{2\mathsf{x}}dz\\
    &\le (C_\partial)^2 \sum_{n\ge n(t)}r_0^{\frac{(\gamma(1+\lambda)-\alpha_0)^2}{2}+\frac{(1-\lambda)^2(\gamma(1+\lambda)-\alpha_0)^2}{2(1-\lambda^2)}+\epsilon(t)}e^{n\gamma^2}e^{-2n(1-\mathsf{x})}r_0^{4\mathsf{x}} R(t)^{2(1-\mathsf{x})}\\
    &=r_0^{4\mathsf{x}+\epsilon(t)}R(t)^{2(1-\mathsf{x})}\sum_{n\ge n(t)}r_0^{f(\lambda_n)},
    \end{align*}
    where the $o(1)$-term is uniform in $z,w$ and $n$ and
    \begin{align*}
        f(\lambda):=\frac{(\gamma(1+\lambda)-\alpha_0)^2}{2}+\frac{(1-\lambda)^2(\gamma(1+\lambda)-\alpha_0)^2}{2(1-\lambda^2)}+\lambda(2(1-\mathsf{x})-\gamma^2).
    \end{align*}
    We estimate $f(\lambda)$ naively: 
    \begin{align*}
        f(\lambda)&=\frac{(\gamma(1+\lambda)-\alpha_0)^2}{1+\lambda}+\lambda(2(1-\mathsf{x})-\gamma^2)\\
        &=(\gamma-\alpha_0)^2+\lambda\gamma^2-\frac{\lambda}{1+\lambda}\alpha_0^2+\lambda(2(1-\mathsf{x})-\gamma^2)\\
        &=(\gamma-\alpha_0)^2+2\lambda(1-\mathsf{x})-\frac{\lambda}{1+\lambda}\alpha_0^2\\
        &\ge(\gamma-\alpha_0)^2+\lambda\Bigl(2(1-\mathsf{x})-{\frac{\alpha_0^2}{2}}\Bigr).
    \end{align*}
    Plugging that into our estimate of the mesoscopic part and using $\lambda_n=n/(\log1/r_0)$ results in
    \begin{align*}
        \int_{\Omega^t\cap B_{R(t)}(z_i)}\sum_{n=n(t)}^{N_t}\int_{A_n(z)}&\P^{**}\Bigl[{G_t(z)};{G_t(w)}\Bigr]e^{n\gamma^2}dwdz\\&\le t^{4\mathsf{x}\kappa(\alpha_0)+\epsilon(t)} R(t)^{2 (1-\mathsf{x})}\sum_{n\ge n(t)}t^{\kappa(\alpha_0)f(\lambda_n)}\\
        &\le t^{\kappa(\alpha_0)(4\mathsf{x}+(\gamma-\alpha_0)^2)+\epsilon(t)} R(t)^{2 (1-\mathsf{x})} \sum_{n\ge n(t)}t^{\kappa(\alpha_0)\lambda_n(2(1-\mathsf{x})-\alpha_0^2/2)}\\
        &=t^{2\Delta+\epsilon(t)} R(t)^{2 (1-\mathsf{x})} \sum_{n\ge n(t)}e^ {-n [ (2(1-\mathsf{x}) - \alpha_0^2/2]}\\
        &=t^{2\Delta+\epsilon(t)} R(t)^{4(1-\mathsf{x}) - \alpha_0^2/2} .
    \end{align*}
    This concludes the first part of the argument. \\

    \textbf{Microscopic case.} When $B_{r_z}(z)$ and $B_{r_w}(w)$ may overlap, the joint covariance structure of the circle-average processes $X'_{s_z}$ and $Y'_{s_w}$ becomes pointless. We therefore discard the constraint at $w$ and impose only a single good event for $z$. Heuristically, as $z$ and $w$ approach each other faster than the radius shrinks, the configuration behaves like a single ball with a double logarithmic insertion at (nearly) the same center. This substantially increases the cost of enforcing the circle average to be no more than $\alpha_0\log (1/r_0)$ (in fact what will be useful here more specifically will be condition $(C2)$). 
    This can be used to balance the Girsanov factor $\vert z-w\vert^{-\gamma^2}$ which otherwise leads to a blowup on the diagonal when $\gamma\in[\sqrt{2},2)$. 

    Let us start commenting on the event $(C1)\cap(C2)$ for some $z\in \Omega^t$ and $w\in A_n(z)$ such that $|z-w|<100t^{\kappa(\alpha_0)}$, i.e. $n>N_t$. As before, we can rephrase this in terms of the Brownian motion $X'_s:=h'_{e^{-s}}(z)$. Since $|z-w|\asymp e^{-n}$, this means that $s\mapsto h_{e^{-s}}(z)$ is a Brownian motion with two additional drifts. The first one is given by $\gamma s$  and comes from the $\gamma-\log$-singularity at $z$. The second one is  coming from the $\gamma-\log$-singularity at $w$, and is thus given by $\gamma s$ so long as $s <n$, whereas it is roughly zero when $s>n$. 
    
    
    Our strategy will thus be to estimate from above $G_t(z) \cap G_t(w)$ by $(C1)(z)$ and $(C2)(z)$ where we only enforce the condition $(C2)(z)$ at a scale corresponding to the separation between $z$ and $w$. 

In fact, we will distinguish an additional ``intermediate regime'', where $r_0 /2 \le |z-w| \le 100 r_0$. 
In this case, essentially $X'_s$ has drift $2\gamma$ until roughly $s=s_z$, so it will in fact suffice to only use $(C1)(z)$.
Reasoning as in the mesoscopic case (see \eqref{eq:mesoscopic case (C1) for z} with $\lambda=1$), but using only the condition $(C1)(z)$, we get
    \begin{align} \label{eq:microscopic case (C1) estimate}
        \P^{**}\Bigl[{G_t(z)};{G_t(w)}\Bigr]\le Cr_0^{\frac{(2\gamma-\alpha_0)^2}{2}},
    \end{align}
    where as before $r_0=t^{\kappa(\alpha_0)}$ and $C>0$ is uniform in $z,w$ and $z_i$. Thus, we estimate the contribution of the part $r_0/2\le\vert z-w\vert\le 100r_0$ by
    \begin{align}
        \int_{\Omega^t\cap B_{R(t)}(z_0)}&\int_{r_0/2\le\vert z-w\vert\le100r_0}\EE^{**}\Bigl[\textbf{1}_{G_t(z)}\textbf{1}_{G_t(w)}\Bigr]\vert z-w\vert^{-\gamma^2}dwdz \nonumber \\
        &\le C\int_{\Omega^t\cap B_{R(t)}(z_0)}\int_{r_0/2\le\vert z-w\vert\le100r_0}r_0^{\frac{(2\gamma-\alpha_0)^2}{2}}r_0^{-\gamma^2}dwdz \nonumber \\
        &\le C\int_{\Omega^t\cap B_{R(t)}(z_0)}r_0^{\frac{(2\gamma-\alpha_0)^2}{2}+2-\gamma^2}dz\\
        &\le C r_0^{\frac{(2\gamma-\alpha_0)^2}{2}+2-\gamma^2+2\mathsf{x}}. \label{eq:intermediate}
    \end{align}
    We want to show that this gives a larger exponent than $2\Delta/\kappa(\alpha_0)$. To prove this we rely on the following two elementary inequalities: 
\begin{equation}\label{eq:controlalpha0}
\begin{cases}
       \frac{\alpha_0^2}{2} < 2 (1- \sqrt{\mathsf{x}})^2\\
        2 (1- \sqrt{\mathsf{x}})^2 < 2(1-\mathsf{x}).
       \end{cases} 
\end{equation}
The second inequality is easier to check: we cancel the factors of two and take the square root of both sides (since they are positive) and we need to check that $ 1- \sqrt{x} < \sqrt{1-x}$, or, equivalently, $1 < \sqrt{x} + \sqrt{1-x}$, which is straightforward to check after squaring both sides.

For the first of the inequalities we observe that since $\alpha_0 = Q - \sqrt{Q^2 - 4 (1-\mathsf{x})}$ from \eqref{eq:alpha0}, 
$\alpha_0$ is maximized at $\gamma=2$ by monotonicity 
(as it is monotone decreasing in $Q$).
Hence $\alpha_0\le 2-\sqrt{4-4(1-x)}=2(1-\sqrt{x})$. Using that, we get
    \begin{align*}
        \frac{(2\gamma-\alpha_0)^2}{2}+2-\gamma^2+2\mathsf{x}> 4\mathsf{x}+(\alpha_0-\gamma)^2 = 2\Delta/\kappa(\alpha_0).
    \end{align*}
    Therefore, the contribution of the "intermediate regime" $r_0/2\le\vert z-w\vert\le 100r_0$ is negligible: that is, the right hand side of \eqref{eq:intermediate} is negligible compared to $t^{2\Delta}$. 
    
    Thus, it remains to treat the case $\vert z-w\vert\le r_0/2$. Fix $z,w$ such that $|z-w| \le r_0/2$, $w \in A_n(z)$. Then if $s \le n-1$, we have
     by \eqref{eq:h with two singularities}
    \begin{align} \label{eq:microscopic part}
        \Big| h_{e^{-s}}(z)- h'_{e^{-s}}(z)-2\gamma s\Big| \le C ,
    \end{align}
    for some explicit constant $C>0$. In other words, for points $y$ at distance $e^{-s}$ from $z$, $G(z,y) = s$, and $|G(w,y) - s| \le C$. In particular, applying this to $s = s_z \le n -1$, the condition $(C1)(z)$ implies
    \begin{align} \label{eq:microscopic part C1"}
        (C1)''&\qquad X'_{s_z}\in[(\alpha_0-2\gamma+o(1))s_z-C,(\alpha_0-2\gamma+o(1))s_z+C],
    \end{align}
    where the $o(1)$-terms vanishes when $t\to0$ and are uniform in $z,w,n$.


    
     We now take $s = n-1$ in \eqref{eq:microscopic part} and in condition $(C2)$ from \eqref{conditions} to obtain a condition on the Brownian motion $X'_s = h'_{e^{-s}}(z)$. However, it is more natural to phrase that condition in terms of the parameter $q\ge 0 $ defining Condition $(C2)$ in \eqref{conditions}, i.e., $(1+q) s_z = s$.

    \begin{align} \label{eq:microscopic part C2"}
        (C2)''\qquad X'_{s} & \le (\alpha_0 - \beta) \log (1/r_0) + s ( \beta - 2\gamma) + C \nonumber 
        \\ 
        & = (\alpha_0-2\gamma+o(1))s_z + (\beta-2\gamma+o(1))qs_z+C,
    \end{align}
    where $C>0$ is uniform in $z,w,n$ and $t$ and the $o(1)$-terms vanishes when $t\to0$ and is also uniform in $z,w,n$. So conditional on \eqref{eq:microscopic part C1"}, \eqref{eq:microscopic part C2"} implies
    \begin{align*}
        X'_{(1+q)s_z}-X'_{s_z}\le(\beta-2\gamma+o(1))qs_z+C,
    \end{align*}
    where the $o(1)$-terms vanishes when $t\to0$ and is uniform in $z,w,n$. Now, the left hand side is just an increment of a standard Brownian motion over an interval of duration $qs_z$. As $s_z=(1+o(1))\log (1/r_0)$, we have
    \begin{align*}
        \PP^{**}[(C2)\vert(C1)]\le \exp\Bigl(-\frac{(\beta-2\gamma+o(1))^2q^2(\log1/r_0)^2+o(1)}{2q\log1/r_0}\Bigr)=r_0^{\frac{(\beta-2\gamma)^2q}{2}+o(1)},
    \end{align*}
    where all $o(1)$-terms are uniform in $z,w,n$. As the probability of $(C1)''$ can be estimated as in \eqref{eq:microscopic case (C1) estimate}, we get in total
    \begin{align*}
        \PP^{**}\Bigl[(C1)\cap (C2)\Bigr]
        &\le r_0^{\frac{(2\gamma-\alpha_0)^2}{2}+o(1)}r_0^{\frac{(2\gamma-\beta)^2q}{2}+o(1)}.
    \end{align*}
    Now we are able to compute the contribution of the microscopic part. Using $$\vert A_n(z)\vert\le \vert B_{e^{-n}(z)}\vert\le \pi s_z^{2(1+q)} \le C r_0^{2 (1+q)},$$
    (recall that $q = q_n$ depends implicitly on $n$, since $s_z (1+q) = s= n-1$), 
    we get:
    \begin{align*}
        &\displaystyle\iint_{z,w \in \Omega^t:|z-w|< r_0/2}\P^{**}\Bigl[{G_t(z)};{G_t(w)}\Bigr]\vert z-w\vert^{-\gamma^2}dzdw\\
        &\le C\int_{\Omega^t}\sum_{n=\log1/r_0-O(1)}^{\infty}\PP^{**}\Bigl[(C1)(z)\cap(C2)(z)\Bigr]\vert A_n(z)\vert r_0^{-(1+q)\gamma^2}dz\\
        &\le\int_{\Omega^t}\sum_{n=0}^{\infty}r_0^{\frac{(2\gamma-\alpha_0)^2}{2}+o(1)}r_0^{\frac{(2\gamma-\beta)^2q}{2}+o(1)}r_0^{(2-\gamma^2)(1+q)}dz\\
        &\le \sum_{n=0}^{\infty}r_0^{2\mathsf{x}+\frac{(2\gamma-\alpha_0)^2}{2}+\frac{(2\gamma-\beta)^2q}{2}+(2-\gamma^2)(1+q)+o(1)}=\sum_{n=0}^{\infty}r_0^{f(q_n)+o(1)},
    \end{align*}
    where all $o(1)$-terms are uniform in $z,w,n$ and
    \begin{align*}
        f(q):=2\mathsf{x}+\frac{(2\gamma-\alpha_0)^2}{2}+\frac{(2\gamma-\beta)^2q}{2}+(2-\gamma^2)(1+q).
    \end{align*}
     One can easily check that $\alpha_0^2/2<2-2\mathsf{x}$.  To do so note that $\alpha_0$ is maximized at $\gamma=2$ which gives $2(1-\sqrt{\mathsf{x}})$ and then use $2(1-\sqrt{\mathsf{x}})^2<2-2\mathsf{x}$. Choose $\beta>\gamma$ so close to $\gamma$ and $\eps>0$ so small such that
    \begin{align*}
        2-\gamma^2+\frac{(2\gamma-\beta)^2}{2}> 2-\frac{\gamma^2}{2}-\eps,\qquad 2-\frac{\gamma^2}{2}-2\eps>0,\qquad 2-3\eps&\ge2\mathsf{x}+\frac{\alpha_0^2}{2}.
    \end{align*}
    Then, we get
    \begin{align*}
        2\mathsf{x}+&\frac{(2\gamma-\alpha_0)^2}{2}+\frac{(2\gamma-\beta)^2q}{2}+(2-\gamma^2)(1+q)\\
        &=2\mathsf{x}+\frac{(2\gamma-\alpha_0)^2}{2}-\frac{(2\gamma-\beta)^2}{2}+\Bigl(2-\gamma^2+\frac{(2\gamma-\beta)^2}{2}\Bigr)(q+1)\\
        &\ge2\mathsf{x}+\frac{(2\gamma-\alpha_0)^2}{2}-\frac{(2\gamma-\beta)^2}{2}+2-\frac{\gamma^2}{2}-2\eps+(1+q)\eps\\
        &\ge4\mathsf{x}+\gamma^2-2\gamma\alpha_0+\eps+\alpha_0^2+(1+q)\eps\\
        &=4\mathsf{x}+(\gamma-\alpha_0)^2+\eps+(1+q)\eps.
    \end{align*}
    As $r_0^{(1+q_n)\eps}=e^{-\eps n(1+o(1))}$, where the $o(1)$-term is uniform in $z,w$ and $n$, we get for $t>0$ small enough
    \begin{align*}
        \sum_{n=0}^{\infty}r_0^{f(q_n)+o(1)}\le\sum_{n=0}^{\infty}r_0^{4\mathsf{x}+(\gamma-\alpha_0)^2+\eps}e^{-\eps n(1+o(1))}=t^{2\Delta+\eps}\sum_{n=0}^{\infty}e^{-\eps n(1+o(1))}\le t^{2\Delta+\eps/2},
    \end{align*}
    which is negligible compared to the contribution coming from the mesoscopic part and thus finishes the proof.
\end{proof}


\subsection{Proof of Theorem \ref{T:heat}} \label{sec:2.3}

As already explained, by this point we know from Lemma \ref{lem:modified expectation}, Proposition \ref{thm:2moment} and the Paley--Zygmund inequality that 
with probability at least $t^{o(1)}$, $\Kb_{\Omega,z_i}(t)$ is bounded from below by $t^{\Delta + o(1)}$, whenever $z_i \in \partial_{R(t)}$. We need to find an argument to boost this probability (which could still tend to zero) to a probability of order 1. This is achieved using an argument with the flavour of a $0$-$1$ law.

More precisely, by Lemma \ref{lem:modified expectation}, Proposition \ref{thm:2moment} and Paley--Zygmund inequality, we have
\begin{align} \label{eq:paley-zygmund}
    a(t):=\PP\Bigr[\Kb_{\Omega, z_i}(t)\ge \E[\Kb_{\Omega, z_i}(t)]\Bigl]\ge t^{\epsilon(t)} R(t)^{\alpha_0^2/2},
\end{align}
where $\epsilon(t) \to 0$ as $t\to 0$, uniformly in $x_i \in \partial_{R(t)}$, and, crucially, \emph{uniformly in the choice of $R(t)$}.


Let us write $N(t) = \#\partial_{R(t)}$. By assumption \eqref{eq:assumption_lowerbound_local}, $N(t) \ge R(t)^{-2(1-\mathsf{x})+\delta}$ for all small enough $\delta$. By considering a smaller subset if necessary, we may assume without loss of generality that $N(t) \le 2 R(t)^{-2(1-\mathsf{x})+\delta}$ and thus $N(t) \to \infty$ STP.

\textbf{Almost independence.} First, we show that all the $\Kb_{\Omega, z_i}(t)$ are almost independent, by comparing their values to those associated with fields $h_i$ where we turn off the common harmonic parts in balls of radius $4R(t)$ around each $z_i$. 

More precisely, we use the Markov property on the union of the $N(t)$ disjoint balls $B_{4R(t)}(z_1),...,B_{4R(t)}(z_{N(t)})$. We may thus write:
\begin{align*}
    h=h^0+\displaystyle\sum_{i=1}^N h_i,
\end{align*}
where these terms are independent of one another, $h^0$ is harmonic on $\cup_{i=1}^{N(t)} B_{4R(t)}(z_i)$, and every $h_i$ is a Dirichlet GFF on $B_{4R(t)}(z_i)$. 
We first control the fluctuations and the overall value of the harmonic parts in each ball, showing it is not too large.

\begin{lem}\label{lem:G}
    We define the good event $G:=G_1\cap G_2$, where
\begin{align*}
    G_1&:=\Bigl\{\displaystyle\max_{1 \le i \le N(t)}\displaystyle\sup_{z\in B_{2R(t)}(z_i)}\vert h^0(z)-h^0(z_i)\vert\le\sqrt{\log(1/t)}\Bigr\},\\
    G_2&:=\Bigl\{\displaystyle\max_{1\le i \le N(t)}\vert h^0_{4R(t)}(z_i)\vert\le 10 \log(N(t))\Bigr\}.
\end{align*}
Then for some constant $C>0$, $\P[G^c]\le CN(t)^{-49} \to 0$ as $t\to0$.
\end{lem}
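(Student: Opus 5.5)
We bound $\P[G_1^c]$ and $\P[G_2^c]$ separately and take a union bound over $1\le i\le N(t)$. In both cases the key point is that the Markov decomposition gives, inside each ball $B_{4R(t)}(z_i)$, exactly the harmonic part appearing in Lemma \ref{lem:decaymean}, restricted to a single ball.

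\textbf{The event $G_2$.} For each fixed $i$, $h^0$ is harmonic on $B_{4R(t)}(z_i)$, so $h^0_{4R(t)}(z_i)=h_{4R(t)}(z_i)$ is the circle average of the whole-plane GFF. Since $\Omega\subset B_{1/2}(0)$ and $R(t)\to0$, this is a centred Gaussian with variance $\log(1/(4R(t)))+O(1)$, uniformly in $i$. Because $N(t)\ge R(t)^{-2(1-\mathsf{x})+\delta}$, we have $\log(1/R(t))\le C\log N(t)$ for a constant $C$ depending only on $\mathsf{x},\delta$. The Gaussian tail then gives
\[
\P\bigl[|h^0_{4R(t)}(z_i)|>10\log N(t)\bigr]\le 2\exp\Bigl(-\frac{100(\log N(t))^2}{2C\log N(t)+O(1)}\Bigr),
\]
which is $N(t)^{-k}$ for every $k$ once $N(t)$ is large. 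After the union bound over the $N(t)$ indices, this is much smaller than $N(t)^{-49}$.

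\textbf{The event $G_1$.} Fix $i$. We apply Lemma \ref{lem:decaymean} with $z_0=z_i$, radius $2R(t)$ and $N=2$, so that the Markov ball is $B_{4R(t)}(z_i)$. The function $u$ in that lemma is exactly $h^0$ on this ball: conditioning on the complement of the union of balls gives the same harmonic extension inside each ball, since the balls are disjoint. Equivalently, the proof of \eqref{eq:decay mean} only used the covariance structure of the harmonic extension from one circle. The lemma yields
\[
\P\Bigl[\sup_{B_{2R(t)}(z_i)}|h^0-h^0(z_i)|\ge\lambda\Bigr]\le c\,e^{-c'\lambda^2},
\]
with $c,c'$ independent of $R(t)$ and $i$, by scale invariance. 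Taking $\lambda=\sqrt{\log(1/t)}$ gives the bound $c\,t^{c'}$. Summing over $i$ gives $N(t)\,c\,t^{c'}$. Since $N(t)\to\infty$ STP, i.e.\ $N(t)\le t^{-\eta}$ for every $\eta>0$ eventually, this is at most $N(t)^{-49}$ for small $t$: take $\eta<c'/50$, so that $t^{c'}\le N(t)^{-50}$.

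\textbf{Conclusion and main obstacle.} Combining the two bounds gives $\P[G^c]\le CN(t)^{-49}$. The only delicate point is justifying that Lemma \ref{lem:decaymean} applies to $h^0$ restricted to one ball, even though $h^0$ is the harmonic extension for a union of balls. This holds because the Markov property on the union coincides with the Markov property on each ball for the harmonic component inside that ball. We also need the constants to be uniform in the scale $R(t)$, which follows from scale invariance of the full-plane GFF modulo constants; only increments and circle averages enter, and the latter were handled directly.
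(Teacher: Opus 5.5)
Your strategy is the paper's: a union bound over the $N(t)$ balls, Lemma~\ref{lem:decaymean} (with $r=2R(t)$, $N=2$) for $G_1$, and a Gaussian tail for the circle averages for $G_2$. Your $G_1$ argument is correct. It is in fact slightly sharper than the paper's, which only records $N(t)t^{c'}\to0$, whereas your choice $\eta<c'/50$ actually yields $cN(t)^{-49}$. Your identification of $h^0|_{B_{4R(t)}(z_i)}$ with the single-ball harmonic extension is also correct.

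The $G_2$ step, however, contains a false claim. With variance $v=\log(1/(4R(t)))\le C\log N(t)$ and threshold $10\log N(t)$, the Gaussian tail gives
\[
2\exp\Bigl(-\frac{100(\log N(t))^2}{2v}\Bigr)\le 2\exp\Bigl(-\frac{50}{C}\log N(t)\Bigr)=2N(t)^{-50/C}.
\]
This is a fixed power of $N(t)$, not $N(t)^{-k}$ for every $k$: one factor of $\log N(t)$ in the numerator is cancelled by the variance. After the union bound you get $2N(t)^{1-50/C}$. This is at most $2N(t)^{-49}$ only if $C\le 1$, i.e.\ only if $\log(1/(4R(t)))\le \log N(t)$, which you never check.

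From the lower bound $N(t)\ge R(t)^{-2(1-\mathsf{x})+\delta}$ you only get $C=(2(1-\mathsf{x})-\delta)^{-1}$. Since $\mathsf{x}\le 1/2$, this gives $C\le 1$ when $\mathsf{x}<1/2$ and $\delta\le 1-2\mathsf{x}$. For $\mathsf{x}=1/2$ (e.g.\ smooth boundaries) you only get $\P[G_2^c]\le 2N(t)^{-49+50\delta}$. So you must replace ``much smaller than $N(t)^{-49}$'' by this explicit computation.

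For comparison, the paper's own step $\exp(-100\log(N(t))^2/(2\log(1/(4R(t)))))\le \exp(-50\log N(t))$ implicitly uses the same comparison $\log N(t)\ge \log(1/(4R(t)))$. The corrected bound $\P[G^c]=o(N(t)^{-1})$ still suffices for the later use in the proof of Theorem~\ref{T:heat}, where only $\P[G^c]\to0$ and $N(t)\P[G^c]\to0$ are needed.
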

\begin{proof}
Let us start showing that $G_1$ has probability tending to $1$. By Lemma \ref{lem:decaymean} we get uniformly on all balls
\begin{align*}
    \PP\Bigl[\displaystyle\sup_{z\in B_{2R(t)}(z_i)}\vert h^0(z)-h^0(z_i)\vert\ge \lambda\Bigr]\le ce^{-c'\lambda^2},
\end{align*}
for $\lambda>1$ and constants $c,c'>0$. Using that, we conclude
\begin{align*}
    &\PP[G^c]=\PP\Bigl[\displaystyle\max_{1\le i \le N(t)}\displaystyle\sup_{z\in B_{2R(t)}(z_i)}\vert h^0(z)-h^0(z_i)\vert> \sqrt{\log (1/t)}\Bigr]\\
    &\le\displaystyle\sum_{i=1}^{N(t)}\PP\Bigl[\displaystyle\sup_{z\in B_{2R(t)}(z_i)}\vert h^0(z)-h^0(z_i)\vert>\sqrt{\log (1/t)}\Bigr]\\
    &\le cN(t)e^{-c'\log (1/t)}=N(t)t^{c'}\rightarrow0,
\end{align*}
as $t \to 0$, since $N(t)$ diverges to infinity slower than any polynomial in $t$. Hence $\P( G_1) \to 1$. 

Next, we need to control the circle averages $h^0_{4R(t)}(z_i)$. Using that $h^0_{4R(t)}(z_i)=h_{4R(t)}(z_i)$ is a centered Gaussian with variance $-\log(4R(t))$, we get
\begin{align*}
    \PP[G_2^c]&=\PP\Bigl[\displaystyle\max_{1\le i \le N(t)}\vert h^0(z)_{4R(t)}(\omega)\vert>10 \log(N(t))\Bigr]\\
    &\le\sum_{i=1}^{N(t)}\PP\Bigl[\vert h^0_{4R(t)}(z_i)\vert>10\log(N(t))\Bigr]\\
    &\le CN(t)\exp\Bigl(-\frac{100\log(N(t))^2}{2\log(\frac{1}{4R(t)})}\Bigr)\\
    &\le CN(t)\exp\Bigl(-50\log(N(t))\Bigr)\\
    &=CN(t)^{-49}\to 0,\qquad\textnormal{ as }t\to0.
\end{align*}
Thus, $\P(G_2) \to 1$ and the lemma is proved.
\end{proof}

We now want to compare the restricted heat content $\Kb_{\Omega, z_i}(t)$ to a version depending only on  $h_i$. We define
\begin{align} \label{eq:definition local independent version}
    \Kb_i^{h_i}(t):=\int_{\Omega^t\cap B_{R(t)}(z_i)}\Pb_z[\sigma^i_\Omega<t]\textbf{1}_{G_t}\mu_\gamma^{h_i}(dz),
\end{align}
where $\sigma^i_\Omega$ is the first hitting time of $\partial\Omega\cup\partial B_{2R(t)}(z_i)$ by a LBM associated to the Dirichlet GFF $h_i$ on $B_{4R(t)}(z_i)$ and $\mu_\gamma^{h_i}$ is the Liouville area measure associated to the Dirichlet GFF $h_i$ on $B_{4R(t)}(z_i)$. As $h_i$ are all independent, also the $\Kb_i^{h_i}(t)$ are independent of each other. 

\begin{lem}\label{lem:comparison_local}
    Let $G = G_1\cap G_2$ be as in Lemma \ref{lem:G}. There exists $\epsilon(t)\to 0$ such that the following holds for all choice of $R(t) \to 0$. 
    There exist random variables $(X_i(t))_{1\le i \le N(t)}$ such that $X_i(t) \ge 0 $ for all $1\le i \le N(t)$ and $\E( X_i(t) \mathbf{1}_G) \to 0$ FTP (uniformly over $z_i \in \partial_{R(t)}$). Furthermore, on the event $G$,
    \begin{align*}
    \Kb_{\Omega, z_i}(t)\ge \Kb_i^{h_i}(t^{1+\epsilon(t) })t^{\epsilon(t)} - X_i,
\end{align*}
where the $o(1)$-term is uniform in $i$ (independently of $R(t)$) and tends to 0 as $t \to 0$.
\end{lem}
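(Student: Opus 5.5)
Fix $i$ and work on $G$. On $B_{4R(t)}(z_i)$ we have $h = h_i + h^0 + \sum_{j\neq i} h_j$, and the $h_j$, $j\ne i$, vanish there since they are Dirichlet GFFs on disjoint balls. So on $B_{2R(t)}(z_i)$ the field is $h_i + h^0$, with $h^0$ harmonic. On $G$ we have, uniformly in $z\in B_{2R(t)}(z_i)$,
$$|h^0(z)| \le |h^0(z_i)| + \sqrt{\log(1/t)} \le 10\log N(t) + \sqrt{\log (1/t)} =: \eta(t)\log(1/t),$$
with $\eta(t)\to 0$. Indeed, $N(t)$ grows slower than any polynomial in $t$, so $\log N(t) = o(\log 1/t)$; and $h^0(z_i)$ equals the circle average $h^0_{4R(t)}(z_i)$ by harmonicity. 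It follows that on $B_{2R(t)}(z_i)$ the measures satisfy $\mu_\gamma \ge t^{\gamma\eta(t)}\mu_\gamma^{h_i}$. For Brownian paths staying in $B_{2R(t)}(z_i)$, the quantum clocks satisfy $\nu^{h}\le t^{-\gamma\eta(t)}\nu^{h_i}$.

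Next, compare exit times. Since $\sigma_\Omega \le \sigma^i_\Omega$ is false in general, I use the correct inclusion instead. The event $\{\sigma^i_\Omega < s\}$ splits into two cases. Either the Brownian path exits $\Omega$ before leaving $B_{2R(t)}(z_i)$, or it reaches $\partial B_{2R(t)}(z_i)$ first. In the first case,
$$\nu^h(\tau_\Omega) \le t^{-\gamma\eta(t)}\nu^{h_i}(\tau_\Omega \wedge \tau_{B_{2R}}).$$
Hence, choosing $s = t^{1+\epsilon(t)}$ with $\epsilon(t)\ge \gamma\eta(t)$, we get
$$\Pb_z[\sigma_\Omega<t] \ge \Pb^{h_i}_z[\sigma^i_\Omega < t^{1+\epsilon(t)}] - \Pb^{h_i}_z[\nu^{h_i}(\tau_{B_{2R(t)}(z_i)}) < t^{1+\epsilon(t)}].$$
The last term is where a point $z\in B_{R(t)}(z_i)$ must travel distance $R(t)\gg t^{\kappa}$ within time $t$.

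The indicator $\mathbf 1_{G_t}$ must also be compared. $G_t$ is defined through circle averages of $h$ at scales $r\le t^{\kappa(\alpha_0)}N$. These differ from those of $h_i$ by $h^0$ averages, i.e. by at most $\eta(t)\log(1/t)$. That shift is comparable to the window size $M$ in (C1), so the events are not literally nested. I would handle this by either of two routes:
\begin{itemize}
\item in $\Kb_i^{h_i}$, interpret $G_t$ as the event computed with the full field $h$, which is legitimate since it only needs to be a lower bound tied to the same region; or
\item enlarge the window in $\Kb_i^{h_i}$ to width $\eta(t)\log 1/t$. This changes first and second moments only by $t^{o(1)}$, as in Lemma \ref{lem:modified expectation} and Proposition \ref{thm:2moment}.
\end{itemize}
The $t^{\epsilon(t)}$ prefactor absorbs the measure comparison.

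Define
$$X_i := \int_{\Omega^t\cap B_{R(t)}(z_i)} \Pb^{h_i}_z\big[\nu^{h_i}(\tau_{B_{2R(t)}(z_i)}) < t^{1+\epsilon(t)}\big]\,t^{\epsilon(t)}\mu^{h_i}_\gamma(dz)\ \ge 0.$$
This gives the claimed inequality on $G$. To show $\E[X_i\mathbf 1_G]\to 0$ FTP, drop $\mathbf 1_G$ and apply Girsanov to $\mu^{h_i}$. This gives
$$\int \P^*_z\otimes P_z\big[\nu(\tau_{R(t)}(z))< t\big]\,dz.$$
By Corollary \ref{cor:scaling} together with Lemmas \ref{lem:tailbound} and \ref{lem:decaymean}, this is FTP small. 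The reason is that $R(t)$ is STP, so it exceeds $t^{\kappa(\alpha)(1-\delta)}$ for all $\alpha<Q$ uniformly, while large $\alpha$ is handled by the $\alpha\ge Q$ case and the Gaussian weight $f_{r,z}$, exactly as in the proof of Proposition \ref{prop:first moment Heat content fractal boundary}. The main obstacle is the bookkeeping of $G_t$ under the field change, together with the uniformity in $R(t)$. All bounds depend only on $\eta(t)$, which is uniform in $i$ by construction of $G$.
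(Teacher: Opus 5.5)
Your argument follows the paper's proof. You bound $h^0$ on $G$, compare $\mu_\gamma,\nu$ with $\mu_\gamma^{h_i},\nu^{h_i}$ up to factors $t^{\pm\gamma\eta(t)}$, and put the paths reaching $\partial B_{2R(t)}(z_i)$ into an FTP-small $X_i$. Your split on whether $\tau_\Omega<\tau_{B_{2R(t)}(z_i)}$ is more careful than the paper's bare claim $\sigma_\Omega=t^{o(1)}\sigma^i_\Omega$. Your $h_i$-measurable $X_i$ also makes $\E[X_i\mathbf 1_G]$ a clean Girsanov computation. The rooted field there is $h_i+\gamma G_{B_{4R(t)}(z_i)}(z,\cdot)$ rather than $\P^*_z$, but Lemma \ref{lem:scaling without} applied directly still gives the FTP bound, with no $\alpha$-decomposition needed.

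\textbf{The gap: the indicator $\mathbf 1_{G_t}$.} You flag it but do not resolve it. By the mean value property, $h_\rho(z)-(h_i)_\rho(z)=h^0(z)$ at every scale entering (C1)--(C2). On $G$ this is only bounded by $D:=\eta(t)\log(1/t)\ge 10\log N(t)$, and even its typical size $\sqrt{\log(1/4R(t))}$ tends to infinity. So it swamps the fixed width $M$ rather than being comparable to it, and the original windows nest in neither direction.
\begin{itemize}
\item Your route 2 enlarges the wrong side. A wider window for $h_i$ gives $G_t(h)\subseteq\tilde G_t(h_i)$ on $G$, which is the inclusion for the reverse comparison. The lemma needs $\mathbf 1_{\tilde G_t(h_i)}\le\mathbf 1_{G_t(h)}$.
\item Your route 1 is what the paper's computation tacitly does, and under that reading your proof is complete. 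But it is not ``legitimate'': $\Kb_i^{h_i}$ then depends on the common field $h^0$, so the $\Kb_i^{h_i}$ are no longer independent. That independence is the whole reason they are introduced; it gives the product \eqref{eq:heat_concl_prod}.
\end{itemize}

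\textbf{How to repair it.} To keep $\Kb_i^{h_i}$ a function of $h_i$ alone, enlarge on the full-field side instead. Keep the original $G_t$, computed from $h_i$, inside $\Kb_i^{h_i}$. Define $\Kb_{\Omega,z_i}$ with (C1) widened to $[\alpha_0\log(1/r)-2M-D,\ \alpha_0\log(1/r)-M+D]$ and the (C2) threshold raised by $D$. Then $G_t(h_i)\subseteq G^{\mathrm{enl}}_t(h)$ on $G$, and your argument yields the inequality. The cost is that Lemma \ref{lem:modified expectation} and Proposition \ref{thm:2moment} must be re-checked for the enlarged content. Since $e^{O(D)}$ is a power of $N(t)$, hence of $R(t)$, this is not automatically a harmless $t^{o(1)}$ given the balance \eqref{eq:R_N}.

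\textbf{Two smaller points.}
\begin{itemize}
\item Integrate $(a-b)_+$, not $a-b$, against $\mu_\gamma\ge t^{\gamma\eta(t)}\mu_\gamma^{h_i}$, since the difference of your two probabilities can be negative. With $(a-b)_+$ your $X_i$ works for any $\epsilon(t)\ge\gamma\eta(t)$.
\item Because $\eta(t)$ involves $\log N(t)$, your $\epsilon(t)$ depends on the choice of $R(t)$, so the uniformity you claim at the end is not obtained. The paper's factor $e^{\gamma(\sqrt{\log(1/t)}+10\log N(t))}$ has the same dependence.
\end{itemize}
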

\begin{proof}
Let us now have a look at what the good event means for $\Kb_i^{h_i}(t)$. On the event $G_1\cap G_2$ and in $B_{2R(t)}(z_i)$, we have $h=h^0+h_i$, where $\vert h^0\vert\le\vert h^0-h^0(z_i)\vert+\vert h^0(z_i)\vert\le \sqrt{\log(1/t)}+10\log(N(t))$. That means, the harmonic part shifts the Liouville area measure at most by a factor $e^{\gamma(\sqrt{\log1/t}+10\log(N(t)))}=t^{o(1)}$, i.e. for all $A\subseteq B_{2R(t)}(z_i)$
\begin{align*}
    \mu_\gamma(A)=\lim_{\eps\to0}\int_Ae^{\gamma h_\eps(z)}\eps^{\gamma^2/2}dz\le e^{\gamma(\sqrt{\log1/t}+10\log(N(t)))}\mu^{h_i}_\gamma(A)\le t^{o(1)}\mu^{h_i}_\gamma(A),
\end{align*}
Similarly, we get $\mu_\gamma(A)\ge t^{o(1)}\mu^{h_i}_\gamma(A)$. 
By a similar argument, we can deduce that, on $G$, we have
\begin{align*}
    \nu(t)=t^{o(1)}\nu^{h_i}(t),
\end{align*}
where $\nu^{h_i}$ denotes the time change given by \eqref{eq:quantum clock} with respect to the field $h_i$. By Lemma \ref{lem:scaling without}, we get that $\Pb_z[\sigma^i_{B_{2R(t)}(z_i)}<t]$ vanishes FTP in $t$, as $R(t)$ vanishes STP. Thus, we get
\begin{align*}
    \sigma_{\Omega}=t^{o(1)}\sigma^i_\Omega,
\end{align*}
as $t\to0$. Therefore, (still on the event $G$)
\begin{align*}
    \Kb_{\Omega, z_i}(t)&=\int_{\Omega^t\cap B_{R(t)}(z_i)}\Pb_z[\sigma_\Omega<t]\textbf{1}_{G_t}\mu_\gamma(dz)\\
    &=\int_{\Omega^t\cap B_{R(t)}(z_i)}\Pb_z[\sigma^i_\Omega t^{o(1)}<t]\textbf{1}_{G_t}\mu_\gamma(dz) - \int_{\Omega^t\cap B_{R(t)}(z_i)}\Pb_z[\sigma^i_\Omega <t, \sigma_{\Omega}>t]\textbf{1}_{G_t}\mu_\gamma(dz) \\
    &\ge\int_{\Omega^t\cap B_{R(t)}(z_i)}\Pb_z[\sigma^i_\Omega t^{o(1)}<t]\textbf{1}_{G_t}\mu^{h_i}_\gamma(dz)t^{o(1)} -\int_{\Omega^t\cap B_{R(t)}(z_i)}\Pb_z[\sigma_{R(t)}<t ]\mu_\gamma(dz)\\
    &=t^{o(1)}\Kb^{h_i}_i(t^{1+o(1))})  - X_i,
\end{align*}
where 
$$
X_i := \int_{\Omega^t\cap B_{R(t)}(z_i)}\Pb_z[\nu ( \tau_R(t)) <t ]\mu_\gamma(dz).
$$
Taking expectations, we see that 
\begin{align*}
    \E(X_i\mathbf{1}_{G}) & = \int_{\Omega^t\cap B_{R(t)}(z_i)} \E^*_z \Bigl[ \mathbf{1}_G \Pb_z [ \sigma_{R(t)}< t ]\Bigr] dz.   
\end{align*}
Arguing as in Corollary \ref{cor:scaling} (see Lemma \ref{lem:scaling without}), we see that on the event $G$, 
\begin{align*}
\Pb_z [ \sigma_{R(t)}< t ] & \le P_z [ \hat \sigma_1 < t R(t)^{-(2- \gamma^2/2)} e^{\gamma \sqrt{\log (1/t)}} N(t)^{10 \gamma} ]\\
& = P_z [ \hat \sigma_1 < t^{1+ o(1)}] 
\end{align*}
so that, using Lemma \ref{lem:tailbound},
\begin{align*}
    \E^*_z \Bigl[ \mathbf{1}_{G}\Pb_z [ \nu ( \tau_{R(t)} )< t ]\Bigr] & \le \P^*_z\otimes P_z [ \hat \sigma_1 < t^{1+ o(1)}] \le t^{K} 
\end{align*}
for any $K>0$ and for all $t$ sufficiently small, uniformly in $z \in B_{R(t)} (z_i)$ and $z_i \in \partial_{R(t)}$. Therefore, $\E( X_i \mathbf{1}_G) \le t^K$ for all $t$ sufficiently small. 
%
%
\end{proof}

We are now ready to finish the proof of Theorem \ref{T:heat}. 

\begin{proof}[Proof of Theorem \ref{T:heat}]
The upper bound on $\Kb_{\Omega}(t)$ follows from the first moment computation (Proposition \ref{prop:first moment Heat content fractal boundary}) and Markov's inequality. We therefore concentrate on the lower bound. Fix $\eps>0$. Note that by Lemma \ref{lem:comparison_local},
\begin{align*}
\P( \Kb_{\Omega} (t) \le t^{\Delta + \eps} ) & \le  \P ( \Kb_{\Omega,z_i} (t) \le t^{\Delta + \eps} , \text{ for all } 1\le i \le N(t))\\
& \le \P(G^c) + \P[t^{o(1)} \Kb^{h_i}_i(t^{1+ o(1)}) - X_i \le t^{\Delta + \eps} \text{ for all } 1\le i \le N(t) ; G]  .
\end{align*}
Note that by Markov's inequality, $\P( X_i \ge t^{\Delta +\eps} ;G) \le t^K$ for some $K>0$, so that by a union bound, and using the fact that $N(t) t^K \to 0$, 
$$
\P [ \max_{1\le i \le N(t)}  X_i > t^{\Delta + \eps}  ; G] \to 0. 
$$
Hence
\begin{align*}
    \P( \Kb_{\Omega} (t) \le t^{\Delta + \eps} ) & \le o(1) + \P [ \max_{1\le i \le N(t)} \Kb^{h_i}_i(t^{1+ o(1)}) \le 2 t^{\Delta + \eps + o(1)} ] 
\end{align*}
It therefore suffices to show: for all $\eps>0$,
\begin{align}\label{eq:goal_concl_heat}
\P [ \max_{1\le i \le N(t)} \Kb^{h_i}_i(t) \le  t^{\Delta + \eps } ] \to 0.
\end{align}
As all $\Kb_i^{h_i}$ are independent, we have
\begin{align}
\P [ \max_{1\le i \le N(t)} \Kb^{h_i}_i(t) \le  t^{\Delta + \eps } ] & =  \P[\Kb^{h_1}_1(t) \le  t^{\Delta + \eps }]^{N(t)} . 
\label{eq:heat_concl_prod}
\end{align}
On the other hand, by Lemma \ref{lem:comparison_local} again, for each $1\le i \le N(t)$,
\begin{align*}
     \P[\Kb^{h_i}_i(t) \le  t^{\Delta + \eps }] &\le \P(G^c) + \P(\Kb_{\Omega, z_i}(t^{1+ o(1)}) \le t^{\Delta + \eps + o(1)} )\\
     & \le N(t)^{-49}  + 1-t^{\epsilon(t)}R(t)^{\alpha_0^2/2}
\end{align*}
by \eqref{eq:paley-zygmund} and Lemma \ref{lem:G}. Plugging back into \eqref{eq:heat_concl_prod}, and using that $1- x \le e^{-x}$ for any $x \in \R$, we get
\begin{align*}
\P [ \max_{1\le i \le N(t)} \Kb^{h_i}_i(t) \le  t^{\Delta + \eps } ] & \le  [1- R(t)^{\alpha_0^2/2} t^{\epsilon(t)}+ N(t)^{-49} ]^{N(t)}\\
& \le \exp\left( - N(t) t^{\epsilon(t)} R(t)^{\alpha_0^2/2} + N(t)^{-48} \right) \to 0.
\end{align*}
In the last line, to conclude that the right hand side tends to zero, we used the fact that 
\begin{equation}\label{eq:R_N}
R(t)^{\alpha_0^2/2} t^{\epsilon(t)} N(t)  \to \infty.
\end{equation}
Let us justify why this holds. Indeed, given \eqref{eq:controlalpha0}, we have $$R(t)^{\alpha_0^2/2} N(t) t^{\epsilon(t)} \ge R(t)^{-\eta}\cdot  R(t)^{2 (1-\mathsf{x}) } N(t) t^{\epsilon(t)}=R(t)^{-\eta/2}t^{\eps(t)} \to \infty$$ 
for $\eta = 2(1- \mathsf{x}) - \alpha_0^2/2 >0$, and for the conclusion we used the fact that, in Assumption \ref{Assumption 2}, $N(t) \ge R(t)^{-2 ( 1- \mathsf{x}) + \zeta}$ with $\zeta = \eta/2$. (Note that this implicitly fixes the value of $\zeta$ to be used in the definition of $\partial_{R(t)}$ in Assumption \ref{Assumption 2}, as well as the choice of $R(t)$, taken so that $R(t)^{-\eta/2} \gg t^{ \epsilon(t)}$; this is possible as $\epsilon(t)$ does not depend on $R(t)$.) This completes the proof of \eqref{eq:R_N}. Altogether, the proof of Theorem \ref{T:heat} is therefore complete.  
\end{proof}

\begin{rem}
    The argument shows that it would have been sufficient to replace Assumption \ref{eq:assumption_lowerbound_local} by the following weaker bound 
    $N\ge  R^{ - 2(1- \sqrt{\mathsf{x}})^2 - \delta}$ for some $\delta>0$, or indeed the even weaker bound $N\ge R^{ - \alpha_0^2/2 - \delta}$ for some $\delta>0$. However, as this seems unnatural (compared to the assumption stated in \eqref{eq:assumption_lowerbound_local}, which just corresponds to the dimension of $\partial \Omega$) we left the assumption as such in Theorem \ref{T:heat}.
\end{rem}

\section{Heat trace asymptotics} \label{sec:3}

\label{S:proof_heat}

In this section we prove Theorem \ref{T:qc}, which deals with asymptotics of the (expected) heat trace 
$$
\mathbf{H}(t) = \int_\Omega \mathbf{p}^\Omega_t(x,x) \mu (dz).
$$
As already mentioned in the introduction, our starting point is the in-out decomposition \eqref{eq:decomposition}, which we recall here for convenience:
\begin{equation}\label{eq:decomposition2}
\mathbf{p}_t^\Omega (x,x) = \mathbf{p}_t^{\CC}(x,x) - \mathbf{p}_t^{\CC \setminus \Omega} (x,x) ,
\end{equation}
 where $\mathbf{p}_t^{\CC}(x,x)$ is the heat kernel in the full plane, and $\mathbf{p}_t^{\mathbb{C} \setminus \Omega} (x,x)$ can informally be viewed as the heat kernel from $x$ to $x$ in time $t$, restricted to trajectories that leave $\Omega$ before time $t$. In other words, 
$$
\mathbf{p}_t^{\CC \setminus \Omega} (x,x) = \mathbf{p}^{\CC}_t(x,x) \Pb^\gamma_{x \overset{t}{\to} x} (\sigma_\Omega < t)
$$
 $\Pb^\gamma_{x \overset{t}{\to} x}$ is the law of a $(\gamma-)$\textbf{Liouville Brownian loop} of duration $t>0$. By definition, the latter (and more generally the $\gamma$-Liouville Brownian bridge of duration $t$ from $x$ to $y$) is the unique reversible law on path space $C([0, t]; \CC)$ such that, for any $0 \le s < t$, 
 \begin{equation}\label{eq:defLBB}
\left(\frac{\mathrm{d}\Pb^\gamma_{x \overset{t}{\to} y}}{\mathrm{d} \Pb^\gamma_x } \right)|_{\mathcal{F}_s} = \frac{\pb_{t-s}^{\CC} (\mathbf{X}_s, y)}{\pb_t^{\CC}(x,y)}.
 \end{equation}
(Here, as before, $\Pb^\gamma_x$ is the law of a Liouville Brownian motion starting from $x$, and $\mathcal{F}_s = \sigma(\mathbf{X}_u, u \le s))$ is the canonical filtration on path space, and $\mathbf{X}_s (\omega) = \omega_s$ is the standard evaluation, or coordinate, map.) As already mentioned, the law of a Liouville loop or Liouville Brownian bridge can \emph{not} be simply described as the time-change of an ordinary Brownian bridge from $x$ to $y$. 


 Theorem \ref{T:heat} concerns the expected value of the heat trace. When we take the expectation in \eqref{eq:decomposition2}, we find
 \begin{equation}
     \E [ \Hb(t) ] = \int_\Omega \E^*_x [ \mathbf{p}_t^{\CC} (x,x) ] dx - \int_\Omega \E^*_x [ \mathbf{p}_t^{\CC \setminus \Omega} (x,x)] dx.
 \end{equation}
Our strategy will be to estimate these two terms (which we refer to as respectively the \textbf{bulk} and the \textbf{boundary} terms) separately:

\begin{itemize}
    \item (Bulk term). It is not hard to see that, for any $x \in \Omega$, (e.g., by adapting the proof of \cite{BW}) that $\E^*_x [ \mathbf{p}_t^{\CC} (x,x) ] \sim  \tfrac{c_\gamma}{t}$. In fact we will need something more quantitative, and show that the difference $t\E^*_x [ \mathbf{p}_t^{\CC} (x,x) ] - c_\gamma$ decays to 0 faster than any polynomial (FTP) as $t\to 0$.

    \item (Boundary term). On the other hand, the second term is (at least intuitively) related to the behaviour of the heat content. Indeed, roughly speaking, we can expect that 
    \begin{align*}
\E^*_x [ \mathbf{p}_t^{\CC \setminus \Omega} (x,x)] &\approx \E^*_x [ \mathbf{p}^{\CC}_t(x,x) \Pb_x( \nu(\tau_\Omega)< t) ]  \\
& \approx \text{const.} (1/t)\times  \P^*_x\otimes \Pb_x (\nu(\tau_\Omega) < t) 
    \end{align*}
    so that this second term should be roughly $(1/t) \E [ \Kb_\Omega(t)] = t^{ - (1- \Delta) + o(1)}$, by Theorem \ref{T:heat}. In making the above approximation, we identified the probability that a Liouville Brownian \emph{bridge} of duration $t$ leaves the domain $\Omega$, with the probability that a Liouville Brownian \emph{motion} leaves that domain in the same time, and we also pretended that the two terms in the expectation are independent. 
\end{itemize}

Combined together, these two asymptotics establish Theorem \ref{T:qc}. Although the above is quite convincing, making this precise requires some nontrivial arguments; in fact, we have to argue in a very indirect manner. 


Before starting the proof, we recall for future reference the ``bridge decomposition'', initially proved in \cite{RhodesVargas_spectral}, and which plays a crucial role in the analysis of the LQG heat kernel (playing a crucial role e.g. in \cite{HKPZ} and \cite{BW}). Indeed, while the heat kernel $\pb_t(x,y)$ is difficult to handle directly, the bridge decomposition gives a handle on \textbf{time-integrals} of the LQG heat kernel, relating it to the ordinary heat kernel. More precisely, the bridge decomposition is the following statement (see Theorem 3.4 in \cite{RhodesVargas_spectral}): if $G: [ 0, \infty) \to [0, \infty]$ is a nonnegative measurable function, then 
\begin{equation}\label{eq:bridge}
\int_0^\infty G(t) \pb_t(x,y) dt = \int_0^\infty \Eb_{x \overset{u}{\to} y} [ G( \nu(u))] p_u(x,y)du,
\end{equation}
where $p_u(x,y) = e^{ - |x-y|^2/(2u)} (2\pi u)^{-1}$ is the ordinary (Gaussian) heat kernel.

This is both natural (as Liouville Brownian motion is a time-change of ordinary Brownian motion, a fact which expresses the conformally invariant nature of Brownian motion in dimension two) and surprising, in that a Liouville bridge is \emph{not} the time-change of an ordinary Brownian bridge.

\subsection{Scale-invariance of the heat kernel on the quantum cone}

\label{SS:hkcone}

We start with some estimates which give a control on the $n$th moment of the (on-diagonal) heat kernel, showing in particular finiteness of this $n$th moment. The arguments will hinge on a delicate comparison between the heat kernel with respect to an environment given by a whole plane GFF (with a $\gamma$-log singularity at the origin), and a $\gamma$-quantum cone, respectively. Roughly speaking, the advantage of the former is that Gaussian computations are possible, while in the latter we can take advantage of some exact scale invariance properties, as will be seen later. 

We fix some notations. Let $h$ denote a realisation (under the law $\PP$) of a GFF on the whole plane, normalised so that it has zero unit circle average. For $x \in B_0(1/2)$, let $h^*_x = h + \gamma G(x, \cdot)$. Thus $h^*_x$ has law $\P^*_x$. When the reference point $x$ is clear we will simply write $h^*$. Associated to $h^*$ there is a heat-kernel, which we write as $\pb^*_t(z,w)$. To ease comparison with the case of the quantum cone below we recall the radial decomposition of this field (see Lemma 7.5 in \cite{BP}). Let $\Hrad(\CC)$ denote the completion, with respect to the ordinary Dirichlet inner product, of smooth functions with compact support in $\CC$, which are constant on each circle centered at the origin. Let $\Hcirc(\CC)$ denote the completion of those functions whose circle average on each circle centered at the origin is zero. We recall that we have the orthogonal decomposition $H^1_0(\CC) = \Hrad(\CC) \oplus \Hcirc(\CC)$. This leads to the independent decomposition of $h$:
$$
h = h_{\mathrm{rad}} + h_{\mathrm{circ}}
$$
where $h_{\mathrm{rad}}$ is a continuous function on $\CC$ which is radially symmetric, having the property that if $|z| =r = e^{-s}$ for some $s\in \R$ (thus $r>0$), then 
$$
h_{\mathrm{rad}} (z)= X_s;
$$
with $(X_s)_{s\in \R}$ a two-sided Brownian motion; that is, $X_0 = 0$, and $(X(s))_{s \in [0, \infty)}$ and $(X(-s))_{s \in [0, \infty)}$ two independent standard Brownian motions. Note that if $x = 0$ then $h^* = h^*_x$ has a similar description $h^* = h^*_{\mathrm{rad}} + h^*_{\mathrm{circ}}$, with radial part given by $h^*_{\mathrm{rad}}(z) = X^*_s$ whenever $|z| = e^{-s}$, 
\begin{equation}
    \label{eq:X*}
    X^*_s =X_s + \gamma (s\vee 0); \quad s\in \RR. 
\end{equation}
In order to make the comparison with the quantum cone easier, it is easier to parametrise the punctured plane $\CC \setminus\{0\}$ by the cylinder $\mathcal{S} = \RR \times (0, 2\pi)$ via the map $z \in \cS \mapsto e^{-z}$ (thereby identifying the lines $\{ \Im (z) = 0\}$ with $\{\Im(z) = 2\pi\}$). Applying the LQG change of coordinates, the fields $h$ and $h^*$ become, respectively, 
\begin{align}
\begin{cases}
\hrad(z) &= X_s - Qs  \\
\hrad^*(z) &= X^*_s - Qs = X_s - (Q - \gamma  1_{s \ge 0})s, 
\end{cases}
\quad z \in \cS, \Re(z) = s.
\end{align}
where, with a slight abuse of notation, we employ the same notation in for $\hrad$ in $\cS$ and in $\CC\setminus\{0\}$, and $X_s$ is a two-sided Brownian motion. Their circular part $\hcirc = \hcirc^*$  is independent of their radial part and can be chosen to be identical in both cases.

Now we carefully introduce the $\gamma$-quantum cone, and more generally the $\alpha$-quantum cone (for $0\le \alpha < Q$, corresponding to the ``thick'' regime), a field which we will denote by $\tilde h$, or $\tilde h^\alpha$ if we want to emphasise the dependence on $\alpha$. We again parameterise the punctured plane by $\cS$, and decompose it into a radial and angular components, $\tilde h = \thrad + \thcirc$. By definition of $\tilde h$,  $\thrad$ and $\thcirc$ are independent, with $\thcirc = \hcirc$ in law. As for the radial part, 
\begin{align}\label{eq:hrad_cone}
    \thrad (z) =: \tilde X_s - (Q-\alpha)s, \quad z \in \cS; \Re(z) = s
\end{align}
where $(\tilde X_s)_{s\ge 0}$ and $(\tilde X_{s})_{s\le 0}$ are independent, with laws given respectively by a Brownian motion motion, and another Brownian motion conditioned so that $\tilde X_{|s|} + (Q - \alpha) |s| \ge 0$ for all $s\le 0$. 

\textbf{Notations.} It will sometimes be useful to distinguish the different underlying environments for the heat kernel (be it the quantum cone $\tilde h$ or the whole-plane GFF $h^*$) either by emphasising this at the level of random variables (leading to heat kernels written respectively as $\tilde \pb_t(x,x)$ and $\pb^*_t(x,x)$) or at the level of the underlying laws for the field, which are denoted by $\tilde \P$ and $\P^*_x$ respectively. In the latter case (i.e., when the law is explicitly specified, say in an expectation) we simply write $\pb_t(x,x)$.



Our first observation is that, on the quantum cone, there is an \emph{exact} form of scale-invariance for the heat kernel, which leads to extremely useful scaling relations. For instance, we will see that we have the \emph{exact} relation:
$$
\EE [\mathbf{p}_t^{\cC}(x,x)] = \frac{c_\gamma}{t},
$$
Although scale-invariance for the area and boundary measures of the quantum cone is well known (see \cite{DuplantierMillerSheffield, BP}) the remarkable consequences that this has for the heat kernel seem to not have been noticed before. We believe this technique will have other uses in the future, so this step is interesting in its own right. More generally, the result below is valid for the notion of $\alpha$-quantum cone, although we will need it in this paper only for the case $\alpha = \gamma$.

\begin{prop}
    \label{P:scale_hk}
    Let $\alpha \in [0,Q)$  and let $\tilde h = \tilde h^\alpha$ be an $\alpha$-quantum cone, with associated heat kernel $\tilde \pb_t(z,w) = \tilde \pb^\alpha_t(z,w)$. Let $x = 0$. Then 
    \begin{equation}
\tilde \pb^\alpha_t(x,x) \overset{d}{=} \frac1t \tilde \pb^\alpha_1(x,x).        
    \end{equation}
\end{prop}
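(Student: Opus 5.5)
The idea is to combine two facts. First, the heat kernel transforms explicitly under adding a constant to the field. Second, the law of the $\alpha$-quantum cone, viewed as a quantum surface marked at $0$ and $\infty$, is invariant under adding a constant. Both the Liouville measure and the Liouville Brownian motion are intrinsic to the quantum surface, that is, they respect the LQG coordinate change. So the on-diagonal heat kernel at the marked point $0$ is a function of the surface alone, and we can compare $\tilde h$ with $\tilde h + c$ for a well-chosen constant $c$.

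\emph{Step 1: adding a constant.} Fix $c\in\RR$ and set $h^c := \tilde h + c$. Then $\mu^{h^c} = e^{\gamma c}\mu^{\tilde h}$, and for the same underlying planar Brownian motion the quantum clock satisfies $\nu^{h^c}(s) = e^{\gamma c}\nu^{\tilde h}(s)$. Hence $\mathbf{Z}^{h^c}_u = \mathbf{Z}^{\tilde h}_{u e^{-\gamma c}}$. The law of $\mathbf{Z}^{h^c}_u$ started at $x$ is therefore that of $\mathbf{Z}^{\tilde h}_{u e^{-\gamma c}}$. Taking Radon--Nikodym derivatives with respect to $\mu^{h^c} = e^{\gamma c}\mu^{\tilde h}$ gives
\[
\pb^{h^c}_u(x,y) = e^{-\gamma c}\, \pb^{\tilde h}_{u e^{-\gamma c}}(x,y)
\]
for $\mu$-a.e.\ $y$, and then for all $y$ by joint continuity of the chosen modification. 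Choosing $e^{\gamma c} = t$ yields $\pb^{h^c}_t(0,0) = \tfrac1t\, \pb^{\tilde h}_1(0,0)$.

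\emph{Step 2: coordinate change.} Let $\phi(z) = Rz$ with $R>0$, and put $\hat h = h^c\circ\phi + Q\log R$. I would check that $\pb^{\hat h}_t(0,0) = \pb^{h^c}_t(\phi(0),\phi(0)) = \pb^{h^c}_t(0,0)$. This follows from the coordinate-change rules for LQG:
\begin{itemize}
\item the pushforward of $\mu^{\hat h}$ under $\phi$ is $\mu^{h^c}$;
\item the image under $\phi$ of the Liouville Brownian motion of $\hat h$ started at $0$ is the Liouville Brownian motion of $h^c$ started at $0$, with the same time parametrisation.
\end{itemize}
The second rule holds because planar Brownian motion is conformally invariant up to a time change $|\phi'|^2$, and this is exactly compensated by the $Q\log|\phi'|$ term together with the $\eps^{\gamma^2/2}$ renormalisation, in which the radius also gets rescaled. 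Taking Radon--Nikodym derivatives then gives equality of the heat kernels. I would state this as a small lemma proved at the level of the regularised clocks, in the same way as the computation in Lemma \ref{lem:scaling without}.

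\emph{Step 3: invariance of the cone.} Choose $R = R_c$ to be the largest $r$ such that the circle average of $h^c$ satisfies $h^c_r(0) + Q\log r = 0$, i.e. $\tilde h_r(0) + Q\log r = -c$. In the cylinder coordinates of \eqref{eq:hrad_cone}, this is the last time $s$ at which $\tilde X_s - (Q-\alpha)s$ hits the level $-c$. By the strong Markov property and the standard path decomposition of Brownian motion with drift at its last hitting time, the process re-centred at that time has exactly the law described in \eqref{eq:hrad_cone}:
\begin{itemize}
\item after the last hitting time it is an unconditioned drifted Brownian motion;
\item before it, it is a drifted Brownian motion conditioned to stay on one side.
\end{itemize}
The lateral part $\thcirc$ is translation invariant on $\cS$ and independent of the radial part, so it is unaffected. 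Hence $\hat h \overset{d}{=} \tilde h$; this is the standard invariance of quantum cones, see \cite{DuplantierMillerSheffield} or \cite{BP}, which I would simply cite. Combining the three steps gives
\[
\tilde\pb_1(0,0) \overset{d}{=} \pb^{\hat h}_t(0,0) = \pb^{h^c}_t(0,0) = \tfrac1t\,\tilde \pb_1(0,0),
\]
after replacing $t$ by its reciprocal role, i.e. this is the claimed identity $\tilde\pb_t(0,0)\overset{d}{=}\tfrac1t\tilde\pb_1(0,0)$.

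The main obstacle I expect is Step 2 together with measurability. The heat kernel is defined only as a density, with a continuous modification taken from \cite{MRVZ}, and $R_c$ is random and depends on $\tilde h$. One needs the identity $\pb^{\hat h} = \pb^{h^c}\circ\phi$ to hold simultaneously for all $R$, almost surely. I would handle this by proving the identity for each fixed deterministic $R$, then extending to all $R$ by continuity in $R$ (or by restricting first to a countable dense set of $R$). Since $\tilde h$ is only locally absolutely continuous with respect to a GFF, the existence and joint continuity results of \cite{MRVZ} also need to be transferred to the cone, which I would do by local absolute continuity.
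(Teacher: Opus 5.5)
Your proposal is correct and follows essentially the paper's proof: Step 1 is exactly the paper's computation (the area measure and the quantum clock both scale by $e^{\gamma c}$, which gives $\pb^{\tilde h + c}_t(0,0)=\tfrac1t\,\tilde\pb_1(0,0)$ when $e^{\gamma c}=t$), and Steps 2--3 spell out the invariance of the cone under adding a constant as a quantum surface, together with the LQG-invariance of the heat kernel, which the paper cites (Theorem 7.18 in \cite{BP}) and asserts in one line. There are two harmless slips to fix: the final chain should read $\tilde\pb_t(0,0)\overset{d}{=}\pb^{\hat h}_t(0,0)=\pb^{h^c}_t(0,0)=\tfrac1t\,\tilde\pb_1(0,0)$, and in the paper's cylinder convention (where $\thrad\ge 0$ for $s\le 0$) your ``largest $r$'' is the \emph{first}, not the last, hitting time of $-c$ in increasing $s$, which matches your own description of the path decomposition.
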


\begin{proof}
The idea is simply to exploit the scale-invariance of the quantum cone, which we first recall. To do so it is useful to indicate the explicit dependence of the area measure $\mu = \mu_{\tilde h}$ with respect to a specific realisation of the quantum cone, $\tilde h$. The scale-invariance of the quantum cone (Theorem 7.18 in \cite{BP}) states that the quantum surface described by $(\tilde h +C, \CC, 0, \infty)$ is identical in law to that described by $(\tilde h, \CC, 0, \infty)$, for any $C \in \RR$. Thus $\tilde h + C$ is another quantum cone (in an embedding which is not the usual ``unit circle embedding'', but embeddings are irrelevant as all quantities of interest here are LQG invariants). Note that 
$$
\mu_{\tilde h} (A) = e^{-\gamma C} \mu_{\tilde h+C}(A)
$$
for any Borel set $A$. Note also that if $(B_t, t \ge 0)$ is a fixed two-dimensional standard Brownian motion, then the quantum clocks with respect to $\tilde h$ and $\tilde h+C$ satisfy a similar relation:
$$
\nu_{\tilde h+C, B} (s) = e^{\gamma C} \nu_{\tilde h, B}(s); \quad  s \ge 0.
$$
Thus the corrsponding Liouville Brownian motions $Z = Z^{\tilde h,B}(s)$ associated to $\tilde h$ and Brownian path $B$ satisfy:
$$
Z^{\tilde h, B}(t) = Z^{\tilde h+C, B}(t e^{\gamma C}), \text{ for all } t\ge 0.
$$
Let $t>0$, and let us fix $C = \log(1/ t) / \gamma$, so $e^{\gamma C} = 1/t$. 
It follows that (using continuity of the heat kernel with respect to the space variable and the dominated convergence theorem)
\begin{align*}
\pb^{\tilde h}_t(x,x) &= \lim_{\eps \to 0}     \frac{\Pb_x (Z^{\tilde h,B}(t) \in B_x(\eps))}{\mu_{\tilde h} ( B_x(\eps))}\\
& = \lim_{\eps \to 0} \frac{\Pb_x (Z^{\tilde h+C,B}(1) \in B_x(\eps))}{t\mu_{\tilde h+C} ( B_x(\eps))}\\
& = \frac1t \pb^{\tilde h+C}_1(x,x),
\end{align*}
as desired. 
\end{proof}

Before continuing with the rest of the proof of Theorem \ref{T:heat} we illustrate the interest of this idea by resolving a conjecture, stated in \cite{BW}, about the annealed convergence of the heat kernel in $\Omega$, $\pb^{\Omega}_t(x,x)$ (i.e., with Dirichlet boundary conditions in $\Omega$) under $\P^*$. The conjecture was in fact stated for a Dirichlet GFF in $\Omega$ under the measure $\P^*$, rather than a full plane GFF under the measure $\P^*$, but as will be clear below this difference in boundary conditions of the field has no impact on the result and its proof.


\begin{cor}\label{cor:limit-law}
Consider the random variable $\pb^\Omega_t(x,x)$ under the law $\P^*_x$, where $x\in \Omega$ is arbitrary. Then 
\begin{equation}
t\pb^\Omega_t(x,x) \to X   
\end{equation}
in distribution as $t\to 0$. The random variable $X$ has the law of $\tilde \pb_1(0,0)$. 
\end{cor}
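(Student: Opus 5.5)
The plan is to combine the exact scale invariance of Proposition \ref{P:scale_hk} with the local convergence of the rooted field $h^*_x$ to a $\gamma$-quantum cone when we zoom in at $x$. Only a small neighbourhood of $x$ matters for $\pb^\Omega_t(x,x)$ when $t$ is small.

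The first step is to localise. Fix a small ball $B_\delta(x)\subset\Omega$. We compare $\pb^\Omega_t(x,x)$ with $\pb^{B_\delta(x)}_t(x,x)$; by domain monotonicity the difference is nonnegative. By the bridge-type decomposition (the in-out decomposition \eqref{eq:decomposition2} with $\Omega$ replaced by $B_\delta(x)$), this difference equals $\pb^\Omega_t(x,x)$ times the probability that a Liouville loop of duration $t$ exits $B_\delta(x)$. To show this is $o(1/t)$ in probability, I would use the Chapman--Kolmogorov bound $\pb_t(x,x)\le \sup_y \pb_{t/2}(x,y)$ together with the FTP exit estimates from Corollary \ref{cor:scaling} and Lemma \ref{lem:tailbound}. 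These show that a Liouville Brownian motion started at $x$ under $\P^*_x$ leaves $B_\delta(x)$ before time $t$ with probability decaying FTP. Reversibility handles the second half of the loop. I expect this step to be the main technical obstacle, because the Liouville bridge is not a time-changed Brownian bridge. If the direct route fails, I would fall back on the time-integrated bridge decomposition \eqref{eq:bridge} with test functions $G$ concentrated near $t$, and then use continuity in $t$.

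The second step is to zoom in. By the Markov property on $B_\delta(x)$, write $h^*_x=h^{\delta}+u+\gamma G(x,\cdot)$. Then rescale by the random radius $r_t$ at which the radial part reaches the quantum-cone normalisation: take the first $s$ such that $X^*_s - Qs$ in the cylinder coordinates drops to level $\frac1\gamma\log t$, i.e.\ quantum area of order $t$. By the standard convergence of the rooted field to the $\gamma$-quantum cone (Theorem 7.11 in \cite{BP}), the field $h^*_x(x+r_t\cdot)+Q\log r_t-\frac{1}{\gamma}\log(1/t)$ converges in law, in total variation on compact sets, to $\tilde h^\gamma$. The key point is that this is realised by a coupling in which the two fields agree exactly on large balls with probability tending to one. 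This coupling works because the radial part after the hitting time is a Brownian motion with drift, matching \eqref{eq:hrad_cone} for $s\ge0$, and the region $s<0$ corresponds to the far field, which is irrelevant after localisation. By the LQG coordinate change and the time-scaling identity used in the proof of Proposition \ref{P:scale_hk}, $t\,\pb^{B_\delta(x)}_t(x,x)$ equals the time-one heat kernel at $0$ of the rescaled field killed on a ball of radius $\delta/r_t\to\infty$.

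The third step is to pass to the limit. On the coupling event, that killed heat kernel coincides with the cone heat kernel $\tilde\pb_1$ killed on exiting a large ball $B_K(0)$. Applying the first step again, now on the cone with Proposition \ref{P:scale_hk}, $\tilde\pb_1^{B_K}(0,0)\to\tilde\pb_1(0,0)$ in probability as $K\to\infty$. Combining the three steps gives $t\,\pb^\Omega_t(x,x)\Rightarrow\tilde\pb_1(0,0)$.
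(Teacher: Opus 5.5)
Your outline matches the paper's own short justification of this corollary (exact scaling of the cone, total-variation convergence of the zoomed field to the cone, locality of $\pb_t(x,x)$). However, the locality step, which the paper only makes rigorous in Lemma~\ref{L:comparison}, is where your argument has a genuine gap.

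In Step~1 the loop estimate is not the real difficulty. The density $\pb_{t/2}(\cdot,x)/\pb_t(x,x)$ of the first half of the loop, reversibility, and $\sup_y\pb_{t/2}(y,x)=\pb_{t/2}(x,x)$ give at once $\pb^{\Omega}_t(x,x)-\pb^{B_\delta(x)}_t(x,x)\le 2\,\Pb_x(\sigma_{B_\delta(x)}<t/2)\,\pb_{t/2}(x,x)$, exactly as in \eqref{eq:bridge_far}. The difficulty is the factor $\pb_{t/2}(x,x)$. The FTP decay of the exit probability yields $o(1/t)$ only once you have an a priori upper bound on $\pb_{t/2}(x,x)$, and you never supply one. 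In the paper this input is Proposition~\ref{prop:finite moments_new}, $\E^*_x[\pb_{t}(x,x)^2]^{1/2}\le t^{-1+o(1)}$, proved via the bridge decomposition \eqref{eq:bridge} and Lemmas~\ref{L:negmoments} and \ref{L:posmoments}. It is used together with Cauchy--Schwarz and Lemma~\ref{lem:tailbound}.

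Step~3 has a more serious problem. On the coupling event the rescaled field agrees with the cone only on $B_K(0)$, but it is killed on $B_{\delta/r_t}(0)\supsetneq B_K(0)$. So its heat kernel does not coincide with $\tilde\pb^{B_K}_1(0,0)$; you only get the lower bound $\bar\pb^{B_{\delta/r_t}}_1(0,0)\ge\tilde\pb^{B_K}_1(0,0)$.

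For the upper bound you must control loops of the rescaled GFF field, not of the cone, that leave $B_K(0)$. Applying Step~1 to the cone only gives the trivial monotone limit $\tilde\pb^{B_K}_1(0,0)\uparrow\tilde\pb_1(0,0)$. The loops in question contribute at most $2\,\bar\Pb_0(\sigma_{B_K}<1/2)\,\bar\pb_{1/2}(0,0)$. Here the exit probability is small only as $K\to\infty$, not as $t\to0$, and $\bar\pb_{1/2}(0,0)=t\,\pb^{B_\delta(x)}_{t/2}(x,x)$. You would therefore need tightness of $t\,\pb_{t/2}(x,x)$, which is essentially the statement itself. At fixed $K$, even the $t^{-1+o(1)}$ moment bound is not enough.

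Relatedly, the region $s<0$ of your rescaled cylinder is not an irrelevant far field. It consists of the annuli between radii $r_t$ and $\delta$, which the time-one heat kernel does see.

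The paper avoids this by not zooming at all. It couples $h^*_x$ with a $\gamma$-quantum cone so that they agree on a \emph{fixed} ball $B$ of radius $1/2$ around $x$. Loops leaving $B$ by time $t$ then have FTP-small probability, so the crude moment bound suffices to make $\E|\pb^*_t(x,x)-\tilde\pb_t(x,x)|$ FTP small. The exact identity $t\tilde\pb_t(x,x)\overset{d}{=}\tilde\pb_1(0,0)$ from Proposition~\ref{P:scale_hk} then does the zooming for free, and Remark~\ref{rem:comparison2} handles $\pb^\Omega_t$ versus $\pb^*_t$.

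A minor point: after rescaling, the constant you add should be $+\frac1\gamma\log(1/t)$, so that quantum areas are multiplied by $1/t$.
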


\begin{proof}
This follows directly from Proposition \ref{P:scale_hk}, the local convergence of the pointed quantum surface $(h+C, x)$ as $C \to \infty$ to a $\gamma-$quantum cone as $C\to \infty$ (in the sense of total variation, say -- see Theorem 7.18 in \cite{BP}), and the fact that $\mathbf{p}_t(x,x)$ is measurable with respect to the local environment near $x$ when $t \to 0$. Details are left to the reader; alternatively, we provide a complete argument (which yields quantitative bounds on the error) in Lemma \ref{L:comparison}.
\end{proof}

\subsection{Heat kernel moments}

\label{SS:moments}
\begin{prop}\label{prop:finite moments_new}
Let $x \in B_0(1/2)$. For all $n\ge 1$ and for all $\eps>0$ there exists $c_n$ such that,
under the rooted law $\P^*_x$ we have, for all $0<t<1$,
\begin{align*}
\EE^*_x\bigl[\bigl(\pb_t(x,x)\bigr)^n\bigr]\le c_n (1/t^{1+ \eps})^n.
    \end{align*}
    In particular, $\EE^*_x\bigl[\bigl(\pb_t(x,x)\bigr)^n\bigr]^{1/n} \le t^{-1 + o(1)}$ as $t\to 0$ (and is finite).

    A similar, more precise estimate holds in the case of a quantum cone: there exists $\tilde c_n \in (0, \infty)$ such that, for $x = 0$, we have the identity
    \begin{equation}
\tilde \EE [ (\pb_t(x,x))^n] = c_n (1/t)^{n}. 
    \end{equation}
    In particular, $\tilde \EE [ (\pb_t(x,x))^n]< \infty$ for all $n\ge 0$ and for all $t>0$, and $x = 0$. 
\end{prop}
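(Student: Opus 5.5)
The plan is to bound the on-diagonal heat kernel by an averaged quantity that the bridge decomposition \eqref{eq:bridge} can handle. Then we take annealed moments using the negative-moment control from Lemma \ref{lem:tailbound} and Corollary \ref{cor:finite negative moment of time change}.

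\textbf{Step 1: reduction to time integrals.} By the semigroup property and Cauchy--Schwarz, $s\mapsto \pb_s(x,x)$ is nonincreasing. Hence
\[
\pb_t(x,x)\le \frac{2}{t}\int_{t/2}^{t}\pb_s(x,x)\,ds\le \frac{C}{t}\int_0^\infty e^{-s/t}\pb_s(x,x)\,ds .
\]
Applying \eqref{eq:bridge} with $G(s)=e^{-s/t}$ gives
\[
\pb_t(x,x)\le \frac{C}{t}\int_0^\infty \Eb_{x\overset{u}{\to}x}\bigl[e^{-\nu(u)/t}\bigr]\frac{du}{2\pi u}.
\]
The $n$th moment of the right-hand side is expanded as an $n$-fold integral over $u_1,\dots,u_n$ of $\E^*_x\otimes\bigotimes_i E_{x\to x}^{u_i}\bigl[\prod_i e^{-\nu_i(u_i)/t}\bigr]$. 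For large $u_i$ the loop leaves a fixed ball, and the clock is then at least the exit clock. For small $u_i$ we use scaling: a loop of duration $u$ reaches Euclidean distance about $\sqrt u$. By Lemma \ref{lem:scaling without}, its clock is then at least $u^{(2-\gamma^2/2)/2}\,e^{\gamma\underline u}\hat\sigma$, which under $\P^*_x$ includes the $\gamma$-log singularity.

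\textbf{Step 2: bounding the product.} We use Hölder's inequality to decouple the $n$ loops, reducing to single-loop estimates of the form $\E^*_x\otimes E[e^{-n\nu(u)/t}]$. Each such term is bounded by the probability that the clock is below $t^{1-\eps}$, plus $e^{-nt^{-\eps}}$. The small-clock probability is controlled by the log-normal tail of Lemma \ref{lem:tailbound}, applied to the loop through its first half, which has Brownian-motion-like law up to absolute continuity. The harmonic part is controlled by Lemma \ref{lem:decaymean}. Together with the Gaussian circle average at scale $\sqrt u$, these show the relevant region is $u\lesssim t^{\kappa}$ with $\kappa=1/(1-\gamma^2/4)$ up to $t^{o(1)}$ factors, accounting for thick points. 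The integral $\int du/u$ over $u\in[t^{\kappa+\eps'},1]$ yields only logarithmic factors, which are absorbed into $t^{-\eps}$. The tail region $u<t^{\kappa+\eps'}$ is handled crudely using $\int_0 du/u\,(\cdots)$ together with the FTP smallness of the clock-small event. This gives $\E^*_x[\pb_t^n]\le c_n t^{-n(1+\eps)}$.

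\textbf{Step 3: the quantum cone.} Proposition \ref{P:scale_hk} gives $\tilde\E[\pb_t(0,0)^n]=t^{-n}\tilde\E[\pb_1(0,0)^n]$ exactly, so it only remains to show finiteness at $t=1$. For this we compare the cone with $h^*_0$. Near $0$ (inside the unit disc) the two laws of the radial part agree for $s\ge0$. Outside, the cone's radial part is conditioned, which only increases the field and hence the clock, so heat kernels are smaller up to the comparison of area measures. More concretely, we rerun Steps 1–2 directly for the cone field, where Lemma \ref{lem:tailbound} still applies because the field in $\D$ is a GFF plus $\gamma$-log plus an independent harmonic part. This gives $\tilde\E[\pb_1^n]<\infty$, and the identity with $\tilde c_n:=\tilde\E[\pb_1(0,0)^n]$ follows.

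\textbf{Main obstacle.} The hardest step is Step 2: bridges are not time-changed Brownian bridges. We must therefore transfer the log-normal small-clock tail, proved for Brownian motion started at the centre of the disc, to the loop measure $E_{x\overset{u}{\to}x}$. The plan is to use the bridge's absolute continuity with respect to Brownian motion on $[0,u/2]$, which has a bounded density once the endpoint is integrated, and to use $\nu(u)\ge\nu(u/2)$. I expect the bookkeeping of the small-$u$ regime against the $du/u$ singularity to be the delicate part.
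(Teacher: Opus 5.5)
\textbf{There is a genuine gap: your Step 1 bound is identically $+\infty$, and your large-$u$ bounds are not integrable either.}

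\textbf{The small-$u$ divergence.} Applying \eqref{eq:bridge} with $G(s)=e^{-s/t}$ gives
\[
\int_0^\infty e^{-s/t}\pb_s(x,x)\,ds=\int_0^\infty \Eb_{x\overset{u}{\to}x}\bigl[e^{-\nu(u)/t}\bigr]\frac{du}{2\pi u}.
\]
Since $\nu(u)\to 0$ as $u\to 0$, the integrand is asymptotic to $1/(2\pi u)$ near $u=0$, so the right-hand side is $+\infty$. This is the logarithmic blow-up of the two-dimensional resolvent density on the diagonal.

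A lower bound on the clock for small $u$ cannot repair this, because the clock really is small there. Your treatment of the region $u<t^{\kappa+\eps'}$ has the direction reversed. There the event $\{\nu(u)\le t^{1-\eps}\}$ has probability close to $1$; it is the event that the clock is \emph{large} that is rare.

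The paper avoids this by keeping the window, i.e.\ applying \eqref{eq:bridge} with $G=\mathbf 1_{[t/2,t]}$. After H\"older one must then bound
\[
\int_0^\infty u^{-1}\,\P^*_x\otimes\Eb_{x\overset{u}{\to}x}\bigl[\nu(u)\in[t/2,t]\bigr]^{1/n}\,du .
\]
For $u<t$, the constraint $\nu(u)\ge t/2$ is what gives integrability at $0$. Reversibility of the bridge and its absolute continuity with respect to Brownian motion on $[0,u/2]$ reduce this to $\P^*_x\otimes P_x[\nu(u/2\wedge\tau_1)\ge t/4]$, up to an exponentially small term. Markov's inequality and Lemma \ref{L:posmoments} then bound this by $C(t/4)^{-q}u^{\xi^*(q)/2}$. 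That is integrable against $du/u$ and costs only a factor $t^{-q/n}$, which is where the $t^{-\eps}$ loss comes from.

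\textbf{The large-$u$ regime.} Bounds that are uniform in $u$ are not integrable against $du/u$ on $[1,\infty)$. This applies both to the exit clock of a fixed ball controlled by Lemma \ref{lem:tailbound}, and to your additive term $e^{-nt^{-\eps}}$. You need decay in $u$ of the probability that the clock is still below $t$ at time $u$. The paper gets this from $\P[\nu(u/2)\le t]\le t^q\,\E[\nu(u/2)^{-q}]$ together with Lemma \ref{L:negmoments}: $\E\otimes\Eb_x[\nu(u)^{-q}]\le Cu^{\xi^*(-q)/2+\eps}$ for \emph{all} $u>0$. For small $q>0$ this decays polynomially in $u$.

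This is a genuinely large-scale estimate that Lemma \ref{lem:tailbound} and Corollary \ref{cor:finite negative moment of time change} do not provide. Beyond the unit disc, $h^*_x$ loses exact scale invariance and the cone is non-Gaussian. The paper handles both by coupling each field to lie above $\hat h=h+\gamma\log(1/|\cdot|)$, then using the scaling of $\hat h$ together with H\"older.

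\textbf{Consequence for Step 3.} Your scaling identity for the cone is correct. However, finiteness of $\tilde\E[\pb_1(0,0)^n]$ needs exactly the two ingredients above at $t=1$: positive moments for $u<1$ and large-scale negative moments for $u>1$. So Step 3 inherits both gaps.
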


\begin{proof}
    The proof is similar in both cases (of $\tilde \P$ and $\P^*_x$) and we do them in parallel. We will first treat the case $x = 0$ and then explain how to adapt the argument to the case of $\P^*_x$ with general $x \in B_0(1/2)$. 
    
    As the heat kernel is an LQG invariant (i.e., invariant under the LQG change of coordinates) we are free to use whichever parametrisation of the surface we wish; it will be convenient to switch between the whole plane model and the strip model, for the calculations below. As mentioned above, we will do so without indicating this in the notations. 
    
    Let $n\ge 1$. We recall that by the general theory of Dirichlet forms, $t\mapsto \mathbf{p}_t(x,x)$ is monotone decreasing in $t>0$, and for each fixed $t>0$, $y \in \CC \mapsto \mathbf{p}_t(x,y)$ is maximised at $y = x$. Thus
    $$
\mathbf{p}_t (x,x) \le \frac{2}{t}\int_{t/2}^t \mathbf{p}_u (x,x) du.  
    $$
Thus raising to the power $n\ge 1$ and  \textbf{bridge decomposition} \eqref{eq:bridge}, as well as Fubini's theorem, we obtain 
\begin{align*}
    & \EE^* [ \mathbf{p}_t(x,x)^n]\\
    \le & \frac{2^n}{t^n} \EE^*\left[\int_0^\infty \ldots \int_0^\infty \mathbf{1}_{u_1 \in [t/2,t]} \ldots \mathbf{1}_{u_n \in [t/2, t]} \mathbf{p}_{u_1}(x,x) \ldots \mathbf{p}_{u_n}(x,x) du_1 \ldots du_n\right]\\
     \le & \frac{2^n}{t^n} \int_0^\infty \ldots \int_0^\infty  \EE^*\otimes\Eb_{x\overset{u_1}{\rightarrow}x,\ldots x\overset{u_n}{\rightarrow}x} \left[ \mathbf{1}_{\nu^1 (u_1) \in [t/2,t] } \ldots \mathbf{1}_{\nu^n (u_n) \in [t/2,t]} \right] \frac1{(2\pi)^n u_1 \ldots u_n} du_1 \ldots du_n
\end{align*}
where $\Eb_{x\overset{u_1}{\rightarrow}x,\ldots x\overset{u_n}{\rightarrow}x} $ is the law of $n$ \emph{independent} ordinary Brownian bridges from $x$ to $x$, of respective durations $u_1, \ldots u_n$; and $\nu^1 (\cdot), \ldots, \nu^n(\cdot)$ are the quantum clocks accumulated by each of these bridges in the \emph{common} environment $\P^*_x$.

Applying Hölder's inequality, we get 
\begin{equation}\label{eq:HKintegral}
\EE^*_x [ \mathbf{p}_t(x,x)^n]\le \frac{1}{\pi^n t^n} \left( \int_0^\infty \frac1{u}  \PP^*_x\otimes\Pb_{x\overset{u}{\rightarrow}x}\bigl[\nu(u)\in[t/2,t]\bigr]^{\frac{1}{n}} du \right)^n.
\end{equation}

    In the case of the quantum cone, we recall that by scaling (Proposition \ref{P:scale_hk}) it suffices to take $t = 1$. The bound \eqref{eq:HKintegral} also holds with $\tilde \P$ replacing $\P^*_x$. 

Thus it suffices to control the integral on the right hand side of \eqref{eq:HKintegral} (and its analogue for the quantum cone in the case $t=1$), which we break into $u \in (0,t)$, $u\in (t,1)$, and $u \in (1, \infty)$. When $u \in (t,1)$ we simply bound the probability by 1 so the integral is at most $\log (1/t)$. 

So let us assume  $u>1$, and observe that, by monotonicity of $\nu$, 
\begin{align*}
    \PP^*_x\otimes\Pb_{x\overset{u}{\rightarrow}x}\bigl[\nu(u)\in[t/2,t]\bigr] & \le \PP^*_x\otimes\Pb_{x\overset{u}{\rightarrow}x}\bigl[\nu(u)\le t \bigr]\\
    & \le \PP^*_x\otimes\Pb_{x\overset{u}{\rightarrow}x}\bigl[\nu(u/2) \le t\bigr].
\end{align*}
Now, the law of a Brownian bridge of duration $u$, restricted to $[0,u/2]$, is absolutely continuous with respect to that of an ordinary Brownian motion over the same interval of time, with Radon--Nikodym derivative bounded by a universal constant $C>0$. Thus for any $q>0$, using also the monotonicity of $\nu$ and Markov's inequality,
\begin{align}
    \PP^*_x\otimes\Pb_{x\overset{u}{\rightarrow}x}\bigl[\nu(u)\in[t/2,t]\bigr] & \le  C \PP^*_x\otimes\Pb_x\bigl[\nu(u/2)\le t\bigr] \nonumber \\
    & \le C t^{q} \E^*_x \otimes \Eb_x [ \nu (u/2)^{-q} ]. \label{eq:HKbound1} 
\end{align}

We will thus need to control the negative moments of $\nu (u)$. We defer this to slightly later, explaining first how we take care of the contribution to the integral coming from $u<t$. 
In that case,
    \begin{align*}
        \PP^*_x\otimes\Pb_{x\overset{u}{\rightarrow}x}\bigl[t/2\le\nu(u) \le t \bigr]& \le\PP^*_x\otimes\Pb_{x\overset{u}{\to}x}[\nu(u)\ge t/2]\\
        & \le \PP^*_x\otimes\Pb_{x\overset{u}{\to}x}[\nu(u/2)\ge t/4] + \PP^*_x\otimes\Pb_{x\overset{u}{\to}x}[\nu(u/2, u)\ge t/4]
\end{align*}
where $\nu (u/2, u)$ is the quantum clock accumulated by the Brownian bridge during the interval $[u/2, u]$ (i.e., $\nu (u/2, u) = \nu(u) - \nu(u/2)$). By reversibility of the Brownian bridge, and by the same absolute continuity of Brownian bridge with respect to ordinary Brownian motion which was already used in \eqref{eq:HKbound1}, we get
\begin{align*}
        \PP^*_x\otimes\Pb_{x\overset{u}{\rightarrow}x}\bigl[t/2\le\nu(u) \le t \bigr] & \le 2C  \PP^*_x\otimes\Pb_{x}[\nu(u/2)\ge t/4]
        \\
        &\le2C \big(\PP^*_x\otimes\Pb_{x}[\nu(u/2\wedge\tau_1)\ge t/4]+\Pb_{x}[\tau_1< u/2]\big),
    \end{align*}
    where we recall that $\tau_1 = \inf\{ t>0: |X_t-x | > 1\}$ is the first time $X$ (the Brownian motion, here) leaves a ball of radius 1 around its starting point. Note that $\Pb_{x}[\tau_1< u/2] \to 0$ as $u \to 0$ FTP (in fact, exponentially fast in $(-1/u)$, by the explicit form of the Gaussian density function), so it suffices to concentrate on the first term. By Markov's inequality, we again have for all $q>0$,
    \begin{align} \label{eq:HKbound2}
        \PP^*_x\otimes\Pb_{x}[\nu(u/2\wedge\tau_1)\ge t/4] & \le (t/4)^{-q} \EE^*_x \otimes \Eb_x [ \nu (u/2 \wedge \tau_1)^q].
    \end{align}

Altogether, we get, for any choice of $q,q'>0$:
\begin{align}\label{eq:total_integral_moments}
     \int_0^\infty\frac1u 
        \PP^*_x\otimes\Pb_{x\overset{u}{\rightarrow}x}\bigl[\nu(u)\in[t/2,t]\bigr]^{\frac{1}{n}} du  & \le \log (1/t) + C  \nonumber \\
        & \quad + C_{q,n}t^{q/n} \int_1^\infty  \frac{1}{u} \E^*_x \otimes \Eb_x [ \nu(u/2)^{-q}]^{1/n} du \nonumber \\
        & \quad + t^{- q'/n}\int_0^t \frac1u \E^*_x \otimes \Eb_x [ \nu(u/2 \wedge \tau_1)^{q'}]^{1/n} du 
\end{align}

We now state the relevant controls over the moments needed to complete this proof, starting with the control at ``large'' scales ($u>1$):

\begin{lem}
    \label{L:negmoments}
    For all $\eps>0$, for all $q>0$ there exists a constant $C  = C( \gamma, q, \eps)$ such that for \emph{all} $u>0$, 
    $$
    \E^*_x \otimes \Eb_x [ \nu (u)^{-q} ] \le Cu^{\xi^*(-q)/2 + \eps}, 
    $$ 
    where $\xi^*(-q) = - q ( 2 - \gamma^2/2) + \gamma^2 q^2/2$. The same estimate applies to a quantum cone: for all $u>0$,
    $$
\tilde \E \otimes \Eb_x [ \nu (u)^{-q} ] \le Cu^{\xi^*(-q)/2 + \eps}
    $$
\end{lem}

The interest and potential difficulty here lies in the fact that we want a result valid for \emph{all} $u>0$, i.e., including at large scales. The function $\xi^*$ can be viewed as the $\gamma$-quantum cone multifractal spectrum (or that of a whole plane GFF with $\gamma$-log singularity at \emph{all} scales -- including large scales, unlike \eqref{eq:X*} where the logarithmic singularity is only present at scales less than unity). 

\begin{proof}
See Appendix \ref{A:proof_negmom}. \end{proof}

Thus, applying Lemma \ref{L:negmoments} with $ q >0$ small enough that $\xi^*(-q)>0$ (which is possible when $\gamma<2$), and plugging into \eqref{eq:total_integral_moments} we see that the integral in the second line is bounded by a constant (say $C$), so this entire term is bounded by $Ct^{1/n} \le C$. 

The control at small scales ($u<1$) is provided by the following lemma. 
\begin{lem}
 \label{L:posmoments}
 For all $q>0$ there exists a constant $C = C( \gamma, q)$ such that for all $0<u <1$, 
 $$
\EE^*_x \otimes \Eb_x [ \nu (u/2 \wedge \tau_1)^q] \le C u^{\xi^*(q)/2}
 $$
 where as before, $\xi^*(q) = (2- \gamma^2/2)q - \gamma^2q^2/2$. The same estimate applies to a $\gamma$-quantum cone. 
\end{lem}

Again, the proof is deferred to the appendix. Consequently, the integral in the third line of \eqref{eq:total_integral_moments} is convergent and thus (crudely) bounded for any $q'>0$, by some constant $C_{q',\gamma,n}$.
Altogether, this shows that the integral in \eqref{eq:total_integral_moments} is bounded by $C( 1+ \log (1/t) + t^{ - q'/n}) \le C t^{ - q'/n}$ for $t$ small enough. 
Thus, combining our estimates and recalling \eqref{eq:HKintegral},
 we see that 
\begin{align*}
    \EE^*_x [ \mathbf{p}_t(x,x)^n] & \le \frac{C}{t^n}  t^{ - q'}
\end{align*}
for $t$ sufficiently small. Since $q'>0$ is arbitrary, this concludes the proof of Proposition \ref{prop:finite moments_new}.
\end{proof}

\subsection{Quantitative bulk convergence }

\label{SS:bulk}

\def\cC{\mathcal{C}}

\emph{Overview.} In this section we are concerned with the (expected) ``bulk'' term in the in-out decomposition \eqref{eq:decomposition2}, namely $\EE [ \int_\Omega \mathbf{p}^{\CC}_t(x,x) \mu(dx) ]$. Recall that we already know (by an easy adaptation of the results in \cite{BW}) that 
\begin{equation}\label{eq:bulk_rough}
\EE [ \int_\Omega \mathbf{p}^{\CC}_t(x,x) \mu(dx) ] \sim \frac{c_\gamma}{t} |\Omega|,
\end{equation}
and our goal in this subsection is to show a quantitative form of convergence, namely, that the difference of the two terms above decays to zero faster than any polynomial (FTP) as $t \to 0$:
\begin{equation}
\label{eq:goal_bulk}
\left|\EE [ \int_\Omega \mathbf{p}^{\CC}_t(x,x) \mu(dx) ] - \frac{c_\gamma}{t} |\Omega|\right| \to 0, \quad \text{FTP}.
\end{equation} 

Rather than refining the techniques of \cite{BW} to obtain quantitative estimates that are sufficiently strong (which we feel would be quite difficult) we take an orthogonal approach, showing that the heat kernel associated to $h^*_x$ can be compared with very good quantitative error bound (FTP) to that associated with $\tilde h$. Using the scale invariance of heat kernel on the quantum cone of Proposition \ref{P:scale_hk}, this then gives quantitative (FTP) bound on the ``in'' term in the in-out decomposition. We start with the lemma allowing us to compare the two heat kernels. 



\begin{lem}\label{L:comparison}
Let $x \in B(0,1/2)$. Consider the fields $h^*_x$ (whose law is that of a GFF on the whole plane, normalised to have zero unit circle average, plus $\gamma G^{\CC} ( x, \cdot)$) and the $\gamma$-quantum cone $\tilde h $ centered at $x$ (i.e., $\tilde h (\cdot - x)$ is a $\gamma$-quantum cone). 
Let $\mathbf{p}^*_t(x,x)$ and $\tilde{\mathbf{p}}_t(x,x)$ denote the respective associated heat kernels. We can couple $h^*_x$ and $\tilde h$ in such a way that, uniformly in $x \in \Omega$,
$$
\E [ |\mathbf{p}^*_t(x,x) - \tilde{\mathbf{p}}_t(x,x) | ] \to 0, \quad \mathrm{FTP}.
$$
\end{lem}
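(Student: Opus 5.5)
The approach is to couple the two fields so that they agree exactly near $x$ at all scales below some small radius, and then show that the heat kernels at time $t$ agree except on an event of FTP-small probability. Using the radial/circular decomposition in cylinder coordinates centred at $x$, take the circular parts identical ($\hcirc^* = \thcirc$). The radial part of $h^*_x$ is $X_s - (Q-\gamma)s$ for $s \ge 0$ (plus a bounded correction coming from $x \neq 0$ and the $\log^+$ terms, which is harmonic and smooth near $x$). For the cone, the radial part for $s\ge 0$ is also a Brownian motion with drift $-(Q-\gamma)$. The cone, however, is normalised so that its radial part first hits $0$ at $s=0$, whereas for $h^*_x$ the value at $s=0$ is arbitrary. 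I would therefore use the standard coupling in which the two radial processes coincide after a random horizontal shift: for $s \ge s_0 := \delta\log(1/t)$ with small $\delta>0$, the fields agree up to an additive constant $C$ and a translation in $s$. Equivalently, I would use the scale invariance (Theorem 7.18 in \cite{BP}), replacing $\tilde h$ by $\tilde h + C$, which is again a cone, so that the two fields agree exactly on $B_{r}(x)$ with $r = t^{\delta}$, outside an event of probability $\le e^{-c\log(1/t)^2}$. The Gaussian tails of $X_{s_0}$ and of the harmonic correction (Lemma \ref{lem:decaymean}) control this, since $r$ is only polynomially small and the Brownian increments are Gaussian.

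On the coupling event, the two heat kernels differ only through paths that exit $B_r(x)$ before time $t$. Writing $\pb_t(x,x) = \pb^{B_r(x)}_t(x,x) + \pb^{\CC\setminus B_r(x)}_t(x,x)$ by the in-out decomposition, the Dirichlet parts coincide. It remains to bound $\E[\pb^{\CC\setminus B_r}_t(x,x)]$ for each field, and by Cauchy--Schwarz
\[
\E[\pb^{\CC\setminus B_r}_t(x,x)] \le \E[\pb_t(x,x)^2]^{1/2}\,\E\bigl[\Pb^\gamma_{x\overset{t}{\to}x}(\sigma_{B_r}<t)^2\bigr]^{1/2}.
\]
The first factor is $t^{-1-o(1)}$ by Proposition \ref{prop:finite moments_new}. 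On the complementary (bad) event I would likewise use Cauchy--Schwarz with the second moments and the FTP probability.

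The main obstacle is bounding the bridge exit probability, because a Liouville bridge is not a time change of a Brownian bridge. My plan is to split the loop at time $t/2$ and use reversibility together with the Markov decomposition \eqref{eq:defLBB}:
\[
\pb_t^{\CC\setminus B_r}(x,x) \le 2\,\Eb_x\bigl[\mathbf 1_{\sigma_{B_r}<t/2}\,\pb_{t/2}(\mathbf Z_{t/2},x)\bigr] \le 2\,\Pb_x(\sigma_{B_r}<t/2)^{1/2}\,\pb_{t}(x,x)^{1/2},
\]
using Cauchy--Schwarz and the Chapman--Kolmogorov identity $\Eb_x[\pb_{t/2}(\mathbf Z_{t/2},x)^2] = \pb_t(x,x)$. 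Taking expectations and applying Hölder again, the heat kernel moment contributes $t^{-1/2-o(1)}$. The quantity $\P^*_x\otimes P_x(\nu(\tau_{B_r})<t/2)$ is FTP small by Corollary \ref{cor:scaling}, combined with Lemma \ref{lem:tailbound} and the Gaussian tails of the circle average at scale $r=t^\delta$, provided $\delta < \kappa(\gamma)$ (indeed for $\delta$ small the relevant thickness would have to be enormous). The same argument applies to the cone, by scale invariance and Lemma \ref{L:posmoments}. All bounds are uniform in $x\in\Omega\subset B_{1/2}(0)$, which gives the FTP convergence.
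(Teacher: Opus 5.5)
There are two genuine gaps: the coupling does not give exact agreement of the fields, and the shrinking ball only gives polynomial decay.

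\textbf{The coupling.} Your coupling step rests on a misreading of the cone's normalisation, and as described it does not deliver what the rest of your argument uses. In the unit circle embedding, the radial part of the $\gamma$-quantum cone for $s\ge 0$ (i.e.\ inside the unit disc) is an \emph{unconditioned} Brownian motion started from $0$, plus drift; the conditioning only concerns $s\le 0$. Consequently, on $B_{1/2}(x)$ both $h^*_x$ and the cone centred at $x$ are a centred Gaussian field with covariance $-\log|z-w|$, plus $\gamma\log(1/|\cdot-x|)$. So they have the same law there and can be coupled to agree exactly on $B_{1/2}(x)$ with probability one. This is the paper's coupling, and it has no bad event.

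What you propose instead does not suffice: agreement up to an additive constant $C$ and a shift in $s$, or replacing $\tilde h$ by $\tilde h+C$. Your claim that the Dirichlet parts $\pb^{B_r}_t(x,x)$ coincide requires exact agreement of the fields on $B_r(x)$. Adding a constant rescales the heat kernel, $\pb^{h+C}_t=e^{-\gamma C}\pb^{h}_{te^{-\gamma C}}$. Moreover, if $C$ is chosen as a function of $h^*_x$, then $\tilde h+C$ is no longer a cone in law.

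\textbf{The shrinking ball.} This is what breaks the FTP conclusion: the radius $r=t^\delta$ with $\delta$ fixed only yields polynomial decay. Under $\P^*_x$ the circle average $h_r(x)$ has variance $\delta\log(1/t)$. The atypical thinness needed for the Liouville Brownian motion to leave $B_{t^\delta}(x)$ before time $t$ is a deviation of order $\log(1/t)$, which therefore costs only about $t^{c/\delta}$. By the upper bound in Lemma \ref{lem:scaling without}, this event does force an early exit with probability close to one. Hence $\P^*_x\otimes P_x(\nu(\tau_{B_r})<t/2)$ is genuinely of order $t^{c(\delta)}$ with $c(\delta)<\infty$, and your final bound is $t^{c(\delta)/2-1-o(1)}$, which is not FTP.

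Corollary \ref{cor:scaling} is FTP only \emph{conditionally} on the thickness $\alpha$. For very negative $\alpha$ its hypothesis $r>t^{\kappa(\alpha)(1-\delta')}$ fails, and such $\alpha$ are only polynomially unlikely. The same conflation (a Gaussian tail at level $\log(1/t)$ with variance of order $\log(1/t)$) also undermines your $e^{-c\log(1/t)^2}$ bound on the coupling failure.

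The paper avoids this by using the fixed ball $B_{1/2}(x)$. There, an exit before time $t/2$ requires either $\hat\sigma_1$ to be tiny, or a Gaussian of bounded variance to be of order $-\log(1/t)$. Both are lognormally, hence FTP, unlikely by Lemmas \ref{lem:tailbound} and \ref{lem:decaymean}. Once you have the exact coupling on $B_{1/2}(x)$, your shrinking-ball argument could also be rescued, since it then works for every $\delta$ and you may let $\delta\to 0$; but the fixed ball is simpler.

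\textbf{A minor slip.} $\Eb_x[\pb_{t/2}(\mathbf Z_{t/2},x)^2]$ is not $\pb_t(x,x)$ but is at most $\pb_{t/2}(x,x)\,\pb_t(x,x)$. This is harmless for FTP. The paper simply uses $\pb_{t/2}(\mathbf Z_{t/2},x)\le\pb_{t/2}(x,x)$.
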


\begin{proof}
We note that, by construction of the $\gamma$-quantum cone, the fields $\tilde h$ and $h^*_x$ can be made to agree exactly on the ball of radius 1/2, which we denote by $B$ in this proof. We assume that $h$ and $\tilde h$ are coupled in this way (for instance, start by sampling $h$ under $\P^*_x$, then sample $\tilde h$ according to the conditional law of $\tilde h$ given $\tilde h|_{B}$). Similar to the in-out decomposition, note that 
\begin{equation}\label{eq:inoutball1}
\mathbf{p}^*_t(x,x) = \mathbf{p}_t^B (x,x) + \mathbf{p}_t^{ \CC \setminus B} (x,x),
\end{equation}
where the first term consists of the heat kernel restricted to trajectories staying entirely in $B$, while the second denotes the heat kernel restricted to trajectories leaving $B$ before time $t$ (e.g., $\mathbf{p}_t^{ \CC \setminus B} (x,x) = \mathbf{p}_t (x,x) \Pb^\gamma_{ x \overset{t}{\to} x} ( \sigma_B< t )$). For notational ease we have removed the star from the two heat kernels on the right hand side.

Likewise, we may apply the same decomposition to $\tilde \pb_t(x,x)$, so that 
\begin{equation}\label{eq:inoutball2}
\tilde{\mathbf{p}}_t(x,x) = \tilde{\mathbf{p}}_t^B (x,x) + \tilde{\mathbf{p}}_t^{ \CC \setminus B} (x,x),
\end{equation}
Since $\tilde h$ and $h^*_x$ agree in $B$, we have 
$$
\tilde{\mathbf{p}}_t^B (x,x)  = \tilde{\mathbf{p}}_t^B (x,x) 
$$
so that 
$$
|\tilde{\mathbf{p}}_t(x,x) - \mathbf{p}^*_t(x,x)| \le \tilde{\mathbf{p}}_t^{ \CC \setminus B} (x,x) + {\mathbf{p}}_t^{ \CC \setminus B} (x,x).
$$
We will bound both terms separately, showing that they converge to 0 in $L^1(\P)$, FTP. The argument is the same for both terms, so let us start with the second one, say. 
Note that, by reversibility of the Liouville Brownian bridge (which is to say, by reversibility of Liouville Brownian motion with respect to $L^2 (\mu_h)$), letting $\sigma_B$ the first exit time of a given trajectory (which will either be Liouville Brownian motion or bridge below -- note that in the latter case we cannot express the Liouville bridge as a time-change of an ordinary Brownian bridge, so cannot write $\sigma_B = \nu (\tau_B)$ as we would for a Liouville Brownian motion): 
\begin{align*}
    \pb_t^{\CC \setminus B} (x,x) & = \Pb^\gamma_{x \overset{t}{\to} x} ( \sigma_B < t) \pb_t(x,x) \\
    & = 2\Pb^\gamma_{x \overset{t}{\to} x} ( \sigma_B < t/2) \pb_t(x,x) \\
    & = 2\Eb^\gamma_x \left[ 1_{\{\sigma_B< t/2\}} \pb_{t/2} ( \mathbf{Z}_{t/2} , x)\right]
\end{align*}
where in the last line we used the explicit construction of the law of Liouville Brownian bridge on $[0,t]$ (or rather, its restriction to $[0, t/2]$ as having the density $\pb_{t/2} (\cdot, x)/\pb_t(x,x)$ with respect to the law of Liouville Brownian motion $(\mathbf{Z}_t, t \ge 0)$. 

Now, by the general theory of Dirichlet forms, $\pb_s(\cdot, x)$ is maximised on the diagonal for any given $s>0$; thus
\begin{align}\label{eq:bridge_far}
 \pb_t^{\CC \setminus B} (x,x) \le 2\Pb_x^\gamma ( \sigma_B < t/2) \pb_{t/2} (x,x) .
\end{align}
Let us take expectation with respect to the field $h^*_x$, and use Cauchy-Schwarz: 
\begin{align*}
    \E^*_x [\pb_t^{\CC \setminus B} (x,x)  ] & \le 2 \E^*_x [ \Pb_x^\gamma ( \sigma_B < t/2)^2]^{1/2} \E^*_x[\pb_{t/2} (x,x)^2]^{1/2} .
\end{align*}
Concerning the first expectation, $\Pb_x^\gamma ( \sigma_B < t/2) \le 1$ so $\Pb_x^\gamma ( \sigma_B < t/2)^2 \le \Pb_x^\gamma ( \sigma_B < t/2)$, hence by Fubini's theorem 
\begin{align*}
    \E^*_x [\pb_t^{\CC \setminus B} (x,x)  ] & \le 2 \left[\P^*_x \otimes \Pb^\gamma_x ( \sigma_{B}< t/2) \right]^{1/2} \E^*_x[\pb_{t/2} (x,x)^2]^{1/2}\\
    & = 2\left[\P^*_x \otimes \Pb_x (\nu(\tau_B) < t/2) \right]^{1/2} \E^*_x[\pb_{t/2} (x,x)^2]^{1/2}.
\end{align*}
The first term tends to zero FTP by Lemma \ref{lem:tailbound}. The second tends to zero no faster than $t^{-1 + o(1)}$ by Proposition \ref{prop:finite moments_new}. Altogether, this shows that $\E^*_x [\pb_t^{\CC \setminus B} (x,x)  ]  \to 0$, FTP. The same reasoning applies to $\tilde \E [\pb_t^{\CC \setminus B} (x,x)  ] $ (note that Lemma \ref{lem:tailbound} applies also here to show that $\tilde \P \otimes \Pb_x (\nu(\tau_B) < t/2)$, since this estimate concerns only the behaviour of the field \emph{inside} $B$, where $\tilde h$ and $h^*_x$ agree). 
This concludes the proof of Lemma \ref{L:comparison}.
\end{proof}

\begin{rem}\label{rem:comparison2}
For a fixed $x \in \Omega$, the same argument can be used to compare $\pb_t^*(x,x)$ (which is the heat kernel for $h^x$ in the full plane) with $\pb_t^\Omega(x,x)$ (which is the heat kernel for $h^x$, but restricted to trajectories not leaving $\Omega$, and under the measure $\P_x^*$): we have $\E [ | \pb_t^*(x,x) - \pb_t^\Omega(x,x)| ]\to 0$, FTP. 
\end{rem}

An immediate consequence of this Lemma, together with the scale invariance of the heat kernel on the quantum cone is a strong (quantitative) estimate for the (expected) bulk term in the in-out decomposition \eqref{eq:decomposition2}, which proves our stated goal \eqref{eq:goal_bulk}.

\begin{lem}
 \label{L:bulk_quant}
 We have
 \begin{equation}\label{eq:bulk_quant}
 \left| \int_\Omega \E^*_x[\pb_t^{\CC}(x,x) ] dx - \frac{|\Omega|}{t} \tilde \E[\pb_1(0,0)] \right| \to 0, \quad \text{\emph{ FTP.}}
 \end{equation} 
 As a consequence, 
 \begin{equation}\label{eq:HK_cgamma}
 \tilde \E[\pb_1(0,0)] = c_\gamma = \frac{1}{\pi(2- \gamma^2/2)}
 \end{equation}
\end{lem}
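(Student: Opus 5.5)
The plan is to combine Lemma \ref{L:comparison} with the exact scaling of Proposition \ref{P:scale_hk} to get the quantitative bulk estimate \eqref{eq:bulk_quant}. Then \eqref{eq:HK_cgamma} is identified by matching against the Weyl-law asymptotics of \cite{BW}.

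\textbf{Step 1 (bulk estimate).} For each $x\in\Omega$, note that under $\P^*_x$ the quantity $\pb_t^{\CC}(x,x)$ is exactly $\pb^*_t(x,x)$, the heat kernel of $h^*_x$. By Lemma \ref{L:comparison} we may couple $h^*_x$ with a $\gamma$-quantum cone $\tilde h$ centred at $x$ so that
\[
\bigl|\E^*_x[\pb^*_t(x,x)]-\tilde\E[\tilde\pb_t(x,x)]\bigr|\le \E\bigl[|\pb^*_t(x,x)-\tilde\pb_t(x,x)|\bigr]\le \eta(t),
\]
with $\eta(t)\to0$ FTP uniformly in $x\in\Omega$. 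The heat kernel is an LQG invariant, so translating the cone to the origin does not change its law. By Proposition \ref{P:scale_hk}, $\tilde\E[\tilde\pb_t(x,x)]=t^{-1}\tilde\E[\tilde\pb_1(0,0)]$, and this is finite by Proposition \ref{prop:finite moments_new}. Integrating over $x\in\Omega$ and using the uniformity gives
\[
\Bigl|\int_\Omega \E^*_x[\pb^{\CC}_t(x,x)]\,dx-\tfrac{|\Omega|}{t}\tilde\E[\pb_1(0,0)]\Bigr|\le |\Omega|\,\eta(t),
\]
which is \eqref{eq:bulk_quant}. (Lemma \ref{L:comparison} is stated for $x\in B(0,1/2)$, and $\Omega\subset B_{1/2}(0)$, so it applies.)

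\textbf{Step 2 (identifying the constant).} It remains to show $\tilde\E[\pb_1(0,0)]=c_\gamma$. We use the Girsanov identity $\E[\int_\Omega\pb^{\CC}_t(x,x)\mu(dx)]=\int_\Omega\E^*_x[\pb^{\CC}_t(x,x)]\,dx$, and recall $\E[\mu(\Omega)]=|\Omega|$ since $G(x,x)$ is renormalised away inside $B_1$. Together with Step 1, this gives
\[
t\,\E\Bigl[\int_\Omega\pb^{\CC}_t(x,x)\mu(dx)\Bigr]\to \tilde\E[\pb_1(0,0)]\,|\Omega|.
\]
The constant must therefore agree with the Weyl-law constant from \cite{BW}, namely $\E[t\Hb_\Omega(t)]\to c_\gamma|\Omega|$.

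To transfer from $\pb^{\CC}$ to $\pb^\Omega$, apply Remark \ref{rem:comparison2} pointwise: it gives $\E^*_x[\pb^{\CC}_t(x,x)-\pb^\Omega_t(x,x)]\to0$ for each fixed $x$. Multiplied by $t$, this still tends to $0$. Hence $t\E^*_x[\pb^\Omega_t(x,x)]\to\tilde\E[\pb_1(0,0)]$ for every $x\in\Omega$.

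Independently, one has $t\E^*_x[\pb^\Omega_t(x,x)]\to c_\gamma$ pointwise. This is precisely the annealed version of \cite{BW}: their Lemma 4.4 computes the Laplace transform of the rooted heat kernel via the bridge decomposition \eqref{eq:bridge}, and the limiting mean is $1/[\pi(2-\gamma^2/2)]$. Matching the two limits gives \eqref{eq:HK_cgamma}.

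\textbf{Main obstacle.} I expect the delicate step to be this last identification. The convergence \eqref{eq:trace_vol} in \cite{BW} holds only in probability, and it was proved for a Dirichlet GFF rather than for $h$. Upgrading it to convergence of expectations needs uniform integrability. That follows from Proposition \ref{prop:finite moments_new} with $n=2$: it gives $\E^*_x[(t\pb_t(x,x))^2]\le c\,t^{-2\eps}$, and this bound is uniform in $x$ if we instead dominate by the cone moment via Lemma \ref{L:comparison}. The change of boundary conditions is handled by local absolute continuity, since only the field near $x$ matters by Remark \ref{rem:comparison2}. If that route proves awkward, I would instead compute $\tilde\E[\pb_1(0,0)]$ directly. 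Integrate the bridge decomposition \eqref{eq:bridge} against $G(t)=e^{-\lambda t}$ on the cone, and use the scaling $\tilde\E[\pb_t]=\tilde\E[\pb_1]/t$. The Laplace transform of $1/t$ diverges logarithmically, so compare the coefficients of $\log\lambda$: on the left this coefficient is $\tilde\E[\pb_1(0,0)]$. On the right it is $\frac1{2\pi}\cdot\frac{1}{2-\gamma^2/2}$ times $2$, because $\log\E[\nu(u)]\sim(2-\gamma^2/2)\frac{\log u}{2}$ by exact cone scaling. This gives $c_\gamma$ again.
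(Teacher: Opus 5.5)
Your Step~1 is exactly the paper's proof of \eqref{eq:bulk_quant}: Lemma~\ref{L:comparison} plus the exact scaling of Proposition~\ref{P:scale_hk}, with finiteness from Proposition~\ref{prop:finite moments_new}. Identifying the constant by matching with the annealed asymptotics adapted from \cite{BW} is also what the paper does, except that it compares directly with the full-plane statement \eqref{eq:bulk_rough}, so your detour through $\pb^\Omega$, Remark~\ref{rem:comparison2} and uniform integrability of the in-probability Weyl law is not needed. One caution about your fallback: the coefficient of $\log\lambda$ there is governed by the typical exponent ($\log\nu(u)/\log u\to 1-\gamma^2/4$ in probability), not by $\log\E[\nu(u)]$, whose exponent is $1-\gamma^2/2$ and which is in fact infinite under the rooted law when $\gamma\ge\sqrt2$.
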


\begin{proof}
The estimate \eqref{eq:bulk_quant} follows directly from Lemma \ref{L:comparison} together with the scale invariance of the heat kernel on the quantum cone (with $\alpha = \gamma$) proved in Proposition \ref{P:scale_hk} (and the fact that the expectation of $\tilde \pb_1(0,0)$ is finite by Proposition \ref{prop:finite moments_new}).
As for \eqref{eq:HK_cgamma}, this follows by comparing with \eqref{eq:bulk_rough}.
\end{proof}

We actually have all the tools to prove Theorem \ref{T:limit_law}.
\begin{proof}[Proof of Theorem \ref{T:limit_law}]
We have already seen in  Corollary \ref{cor:limit-law} that $t \pb_t^{\Omega}(x,x) \to X $ in law as $t \to 0$, where $X = \tilde \pb_1(0,0)$. In fact, from Remark \ref{rem:comparison2} and Proposition \ref{P:scale_hk} there is a coupling such that $\E [ | t \pb_t^{\Omega}(x,x) -  X| ] \to 0$, FTP. 

From Lemma  \ref{L:bulk_quant} we furthermore have $\E[X] = c_\gamma$, and from Proposition \ref{prop:finite moments_new} we have that $\E[X^n] < \infty$ for all $n\ge 0$. 
\end{proof}

\subsection{Boundary term}

\label{SS:boundary}

Taking into account the in-out decomposition \eqref{eq:decomposition2}, the quantitative control on the bulk (or ``in'') term in Lemma \ref{L:bulk_quant}, in order to prove Theorem \ref{T:qc}, it remains to show:
\begin{prop}\label{L:boundary}
\begin{equation}
\label{eq:goal_bd}
\E\left[ \int_\Omega \pb_t^{\CC \setminus \Omega} (x,x) \mu_h(dx) \right] = t^{ \Delta -1 + o(1)}.
\end{equation}
\end{prop}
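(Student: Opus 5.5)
We prove matching upper and lower bounds on $\E\bigl[\int_\Omega \pb_t^{\CC\setminus\Omega}(x,x)\mu_h(dx)\bigr]$. After Girsanov this quantity equals $\int_\Omega \E^*_x[\pb_t^{\CC\setminus\Omega}(x,x)]\,dx$. Throughout we use the representation
$$\pb_t^{\CC\setminus\Omega}(x,x)=\pb_t(x,x)\,\Pb^\gamma_{x\overset{t}{\to}x}(\sigma_\Omega<t).$$
The aim is to compare this bridge quantity with the heat content density $\P^*_x\otimes P_x[\nu(\tau_\Omega)<t]$ integrated over $x\in\Omega$. By Proposition \ref{prop:first moment Heat content fractal boundary}, that integral equals $t^{\Delta+o(1)}$.

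\textbf{Upper bound.} Reversibility of the Liouville loop gives $\Pb^\gamma_{x\overset{t}{\to}x}(\sigma_\Omega<t)\le 2\Pb^\gamma_{x\overset{t}{\to}x}(\sigma_\Omega<t/2)$. Arguing exactly as in \eqref{eq:bridge_far} in the proof of Lemma \ref{L:comparison}, with $B$ replaced by $\Omega$, we get
$$\pb_t^{\CC\setminus\Omega}(x,x)\le 2\,\Pb_x(\sigma_\Omega<t/2)\,\pb_{t/2}(x,x).$$
We then apply H\"older's inequality with exponents $p,p'$ and $p$ close to $1$. Since $\Pb_x(\sigma_\Omega<t/2)^p\le \Pb_x(\sigma_\Omega<t/2)$, this yields
$$\E^*_x[\pb_t^{\CC\setminus\Omega}(x,x)]\le 2\,\bigl(\P^*_x\otimes P_x[\nu(\tau_\Omega)<t/2]\bigr)^{1/p}\,\E^*_x[\pb_{t/2}(x,x)^{p'}]^{1/p'}.$$
By Proposition \ref{prop:finite moments_new}, the last factor is at most $t^{-1+o(1)}$, uniformly in $x$. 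Integrating over $\Omega$ and using Jensen's inequality (concavity of $y\mapsto y^{1/p}$, with $|\Omega|$ finite) gives a bound of order $t^{-1+o(1)}\bigl(t^{\Delta+o(1)}\bigr)^{1/p}$. Letting $p\downarrow1$ gives the upper bound $t^{\Delta-1+o(1)}$. One point needs checking: the $o(1)$ terms in Proposition \ref{prop:finite moments_new} must be uniform in $x\in\Omega$. This holds for $x\in B_{1/2}(0)$, after translating and rescaling $\Omega$ as was done in Section \ref{sec:2.1}.

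\textbf{Lower bound.} This direction needs more care, because a loop leaving $\Omega$ is not directly a Brownian motion leaving $\Omega$. We restrict to the good points of the lower-bound part of the proof of Proposition \ref{prop:first moment Heat content fractal boundary}. These are the points $x\in\Omega^*_{\delta,t}$ at distance about $r=t^{\kappa(\alpha_0)(1+\delta)}$ from $\partial^*\Omega$, on the event $\cE_{x,r}(\alpha_0,\delta)$; on this event the exit time from $B_{Cr}(x)$ is much smaller than $t$ with high probability. We plan to use the bridge decomposition \eqref{eq:bridge} with a time window, as in the proof of Proposition \ref{prop:finite moments_new}. Monotonicity gives
$$\pb_t(x,x)\le\frac{2}{t}\int_{t/2}^t\pb_u(x,x)\,du,$$
but we need the reverse direction. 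We therefore lower bound by the average over $u\in[t,2t]$:
$$\frac1t\int_t^{2t}\pb^{\CC\setminus\Omega}_u(x,x)\,du\le \pb_t(x,x)\cdot(\text{exit prob. at time }2t),$$
and then use that $\pb^{\CC\setminus\Omega}_s(x,x)=\pb_s^{\CC}(x,x)-\pb_s^\Omega(x,x)$ is handled after integrating in $s$. Concretely, by \eqref{eq:bridge},
$$\int_t^{2t}\pb^{\CC\setminus\Omega}_u(x,x)\,du=\int_0^\infty \Pb_{x\overset{u}{\to}x}\bigl[\nu(u)\in[t,2t],\,\tau_\Omega<u\bigr]\,\frac{du}{2\pi u},$$
where the bridge on the right is an ordinary Brownian bridge. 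An ordinary Brownian bridge started next to the boundary exits $\Omega$ early with high probability, by Beurling's estimate, and the event $\nu(u)\in[t,2t]$ has probability of order $1$ over $u$ in a logarithmic window. Hence the right side is at least $c\,\P^*_x[\cE_{x,r}(\alpha_0,\delta)]$, up to $t^{o(1)}$ factors. Integrating over $x$ as in the proof of Proposition \ref{prop:first moment Heat content fractal boundary} gives
$$\int_t^{2t}\Bigl(\int_\Omega\E^*_x\bigl[\pb_u^{\CC\setminus\Omega}(x,x)\bigr]dx\Bigr)du\ge t^{\Delta+o(1)}.$$
Finally, $u\mapsto\int_\Omega \E^*_x[\pb_u^{\CC\setminus\Omega}(x,x)]\,dx$ is essentially monotone up to FTP errors, being the difference of $\int_\Omega\E^*_x[\pb_u^{\CC}(x,x)]\,dx$, which is $c_\gamma|\Omega|/u$ up to FTP errors by Lemma \ref{L:bulk_quant}, and the heat trace, which is monotone. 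Combining this with the upper bound lets us transfer the averaged lower bound to the single time $t$, possibly at time $2t$ instead of $t$, which is harmless at the level of exponents.

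\textbf{Main obstacle.} I expect the main obstacle to be this last step: controlling the joint event $\{\nu(u)\in[t,2t],\ \tau_\Omega<u\}$ for the ordinary bridge under $\P^*_x$, uniformly over good points, and then de-averaging in time. For the joint event, I would first try to restrict the bridge to $[0,u/2]$ using absolute continuity with respect to Brownian motion. I would then control $\nu(u)$ from above via Lemma \ref{L:posmoments} and Corollary \ref{cor:scaling}, and from below via Lemma \ref{lem:tailbound}, with $u$ chosen near $t^{2\kappa(\alpha_0)}$ up to $t^{o(1)}$ factors.
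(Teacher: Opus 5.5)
Your upper bound is the paper's argument and is fine: $\pb_t^{\CC\setminus\Omega}(x,x)\le 2\,\Pb_x(\sigma_\Omega<t/2)\,\pb_{t/2}(x,x)$, then H\"older with Proposition \ref{prop:finite moments_new}, Jensen, and Proposition \ref{prop:first moment Heat content fractal boundary}. The lower bound does not close, and the first problem is the de-averaging step, which is wrong as justified.

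Write $F(u)=\int_\Omega\E^*_x[\pb_u^{\CC\setminus\Omega}(x,x)]\,dx=A(u)-H(u)$, with $A(u)=c_\gamma|\Omega|/u+o_{\mathrm{FTP}}(1)$ and $H(u)=\E[\Hb_\Omega(u)]$. A difference of two decreasing functions need not be monotone. Here both pieces are of order $1/u$, while $F$ is only of order $u^{\Delta-1}$. Monotonicity of $H$ gives two things:
\begin{itemize}
\item for $u\ge t$, $F(t)\le F(u)+A(t)-A(u)$, which is the wrong direction;
\item for $u\in[t/2,t]$, $F(t)\ge F(u)-(A(u)-A(t))$, but averaging this over $[t/2,t]$ costs $(2\log 2-1)\,c_\gamma|\Omega|/t\gg t^{\Delta-1}$. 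The loss is negligible only when $t-u\ll t^{1+\Delta}$.
\end{itemize}
The upper bound does not rescue this. An averaged lower bound plus a pointwise upper bound only gives $F(u)\ge t^{\Delta-1+o(1)}$ for some $u\in[t,2t]$ depending on $t$, not for every small $t$. Your displayed inequality for $\frac1t\int_t^{2t}\pb_u^{\CC\setminus\Omega}(x,x)\,du$ is an upper bound on the average, so it gives nothing from below at time $t$ either.

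The more fundamental gap is the claim that, on the thickness event, $\int_0^\infty\Pb_{x\overset{u}{\to}x}[\nu(u)\in[t,2t],\,\tau_\Omega<u]\,\frac{du}{2\pi u}\ge t^{o(1)}$. This is the whole content of the lower bound: it is the time-integrated form of \eqref{goal:Econd_pt}, i.e.\ that conditioning on an atypical thickness does not destroy the $1/t$ size of the heat kernel. It is asserted, not proved.
\begin{itemize}
\item Lemma \ref{lem:tailbound}, Lemma \ref{L:posmoments} and Corollary \ref{cor:scaling} are tail bounds. They show $\nu(u)<t$ or $\nu(u)>2t$ with high probability when $u$ is polynomially far from the natural scale. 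They give no lower bound on the probability that the bridge clock lands in the fixed factor-$2$ window $[t,2t]$; that needs a local estimate on the law of $\nu(u)$.
\item Each $u$ involves a different bridge, so you cannot use monotonicity of $u\mapsto\nu(u)$ along a single path.
\item The unconditional identity $\int_t^{2t}\E^*_x[\pb_u(x,x)]\,du\approx c_\gamma\log 2$ does not transfer, because the conditioned field is not scale invariant.
\end{itemize}

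The paper handles this with a different idea that also makes time-averaging unnecessary:
\begin{itemize}
\item It conditions at the larger scale $r'=t^{\kappa_0(1-\delta)}$.
\item It replaces $h^*_x$ by a $\gamma$-quantum cone at FTP cost, as in Lemma \ref{L:comparison}.
\item It uses the exact scaling of Proposition \ref{P:scale_hk} to write $\tilde\E_x[\pb_t^{\CC\setminus\Omega}(x,x)\mid\cE_x]=t^{-1}\bar\E_x[\pb_1^{\CC\setminus r_C^{-1}\Omega}(x,x)\mid\bar\cE_x]$.
\item Lemma \ref{L:mapping} shows $r\ll r_C\ll r'$. So the conditioned rescaled cone converges to an unconditioned cone, while the rescaled boundary collapses onto $x$.
\item Fatou then gives $\liminf_{t\to0}t\,\tilde\E_x[\pb_t^{\CC\setminus\Omega}(x,x)\mid\cE_x]\ge c_\gamma$ at each fixed $t$.
\end{itemize}

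If you want to stay closer to your own route, there is a fixed-time bound that needs only localisation of Liouville Brownian motion, not a local estimate on the bridge clock. By Chapman--Kolmogorov and Cauchy--Schwarz, with $\pb^\Omega_{t/2}(x,\cdot)=0$ off $\Omega$,
$\pb_t^{\CC\setminus\Omega}(x,x)\ge\int(\pb_{t/2}(x,y)-\pb^\Omega_{t/2}(x,y))^2\mu(dy)\ge\Pb_x(\sigma_\Omega\le t/2,\ \mathbf Z_{t/2}\in A)^2/\mu(A)$.
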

We divide the proof in an upper bound and a lower bound. We start with the upper bound, which is a bit easier.

\begin{proof}[Proof of upper bound of \eqref{eq:goal_bd}] We start by applying Fubini's theorem and Girsanov's lemma, so that 
\begin{align*}
    \E\left[ \int_\Omega \pb_t^{\CC \setminus \Omega} (x,x) \mu_h(dx) \right] & = \int_\Omega \E^*_x\left[ \pb_t^{\CC \setminus \Omega} (x,x) \right] dx.
\end{align*}
Arguing as in \eqref{eq:bridge_far} (but with $\Omega$ instead of $B$), we have 
\begin{equation}
    \pb_t^{\CC\setminus \Omega} (x,x) \le  \pb_{t/2}(x,x)\Pb^\gamma_x ( \sigma_\Omega< t/2)
\end{equation}
    Therefore, taking expectation $\E^*_x$ and applying Hölder's inequality, we have for some arbitrary $n\ge 1$ and some constant $c_n$ depending only on $n\ge 1$ and $\gamma$, 
    \begin{align*}
        \E\left[ \int_\Omega \pb_t^{\CC \setminus \Omega} (x,x) \mu_h(dx) \right] & \le \int_\Omega \E^*_x [ \pb_{t/2} (x,x)^{n+1}]^{\tfrac{1}{n+1}} \left(\P^*_x \otimes \Pb_x^\gamma [ \sigma_\Omega < t/2]\right)^{\tfrac{n}{n+1}} dx
        \\
& \le c_nt^{ -1 +o(1)} \int_\Omega \left(\P^*_x \otimes \Pb_x^\gamma [ \sigma_\Omega < t/2]\right)^{\tfrac{n}{n+1}} dx\\
& \le c_n t^{-1+ o(1)} \left(\int_\Omega \P^*_x \otimes \Pb_x^\gamma [ \sigma_\Omega < t/2] dx\right)^{\tfrac{n}{n+1}}
    \end{align*}
where in the second line we used Proposition \ref{prop:finite moments_new}, and in the last line we used Jensen's inequality, since $x\mapsto x^{n/(n+1)}$ is concave.
The integral in the last line is nothing but $\E[ \Kb_\Omega ( t/2)]$. Therefore, applying Proposition \ref{prop:first moment Heat content fractal boundary}, we get 
\begin{align*}
        \E\left[ \int_\Omega \pb_t^{\CC \setminus \Omega} (x,x) \mu_h(dx) \right]
        \le c_n t^{-1+ o(1)} (t^{\Delta + o(1)})^{\tfrac{n}{n+1}}.
        \end{align*}
Since $n\ge 1$ can be chosen arbitrarily large, this proves
$$
 \E\left[ \int_\Omega \pb_t^{\CC \setminus \Omega} (x,x) \mu_h(dx) \right] \le t^{ -1 + \Delta + o(1)},
$$
which is the desired upper bound. 
\end{proof}

\begin{proof}[Proof of lower bound of \eqref{eq:goal_bd}] As in the proof of Proposition \ref{prop:first moment Heat content fractal boundary}, we get a lower bound on the expectation by restricting to points $x$ at the right distance to the boundary, and to an event $\cE = \cE_x$ concerning the behaviour of the GFF near $x$ similar to those already appearing in the analysis of the heat content, and which ought to provide the dominant contribution. However, the reason that the lower bound is harder is that there is no independence between the $\pb_t(x,x)$ and the above event $\cE_x$ which determines the behaviour of the GFF near $x$; on the contrary, these events may be \emph{a priori} very correlated. Since we want a lower bound, we cannot (as in the above upper bound) rely on Hölder's inequality. Thus, an argument is needed, to show that the conditional expectation of $\pb_t(x,x)$ remains of order $(1/t)$, even when we condition on the event $\cE_x$.

Let $\delta>0$, and introduce the following region (our notation here differs slightly from that used in the proof of Proposition \ref{prop:first moment Heat content fractal boundary}): 
        \begin{align*}
        \Omega^*_{\delta}&:=\Bigl\{x\in\Omega\Big\vert t^{\kappa_0(1+\delta)}\le d(x,\partial^*\Omega)\le 2t^{\kappa_0(1+\delta)}\Bigr\},
    \end{align*}
where we recall that $\partial^*\Omega$ is a connected component of the boundary of $\Omega$ of positive diameter, and we write $\kappa_0=\kappa(\alpha_0)$. 

We let $r = t^{\kappa_0(1+\delta)}$ and note that $\text{dist} (z, \partial^* \Omega) \asymp r$ whenever $z \in \Omega^*_{\delta}$. We choose a larger scale $r' = t^{\kappa_0 (1- \delta)}$, so that $r' /r = t^{ -2 \kappa_0 \delta} \to \infty$ as $t \to 0$. (More generally the proof will depend on a number of choices of scales which have not been optimised, but are sufficient for the proof). In particular, for $x \in \Omega_\delta$, the ball $B_r(x) \subset \Omega$ but a substantial portion of $B_{r'}(x)$ lies outside $\Omega$. We will condition the field on its circle average distance $r'$, as follows:
\begin{align*}
    \cE_{x} &:=\{ | h_{r'}(x) - \alpha_0\log(1/r') | \le 1 \}
\end{align*}

Then, obviously,
    \begin{align*}
        \int_\Omega\EE^*_x[\pb_t^{\CC \setminus\Omega}(x,x)]dx&\ge\int_{\Omega^*_{\delta}}\EE^*_x[\pb_t^{\CC \setminus \Omega}(x,x)\textbf{1}_{\cE_{x}}]dx =\int_{\Omega^*_{\delta}}\EE^*_x[\pb_t^{\CC \setminus \Omega}(x,x)\vert\cE_{x}]\PP^*_x[\cE_{x}]dx.
    \end{align*}
As in \eqref{eq:compare h to h'}, we have
   \begin{align}
       \PP^*_x[\cE_{x}] & = \exp \left( - \tfrac{(\gamma - \alpha_0)^2}{2} \log (1/r') (1+ o(1))\right) \nonumber \\
       &  = t^{\kappa_0(1-\delta)(\gamma-\alpha_0)^2/2+o(1)}, \label{eq:P*E_x}
   \end{align}
   so it remains to estimate $\EE^*[\pb_t^{\CC \setminus \Omega}(x,x)\vert\cE_{x}]$ for $x \in \Omega^*_{\delta}$, and more precisely we claim that it suffices to 
   show 
   \begin{equation}\label{goal:Econd_pt}
     \EE^*[\pb_t^{\CC \setminus \Omega}(x,x)\vert\cE_{x}] \ge   t^{-1 + o(1)},
   \end{equation} 
   uniformly over $x \in \Omega^*_{\delta}$. Indeed, if this holds, then 
\begin{align*}
\int_\Omega\EE^*_x[\pb_t^{\CC \setminus\Omega}(x,x)]dx & \ge t^{\kappa_0 (\gamma - \alpha_0)^2/2 -1 } | \Omega^*_\delta| \times t^{  \delta[ \kappa_0 (\gamma - \alpha_0)^2/2] + o(1)}\\
& \ge t^{2\mathsf{x}\kappa_0 + \kappa_0 ( \gamma - \alpha_0)^2/2 -1 + o(1)} \times t^{  \delta[  \kappa_0 (\gamma - \alpha_0)^2/2] + o(1)},
\end{align*}
where we used our assumption \eqref{eq:scaling_informal} to lower bound $|\Omega^*_\delta|$. Recalling from \eqref{eq:Delta_alt} that $\kappa_0 (2\mathsf{x}+  ( \gamma - \alpha_0)^2/2) = \Delta$, we get 
$$
\int_\Omega\EE^*_x[\pb_t^{\CC \setminus\Omega}(x,x)]dx\ge t^{\Delta -1} \times t^{ \delta[  \kappa_0 (\gamma - \alpha_0)^2/2] + o(1)}, 
$$
which proves the lower bound of \eqref{eq:goal_bd} since $\delta$ is arbitrary.

 We thus focus on \eqref{goal:Econd_pt}.
 Even though  $\pb_t^{\CC \setminus \Omega}(x,x)$ is far from independent from the event $\cE_{x}$, it is nevertheless reasonable to hope for such a lower bound in view of the a.s. \emph{uniform} (over the entire domain) lower bound on the on-diagonal heat kernel $\mathbf{p}_t(x,x)$ of the form $t^{-1+ o(1)}$ obtained in \cite{AK2016}. Of course, the latter is an a.s. bound, rather than a bound in expectation, so it cannot be directly used, but this suggests it is indeed reasonable to hope for such a conditional bound. 

To see \eqref{goal:Econd_pt} we first change the field $h^*_x$ to the $\gamma$-quantum cone. Arguing as in Lemma \ref{L:comparison}, the error we make in doing so tends to zero FTP:
$$
\E^*_x[ \pb_t^{\CC \setminus \Omega} (x,x)  | \cE_x ] = \tilde \E_x[ \pb_t^{\CC \setminus \Omega} (x,x)  | \cE_x ] + o_{\text{FTP}}(1),
$$
where $\tilde \E_x$ denotes the law of a $\gamma$-quantum cone translated so $x$ is at the origin. (Briefly, the only difference with the setting of Lemma \ref{L:comparison} is that here we are conditioning on $\cE_x$. Recall that Lemma \ref{L:comparison} relies on coupling $h^*_x$ and the translated quantum cone in such a way that they agree exactly in a ball of radius at least $1/2$ around $x$, hence the events $\cE_x$ are the same for both fields and thus the argument from that Lemma applies \emph{verbatim}.) It thus suffices to prove 
\begin{equation}\label{eq:goal_cond_pt_cone}
\tilde \E_x [ \pb_t^{\CC\setminus \Omega}(x,x) |\cE_x] \ge c t^{-1}
\end{equation}
for $t$ small enough, uniformly in $x \in \Omega^*_\delta$. Let us apply a scaling to the quantum cone, as in Proposition \ref{P:scale_hk}, i.e., consider the ``unit circle embedding'' $\bar h_x$ of $\tilde h_x + C$, where $\tilde h_x$ has the law $\tilde \P_x$, of the translated $\gamma$-quantum cone and $C = (1/\gamma) \log (1/t)$. In other words, we apply the LQG change of coordinate:
\begin{equation}
     \bar h_x(\cdot)  = \tilde h_x ( r_C\cdot) + C+ Q \log r_C,
\end{equation}
where $r_C = e^{ - s_C}$ is defined by 
$$
s_C = \inf\{ s \in \R: \tilde{X}_s - (Q- \gamma) s = -C\} 
$$
and where $\tilde X_s$ describes the radial part of the $\gamma$-quantum cone, as in \eqref{eq:hrad_cone} (i.e., the circle average at distance $\rho$ from the origin of $\tilde h$ is $\tilde X_{\log (1/ \rho)} + \gamma \log (1 / \rho)$).

As explained in Proposition \ref{P:scale_hk}, we have $\tilde \pb_t(x,x) = (1/t) \bar \pb_1(x,x)$, where $\tilde \pb$ and $\bar \pb$ denote the heat kernels associated respectively to $\tilde h_x$ and to $\bar h_x$. Furthermore, 
$$
\pb_t^{\CC \setminus \Omega}(x,x) = \frac{1}{t} \bar \pb^{\CC \setminus r_C^{-1} \Omega}_1(x,x)
$$
where $r_C^{-1} \Omega$ denotes, with a slight abuse of notation, the image of $\Omega$ under $z \mapsto (z-x)r_C^{-1} + x$. 
Furthermore, when we phrase the event $\cE_x$ in terms of $\bar h_x$ instead of in terms of $\tilde h_x$, 
$$
\bar \cE_x = \Bigl\{ \Big|(\bar h_x)_{r'/r_C}(x) - \alpha_0 \log (1/r') + C + Q\log r_C\Big| \le 1 \Bigr\},
$$
where $(\bar h_x)_{\rho}(x)$ denotes the circle average of $\bar h_x$ at distance $\rho$ from $x$. Thus, using the invariance of the heat kernel under LQG coordinate changes, 
\begin{equation}\label{eq:rescaled_hk}
\tilde \E_x [ \pb_t^{\CC \setminus \Omega} (x,x) | \cE_x ] = \frac1t \bar \E_x [ \pb_1^{ \CC \setminus r_C^{-1} \Omega } (x,x) | \bar \cE_x].
\end{equation}

We claim that:
\begin{itemize}
    \item (Large scales.) The effect of the conditioning becomes negligible as $t\to 0$, i.e., the conditional law of 
    $\bar h_x$ given $\cE_x$ converges (say in the sense of total variation locally on any compact) to that of the unconditional $\gamma$ quantum-cone $\tilde \P_x$:
    \begin{equation}\label{eq:simpl1}
        \bar \P_x ( \cdot | \bar \cE_x) \Rightarrow \tilde \P_x
    \end{equation}
    \item (Small scales.) The requirement that trajectories leave the scaled domain $r_C^{-1} \Omega$ in the computation of the heat kernel at time 1 also becomes irrelevant, since this domain shrinks to the origin $x$; in other words, 
    \begin{equation}\label{eq:simpl2}
        \pb_1^{\CC \setminus r_C^{-1} \Omega} (x,x) \to \pb_1^{\CC} (x,x)
    \end{equation}
\end{itemize}

Assuming these two facts, one sees that, using \eqref{eq:rescaled_hk} and Fatou's lemma, 
\begin{align}\nopagebreak
    \liminf_{t\to 0} t \tilde \E_x [ \pb_t^{ \CC \setminus \Omega} (x,x) | \cE_x] &\ge \liminf_{t\to 0} \bar \E_x[ \pb_1^{ \CC \setminus r_C^{-1} \Omega} (x,x) | \bar \cE_x] \nonumber \nopagebreak \\
    & \ge \tilde \E_x[ \pb^{\CC}_1(x,x) ] = c_\gamma \label{eq:Fatou}
\end{align}
where in the last line we used \eqref{eq:HK_cgamma}. So, this proves \eqref{eq:goal_cond_pt_cone} (in fact, a more precise version of \eqref{eq:goal_cond_pt_cone} in which there is no term of the form $t^{o(1)}$ and the constant in front is explicit, and is given by $c_\gamma$).

The proof of these two facts \eqref{eq:simpl1} and \eqref{eq:simpl2} is encapsulated by the following lemma. 

\begin{lem}
    \label{L:mapping}
    Given $\cE_x$, we have $r' /r_C \to \infty$ in probability as $t \to 0$, and $\bar \P_x ( \cdot | \cE_x ) \Rightarrow \tilde \P_x(\cdot)$ as $t \to 0$ (with $\Rightarrow$ denoting total variation convergence locally on \emph{any} compact set of $\CC$), uniformly in $x \in \Omega_\delta$.  
    
    Furthermore, we have $r /r_C \to 0$ in probability, and thus $d ( x, \partial^*(r_C^{-1} \Omega)) \to 0$. In particular,  $ \bar \pb_1^{\CC \setminus r_C^{-1} \Omega} (x,x) \to \bar \pb_1^{\CC} (x,x)$ in law.

\end{lem}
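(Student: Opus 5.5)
The plan is to reduce everything to the radial part of the quantum cone, which is a one-dimensional drifted Brownian motion, and to use its Markov property at a suitable stopping time. Under $\tilde \P_x$, the circle average of $\tilde h_x$ at radius $e^{-s}$ is $\tilde X_s + \gamma s$. Write $s' = \log(1/r')$. Conditioning on $\cE_x$ means conditioning on
\[
\tilde X_{s'} = (\alpha_0 - \gamma) s' + O(1).
\]
The independent circular part is unaffected, and so is the radial part on $s\le 0$, given $\tilde X_{s'}$. The time $s_C$ is the first $s$ at which $\tilde X_s - (Q-\gamma)s = -C$, with $C = \gamma^{-1}\log(1/t)$.

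\textbf{Step 1 (locating $r_C$).} I will locate $s_C$ under the conditioning. At $s'$ we have $\tilde X_{s'} - (Q-\gamma)s' = (\alpha_0 - Q)s' + O(1)$. Now $s' = \kappa_0(1-\delta)\log(1/t)$ and
\[
\gamma(Q-\alpha_0)\kappa_0 = \frac{\gamma(Q-\alpha_0)}{2+\gamma^2/2-\gamma\alpha_0} = 1,
\]
since $\gamma Q = 2 + \gamma^2/2$. Hence at $s'$ the process equals $-(1-\delta)C + O(1)$, which is strictly above $-C$.

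I will argue that $s_C > s'$ with probability tending to one. Before $s'$, the conditioned process is a Brownian bridge from $0$ to about $-(1-\delta)C$ over time $s'\asymp C$. Its fluctuations are of order $\sqrt{C}$, so it hits the lower level $-C$ only with probability $e^{-c\,\delta^2 C}$.

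After $s'$, the process is a Brownian motion with drift $-(Q-\gamma)<0$, independent of the conditioning. It needs to descend a further $\delta C$, so
\[
s_C - s' = \frac{\delta C}{Q-\gamma}\,(1+o(1)) \quad \text{in probability}.
\]
This gives $\log(r'/r_C) \asymp \delta C \to \infty$, as claimed.

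For the second claim I compare with $s_r = \log(1/r) = \kappa_0(1+\delta)\log(1/t)$. Then
\[
s_r - s' = 2\delta\kappa_0\log(1/t) = \frac{2\delta C}{Q-\alpha_0}.
\]
Since $Q - \alpha_0 = \sqrt{Q^2 - 4(1-\mathsf x)} < Q-\gamma$ whenever $\alpha_0>\gamma$ fails, I must be careful here. In fact $\alpha_0 \le \gamma$, so $Q-\alpha_0 \ge Q-\gamma$, and the comparison goes the wrong way. I therefore expect to need to adjust the choice of scales $r,r'$: for instance take $r'=t^{\kappa_0(1-\delta)}$ and $r=t^{\kappa_0(1+\delta')}$ with $\delta'$ large enough (depending on $\delta$) that $s_r - s' > \delta C/(Q-\gamma)$. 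This yields $r/r_C\to 0$ in probability, i.e. $d(x,\partial^*(r_C^{-1}\Omega))\asymp r/r_C\to 0$. The lower-bound bookkeeping in \eqref{goal:Econd_pt} still only loses $t^{O(\delta)}$ provided $\delta'=O(\delta)$, and a constant multiple suffices. Checking this consistency is the first thing I would verify.

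\textbf{Step 2 (total variation convergence).} To show $\bar\P_x(\cdot\mid\bar\cE_x)\Rightarrow\tilde\P_x$ locally, I use the strong Markov property of $\tilde X$ at the stopping time $s_C$, which is valid on $\{s_C>s'\}$. On that event, the post-$s_C$ radial process $(\tilde X_{s_C+u}-\tilde X_{s_C})_{u\ge0}$ is a fresh drifted Brownian motion, independent of the conditioning. The rescaled field $\bar h_x$ restricted to $\{|z|\le 1\}$ is determined by this post-$s_C$ process together with the circular part, and the circular part has a scale-invariant law.

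For the pre-$s_C$ part, i.e. $\bar h_x$ at scales $|z|\in[1,R]$, I observe that the segment on $[s_C-\log R,\,s_C]$ of a process conditioned to reach $-C$ for the first time at $s_C$ is, as $C\to\infty$, the time-reversal of a Brownian motion with drift conditioned to stay above its endpoint. This is exactly the $s\le0$ part of the quantum cone in \eqref{eq:hrad_cone}. The conditioning on $\cE_x$ acts at distance $s_C-s'\to\infty$ before $s_C$, so its effect on this window vanishes. This is the standard argument behind Theorem 7.18 in \cite{BP}, and I would adapt it, including the uniformity in $x$, which is automatic because the laws do not depend on $x$.

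\textbf{Step 3 (heat kernel convergence).} From Step 2, the domain $r_C^{-1}\Omega$ contains $B_{c\,r/r_C}(x)$ but has boundary points within distance $O(r/r_C)$ of $x$, on a connected component of diameter $\gg1$. By Beurling, ordinary Brownian paths started at $x$ leave $r_C^{-1}\Omega$ before leaving $B_\epsilon(x)$ with probability tending to one, and the Liouville clock for exiting $B_\epsilon$ tends to $0$ by Lemma \ref{lem:scaling without}.

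To pass from motion to bridge, I would use the bridge density $\pb_{1/2}(\cdot,x)/\pb_1(x,x)$ on $[0,1/2]$ together with the local TV convergence of Step 2. This gives
\[
\Pb^\gamma_{x\overset{1}{\to}x}(\sigma_{r_C^{-1}\Omega}<1)\to1,
\]
and hence $\bar\pb_1^{\CC\setminus r_C^{-1}\Omega}(x,x)\to\bar\pb_1^{\CC}(x,x)$ in law.

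\textbf{Main obstacle.} I expect the main difficulty to be the scale-compatibility issue in Step 1, namely ensuring $r\ll r_C\ll r'$ simultaneously with only an $O(\delta)$ loss in the exponent. A secondary difficulty is making the TV convergence in Step 2 rigorous, given that $s_C$ is not a fixed time.
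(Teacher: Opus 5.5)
Your proposal follows the paper's route. Both arguments use the Markov property of the cone's radial part at $s'=\log(1/r')$, the law of large numbers \eqref{eq:LLN} for $s_C-s'$, and the cone-convergence argument of \cite{BP} for the total variation claim. For the heat kernel, both use Beurling's estimate together with the bridge density. You also check that $s_C>s'$ with high probability, which the paper leaves implicit. The one real departure is the choice of $r$, and there your suspicion is justified: it exposes an error in the paper. The paper needs $\log(1/r)-s_C\to+\infty$, i.e.\ \eqref{eq:kappa and Q}, i.e.\ $Q-\alpha_0<2(Q-\gamma)$. It reduces this to $3Q^2-12>0$ using $4(1-\mathsf{x})\le 4$. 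That bound controls $Q^2-4(1-\mathsf{x})$ from below, not from above, so the reduction is invalid. Since $Q-\gamma=\sqrt{Q^2-4}$ and $Q-\alpha_0=\sqrt{Q^2-4+4\mathsf{x}}$, the condition is exactly $4\mathsf{x}<3(Q-\gamma)^2$. This fails for $\gamma$ close to $2$ whenever $\mathsf{x}>0$; for $\mathsf{x}=1/2$ it fails for every $\gamma\ge 1.35$. In that regime $s_C-\log(1/r)\to+\infty$, i.e.\ $r/r_C\to\infty$, so with $r=t^{\kappa_0(1+\delta)}$ the second half of the lemma is false.

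Your own justification is imprecise, though. The inequality $Q-\alpha_0\ge Q-\gamma$ does not by itself make the comparison go the wrong way, because $\log(1/r)-\log(1/r')=2\delta\kappa_0\log(1/t)$ carries a factor $2$. When $4\mathsf{x}<3(Q-\gamma)^2$ (e.g.\ for small $\gamma$), the paper's scales work as they stand. The repair you propose is the right one. Take $r=t^{\kappa_0(1+\delta')}$ with $\delta'>\delta(\gamma-\alpha_0)/(Q-\gamma)$, which is a fixed multiple of $\delta$ for each $\gamma<2$. The law of large numbers then gives $r\ll r_C\ll r'$ in probability. In the lower bound of Proposition~\ref{L:boundary} only the volume factor changes, to $t^{2\mathsf{x}\kappa_0(1+\delta')+o(1)}$; this is an $O(\delta)$ loss in the exponent that vanishes as $\delta\to 0$. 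So what you prove is a modified lemma rather than the statement as written. That is the correct move, since the statement as written does not hold for all $\gamma\in(0,2)$.
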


\begin{rem}
    For the final claim of this lemma it is crucial that the compact sets in the topology underlying the convergence in law $\Rightarrow$ are not just in $\CC \setminus \{x \}$ but in all of $\CC$, thereby allowing to identify the limiting geometry near $x$ itself. 
\end{rem}

\begin{proof}
    To prove this lemma it is preferrable to change coordinates to the cylinder $\cS$, as in \eqref{eq:hrad_cone}.
    \begin{figure}
        \centering
    \includegraphics[width=0.8\linewidth]{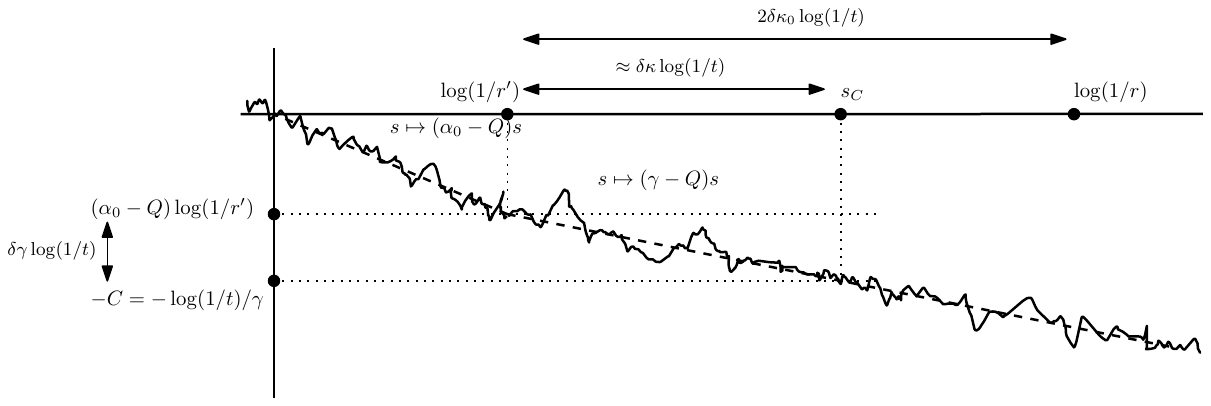}
        \caption{Quantum cone, conditioned on $\cE_x$, in cylindrical coordinates. The main observation in the proof of Lemma \ref{L:mapping} is that $s_C$ finds itself far away from both $\log (1/r)$ and $\log (1/r')$, so that the effect of both conditions (at short and large scales) disappear asymptotically.}
        \label{F:conditionedcone}
    \end{figure}
    
    In these coordinates, the conditioning $\cE_x$ for $h_x$ is simply that 
    $$
    \tilde \cE_x = \Bigl\{\Big|\tilde X_{\log (1/r') } - (\alpha_0 - Q) \log (1/r') \Big| \le 1 \Bigr\}.
    $$
    By the Markov property, the conditional behaviour of $\tilde X_s - (Q -\gamma ) s $ for $s \ge \log (1/r')$, given $\tilde \cE_x$, is that of a Brownian motion with drift $- (Q-\gamma)<0$, starting at time $\log (1/r')$ at a position which is within 1 of the value of $(\alpha_0 -Q) \log (1/r')$. 
    The key observation is that, given $\tilde \cE_x$, 
    \begin{equation} \label{eq:sC}
s_C - \log (1/r') \to +\infty        
    \end{equation}
    in probability as $t\to 0$ (which, in the original coordinate system of $\CC$, boils down to the claimed fact that $r'/r_C \to \infty$). To see \eqref{eq:sC}, note that 
    \begin{equation}\label{eq:C_r'}
C> ( Q - \alpha_0) \log (1/r')
    \end{equation}
    and more precisely 
    $$ 
    C - ( Q - \alpha_0) \log (1/r') = \frac{\delta}{ \gamma} \log (1/t) \to \infty
    $$
    since $r' = t^{\kappa_0 (1- \delta)}$ and $\kappa_0  = 1/ ( \gamma( Q - \alpha_0))$, by definition of $\kappa(\alpha)$ in \eqref{eq:definition of kappa}.
(Visually, the meaning of \eqref{eq:C_r'} is that the horizontal line $y = - C$ lies much below the line $y = (\alpha_0 - Q) \log (1/r')$, which is the value of $\tilde X$ at time $\log (1/r')$ given $\tilde \cE_x$.) 
Therefore, applying the Markov property, the difference $s_C - \log (1/r')$ (which is the first hitting time of some large negative value by a Brownian motion with negative drift) is to first order governed by the law of large numbers:
   \begin{equation}\label{eq:LLN}
\frac{ s_C - \log (1/r')}{\log (1/t)} \to  \frac{\delta}{\gamma( Q - \gamma)} 
    \end{equation}
    in probability, which certainly proves \eqref{eq:sC}. Applying the Markov property of $\tilde X$ (under $\tilde \P_x ( \cdot | \tilde \cE_x)$) at time $\log (1/r')$, and using the exact same proof as that of Theorem 7.11 in \cite{BP} we deduce that the law of $(\tilde X_{s_C + s} + C + ( \gamma - Q) s)_{s\in \RR} $, given $\tilde \cE_x$, converges (in the sense of total variation, on any interval of the form $[s_0, \infty) \subset \RR$ -- crucially we do not consider just compact intervals) to that of the unconditional law of $(\tilde X_s + (\gamma - Q) s)_{s\in \RR}$. After returning to the original coordinate system of $\CC$, this shows that the conditional law of $\bar h_x$, given $\bar \cE_x$, converges to the unconditional law of $\tilde \P_x$. 
    
    To conclude, it remains to prove that $r/ r_C \to 0$ in probability as $t \to 0$. However, this follows at once from \eqref{eq:LLN}, the fact that $\log (1/r) - \log (1/r') = 2\kappa_0 \delta \log (1/t)$, and the inequality 
    \begin{align}\label{eq:kappa and Q}
        2\kappa_0 >  \frac{1}{\gamma(Q - \gamma)}.
    \end{align}
    Indeed, recall that $\kappa_0=(\gamma(Q-\alpha_0))^{-1}$, so \eqref{eq:kappa and Q} is equivalent to $2(Q-\gamma)>Q-\alpha_0$, which can be reformulated as
    \begin{align*}
        2(Q-\gamma)>\sqrt{Q^2-4(1-\mathsf{x})}.
    \end{align*}
    Since $4(1-\mathsf{x})\le4$, it suffices to show (after squaring and rearranging) that
    \begin{align*}
        3Q^2-8Q\gamma+4\gamma^2+4>0.
    \end{align*}
    Using $8\gamma Q=16+4\gamma^2$, this is equivalent to $3Q-12>0$, which is true as $Q>2$. This shows that $r/ r_C \to 0$ in probability.

    This implies that the distance from $x$ to the  rescaled component of the boundary $\partial^* \Omega$, $\partial^* (r_C^{-1} \Omega)$, tends to 0 as $t\to0$, and so Liouville Brownian motion (and thus also Liouville Brownian bridge by absolute continuity), is guaranteed to leave $r_C^{-1} \Omega$ after any strictly positive amount of time with probability tending to 1 as $t\to 0$, since $\partial^* \Omega$ is connected and has positive diameter (so the diameter even tends to infinity after rescaling by $r_C^{-1}$, though it is not necessary to exploit this fact). Thus $$\bar \pb_1^{ \CC \setminus r_C^{-1} \Omega} (x,x) \to \bar \pb_1^{\CC} ( x,x)$$ 
    in law, as desired. This finishes the proof of Lemma \ref{L:mapping}. 
\end{proof}

As explained in \eqref{eq:Fatou}, this proves \eqref{eq:goal_cond_pt_cone}, and thus, \eqref{goal:Econd_pt}. In turn, this proves \eqref{eq:goal_bd} and 
 thus concludes the proof of Proposition \ref{L:boundary}.  
\end{proof}

Combining Proposition \ref{L:boundary}, Lemma \ref{L:bulk_quant}, and the in-out decomposition \eqref{eq:decomposition2}, the proof of Theorem \ref{T:qc} is also complete. \qed  

\appendix \label{appedix}

\section{Proof of Lemma \ref{L:negmoments}}
\label{A:proof_negmom}

Here we provide a proof of Lemma \ref{L:negmoments}; the easier case of positive moments in the unit ball (Lemma \ref{L:posmoments}) is left for the reader to check and in fact can be deduced directly from existing estimates such as Proposition 3.37 in \cite{BP}. The reason why Lemma \ref{L:negmoments} requires a new argument is that it pertains to negative moments \emph{at large times} for Brownian motion, corresponding to negative moments at large scales for the Liouville area measures. Furthermore, at large scales, both the quantum cone and the whole plane GFF under $\P^*_x$ have a behaviour which could cause the ``standard'' arguments (say in \cite{BP}) to break: indeed, on the one hand, beyond the unit ball the $\gamma$-quantum cone does not coincide with $\P^*_0$ and in particular is not Gaussian; while on the other hand, the change in behaviour of the Green function  $G(x, y)$ beyond $|y |=1$ (see \eqref{eq:Greens function of full plane GFF}) causes a change in drift which breaks scale invariance for the whole plane GFF under $\P^*_x$; see \eqref{eq:X*}. Thus careful arguments are needed. 

\begin{proof}[Proof of Lemma \ref{L:negmoments}] Our first observation is that it suffices to consider the case $x = 0$. In the case of the quantum cone this is directly assumed. In the case of the whole-plane GFF, recall that the whole plane GFF is invariant under all Möbius transforms up to additive constant (see, e.g., Lemma 6.26 in \cite{BP}). This implies that $h^*_x = h^*_0 - \Omega$ in law, where $\Omega$ is a Gaussian random variable with fixed variance (corresponding to the average value of $h^*_0$ on the circle of radius 1 around $-x$). The two random variables on the right hand side of this identity in law are not independent, but $\E[(e^{\gamma \Omega})^p]< \infty$ for all $p>0$, so by Hölder's inequality it suffices to prove the result with $h^*_0$ in place of $h^*_x$, as announced. 

The key observation is that both fields $h^*_0$ and $\tilde h$ can be coupled to a third field, which we will denote by $\hat h$, in such a way that
\begin{equation}\label{eq:domination}
h^*_0 \ge \hat h,\quad  \tilde h \ge \hat h
\end{equation}
(in the sense that the difference of the distributions is a nonnegative function), and where the field $\hat h$ can be simply written as 
\begin{equation}\label{eq: hat h}
\hat h (\cdot)  = h + \gamma \log (\tfrac{1}{|z|}); \quad \quad z \in \CC \setminus \{0\}.
\end{equation}
This is essentially obvious in the case of $h^*_0$, and amounts to the observation that 
$$h^*_0(z) =  h (z) + \gamma\log G(0, z)\ge  h + \gamma \log ( 1/ |z|), \quad \quad z \in \CC \setminus \{0\}.$$There is in fact equality in the unit disc $B_0(1)$, and outside the unit by definition of $G$ (see \eqref{eq:Greens function of full plane GFF}) we have $G(0, \cdot) = 0$ while $\log (1/|z|) \le 0$. 

In the case of $\tilde h$ this requires a bit more arguments and boils down to the fact that if $(X_t, t \ge 0)$ is a Brownian motion conditioned so that $X_t - (Q- \gamma ) t \ge 0$ for all $t \ge 0$, then $X_t $ stochastically dominates a Brownian motion. This can be seen either from an explicit computation of the generator, which has a positive drift (see, e.g., the discussion after Definition 7.7 in \cite{BP} where the generator is computed) or, more conceptually, can be seen from the fact that the conditioned process is a certain Doob $h$-transform, where the associated harmonic function, which is here very explicit as it is just an exponential function, is monotone increasing (see e.g. \cite{BenjaminiBerestycki} for related arguments). 

It thus suffices to prove the result with $\hat h$ instead of $h^*_x$ or $\tilde h$, and we will denote the corresponding measure by $\hat \mu$ and quantum clock by $\hat \nu(u), u \ge 0$.

Now, note that by conformal invariance of the whole plane GFF, $(h(zR)_{z\in \CC})$ is a full plane GFF, with a different additive normalisation. Thus $h(zR) - h_R(0)$ has the same law as $h$ (where $h_r(x)$ denote the circle average of $h$ at radius $r$ around $x$), and hence 
\begin{equation}\label{eq:scaling hat h}
\hat h(zR) {=} \hat h'(z) + \gamma\log (\frac{1}{R})+   h_R(0) 
\end{equation}
where $\hat h'$ has the same law as $\hat h$. Fix a Brownian motion $(B_t, t \ge 0)$ and for $t\ge 0$, let $B'_t = R B_{t/R^2}$. Then $(B'_t)_{t\ge 0}$ is also a standard Brownian motion. Let $\hat \nu (u) $ denote the quantum clock of $B$ with respect to $\hat h$ and let $\hat \nu'(u)$ denote the quantum clock of $B'$ with respect to $\hat h'$. Fix $u>0$. Then writing down explicitly the definition of the quantum clock, and noting that 
$$\hat h_\eps (zR) = ( \hat h')_{\eps/R} (z) + \gamma \log (1/R) + h_R(0),
$$
we see that 
\begin{equation}\label{eq:scaling clock}
\hat \nu (u) = e^{\gamma h_R(0) + \gamma^2 \log (1/R)} \hat \nu'( u/R^2) R^{2 + \gamma^2/2}.
\end{equation}
Thus if we suppose $u = R^2$ we find
$$
\hat \nu (u) = e^{ \gamma h_R(0)} \hat \nu'(1) R^{ 2- \gamma^2/2}.
$$
Raising to the power $-q$ and taking expectations on both sides,
\begin{align*}
\E[ \hat \nu (u)^{-q} ] & = R^{-q( 2- \gamma^2/2)} \E [ e^{ - q \gamma h_R(0)} \hat \nu'(1)^{-q} ].  
\end{align*}
Unlike in the exactly scale invariant case discussed e.g. in \cite[Theorem 3.27]{BP}, the two random variables inside the expectation in the right hand side above are not independent. Nevertheless, we can use Hölder's inequality: for every $m>1$, letting $m'$ be its Hölder conjugate (so $1/m  + 1/m' = 1$) we have
$$
\E[ \hat \nu (u)^{-q} ] \le u^{ - q (1 - \gamma^2/4)} \E[ e^{ - q \gamma m h_R(0)}]^{1/m} \E[ \hat \nu'(1)^{ -q m'}]^{1/m'}
$$
The first expectation can be computed exactly (and is equal  to $e^{ \gamma^2 m^2 q^2 \var (h_R(0)) /2}$). Since GMC admits negative moments of all order (Theorem 3.42 in \cite{BP}), the second expectation is some fixed number $C = C_{q,m, \gamma}$ which depends only on $q, m$ and of course $\gamma$. Since $\var (h_R(0)) = \log R + O(1) = (1/2) \log u + O(1)$, we see that for any $m >1$, 
\begin{align*}
    \E[ \hat \nu (u)^{-q} ]  & \le C u^{ - q (1 - \gamma^2/4) } (u^{1/2})^{q^2 m^2 \gamma^2/2}.
\end{align*}
By choosing $m>1$ as close to 1 as needed, we see that the exponent of $u$ above can be taken to be as close  to 
$$- q( 1- \gamma^2/4) + q^2 \gamma^2/4 = \xi^*(-q)/2$$ as desired. This concludes the proof of Lemma \ref{L:negmoments}. 
\end{proof}

\section{The heat content is intrinsic to LQG}
\label{App:K_intrinsic}

We verify that, as announced, the heat content is a quantity which is intrinsic to LQG (i.e., satisfies the LQG change of coordinate formula), even though it is not expected to be spectrally determined. Given a domain $\Omega$ and a field $h$ on $\Omega$, let us write $ \Kb^{(\Omega, h)}(t)$ for the heat content associated to the field $h$ on $\Omega$. 

\begin{prop}[Heat content is intrinsic] \label{P:intrinsic}
    Let $(\Omega,h)\sim(\Omega',h')$ be equivalent as quantum surfaces (i.e., in the sense of \cite{DS2011}). Then
        \begin{align}
        \Kb^{(\Omega, h)}(t) = \Kb_{}^{(\Omega',h')}(t)\quad \text{for all}\quad t\geq0.
    \end{align}
\end{prop}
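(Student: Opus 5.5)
The plan is to check directly that each ingredient in the definition of $\Kb$ transforms correctly under the LQG change of coordinates. By definition, $(\Omega,h)\sim(\Omega',h')$ means there is a conformal bijection $\phi:\Omega\to\Omega'$ with
\[
h = h'\circ\phi + Q\log|\phi'|.
\]
The heat content
\[
\Kb^{(\Omega,h)}(t)=\int_\Omega \Pb^{h}_z[\sigma_\Omega<t]\,\mu_h(dz)
\]
involves only two objects: the Liouville measure $\mu_h$ and the law of the exit time $\sigma_\Omega$ of Liouville Brownian motion. It therefore suffices to prove two facts:
\begin{itemize}
\item[(i)] $\phi_*\mu_h=\mu_{h'}$;
\item[(ii)] if $\mathbf{Z}$ is LBM for $h$ started at $z$ and killed on exiting $\Omega$, then $\phi(\mathbf{Z})$ is LBM for $h'$ started at $\phi(z)$, run until exiting $\Omega'$, with the same time parametrisation.
\end{itemize}
Given (i) and (ii), $\sigma_\Omega(\mathbf{Z})=\sigma_{\Omega'}(\phi(\mathbf{Z}))$ pathwise, so $\Pb^h_z[\sigma_\Omega<t]=\Pb^{h'}_{\phi(z)}[\sigma_{\Omega'}<t]$. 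A change of variables $w=\phi(z)$ in the integral, using (i), then gives the claim for every $t$ simultaneously.

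Fact (i) is the standard coordinate change formula for GMC (e.g.\ as in \cite{DS2011} or \cite[Chapter 3]{BP}), so I will simply cite it.

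Fact (ii) is the real content, and I will argue it via conformal invariance of planar Brownian motion. Let $B$ be a standard Brownian motion from $z$ and $\tau_\Omega$ its exit time. Then $W_s:=\phi(B_s)$, $s<\tau_\Omega$, is a time-change of a Brownian motion $B'$ from $\phi(z)$, namely $W_s=B'_{A(s)}$ with
\[
A(s)=\int_0^s|\phi'(B_u)|^2\,du,
\]
and $A(\tau_\Omega)=\tau'_{\Omega'}$, the exit time of $B'$ from $\Omega'$. One then compares clocks. Formally,
\[
\nu^{h',B'}(A(s))=\int_0^s e^{\gamma h'(\phi(B_u))}|\phi'(B_u)|^2\,du ,
\]
and since $\gamma Q=2+\gamma^2/2$,
\[
e^{\gamma h'(\phi(y))}|\phi'(y)|^2=e^{\gamma h(y)}|\phi'(y)|^{2-\gamma Q}=e^{\gamma h(y)}|\phi'(y)|^{-\gamma^2/2}.
\]
The leftover factor $|\phi'|^{-\gamma^2/2}$ is exactly compensated by the renormalisation $\eps^{\gamma^2/2}$: a circle of radius $\eps$ around $y$ maps approximately to a circle of radius $\eps|\phi'(y)|$ around $\phi(y)$. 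This is the same computation that gives (i).

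To make the clock comparison rigorous I will not work with circle averages directly. Instead, conditionally on the Brownian path, I will view $\nu(\tau_\Omega)$ as the total mass of a GMC whose reference measure is the occupation measure of $B$, as in \cite{GRV, Ber2015}. The coordinate change formula for GMC with a general reference measure then identifies $\nu^{h',B'}\circ A$ with $\nu^{h,B}$ almost surely, simultaneously for all $s$ by monotonicity and continuity. This yields $\phi(\mathbf{Z}_t)=\mathbf{Z}'_t$ for all $t<\sigma_\Omega=\sigma_{\Omega'}$.

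I expect this last step, justifying the change of variables for the Brownian-occupation-based chaos rather than for Lebesgue measure, to be the main obstacle. The regularisation is at the level of the field and not of the path, so the image of the reference measure under $\phi$ is not a uniform rescaling. I would handle it via uniqueness of GMC: Shamov's characterisation \cite{Shamov}, or Berestycki's convergence for arbitrary approximations \cite{Ber2017}, applied to the measure $\phi_*(\text{occupation measure})$ weighted by $|\phi'|^{2}$. In that framework the extra factor $|\phi'|^{-\gamma^2/2}$ emerges from the variance correction, exactly as for area. Alternatively, the statement is already known as the conformal covariance of LBM \cite{GRV, Ber2015}, and I would cite it.
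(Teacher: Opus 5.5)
Your proposal is correct and follows essentially the same route as the paper: the paper also reduces everything to the conformal covariance of Liouville Brownian motion, citing Theorem 1.3 of \cite{Ber2015} to get $\sigma_\Omega=\sigma'_{\Omega'}$ pathwise, and then concludes via the coordinate change of $\mu$. Your explicit fact (i) and the change of variables $w=\phi(z)$ spell out a step the paper leaves implicit, and your sketched GMC-uniqueness argument for (ii) is optional once that theorem is cited.
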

\begin{proof}
    Let $\phi:\Omega\to\Omega'$ be a conformal isomorphism  and let $(B_s)_{s\ge0}$ be a planar Brownian motion starting at some $z\in\Omega$. Then the Liouville Brownian motion associated to $(\Omega,h)$ is given by $\mathbf{Z}_t = B_{\nu^{-1}(t)}$, where $\nu$ is the quantum clock, 
\begin{equation} \label{eq:quantum clock} 
    \nu(t) = \lim_{\eps \to 0} \eps^{\gamma^2/2}\int_0^t e^{\gamma h_\eps (B_s) } \mathrm{d} s .
    \end{equation}
    By Theorem 1.3 in \cite{Ber2015} the process $(\mathbf{Z}_t, t \ge0)$ satisfies the change of coordinate formulae and so is itself intrinsic to LQG: 
    $\phi(\mathbf{Z}_t)$ is a Liouville Brownian motion in $\Omega'$ associated to the field $h\circ \phi^{-1} + Q \log | ( \phi^{-1})'|$, where $Q = \tfrac{2}{\gamma} + \tfrac{\gamma}{2}$. This is precisely $h'$ since $(\Omega, h) $ and $(\Omega', h')$ are equivalent as quantum surfaces.

    Furthermore, $\phi(\mathbf{Z}_t)=: \mathbf{Z}'_t$ starts at $\mathbf{Z}'_0 = \phi(z)$. Thus, if $\tau_\Omega$ is the exit time by $B$ of the domain $\Omega$, and  $\sigma_\Omega$ (resp. $\sigma'_{\Omega'}$) is the exit time of $\Omega$ (resp. $\Omega'$) by $\mathbf{Z}$ (resp. $\mathbf{Z}'$) $\sigma_\Omega = \nu (\tau_\Omega)$ and $\sigma_\Omega = \sigma'_{\Omega'}$. Thus we find $\mathbf{P}_z ( \sigma_\Omega < t) = \mathbf{P}_{\phi(z)} ( \sigma'_{\Omega'} < t)$. It follows that the heat content on $\Omega$ associated to $h$ is equal to that on $\Omega'$ associated to $h'$, as desired. 
\end{proof}

\section{Example of different one-sided Minkowski dimensions}

\label{App:Minkowski}

As we have not easily found an example of a domain with different one-sided Minkowski dimensions, we will give one here, which in fact is even a Jordan domain. 

We start by describing the construction, which is a variation on the classical ``comb'' domain in which one removes an infinite sequence of parallel slits from a fixed nice domain, such as a square. Simply, we fatten the slits and add them above the square (rather than remove them from it), and let them have a height which decreases to zero as they accumulate towards a boundary point (taken to be a corner of the unit square). It will be important to assign precise rates to the various quantities involved (thickness of rectangles, height, spacing between rectangles).

\begin{figure}
        \centering
    \includegraphics[width=0.8\linewidth]{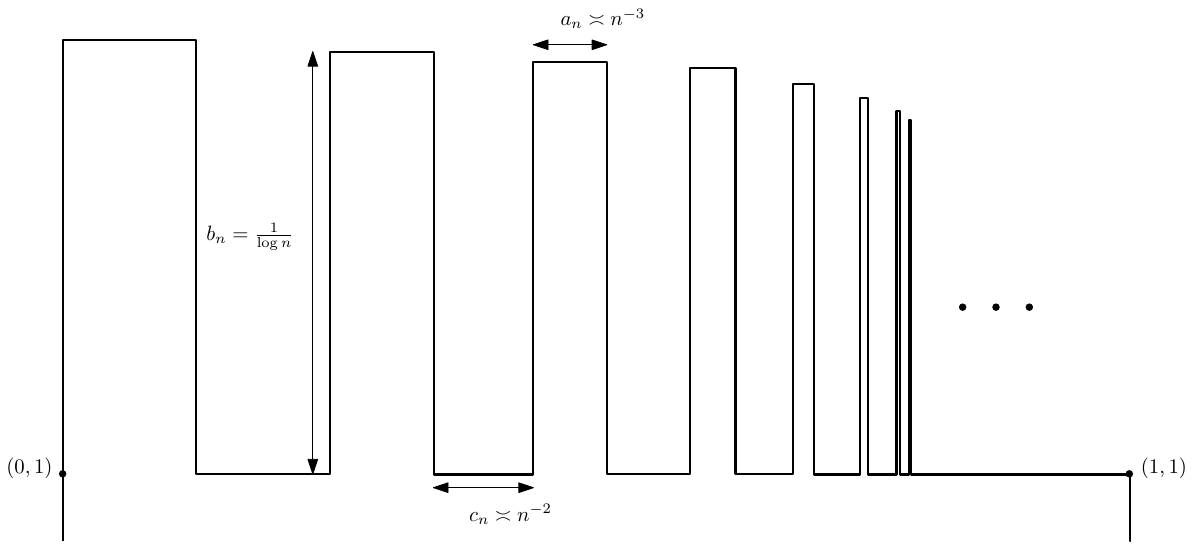}
        \caption{Example of a curve with different one-sided Minkowski dimensions on each side: the inner Minkowski dimension is $4/3$ whereas the outer one is $3/2$.}
        \label{F:differentMinkowskidimenions}
    \end{figure}

We start by describing the domain in words. Let $S = (0,1)^2$ denote the unit square. We will add to the top edge of the square a sequence of rectangles $R_n, n\ge 1$ (i.e., the lower edge of each rectangle will be contained in the segment $[ \mathbf i, \mathbf i +1]$, where $\mathbf i^2 =-1$), sequentially from left to right, as follows. The $n$th rectangle has height $b_n : = (1/\log (n+1))>0$, and thickness or width $a_n:=n^{-3}/2$. The spacing (i.e., distance) between $R_n$ and $R_{n+1}$ is $c_n:= 6/(\pi n^2)- a_n$. In other words, the difference between the bottom-left corners of $R_n$ and $R_{n+1}$ is $6 /(\pi n^2)$. 

We let $\ell_n$ denote the lower edge of $R_n$, which is horizontal segment of the form $[\lambda_n +\mathbf{i}, \lambda_n + a_n + \mathbf{i}]$, where $$\lambda_n = \sum_{i=1}^{n-1} 6/(\pi i^2)  $$ 
is the real part of the bottom-left corner of $R_n$. 
We then define the domain $\Omega$ to be
$$
\Omega = S \cup \left(\bigcup_{n\ge 1} R_n \right) \setminus \left( \bigcup_{n\ge 1} \ell_n\right)
$$
Note that the rectangles $R_n$ (say, the lower left corner, $\lambda_n + \bf i$) accumulate towards the point $\mathbf{ i} + 1$, since $\lim_{n\to \infty} \lambda_n = 1$.

We claim that $\Omega$ may be viewed as a Jordan domain. To do this, it will be useful to denote by $\tilde R_n$ the portion of $\partial \Omega$ which consists of $R_n \setminus \ell_n$ together with the portion of $ \partial S$ connecting $R_n$ and $R_{n+1}$, thus in total $\tilde R_n$ consists of four segments, three from $R_n$ and one from $\partial S$. 

We consider the curve $\eta$ that starts at time 0 from the point $\eta(0) = (0,1) = \mathbf{i}$, and traces $\partial \Omega$ counterclockwise as follows: for each $n\ge 1$, the curve traces $\tilde R_1$, followed by $\tilde R_2$, and so on. After visiting all rectangles, $\eta$ continues from $\bf i + 1$ and visits the remaining three sides of $S$, in counterclockwise order. We need to fix a parameterisation for $\eta:[0,4] \to \partial \Omega$. To this end we fix a positive summable series $\eps_n$ such that $\sum_{n\ge 1} \eps_n = 1$, and require $\eta$ to trace $\tilde R_n$ in time $\eps_n$, at constant speed. 
We set $\eta(1) = \mathbf{i} +1$ and let $\eta$ traverse the rest the remaining three sides of $\partial S$ at unit speed during the interval $[1,4]$.  

The above prescription assigns uniquely a value $\eta(t) \in \partial \Omega$ for each $0\le t \le 4$. The key observation is that $\eta$ is continuous and simple. 
To check continuity, note that only the continuity from the left at value $t=1$ is potentially problematic. However, if $t<1$, then $\eta(t)$ belongs to a uniquely defined $\tilde R_n$, say $\eta(t) \in \tilde R_{n(t)}$, with $n(t)$ uniquely determined by the fact $$\sum_{i=1}^{n(t)} \eps_i \le t < \sum_{i=1}^{n(t)+1} \eps_i.$$ 
Note that $n(t) \uparrow \infty$ as $t\to 1^-$. Hence, as $\text{diam}(\tilde R_n) \to 0$ and $d(\eta(1), \tilde R_n) \to 0$ as $n\to \infty$, we see that $\eta(t) \to \eta(1)$ as $t\to 1^-$. Thus $\eta$ is a continuous curve. 

Furthermore, $\eta$ is simple. Indeed let $0\le  s< t\le 4$. There are various cases to treat, but again the only potentially problematic case is when $t =1$ and $0\le s<1$. However,  if $s< 1$ then $\eta(s) \in \tilde R_{n(s)}$ so $\eta(s) \neq \eta(1)$. We deduce that $\eta$ is a Jordan curve, with $\partial \Omega = \eta([0,4])$.




\medskip The claim is now the following:

\begin{lem}[example of different one-sided Minkowski dimensions]\label{lem:example}
    The Jordan domain $\Omega$ has  inner Minkowski dimension $4/3$ and outer Minkowski dimension $3/2$.
\end{lem}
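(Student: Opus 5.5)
The plan is to compute directly the Lebesgue measure of the inner $\eps$-neighbourhood $\Omega_\eps=\{z\in\Omega: d(z,\partial\Omega)\le\eps\}$ and of the outer one $\Omega^{c}_\eps=\{z\notin\bar\Omega: d(z,\partial\Omega)\le\eps\}$, each up to factors $\eps^{o(1)}$. Here the outer neighbourhood is restricted to a bounded window, say the box $[-1,2]\times[-1,3]$. The inner area should be $\eps^{2/3+o(1)}$, so $2\mathsf{x}=2/3$ and the inner dimension is $2-2/3=4/3$. The outer area should be $\eps^{1/2+o(1)}$, giving outer dimension $3/2$.

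Throughout, the inputs are $a_n\asymp n^{-3}$, $c_n\asymp n^{-2}$ and $b_n=1/\log(n+1)$. The factor $1/\log$ only ever produces $\eps^{o(1)}$ corrections, since $b_n\in[1/\log(1/\eps)^{C},1]$ for the relevant $n\le \eps^{-1}$. Harmless pieces are handled first: the neighbourhood of the three remaining sides of $S$, of the top edges of the rectangles, and of the top edge of $S$ (on either side) all have area $O(\eps)$ or less. For instance, the tops of the rectangles with $n\le \eps^{-1/2}$ contribute $\sum_n \eps(a_n+2\eps)\lesssim \eps$ plus the corner terms, which are negligible.

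\textbf{Inner side.} Inside the rectangle $R_n$ there are two regimes.
\begin{itemize}
\item If $a_n\le 2\eps$, i.e.\ $n\gtrsim \eps^{-1/3}$, the whole rectangle lies in $\Omega_\eps$ and contributes $a_nb_n$.
\item Otherwise only the two vertical strips of width $\eps$ count, contributing $\asymp \eps b_n$.
\end{itemize}
Summing gives
\[
\eps\sum_{n\lesssim \eps^{-1/3}} b_n+\sum_{n\gtrsim\eps^{-1/3}} a_nb_n=\eps^{2/3+o(1)}.
\]
Both pieces are of this order: the first is an upper and lower bound via $b_n\ge 1/\log(1/\eps)$, and the second is $\asymp \eps^{2/3}/\log(1/\eps)$. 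The part of $S$ near its top edge has area $\le\eps$, so $|\Omega_\eps|=\eps^{2/3+o(1)}$.

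\textbf{Outer side.} The same dichotomy applies to the gap of width $c_n$ between $R_n$ and $R_{n+1}$, which has height $\ge b_{n+1}$.
\begin{itemize}
\item If $c_n\le2\eps$, i.e.\ $n\gtrsim\eps^{-1/2}$, the gap region up to height $b_{n+1}$ is entirely within $\eps$ of $\partial\Omega$, contributing $\asymp c_nb_n$.
\item Otherwise the gap contributes $\asymp \eps b_n$.
\end{itemize}
This yields
\[
\eps\sum_{n\lesssim\eps^{-1/2}}b_n+\sum_{n\gtrsim\eps^{-1/2}}c_nb_n=\eps^{1/2+o(1)}.
\]
The exterior near the rest of $\partial S$, near the leftmost side of $R_1$, and above the rectangle tops adds only $O(\eps)$. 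The one point needing care is the region just above the tops near the accumulation point $\mathbf i+1$. It lies inside the ball of radius $O(\eps^{1/2})$ around that point only when $n\gtrsim \eps^{-1/2}$, and then it has area at most $\eps^{1/2}\cdot\eps$ plus the gap terms already counted.

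\textbf{Main obstacle.} The main difficulty is bookkeeping rather than ideas. I must make sure the lower bounds are genuine (disjointness of the counted strips, and $b_n$ bounded below by $1/\log(1/\eps)$ on the relevant range). I must also check that no additional region, for instance the part of $S$ below the openings $\ell_n$ or the exterior to the right of $\mathbf i+1$, contributes more than $\eps^{2/3}$ (respectively $\eps^{1/2}$). For $S$ below the openings, points there are within $\eps$ of $\partial\Omega$ only through the top strip of $S$, which is of area $\le \eps$. Once these checks are done, taking logarithms and dividing by $\log\eps$ gives inner dimension $4/3$ and outer dimension $3/2$, which proves Lemma~\ref{lem:example}.
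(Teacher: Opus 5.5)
Your proposal is correct and follows essentially the same route as the paper: a rectangle-by-rectangle sum on the inner side and a gap-by-gap sum on the outer side, with the two regimes switching at $n\asymp\eps^{-1/3}$ (from $a_n\asymp n^{-3}$) and $n\asymp\eps^{-1/2}$ (from $c_n\asymp n^{-2}$), and the factors $b_n=1/\log(n+1)$ only contributing $\eps^{o(1)}$. One small geometric slip: the rectangles with $n\gtrsim\eps^{-1/2}$ have height about $1/\log(1/\eps)$, not $O(\eps^{1/2})$, so they do not lie in an $O(\eps^{1/2})$-ball around $\mathbf i+1$; the region above their tops still has area $O(\eps)$ once you add the term $\eps\sum_{n}(b_n-b_{n+1})=O(\eps)$ from the exposed right sides, so your conclusion is unaffected.
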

\begin{proof}
    We have to estimate the outer and inner $\eps$-neighborhood for $\eps>0$ small.\\

    \textbf{Inner neighborhood.} Without the rectangles, the segment from $(0,1)$ to $(1,1)$ would contribute an $\eps$-neighborhood of area exactly $\eps$. We now quantify the additional area contributed by each attached rectangle, distinguishing two regimes:\\

    Consider the area of the $\eps-$(inner) neighbourhood of $\tilde R_n$. Suppose first that $2\eps \le a_n$ (as shown in the first two rectangles in Figure \ref{F:explanation}). Compared to the flat case of $\partial S$, the amount that is added comes from the neighbourhood of the vertical sides of $R_n$ (their $\eps$-neighbourhood are distinct since $2\eps \le a_n$) and some rounding around the corners. Hence 
    $$ |\partial ( R_n \setminus \ell_n)_\eps| =2\eps b_n-2c\eps^2,$$ where we recall that $b_n$ is the rectangle’s height and the term with $c\in(0,1)$ accounts for the rounded corners near the two lower corners.
    
    Now suppose instead that  $2\eps > a_n$. Then the $\eps$-neighborhood fills the rectangle completely (see the third rectangle in Figure \ref{F:explanation}), 
    so  the added area is 
    $$|\partial ( R_n \setminus \ell_n)_\eps| =a_nb_n-\tilde ca_n^2,
    $$
    where $\tilde c\in(0,2)$ can be computed explicitly and again comes from the rounded corners.

    Note that $2\eps \le a_n$ holds precisely when $n\le\lfloor \eps^{-1/3}\rfloor$. Thus, the total area of the inner $\eps$-neighbourhood of this part of $\partial \Omega$
    can be obtained by summing the contribution of all rectangles:
    \begin{align*}
        I_\eps&=\eps+\sum_{n=1}^{\eps^{-1/3+o(1)}}(2\eps b_n-2\eps^2 c)+\sum_{n=\epsilon^{-1/3+o(1)}}^{\infty}a_n(b_n-a_n\tilde c)\\
        &=\eps+\sum_{n=1}^{\eps^{-1/3+o(1)}}\Bigl(\frac{2\eps}{\log(n)}-2\eps^2 c\Bigr)+\sum_{n=\eps^{-1/3+o(1)}}^{\infty}\frac{2}{n^3\log(n)}-\frac{4\tilde c}{n^6}\\
        &=\eps+\eps^{1-1/3+o(1)}+(\eps^{-1/3})^{-(2-1)+o(1)}=\eps^{2/3+o(1)},
    \end{align*}
    where $o(1)$ vanishes when $\eps\to0$. In conclusion, the inner Minkowski dimension is $2-\lim_{\eps\to0}\log(I_\eps)/\log(\eps^{-1})=2-2/3=4/3$.\\

    \textbf{Outer neighborhood.} Similar considerations apply, estimating the contribution of each rectangle. Now we must also account for interactions between neighboring rectangles, depending on whether they are sufficiently separated, corresponding to whether $2\eps\le c_n$ or not.

    If $2\eps\le c_n$, the gap between $R_n$ and $R_{n+1}$ is large enough that their outer $\eps$-neighborhoods do not overlap. The $n$-th rectangle then contributes an additional area $2\eps b_n-2c\eps^2$, with the negative term arising from the rounded corners (for some $c\in(0,1)$).

    If $2\eps>c_n$, the $\eps$-neighborhoods overlap in the gap. In this case, the extra area placed between the two rectangles is at most, say, $2b_n c_n \le C n^{-2}/\log (n)$.

    Combining these two regimes, we obtain the stated decomposition of the total contribution:
    \begin{align*}
        O_\eps&=\eps+\sum_{n=1}^{\eps^{-1/2+o(1)}}(\frac{2\eps}{\log(n)}-2\eps^2 c)+C \sum_{n=\eps^{-1/2+o(1)}}^{\infty}\frac{n^{-2+o(1)}}{\log(n)}\\
        &=\eps+\eps^{1-1/2+o(1)}+\eps^{-1(-1/2)+o(1)}\\
        &=\eps^{1/2+o(1)}
    \end{align*}
    so the outer Minkowski dimension is $2-\lim_{\eps\to0}\log(O_\eps)/\log(\eps)=2-1/2=3/2$. This completes the proof of Lemma \ref{lem:example}.
\end{proof}

\begin{figure}
        \centering
    \includegraphics[width=0.7\linewidth]{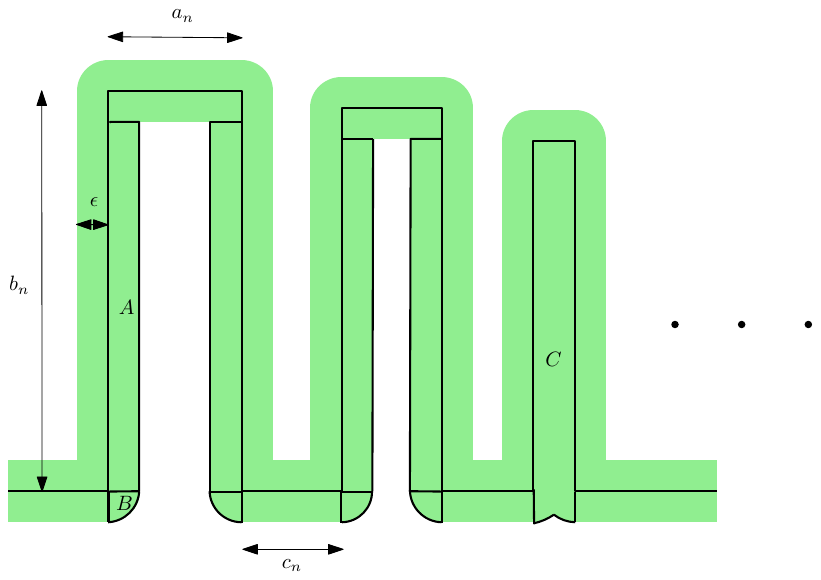}
        \caption{Idea of the proof of Lemma \ref{lem:example}.}
        \label{F:explanation}
    \end{figure}
\bibliographystyle{alpha}
\bibliography{main}

\end{document}